\theoremstyle{plain}
\newtheorem{theorem}{Theorem}[section]
\newtheorem{proposition}[theorem]{Proposition}
\newtheorem{corollary}[theorem]{Corollary}
\theoremstyle{definition}
\newtheorem{definition}[theorem]{Definition}
\theoremstyle{remark}
\newtheorem{remark}[theorem]{Remark}
\newtheorem{question}[theorem]{Question}
\DeclareMathOperator{\Iso}{{\rm Iso}}
\DeclareMathOperator{\cl}{{\rm cl}}
\DeclareMathOperator{\scl}{{\rm scl}}
\DeclareMathOperator{\sqc}{{\rm sc}}
\begin{document}
\title{$k$-spaces, sequential spaces and related topics in the absence of the axiom of choice}
\author{Kyriakos Keremedis and Eliza Wajch\\
Department of Mathematics, University of the Aegean\\
Karlovassi, Samos 83200, Greece\\
kker@aegean.gr\\
Institute of Mathematics\\
Faculty of Exact and Natural Sciences \\
Siedlce University of Natural Sciences and Humanities\\
ul. 3 Maja 54, 08-110 Siedlce, Poland\\
eliza.wajch@wp.pl}
\maketitle
\begin{abstract}
In the absence of the axiom of choice, new results concerning sequential, Fr\'echet-Urysohn, $k$-spaces, very $k$-spaces, Loeb and Cantor completely metrizable spaces are shown. New choice principles are introduced. Among many other theorems, it is proved in $\mathbf{ZF}$ that every Loeb, $T_3$-space having a base expressible as a countable union of finite sets is a metrizable second-countable space whose every $F_{\sigma}$-subspace is separable; moreover, every $G_{\delta}$-subspace of a second-countable, Cantor completely metrizable space is Cantor completely metrizable, Loeb and separable. It is also noticed that Arkhangel'skii's statement that every very $k$-space is Fr\'echet-Urysohn is unprovable in $\mathbf{ZF}$ but it holds in $\mathbf{ZF}$ that every first-countable, regular very $k$-space whose family of all non-empty compact sets has a choice function is Fr\'echet-Urysohn. That every second-countable metrizable space is a very $k$-space is equivalent to the axiom of countable choice for $\mathbb{R}$. 
 \medskip
 
\noindent\textit{Mathematics Subject Classification (2010)}: 03E25, 03E35,  54D50, 54D55, 54E50\newline 
\textit{Keywords}: weak choice principles, sequential space, Fr\'echet-Urysohn space, very $k$-space, Loeb space, Cantor completely metrizable space

\end{abstract}

\section{Introduction}
\label{s1}
\subsection{Set-theoretic framework}
The intended context for reasoning in this paper is the Zermelo-Fraenkel
set theory $\mathbf{ZF}$ in which the axiom of choice $\mathbf{AC}$ is deleted. To stress that a result is proved in $\mathbf{ZF}$ (respectively, only in $\mathbf{ZF+A}$ or $\mathbf{ZFC}$ where $\mathbf{ZFC}$ is $\mathbf{ZF+AC}$), we write at the beginning of the statements of the theorems $[\mathbf{ZF}]$ (respectively, $[\mathbf{ZF+A}]$ or $[\mathbf{ZFC}]$). Apart from models of $\mathbf{ZF}$, we refer to some models of set theory $\mathbf{ZFA}$ (or $\text{ZF}^0$ in \cite{hr}) with an infinite set of atoms (see \cite{je}).  All our theorems proved in $\mathbf{ZF}$ are also theorems of $\mathbf{ZFA}$. 

The set of all finite ordinal numbers (of von Neumann) is denoted by $\omega$. For convenience, we denote by $\mathbb{N}$ the set $\omega\setminus\{0\}$. 

For a set $X$, the power set of $X$ is denoted by $\mathcal{P}(X)$. For sets $X$ and $Y$, we write $|X|\leq |Y|$ if $X$ is equipotent with a subset of $Y$. 

\begin{definition}
\label{s1d1}
Let $X$ be a set.
\begin{enumerate}
\item[(i)] $X$ is called \emph{finite} if there exists $n\in\omega$ such that $X$ is equipotent with $n$; otherwise, $X$ is called \emph{infinite}.
\item[(ii)] $X$ is called \emph{Dedekind-finite} if no proper subset of $X$ is equipotent with $X$; otherwise, $X$ is called \emph{Dedekind-infinite}.
\item[(iii)] $\mathbf{X}$ is called \emph{countable} if $X$ is equipotent with a subset of $\omega$.
\item[(iv)] $\mathbf{X}$ is called \emph{denumerable} if $X$ is a countable infinite set.
\item[(v)] $\mathbf{X}$ is called a \emph{cuf set} if $X$ is expressible as a countable union of finite sets.
\end{enumerate}
\end{definition}

For a set $X$, the family of all finite subsets of $X$ is denoted by $[X]^{<\omega}$, and the family of all countable subsets of $X$ is denoted by $[X]^{\leq\omega}$. We denote by $\mathcal{P}^{inf}(X)$ the collection of all infinite subsets of $X$, and by $\mathcal{P}^{un}(X)$ the collection of all uncountable subsets of $X$.

Let $J$ be a non-empty set and let  $\mathcal{A}=\{A_j: j\in J\}$ be a family of non-empty sets. Then every function $f\in\prod_{j\in J}A_j$ is called a \emph{choice function} of $\mathcal{A}$. Every function $f\in\prod_{j\in J}\mathcal{P}(A_j)$ such that, for every $j\in J$, $f(j)$ is a non-empty finite subset of $A_j$, is called a \emph{multiple choice function} of $\mathcal{A}$. If $J$ is infinite, then it is said that $\mathcal{A}$ has a partial (multiple) choice function if there exists an infinite subset $S$ of $J$ such that the family $\{A_j: j\in S\}$ has a (multiple) choice function. If $S$ is an infinite subset of $J$ and $f\in\prod_{j\in S}A_j$ (respectively, $f\in\prod_{j\in S}([A_j]^{<\omega}\setminus\{\emptyset\}))$, then $f$ is called a \emph{partial choice function} (respectively, a \emph{partial multiple choice function}) of $\mathcal{A}$.

\subsection{Topological terminology and notation}

Let $\mathbf{X}=\langle X, \tau\rangle$ and let $A\subseteq X$. The closure of $A$ in $\mathbf{X}$ is denoted by $\cl_{\mathbf{X}}(A)$. The collection $\tau|_ A=\{A\cap U: U\in\tau\}$ is the subspace topology on $A$ inherited from the topology of $\mathbf{X}$. If this is not misleading, the (topological) subspace $\mathbf{A}=\langle A, \tau|_A\rangle$ of $\mathbf{X}$ will be also denoted by $A$. If it is not stated otherwise, all subsets of $X$ will be considered as topological subspaces of $\mathbf{X}$. The set $A$ is called \emph{sequentially closed} in $\mathbf{X}$ if, for every $x\in X$ and every sequence of points of $A$ converging in $\mathbf{X}$ to $x$, we have $x\in A$. Let $A^{\ast}$ be the set of all points $x\in X$ such that there exists a sequence of points of $A\setminus\{x\}$ which converges in $\mathbf{X}$ to the point $x$. Then the set $\sqc_{\mathbf{X}}(A)=A\cup A^{\ast}$ is called the \emph{sequential closure} of $A$. The intersection of all sequentially closed subsets of $\mathbf{X}$ containing $A$ is denoted by $\scl_{\mathbf{X}}(A)$. Clearly, $\sqc_{\mathbf{X}}(A)\subseteq \scl_{\mathbf{X}}(A)\subseteq \cl_{\mathbf{X}}(A)$. The set $A$ is sequentially closed in $\mathbf{X}$ if and only if $A=\sqc_{\mathbf{X}}(A)$ or, equivalently, $A=\scl_{\mathbf{X}}(A)$.

\begin{definition} 
\label{s1d2}
If $\mathbf{X}$ is a topological space, then:
\begin{enumerate}
\item[(i)] $\mathcal{C}l(\mathbf{X})$ is the collection of all closed sets of $\mathbf{X}$, and 
$$\mathcal{C}l^{\ast}(\mathbf{X})=\mathcal{C}l(\mathbf{X})\setminus\{\emptyset\};$$

\item[(ii)] $\mathcal{SC}(\mathbf{X})$ is the collection of all sequentially closed sets of $\mathbf{X}$, and 
$$\mathcal{SC}^{\ast}(\mathbf{X})=\mathcal{SC}l(\mathbf{X})\setminus\{\emptyset\};$$

\item[(iii)] $\mathcal{K}(\mathbf{X})$ is the collection of all compact subsets of $\mathbf{X}$, and 
$$\mathcal{K}^{\ast}(\mathbf{X})=\mathcal{K}(\mathbf{X})\setminus\{\emptyset\};$$

\item[(iv)]  $\mathcal{F}_{\sigma}(\mathbf{X})$ is the collection of all $F_{\sigma}$-sets in $\mathbf{X}$, and 
$$\mathcal{F}_{\sigma}^{\ast}(\mathbf{X})=\mathcal{F}_{\sigma}(\mathbf{X})\setminus\{\emptyset\};$$

\item[(v)] $\mathcal{G}_{\delta}(\mathbf{X})$ is the collection of all $G_{\delta}$-sets in $\mathbf{X}$, and 
$$\mathcal{G}_{\delta}^{\ast}(\mathbf{X})=\mathcal{G}_{\delta}(\mathbf{X})\setminus\{\emptyset\}.$$
\end{enumerate}
\end{definition}

\begin{definition} 
\label{s1d3}
Let $\mathbf{X}$ be a topological space.
\begin{enumerate}
\item[(i)] $\mathbf{X}$ is called \emph{Loeb} if $\mathcal{C}l^{\ast}(\mathbf{X})$ has a choice function; every choice function of $\mathcal{C}l^{\ast}(\mathbf{X})$ is called a \emph{Loeb function} of $\mathbf{X}$.

\item[(ii)] $\mathbf{X}$ is called \emph{s-Loeb} if $\mathcal{SC}^{\ast}(\mathbf{X})$ has a choice function; every choice function of $\mathcal{SC}^{\ast}(\mathbf{X})$ is called an \emph{s-Loeb function} of $\mathbf{X}$.

\item[(iii)] $\mathbf{X}$ is called \emph{$\mathcal{K}$-Loeb} if $\mathcal{K}^{\ast}(\mathbf{X})$ has a choice function.

\item[(iv)] $\mathbf{X}$ is called  \emph{$\mathcal{F}_{\sigma}$-Loeb} (respectively, $\mathcal{G}_{\delta}$-\emph{Loeb}) if the family $\mathcal{F}_{\sigma}^{\ast}(\mathbf{X})$ (respectively, $\mathcal{G}_{\delta}^{\ast}(\mathbf{X})$) has a choice function.

\item[(v)] $\mathbf{X}$ is called  \emph{sequential} if $\mathcal{SC}l(\mathbf{X})=\mathcal{C}l(\mathbf{X})$.

\item[(vi)] $\mathbf{X}$ is called  \emph{Fr\'echet-Urysohn} if, for every subset $A$ of $X$ and every point $x\in\cl_{\mathbf{X}}(A)$, there exists a sequence of points of $A$ converging in $\mathbf{X}$ to the point $x$.

\item[(vii)] $\mathbf{X}$ is called a \emph{$k$-space} if  $\mathbf{X}$ is Hausdorff and, for every set $A\subseteq X$ such that, for every $K\in\mathcal{K}(\mathbf{X})$, $A\cap K\in\mathcal{C}l(\mathbf{X})$, it holds that $A\in\mathcal{C}l(\mathbf{X})$.

\item[(viii)] $\mathbf{X}$ is called a \emph{very $k$-space} (see \cite{ar}) or a \emph{hereditarily $k$-space} if every subspace of $\mathbf{X}$ is a $k$-space.
\end{enumerate}
\end{definition}

Let $\langle X, d\rangle$ be a metric space. For a point $x\in X$ and a positive real number $r$, the set $B_d(x, r)=\{y\in X: d(x,y)<r\}$ is the \emph{open ball with center $x$ and radius $r$}. The family 
$$\tau(d)=\{V\in\mathcal{P}(X): (\forall x\in V)(\exists r\in (0, +\infty)) B_d(x, r)\subseteq V\}$$
\noindent is the \emph{topology on $X$ induced by $d$}. Then $\mathbf{X}=\langle X, \tau(d)\rangle$ is the \emph{topological space associated with} $d$. For a set $A\subseteq X$, $\delta_d(A)=\sup\{d(x,y): x,y\in A\}$ is the \emph{d-diameter} of $A$. 

\begin{definition}
\label{s1d4}
\begin{enumerate}
\item[(i)] A metric $d$ on a set $X$ is called \emph{Cantor complete} if, for every descending family $\mathcal{F}=\{F_n: n\in\omega\}$ of non-empty closed subsets of $\langle X, \tau(d)\rangle$ with $\lim\limits_{n\to+\infty}\delta_d(F_n)=0$, it holds that  $\bigcap\mathcal{F}\neq\emptyset$.
\item[(ii)] A topological space $\langle X, \tau\rangle$ is called \emph{Cantor completely metrizable} if there exists a Cantor complete metric $d$ on $X$ such that $\tau=\tau(d)$.
\item[(iii)] A metric space $\langle X, d\rangle$ is called \emph{Cantor complete} if $d$ is Cantor complete. 
\end{enumerate}
\end{definition}

In the sequel, if it is not stated otherwise,  topological and metric spaces will be denoted by boldface letters, and their underlying sets by lightface letters.  For $n\in(\omega+1)\setminus\{0\}$, the space $\mathbb{R}^{n}$ will be considered with its natural (standard) topology. The space $\mathbb{R}^1$ is identified with $\mathbb{R}$. We denote by $\mathbb{P}$ the space of all irrational numbers as a subspace of $\mathbb{R}$. It is well known that $\mathbb{P}$ is homeomorphic with the Baire space $\mathbb{N}^{\omega}$. 

\subsection{The list of forms relevant to choice principles}
\label{s1.3}

Below, we list the choice principles we use in this paper. For the known forms which can be found in \cite{hr}, we give the form number under which they are recorded in \cite{hr}.

\begin{definition}
\label{s1d5}
\begin{enumerate}
\item $\mathbf{AC}$ (\cite[Form 1]{hr}): Every infinite family of non-empty sets has a choice function.


\item $\mathbf{AC}_{fin}$ (\cite[Form 62]{hr}): Every infinite family of non-empty finite sets has a choice function.

\item $\mathbf{CMC}$ (\cite[Form 126]{hr}): Every denumerable family of non-empty sets has a multiple choice function.

\item $\mathbf{CAC}$ (\cite[Form 8]{hr}): Every denumerable family of non-empty sets has a choice function.

\item $\mathbf{CAC}_{\omega}$ (\cite[Form 32]{hr}): Every denumerable family of non-empty countable sets has a choice function.

\item $\mathbf{CAC}_{fin}$ (\cite[Form 10]{hr}): Every denumerable family of non-empty finite sets has a choice function.

\item $\mathbf{CUC}$ (\cite[Form 31]{hr}): For every countable family $\mathcal{A}$ of countable sets, the union $\bigcup\mathcal{A}$ is a countable set.

\item $\mathbf{IDI}$ (\cite[Form 9]{hr}): Every infinite set is Dedekind-infinite. 
\end{enumerate}
\end{definition}

\begin{definition} 
\label{s1d6}
 Let $X$ be a set. If $\mathbf{\Phi}$ is a form given in Definition \ref{s1d5}, then $\mathbf{\Phi}(\mathbf{X})$ means $\mathbf{\Phi}$ restricted to $X$ or to families of subsets of $\mathbf{X}$, respectively. In particular:
\begin{enumerate}
\item $\mathbf{AC}(\mathbb{R})$ (\cite[Form 79]{hr}): Every infinite family of non-empty subsets of $\mathbb{R}$ has a choice function.
\item $\mathbf{CAC}(\mathbb{R})$ (\cite[Form 94]{hr}): Every denumerable family of non-empty subsets of $\mathbb{R}$ has a choice function.
\item $\mathbf{CAC}_{\omega}(\mathbb{R})$ (\cite[Form 5]{hr}): Every denumerable family of countable non-empty subsets of $\mathbb{R}$ has a choice function.
\item $\omega-\mathbf{CAC}(\mathbb{R})$ (Definition 4.56 in \cite{her}): For every family $\{A_i: i\in\omega\}$ of non-empty subsets of $\mathbb{R}$, there exists a family $\{C_i: i\in\omega\}$ of countable non-empty subsets of $\mathbb{R}$ such that, for each $i\in\omega$, $C_i\subseteq A_i$.
\item $\mathbf{CUC}(\mathbb{R})$ (\cite[Form 6]{hr}): Every denumerable union of countable subsets of $\mathbb{R}$ is countable.
\item $\mathbf{IDI}(\mathbb{R})$ (\cite[Form 13]{hr}): Every infinite subset of $\mathbb{R}$ is Dedekind-infinite.
\end{enumerate}
\end{definition}

Furthermore, for every set $X$,  we introduce four additional forms in the following definition:

\begin{definition}
\label{s1d7}
\begin{enumerate}
\item $\mathbf{KW}(X, \omega)$: For every infinite family $\mathcal{A}$ of non-empty subsets of $X$, the family $\{[A]^{\leq\omega}\setminus\{\emptyset\}: A\in\mathcal{A}\}$ has a choice function. 

\item $\mathbf{CKW}(X, \omega)$: $\mathbf{KW}(X, \omega)$ restricted to denumerable families of non-empty subsets of $X$.

\item $\mathbf{PCAC}(X)$: Every denumerable family of non-empty subsets of $X$ has a partial choice function.

\item $\mathbf{PCMC}(X)$: Every denumerable family of non-empty subsets of $X$ has a partial multiple choice function. 
\end{enumerate}
\end{definition}

We define below several new statements concerning topological spaces and recall some known ones. 

\begin{definition} 
\label{s1d8}
Let $\mathbf{X}=\langle X, \tau\rangle$ be a topological space.
\begin{enumerate}

\item $\mathbf{S}(\mathbf{X})$: $\mathbf{X}$ is sequential.

\item $\mathbf{HS}(\mathbf{X})$: Every subspace of $\mathbf{X}$ is sequential.

\item $\mathbf{FU}(\mathbf{X})$: $\mathbf{X}$ is Fr\'echet-Urysohn.

\item $\mathbf{k}(\mathbf{X})$: $\mathbf{X}$ is a $k$-space.

\item $\mathbf{Hk}(\mathbf{X})$: Every subspace of $\mathbf{X}$ is a $k$-space.

\item $\mathbf{L}(\mathbf{X})$: $\mathbf{X}$ is Loeb.

\item $\mathbf{SL}(\mathbf{X})$: $\mathbf{X}$ is s-Loeb.

\item $\mathbf{KL}(\mathbf{X})$: $\mathbf{X}$ is $\mathcal{K}$-Loeb.

\item $\mathbf{F_{\sigma}L}(\mathbf{X})$: $\mathbf{X}$ is $\mathcal{F}_{\sigma}$-Loeb.

\item $\mathbf{G_{\delta}L}(\mathbf{X})$: $\mathbf{X}$ is $\mathcal{G}_{\delta}$-Loeb.

\item $\mathbf{CF}_{\sigma}\mathbf{L}(\mathbf{X})$: Every countable family of non-empty $F_{\sigma}$-subsets of $\mathbf{X}$ has a choice function.

\item $\mathbf{CG}_{\delta}\mathbf{L}(\mathbf{X})$: Every countable family of non-empty $G_{\delta}$-subsets of $\mathbf{X}$ has a choice function.

\item $\mathbf{\Pi F}_{\sigma}(\mathbf{X})$: Every infinite subset of $X$ contains an infinite $F_{\sigma}$-set of $\mathbf{X}$.

\item $\mathbf{\Pi G}_{\delta}(\mathbf{X})$:  Every infinite subset of $X$ contains an infinite $G_{\delta}$-set of $\mathbf{X}$. 

\item $\mathbf{UIC}(\mathbf{X})$: Every uncountable subset of $X$ contains an infinite closed subset of $\mathbf{X}$.

\item $\mathbf{UISC}(\mathbf{X})$: Every uncountable subset of $X$ contains an infinite sequentially closed set of $\mathbf{X}$.

\item $\mathbf{ISCIC}(\mathbf{X})$: Every infinite sequentially closed subset of $\mathbf{X}$ contains an infinite closed subset of $\mathbf{X}$.

\item $\mathbf{SCF_{\sigma}}(\mathbf{X})$: Every sequentially closed subset of $\mathbf{X}$ is of type $F_{\sigma}$ in $\mathbf{X}$.

\item $\mathbf{SCDF_{\sigma}}(\mathbf{X})$: Every sequentially closed subset $A$ of $\mathbf{X}$ contains an $F_{\sigma}$-set $D$ of $\mathbf{X}$ such that $D$ is dense in the subspace $\mathbf{A}$ of $\mathbf{X}$.

\item $\mathbf{SCDWOU}(\mathbf{X})$: Every sequentially closed subset $A$ of $\mathbf{X}$ contains a set $D$ such that $D$ is expressible as a well-ordered union of well-orderable sets, and $D$ is dense in the subspace $\mathbf{A}$ of $\mathbf{X}$.

\item $\mathbf{WUF}(\mathbf{X})$: For every closed subset $F$ of $\mathbf{X}$ and every accumulation point $x$ of $F$ in $\mathbf{X}$, there exists a sequence of points of $F\setminus\{x\}$ which converges in $\mathbf{X}$ to the point $x$.

\item $\mathbf{CUF}(\mathbf{X})$: For every compact subset $F$ of $\mathbf{X}$ and every accumulation point $x$ of $F$ in $\mathbf{X}$, there exists a sequence of points of $F\setminus\{x\}$ which converges in $\mathbf{X}$ to the point $x$.

\item $\mathbf{CPCAC}(\mathbf{X})$: For every infinite compact subset $K$ of $\mathbf{X}$, $\mathbf{PCAC}(K)$.

\item $\mathbf{CPCMC}(\mathbf{X})$: For every infinite compact subset $K$ of $\mathbf{X}$, $\mathbf{PCMC}(K)$.
\end{enumerate}
\end{definition}

Forms 4, 5 and 8--24 of Definition \ref{s1d8} are new.

\begin{definition}
\label{s1d9}
If, for a topological space $\mathbf{X}$, $\mathbf{\Phi}(\mathbf{X})$ is a form defined in Definition \ref{s1d8}, then:
\begin{enumerate}
\item $\mathbf{\Phi}$ means: For every topological space $\mathbf{X}$, $\mathbf{\Phi}(\mathbf{X})$ holds.
\item $\mathbf{\Phi}_M$ means: For every metrizable space $\mathbf{X}$, $\mathbf{\Phi}(\mathbf{X})$ holds.
\item $\mathbf{\Phi}_{M2}$ means: For every second-countable metrizable space $\mathbf{X}$, $\mathbf{\Phi}(\mathbf{X})$ holds.
\end{enumerate}
\end{definition}

\subsection{Some known theorems}
\label{s1.4}
Before we pass to the main body of the paper, let us recall several known theorems for future references.

\begin{theorem}
\label{s1t:arkh} 
(Cf. \cite{ar} and, e.g., \cite[Exercise 3.3.1]{en}.) $[\mathbf{ZFC}]$  A Hausdorff space $\mathbf{X}$ is a very $k$-space if and only if $\mathbf{X}$ is Fr\'echet-Urysohn.
\end{theorem}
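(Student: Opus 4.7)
The plan is to prove both implications, with the substantial work occurring in the direction that a Hausdorff hereditarily $k$-space is Fr\'echet-Urysohn, which is where the axiom of choice enters essentially.

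For the forward direction, I would first observe that the Fr\'echet-Urysohn property is hereditary: if $\mathbf{Y}$ is a subspace of $\mathbf{X}$, $A\subseteq Y$, and $x\in\cl_{\mathbf{Y}}(A)$, then $x\in\cl_{\mathbf{X}}(A)$, so a sequence in $A$ converging to $x$ in $\mathbf{X}$ also converges in $\mathbf{Y}$. Next I would verify that a Hausdorff Fr\'echet-Urysohn space is a $k$-space: suppose $A\subseteq X$ meets every $K\in\mathcal{K}(\mathbf{X})$ in a closed set of $K$, and pick $x\in\cl_{\mathbf{X}}(A)$; the Fr\'echet-Urysohn property yields a sequence $(x_n)$ in $A$ with $x_n\to x$, the set $\{x_n:n\in\omega\}\cup\{x\}$ is compact by Hausdorffness, and the hypothesis forces $x\in A$. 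Combining these two points shows every subspace of $\mathbf{X}$ is a Hausdorff $k$-space.

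For the harder direction I argue by contradiction. Assume $\mathbf{X}$ is Hausdorff and hereditarily a $k$-space but not Fr\'echet-Urysohn, and select $A\subseteq X$ and $x\in\cl_{\mathbf{X}}(A)\setminus A$ such that no sequence from $A$ converges to $x$. The subspace $\mathbf{Y}=A\cup\{x\}$ is a $k$-space and $A$ is not closed in $\mathbf{Y}$, so the $k$-space property produces a compact $K\subseteq Y$ for which $A\cap K$ is not closed in $K$; direct inspection forces $x\in K$ and $x$ to be an accumulation point of $K\setminus\{x\}\subseteq A$. The problem has thus been reduced to the claim that, in any compact Hausdorff hereditarily $k$-space $K$, every non-isolated point is the limit of a nontrivial convergent sequence; applying this claim to $(K,x)$ yields the required contradiction.

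The main obstacle is establishing this claim, and here I would proceed in two steps in ZFC. First, use Zorn's lemma to extract a closed $F\subseteq K$ containing $x$ and minimal subject to $x$ being non-isolated in $F$; checking that descending chains admit a valid lower bound rests on the compactness of $K$ and the regularity furnished by compact Hausdorffness. Second, show that such a minimal $F$ has countable tightness at $x$, i.e.\ $x\in\cl_{\mathbf{X}}(C)$ for some countable $C\subseteq F\setminus\{x\}$: otherwise one would carve out a subspace of $F$ exhibiting the same obstruction to being a $k$-space as $\beta\omega$ does, contradicting the hereditary $k$-property. Once countable tightness at $x$ is in hand, the compact Hausdorff subspace $\cl_{\mathbf{X}}(C)$ is countable and regular, hence second-countable, so $x$ admits a countable local base in it and a sequence in $C\subseteq A$ converging to $x$ can be extracted.
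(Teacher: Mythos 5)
The paper does not prove this theorem: it is quoted in Section~\ref{s1.4} as a known result of Arkhangel'skii, with references to \cite{ar} and \cite[Exercise 3.3.1]{en}, so there is no internal proof to compare yours with; I assess your argument on its own. Your easy direction is correct, and so is the opening reduction of the hard direction: from a hereditarily $k$, non-Fr\'echet-Urysohn Hausdorff space you correctly extract a compact Hausdorff $K$ with $x\in K$ non-isolated, $K\setminus\{x\}\subseteq A$, every subspace of $K$ a $k$-space, and no sequence from $K\setminus\{x\}$ converging to $x$. The problem is that both steps of your treatment of the remaining claim have genuine gaps. The Zorn's lemma step does not work: the family of closed sets $F\ni x$ in which $x$ is non-isolated is not closed under intersections of descending chains, and compactness plus regularity does not repair this. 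In $K=[0,1]$ with $x=0$, the sets $F_n=\{0\}\cup\{1/k:k\geq n\}$ form a descending chain of closed sets in each of which $0$ is non-isolated, yet $\bigcap_{n}F_n=\{0\}$. So a minimal $F$ need not exist and the construction cannot begin.

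The second step is also broken as written: for a countable $C$ with $x\in\cl_{\mathbf{X}}(C)$, the set $\cl_{\mathbf{X}}(C)$ need not be countable ($\mathbb{Q}\cap[0,1]$ is countable with closure $[0,1]$), and ``countable regular implies second-countable'' is false in general (the Arens--Fort space is a countable regular space that is not first-countable); only countable \emph{compact} Hausdorff spaces are guaranteed to be first-countable, via the fact that their points are $G_{\delta}$. The correct move once a countable $C\subseteq K\setminus\{x\}$ with $x\in\cl_{\mathbf{X}}(C)$ is in hand is to apply the hereditary $k$-property one more time, to the subspace $C\cup\{x\}$ itself: every compact subset of $C\cup\{x\}$ is a countable compact Hausdorff space, hence first-countable, so either some compact subset of $C\cup\{x\}$ accumulates at $x$ (giving the desired sequence) or none does, in which case $C\setminus\{x\}$ witnesses that $C\cup\{x\}$ is not a $k$-space. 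Finally, the passage you summarize as ``otherwise one would carve out a subspace exhibiting the same obstruction as $\beta\omega$'' --- that is, the proof that $x$ has countable tightness in $K$ --- is exactly the hard core of Arkhangel'skii's theorem; it requires a transfinite (free-sequence type) construction of a non-$k$ subspace when the tightness at $x$ is uncountable, it is not delivered by minimality of $F$ (which you have not secured anyway), and as stated it is an appeal to the conclusion rather than an argument for it.
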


\begin{proposition}
\label{s1:WOT}
(Cf., e.g.,\cite[proof of Theorem 1.4]{her}.) $[\mathbf{ZF}]$ For every set $X$, $\mathbf{AC}(X)$ is equivalent to the sentence: $X$ is well-orderable.
\end{proposition}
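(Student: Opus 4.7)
The plan is to prove the equivalence by the standard Zermelo-style argument, adapted to take account of the ``infinite family'' qualifier in the definition of $\mathbf{AC}(X)$.

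For the easy direction, I would assume $X$ carries a well-ordering $\leq$ and, given \emph{any} family $\mathcal{F}$ of non-empty subsets of $X$, define $f(A)=\min_{\leq}A$ for $A\in\mathcal{F}$. This yields a choice function, so in particular $\mathbf{AC}(X)$ holds.

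For the nontrivial direction, assume $\mathbf{AC}(X)$. If $X$ is finite there is nothing to do, since every finite set is well-orderable in $\mathbf{ZF}$ (by induction on the natural number $n$ with $X$ equipotent to $n$). Assume $X$ is infinite. Then $\mathcal{P}(X)\setminus\{\emptyset\}$ is an infinite family of non-empty subsets of $X$, and $\mathbf{AC}(X)$ produces a choice function $f\colon\mathcal{P}(X)\setminus\{\emptyset\}\to X$ with $f(A)\in A$. Fix the Hartogs ordinal $\kappa=\aleph(X)$, i.e.\ the least ordinal that does not inject into $X$; its existence is a theorem of $\mathbf{ZF}$. Define by transfinite recursion on $\alpha<\kappa$
\[
x_{\alpha}=f\bigl(X\setminus\{x_{\beta}:\beta<\alpha\}\bigr),
\]
provided the set on the right is non-empty, and halt otherwise. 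The map $\alpha\mapsto x_\alpha$ is injective on its domain, so the recursion must halt at some $\lambda<\kappa$; otherwise the map would inject $\kappa$ into $X$, contradicting the choice of $\kappa$. At the halting stage we have $X=\{x_{\beta}:\beta<\lambda\}$, and the bijection $\beta\mapsto x_\beta$ transports the well-order of $\lambda$ onto $X$.

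I do not expect a serious obstacle: the only point where one must be a little careful is that $\mathbf{AC}(X)$ as formulated concerns only \emph{infinite} families, but this is harmless because in the finite case $X$ is already well-orderable outright, and in the infinite case the single family $\mathcal{P}(X)\setminus\{\emptyset\}$ used in the recursion is itself infinite. Everything else (Hartogs' theorem and transfinite recursion from a single choice function) is available in $\mathbf{ZF}$.
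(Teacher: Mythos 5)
Your proof is correct and is exactly the standard Zermelo-style argument (choice function on $\mathcal{P}(X)\setminus\{\emptyset\}$ plus Hartogs' ordinal and transfinite recursion) that the paper itself does not reproduce but merely cites from Herrlich's proof of the Well-Ordering Theorem. Your extra care about the ``infinite family'' qualifier is a sensible and accurate touch, but introduces no divergence from the intended argument.
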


\begin{theorem}
\label{s1t12}
(Cf. \cite[Theorem 2.3 and Corollary 2.4 ]{kw1}.) $[\mathbf{ZF}]$ Every second-countable Cantor completely metrizable space is Loeb. In particular, $\mathbb{R}^{\omega}$, the Cantor cube $\{0,1\}^{\omega}$, the Hilbert cube $[0, 1]^{\omega}$ and the Baire space $\mathbb{N}^{\omega}$, hence the space $\mathbb{P}$ of irrationals also, are Loeb. 
\end{theorem}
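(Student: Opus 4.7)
The plan is to use the countable base together with Cantor completeness to produce, for each non-empty closed set, a canonical descending sequence of closed subsets whose diameters shrink to zero; their intersection is then a singleton that serves as the Loeb selection. Fix once and for all a Cantor complete metric $d$ inducing the topology of $\mathbf{X}$ and an enumeration $\{B_n : n \in \omega\}$ of a countable base of $\mathbf{X}$.

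For each $F \in \mathcal{C}l^{\ast}(\mathbf{X})$, define recursively $F_0 = F$ and, given a non-empty closed $F_n$, let $k_n$ be the least $k \in \omega$ with $B_k \cap F_n \neq \emptyset$ and $\delta_d(B_k) \leq 2^{-n}$; then put $F_{n+1} = \cl_{\mathbf{X}}(B_{k_n} \cap F_n)$. The index $k_n$ exists because, picking any $x \in F_n$, the point $x$ lies in some $B_k \subseteq B_d(x, 2^{-(n+2)})$, and such $B_k$ has $d$-diameter at most $2^{-(n+1)} \leq 2^{-n}$. Thus $(F_n)_{n \in \omega}$ is a descending chain of non-empty closed subsets of $\mathbf{X}$ with $\delta_d(F_n) \to 0$. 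By Cantor completeness, $\bigcap_{n \in \omega} F_n$ is non-empty, and by the Hausdorff property of metric spaces it reduces to a singleton $\{p_F\}$; setting $f(F) = p_F$ yields a function $f \colon \mathcal{C}l^{\ast}(\mathbf{X}) \to X$ with $f(F) \in F$, which is the required Loeb function.

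The main delicacy, and the only place where $\mathbf{ZF}$-carefulness is needed, is to verify that no implicit choice enters the construction. This is handled by always taking $k_n$ to be the \emph{least} suitable natural number: the entire sequence $(F_n)_{n \in \omega}$, and hence $p_F$, is then a definable function of $F$ once the pair consisting of $d$ and the enumeration of the base has been fixed in advance. No selection over a family of sets of cardinality larger than $\omega$ is ever made, and the whole argument goes through in $\mathbf{ZF}$.

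For the particular spaces listed, $\mathbb{R}^{\omega}$ carries its standard product metric, which is second-countable and Cantor complete in $\mathbf{ZF}$, so the above argument applies directly; the Cantor cube $\{0,1\}^{\omega}$ and the Hilbert cube $[0,1]^{\omega}$ are closed subspaces of $\mathbb{R}^{\omega}$ and therefore inherit both Cantor completeness and second-countability; the Baire space $\mathbb{N}^{\omega}$ is second-countable and Cantor complete under its usual ultrametric; and $\mathbb{P}$, being homeomorphic to $\mathbb{N}^{\omega}$ as already noted, is Loeb as well.
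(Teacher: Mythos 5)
Your proof is correct, and since the paper only recalls this theorem with a citation to \cite{kw1} rather than proving it, the comparison is with the standard argument there, which yours essentially reproduces: taking the \emph{least} index of a basic open set of diameter at most $2^{-n}$ meeting $F_n$ makes the descending sequence $(F_n)_{n\in\omega}$, and hence the unique point of $\bigcap_{n\in\omega}F_n$, definable from the fixed metric and the fixed enumeration of the base, so Cantor completeness yields a Loeb function without any appeal to choice. The treatment of the listed examples is also fine, resting only on the facts (used elsewhere in the paper and credited to \cite{k1}) that $\mathbb{R}$ and $\mathbb{N}^{\omega}$ are definably Cantor complete, that countable products and closed subspaces of Cantor complete metric spaces are Cantor complete, and that Loebness is a topological invariant.
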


\begin{theorem}
\label{s1t13}
$[\mathbf{ZF}]$ Every second-countable metrizable space is $\mathcal{K}$-Loeb.
\end{theorem}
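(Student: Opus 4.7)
The plan is to construct a canonical choice function by using a fixed compatible metric and a fixed enumeration of a countable base to shrink any non-empty compact set down to a single point. Let $\mathbf{X}=\langle X,\tau\rangle$ be a second-countable metrizable space. By metrizability, fix a metric $d$ with $\tau=\tau(d)$; by second-countability, fix an enumeration $\mathcal{B}=\{B_n:n\in\omega\}$ of a countable base for $\tau$. Both $d$ and $\mathcal{B}$ are obtained by a single existential instantiation at the start, so no hidden choice is used.

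Given $K\in\mathcal{K}^{\ast}(\mathbf{X})$, I would define recursively a decreasing sequence of non-empty compact subsets of $K$ as follows. Set $K_0=K$. Having obtained a non-empty compact $K_k$, let
\[
n_k=\min\{n\in\omega: B_n\cap K_k\neq\emptyset\text{ and }\delta_d(B_n)<1/(k+1)\},
\]
and put $K_{k+1}=K_k\cap\cl_{\mathbf{X}}(B_{n_k})$. The displayed set is non-empty: for any $x\in K_k$, since $\mathcal{B}$ is a base and $B_d(x,1/(2(k+2)))$ is open, some $B_n$ with $x\in B_n\subseteq B_d(x,1/(2(k+2)))$ exists, and this $B_n$ meets $K_k$ while satisfying $\delta_d(B_n)\leq 1/(k+2)<1/(k+1)$. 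Hence $n_k$ is well defined as the minimum of a non-empty subset of $\omega$, requiring no choice at all. Then $K_{k+1}$ is non-empty (it contains $K_k\cap B_{n_k}$) and compact (closed in the compact space $K_k$), with $\delta_d(K_{k+1})\leq\delta_d(B_{n_k})<1/(k+1)$.

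To conclude, I would invoke the $\mathbf{ZF}$-provable fact that in a compact space a decreasing sequence of non-empty closed sets has non-empty intersection (finite intersection property applied inside $K$); therefore $\bigcap_{k\in\omega}K_k\neq\emptyset$, and since its $d$-diameter is $0$, it is a singleton $\{x_K\}$. Setting $f(K)=x_K$ yields a choice function $f\in\prod_{K\in\mathcal{K}^{\ast}(\mathbf{X})}K$, and the entire assignment $K\mapsto\bigl((K_k,n_k)_{k\in\omega},x_K\bigr)$ is a definable operation inside $\mathbf{ZF}$ relative to the fixed $d$ and $\mathcal{B}$. I do not foresee a serious obstacle; the only point meriting care is that the recursion respects the absence of $\mathbf{AC}$, which it does because each $n_k$ is produced by well-ordering $\omega$ rather than by choosing from arbitrary sets.
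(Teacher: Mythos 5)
Your proof is correct, and it is exactly the kind of argument intended here: the paper states Theorem \ref{s1t13} without printing a proof, but your construction (fix $d$ and an enumerated countable base once, then shrink a given non-empty compact $K$ through a definable nested sequence of closed sets with diameters tending to $0$ and use the choice-free finite-intersection-property argument) is the same uniform ``first index of a suitable basic open set'' technique the paper itself uses in, e.g., the proofs of Theorem \ref{s2t6} and Theorem \ref{s6t2}. All the delicate points are handled: $n_k$ is a minimum over $\omega$, the recursion has a definable step, and the resulting singleton depends only on $K$, $d$ and the enumeration, so $f$ is a genuine choice function in $\mathbf{ZF}$.
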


We say that a topological space $\mathbf{X}$ has a cuf base if there exists an (open) base $\mathcal{B}$ of $\mathbf{X}$ such that $\mathcal{B}$ is a cuf set.  It was shown in \cite[Corollary 4.8]{gt} that Urysohn's Metrization Theorem is provable in $\mathbf{ZF}$, that is, every second-countable $T_3$-space is metrizable in $\mathbf{ZF}$. This result was generalized in \cite{ktw1} to the following theorem:

\begin{theorem}
\label{s1t14}
(Cf. \cite[Theorem 2.1]{ktw1}.) $[\mathbf{ZF}]$  Every $T_3$-space which has a cuf base is metrizable. 
\end{theorem}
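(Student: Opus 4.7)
The plan is to adapt the $\mathbf{ZF}$ proof of Urysohn's metrization theorem for second-countable $T_3$-spaces (as in \cite{gt}) to the weaker setting where the base is only a cuf set. Write the given base as $\mathcal{B}=\bigcup_{n\in\omega}\mathcal{B}_n$ with each $\mathcal{B}_n$ finite, and replace $\mathcal{B}_n$ by $\bigcup_{k\le n}\mathcal{B}_k$ to assume $\mathcal{B}_n\subseteq\mathcal{B}_{n+1}$. Although a cuf set need not be countable in $\mathbf{ZF}$, the fixed $\omega$-indexing of the $\mathcal{B}_n$ will suffice to carry out recursive constructions canonically, without any use of choice principles outside $\mathbf{ZF}$.

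First I would show that $\mathbf{X}$ is normal. Given disjoint closed $A,B$, by $T_3$ we have $A\subseteq\bigcup\mathcal{U}$ where $\mathcal{U}=\{U\in\mathcal{B}:\overline{U}\cap B=\emptyset\}$, and symmetrically $B\subseteq\bigcup\mathcal{V}$ for $\mathcal{V}=\{V\in\mathcal{B}:\overline{V}\cap A=\emptyset\}$. Setting $U_n=\bigcup(\mathcal{U}\cap\mathcal{B}_n)$ and $V_n=\bigcup(\mathcal{V}\cap\mathcal{B}_n)$ (finite unions, hence well-defined open sets), the standard disjointification $U_n'=U_n\setminus\bigcup_{k\le n}\overline{V_k}$ and $V_n'=V_n\setminus\bigcup_{k\le n}\overline{U_k}$ yields disjoint opens $\bigcup_n U_n'\supseteq A$ and $\bigcup_n V_n'\supseteq B$. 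Only the $\omega$-indexing of the levels $\mathcal{B}_n$ is invoked, no choice of representatives.

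Next, for each pair $p=(U,V)\in\mathcal{B}\times\mathcal{B}$ with $\overline{U}\subseteq V$, I would construct, canonically from the data $(\mathcal{B}_n)_{n\in\omega}$, a continuous $f_p\colon X\to[0,1]$ that vanishes on $\overline{U}$ and equals $1$ off $V$. The recursion of Urysohn's lemma demands, at each stage, an intermediate open $W$ with $C\subseteq W\subseteq\overline{W}\subseteq O$ for a given closed $C\subseteq O$; using the normality argument of the previous paragraph applied to $C$ and $X\setminus O$, and taking $W=\bigcup_n U_n'$ built from this particular pair, gives a canonical $W$ depending only on $(C,O)$ and the enumerated base. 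Iterating over dyadic rationals produces $f_p$ canonically. Now form the evaluation $e\colon X\to\prod_{n\in\omega}[0,1]^{\mathcal{F}_n}$, where $\mathcal{F}_n=\{(U,V)\in\mathcal{B}_n\times\mathcal{B}_n:\overline{U}\subseteq V\}$ is finite and $e(x)(n)(p)=f_p(x)$. On each finite-indexed factor the formula $d_n(u,v)=\max_{p\in\mathcal{F}_n}|u(p)-v(p)|$ defines a metric without any enumeration of $\mathcal{F}_n$, so $d(x,y)=\sum_n 2^{-n}d_n(e(x)(n),e(y)(n))$ is a well-defined metric on $X$; the $T_3$ axiom together with the base property of $\mathcal{B}$ then shows $\tau=\tau(d)$.

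The main obstacle is Step 2: producing the Urysohn functions $f_p$ canonically. The classical recursion silently chooses intermediate open sets at each dyadic level, and in the absence of $\mathbf{AC}$ we must replace each such choice by a canonical construction extracted from the fixed decomposition $\mathcal{B}=\bigcup_n\mathcal{B}_n$. The disjointification trick from the normality proof is the key device, but verifying that the resulting $W$ depends only on $(C,O)$ (not on any further choices) and that the recursion indeed closes up into a continuous $f_p$ indexed uniformly by $p$ is the delicate technical point.
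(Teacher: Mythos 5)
The paper does not actually prove Theorem \ref{s1t14}: it imports it from \cite[Theorem 2.1]{ktw1}, so there is no internal proof to compare against. Your argument is correct and is the expected one — the Good--Tree $\mathbf{ZF}$ proof of Urysohn's metrization theorem for second-countable $T_3$-spaces (\cite[Corollary 4.8]{gt}, cited just before the statement) adapted to a cuf base by indexing every construction over the countably many finite levels $\mathcal{B}_n$ rather than over individual base elements. The points needing care all check out: the disjointification yields normality using only the $\omega$-indexing of the levels; the resulting canonical insertion operator $(C,O)\mapsto W$ with $C\subseteq W\subseteq\cl_{\mathbf{X}}(W)\subseteq O$ turns the dyadic recursion of Urysohn's lemma into a genuine $\mathbf{ZF}$ recursion, uniformly in the pair $p$, so the family $(f_p)_p$ exists as a set; and since each $\mathcal{F}_n$ is finite, the sup-metrics $d_n$ and the weighted sum $d$ are defined without any enumeration of $\mathcal{F}_n$. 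One detail worth making explicit at the end: both the proof that $d(x,y)=0$ implies $x=y$ and the inclusion $\tau\subseteq\tau(d)$ require, for given $x\in V\in\mathcal{B}$, a basic $U$ with $x\in U\subseteq\cl_{\mathbf{X}}(U)\subseteq V$ such that the pair $(U,V)$ lies in a \emph{single} level $\mathcal{F}_n$; this is exactly what your initial replacement of $\mathcal{B}_n$ by $\bigcup_{k\le n}\mathcal{B}_k$ guarantees, and it is the place where that normalization is actually used.
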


\begin{theorem}
\label{s1t15}
$[\mathbf{ZF}]$ (Cf. \cite[Section 4.6, Exercise E2]{hr}.) If $D$ is an infinite Dedekind-finite subset of $\mathbb{R}$, then $D$ is not closed in $\mathbb{R}$.
\end{theorem}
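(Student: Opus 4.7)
I would argue by contrapositive: assuming $D\subseteq\mathbb{R}$ is infinite and closed, I will produce a denumerable subset of $D$, which by Definition \ref{s1d1}(ii) forces $D$ to be Dedekind-infinite. The key is that no choice is needed, because all constructions can be indexed by $\mathbb{Q}$, which is canonically well-orderable in $\mathbf{ZF}$.

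The first step is to exploit the structure of the open set $U=\mathbb{R}\setminus D$. Every open subset of $\mathbb{R}$ is the disjoint union of its connected components, each of which is an open interval (bounded or unbounded). Since $\mathbb{Q}$ is dense in $\mathbb{R}$ and canonically enumerable, the map sending each $q\in\mathbb{Q}\cap U$ to the component of $U$ containing $q$ is a surjection from a countable set onto the set $\mathcal{I}$ of components; using a fixed enumeration of $\mathbb{Q}$, this surjection canonically produces an injection from $\mathcal{I}$ into $\omega$. Hence $\mathcal{I}$ is countable.

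Now split into two cases. If $\mathcal{I}$ is finite, then $D$ is a finite union of (possibly unbounded) closed intervals; since $D$ is infinite, at least one of these is non-degenerate, say contains $[a,b]$ with $a<b$, and then $\{a+(b-a)/(n+2):n\in\omega\}$ is an explicitly given denumerable subset of $D$. If $\mathcal{I}$ is denumerable, then, since at most two components are unbounded, denumerably many components are bounded open intervals $(a,b)$ whose endpoints both lie in $D$ (they are finite boundary points of the open set $U$, and $D$ is closed). Assigning to each bounded component $I\in\mathcal{I}$ its right endpoint $r(I)\in D$ gives an injection: if two disjoint bounded open intervals had the same right endpoint, a small left neighbourhood of that endpoint would be contained in both, contradicting disjointness. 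Composing with the canonical enumeration of $\mathcal{I}$, the range of $r$ is an explicit denumerable subset of $D$, which is the desired contradiction.

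The only point that requires care is the first step: justifying in $\mathbf{ZF}$ that the decomposition of $U$ into components is canonically countable without any choice. The argument above avoids this pitfall by indexing via $\mathbb{Q}$ rather than selecting a rational from each component. Once that is clear, the case analysis is straightforward and the right-endpoint map in the denumerable case is the natural device that turns "countably many bounded gaps" into "countably many boundary points of $D$."
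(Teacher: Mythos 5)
Your argument is correct: every step is choice-free because the components of $U=\mathbb{R}\setminus D$ are indexed canonically through the fixed enumeration of $\mathbb{Q}$, the right-endpoint map on bounded components is injective for the reason you give, and in the finite-component case a non-degenerate closed interval inside $D$ immediately yields an explicit injection of $\omega$ into $D$. (The only cosmetic slip is calling the conclusion of the second case a ``contradiction''; you are arguing by contrapositive, so it is simply the desired denumerable subset.) Note, however, that the paper does not prove Theorem \ref{s1t15} at all: it cites it from Howard--Rubin, and the route suggested by the paper's own machinery is quite different from yours. There one uses that $\mathbb{R}$ is Loeb (Theorem \ref{s1t12}; concretely, $f(F)$ can be taken to be the point of $F$ nearest the origin, preferring the non-negative one), picks $f\bigl(\cl_{\mathbb{R}}(B\cap D)\bigr)$ for each basic rational interval $B$ meeting $D$ to get a countable dense subset of the closed set $D$, and observes that an infinite separable $T_1$-space is Dedekind-infinite; this is exactly how Theorems \ref{s2t3} and \ref{s2t5} strengthen the result to $F_\sigma$-subsets of arbitrary Loeb $T_3$-spaces with cuf bases. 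Your proof buys elementarity and self-containment --- no Loeb function, no base-indexed selection, just the order structure of $\mathbb{R}$ and the gap decomposition of an open set --- but it is tied to the real line and does not by itself give the $F_\sigma$ refinement or the generalization beyond $\mathbb{R}$ that the paper's separability argument provides.
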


\begin{theorem}
\label{s1t16}
(Cf. \cite[Theorem 4.54, and Exercise E3 of Section 4.6]{hr}.) $[\mathbf{ZF}]$ Each of $\mathbf{CAC}(\mathbb{R})$, $\mathbf{HS}(\mathbb{R})$, $\mathbf{FU}(\mathbb{R})$, $\mathbf{FU}_{M2}$, $\mathbf{S}_{M2}$ is equivalent to each of following:
\begin{enumerate}
\item[(i)] for every second-countable $T_0$-space $\mathbf{X}$, $\mathbf{FU}(\mathbf{X})$ holds;
\item[(ii)] every second-countable $T_0$-space is separable;
\item[(iii)]  every second-countable metrizable space is separable;
\item[(iv)] $\mathbb{R}$ is hereditarily separable;
\item[(v)] every second-countable $T_0$-space is sequential;
\item [(vi)] every second-countable Cantor completely metrizable space is Fr\'echet-Urysohn.
\end{enumerate}
\end{theorem}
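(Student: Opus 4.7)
The plan is to close a cycle of implications among these twelve conditions, building on the cited Herrlich Theorem~4.54 and Exercise~E3 of Section~4.6 in \cite{hr}, which together yield the core equivalences among $\mathbf{CAC}(\mathbb{R})$, $\mathbf{FU}(\mathbb{R})$, and (iv), and to adjoin the remaining statements by class-inclusion observations and two short bridging arguments in opposite directions.

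I would first record the implications obtained by inclusion of topological classes. Since every metrizable space is $T_0$ and every second-countable Cantor completely metrizable space is second-countable metrizable, (i) immediately gives $\mathbf{FU}_{M2}$ and (vi), and passes to (ii), (iii). Because Fr\'echet-Urysohn implies sequential, we also obtain (i) $\Rightarrow$ (v) and $\mathbf{FU}_{M2} \Rightarrow \mathbf{S}_{M2}$. Moreover, $\mathbb{R}$ is itself second-countable Cantor completely metrizable (Cantor's nested-interval argument runs in $\mathbf{ZF}$ via $\inf F_n, \sup F_n$ without selection), so (vi) $\Rightarrow \mathbf{FU}(\mathbb{R})$, bridging the Cantor-complete hypothesis to the Herrlich core cycle. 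Hereditariness of the Fr\'echet-Urysohn property then yields $\mathbf{FU}(\mathbb{R}) \Rightarrow \mathbf{HS}(\mathbb{R})$ directly.

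For the forward bridging implication $\mathbf{CAC}(\mathbb{R}) \Rightarrow$ (i), I would use that every second-countable $T_0$-space $\mathbf{X}$ embeds in the Hilbert cube $[0,1]^\omega$ via the Urysohn-style evaluation map built from a countable base, and that $[0,1]^\omega$ is in $\mathbf{ZF}$-bijection with a subset of $\mathbb{R}$, so choice on countable families of nonempty subsets of $X$ reduces to $\mathbf{CAC}(\mathbb{R})$. Given $A \subseteq X$ with $x \in \cl_{\mathbf{X}}(A)$ and a base $\{B_n : n \in \omega\}$, I would enumerate the basic open sets $\{B_{n_k} : k \in \omega\}$ containing $x$, apply $\mathbf{CAC}(\mathbb{R})$ to the nonempty family $\{A \cap B_{n_0} \cap \cdots \cap B_{n_k} : k \in \omega\}$, and thereby build a sequence converging to $x$. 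The same transport principle yields $\mathbf{CAC}(\mathbb{R}) \Rightarrow$ (ii), (iii), (v), and (vi) in one stroke.

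For the reverse side, (iii) $\Rightarrow \mathbf{CAC}(\mathbb{R})$ proceeds by the standard disjoint-copies construction: given nonempty $A_n \subseteq \mathbb{R}$, fix a homeomorphism $f_n : \mathbb{R} \to (n, n+1)$ and set $B = \bigcup_{n \in \omega} f_n(A_n) \subseteq \mathbb{R}$, which is second-countable metrizable. By (iii), $B$ has a countable dense subset $D$, enumerable as $(d_k)_{k \in \omega}$; since $f_n(A_n) = B \cap (n, n+1)$ is open in $B$, each intersection $D \cap f_n(A_n)$ is nonempty, and the least-index trick produces $f_n^{-1}(d_{k(n)}) \in A_n$. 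The principal obstacle is completing the loop through $\mathbf{S}_{M2}$ and (v): mere sequentiality, in the absence of Fr\'echet-Urysohn, does not directly furnish a selector or a countable dense set. Here I would argue by contraposition in the spirit of Theorem~\ref{s1t15}: a failure of $\mathbf{CAC}(\mathbb{R})$ permits the construction of a Dedekind-finite-type subset of $\mathbb{R}$ whose topology is not determined by sequences, producing a second-countable $T_0$ (respectively, metrizable) space in which sequential closure strictly precedes closure, contradicting (v) (respectively, $\mathbf{S}_{M2}$) and closing the cycle.
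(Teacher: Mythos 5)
The paper offers no proof of Theorem \ref{s1t16}: it is listed among the known results of Section \ref{s1.4} and attributed to the literature, so there is no internal argument to compare against, and your proposal must stand on its own. Its architecture (close a cycle anchored on the cited equivalence of $\mathbf{CAC}(\mathbb{R})$, $\mathbf{FU}(\mathbb{R})$ and (iv)) is sound, and several legs are correct: the class-inclusion implications, $(vi)\rightarrow\mathbf{FU}(\mathbb{R})$ via the $\mathbf{ZF}$-valid Cantor completeness of $\mathbb{R}$, and $(iii)\rightarrow\mathbf{CAC}(\mathbb{R})$ by the disjoint-copies trick. But three points need attention. (a) A second-countable $T_0$-space does \emph{not} embed in the Hilbert cube: the Urysohn evaluation map requires regularity, and the Sierpi\'nski space is a second-countable $T_0$-space that embeds in no metrizable space. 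What you actually need, and what is true, is only that such a space $\mathbf{X}$ with base $\{B_n: n\in\omega\}$ injects into $\mathcal{P}(\omega)\approx\mathbb{R}$ via $x\mapsto\{n\in\omega: x\in B_n\}$ (injective by $T_0$); with that injection your transport of $\mathbf{CAC}(\mathbb{R})$ and the nested-basic-sets construction of a convergent sequence do go through. (b) The claim that (i) ``passes to (ii), (iii)'' is not a direct implication -- Fr\'echet--Urysohn does not yield separability without choice -- though this costs nothing once the cycle is closed elsewhere. (c) $\mathbf{HS}(\mathbb{R})$ is a sink in your implication graph: you derive it from $\mathbf{FU}(\mathbb{R})$ but never leave it; you need the converse $\mathbf{HS}(\mathbb{R})\rightarrow\mathbf{FU}(\mathbb{R})$ (Proposition \ref{s1p18}(i), or directly: if $x\in\cl_{\mathbb{R}}(A)\setminus A$, apply sequentiality of the subspace $A\cup\{x\}$).

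The genuine gap is the closing step $(v)\rightarrow\mathbf{CAC}(\mathbb{R})$ and $\mathbf{S}_{M2}\rightarrow\mathbf{CAC}(\mathbb{R})$. Your contrapositive rests on ``a failure of $\mathbf{CAC}(\mathbb{R})$ permits the construction of a Dedekind-finite-type subset of $\mathbb{R}$,'' but $\neg\mathbf{CAC}(\mathbb{R})$ does not imply the existence of an infinite Dedekind-finite subset of $\mathbb{R}$ in $\mathbf{ZF}$: $\mathbf{IDI}(\mathbb{R})$ is strictly weaker than $\mathbf{CAC}(\mathbb{R})$, so $\neg\mathbf{CAC}(\mathbb{R})$ is consistent with $\mathbf{IDI}(\mathbb{R})$, and the hinted mechanism (Theorem \ref{s1t15}) is unavailable. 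The standard repair, which is exactly the device used in the proof of Theorem \ref{s6t8}, is: from $\neg\mathbf{CAC}(\mathbb{R})$ obtain, via \cite[Lemma 2(ii)]{hkrst}, a family $\{A_n: n\in\mathbb{N}\}$ of non-empty subsets of $\mathbb{R}$ with no \emph{partial} choice function, transplant each $A_n$ homeomorphically into $(\frac{1}{n+1},\frac{1}{n})$ to get $F_n$, and set $X=\{0\}\cup\bigcup_{n}F_n$; then $\bigcup_n F_n$ is sequentially closed (a sequence converging to $0$ would yield a partial choice function) but not closed, so $\mathbf{X}$ is a second-countable metrizable, hence $T_0$, space that is not sequential, refuting both (v) and $\mathbf{S}_{M2}$. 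The passage from ``no choice function'' to ``no partial choice function'' is essential here and must be cited or proved, since a convergent sequence only selects from infinitely many, not all, of the $A_n$.
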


\begin{theorem} 
\label{s1:fil}
(Cf. \cite[Theorem 3.3]{kopsw}.) $[\mathbf{ZF}]$ The following are equivalent:
\begin{enumerate}
\item[(i)] $\mathbf{CMC}$;
\item[(ii)] every first-countable Hausdorff space is a $k$-space;
\item[(iii)] every metrizable space is a $k$-space.
\end{enumerate}
\end{theorem}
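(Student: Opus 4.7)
I would structure the argument as the cycle $(i)\Rightarrow(ii)\Rightarrow(iii)\Rightarrow(i)$, with $(ii)\Rightarrow(iii)$ immediate since every metrizable space is first-countable and Hausdorff.

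For $(i)\Rightarrow(ii)$, let $\mathbf{X}$ be a first-countable Hausdorff space and $A\subseteq X$ a set with $A\cap K\in\mathcal{C}l(\mathbf{X})$ for every $K\in\mathcal{K}(\mathbf{X})$. Assuming toward a contradiction that $x\in\cl_{\mathbf{X}}(A)\setminus A$, I would take a countable open base at $x$ and replace it by its finite intersections to obtain a decreasing open base $\{U_n:n\in\omega\}$, noting every $A\cap U_n$ is non-empty. I then apply $\mathbf{CMC}$ to the denumerable family $\{A\cap U_n:n\in\omega\}$ to obtain finite non-empty $S_n\subseteq A\cap U_n$, and set $K=\{x\}\cup\bigcup_{n\in\omega}S_n$. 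A standard argument shows $K$ is compact: any cover-member containing $x$ contains $U_N$ for some $N$, hence $S_n$ for all $n\ge N$, leaving only finitely many points to cover. Since $x\notin A$, one has $A\cap K=\bigcup_n S_n$, of which $x$ is a limit point in $\mathbf{X}$ by the base property---contradicting $A\cap K\in\mathcal{C}l(\mathbf{X})$.

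For $(iii)\Rightarrow(i)$, I would argue contrapositively. Suppose $\mathbf{CMC}$ fails, witnessed by a denumerable family $\{A_n:n\in\omega\}$ of non-empty sets admitting no multiple choice function. Put $B_n=\prod_{i\le n}A_i$, which is non-empty in $\mathbf{ZF}$ because finite products of non-empty sets are non-empty. The central reduction is that $\{B_n:n\in\omega\}$ admits no partial multiple choice function: from any infinite $S\subseteq\omega$ and finite non-empty $F_n\subseteq B_n$ for $n\in S$, the formulas $n_j=\min\{n\in S:n\ge j\}$ and $G_j=\{f(j):f\in F_{n_j}\}\subseteq A_j$ would yield a multiple choice function $\{G_j:j\in\omega\}$ of $\{A_j\}$, contradicting the choice of $\{A_n\}$.

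Using this new family, I would define a metric $d$ on $Y=\{p\}\cup\bigsqcup_{n\in\omega}B_n$ (with $p$ a fresh point) by $d(p,b)=\tfrac{1}{n+1}$ for $b\in B_n$, $d(a,b)=\tfrac{2}{n+1}$ for distinct $a,b\in B_n$, and $d(a,b)=\tfrac{1}{n+1}+\tfrac{1}{m+1}$ for $a\in B_n$, $b\in B_m$ with $n\ne m$; a routine case check gives the triangle inequality. In $\mathbf{Y}=\langle Y,\tau(d)\rangle$ each $B_n$ is closed, every point of $\bigsqcup_n B_n$ is isolated, and $p\in\cl_{\mathbf{Y}}(A)\setminus A$ for $A=\bigsqcup_n B_n$, so $A$ is not closed. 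For any $K\in\mathcal{K}(\mathbf{Y})$, the intersection $K\cap B_n$ is compact and discrete hence finite; moreover $K$ meets only finitely many $B_n$'s, as otherwise the non-empty intersections would form a partial multiple choice function of $\{B_n\}$, contradicting the reduction step. Thus $A\cap K$ is finite and closed, so $\mathbf{Y}$ is a metrizable non-$k$-space, refuting $(iii)$. The main obstacle I anticipate is precisely this reduction from ``no MCF of $\{A_n\}$'' to ``no partial MCF of $\{B_n\}$''---without it, a compact set in $\mathbf{Y}$ could harvest points from infinitely many $B_n$'s and the whole argument would collapse.
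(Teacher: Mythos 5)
Your argument is correct and complete; note that the paper itself does not prove Theorem \ref{s1:fil} but only cites it from \cite[Theorem 3.3]{kopsw}, and your two constructions (the compact set $\{x\}\cup\bigcup_{n}S_{n}$ built from $\mathbf{CMC}$, and the one-point amalgamation metric on $\{p\}\cup\bigsqcup_{n}B_{n}$) are exactly the standard ones, the latter being the same device as the metric $(\ast)$ used in Section \ref{s6} of the paper. Your key reduction via $B_{n}=\prod_{i\le n}A_{i}$ is the usual $\mathbf{ZF}$ proof that $\mathbf{CMC}$ is equivalent to its ``partial'' version, and it is applied correctly here.
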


\begin{proposition}
\label{s1p18}
$[\mathbf{ZF}]$
\begin{enumerate}
\item[(i)] For every topological space $\mathbf{X}$, $\mathbf{FU}(\mathbf{X})$ and $\mathbf{HS}(\mathbf{X})$ are equivalent.
\item[(ii)] Every sequential Hausdorff space is a $k$-space. 
\end{enumerate}
\end{proposition}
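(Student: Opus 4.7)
For part~(i), my plan is to prove both implications. The direction $\mathbf{FU}(\mathbf{X})\Rightarrow\mathbf{HS}(\mathbf{X})$ is routine: Fr\'echet--Urysohn is hereditary, because whenever $\mathbf{Y}$ is a subspace of $\mathbf{X}$, $A\subseteq Y$ and $x\in\cl_{\mathbf{Y}}(A)=\cl_{\mathbf{X}}(A)\cap Y$, applying $\mathbf{FU}(\mathbf{X})$ produces a sequence in $A$ converging to $x$ in $\mathbf{X}$, which converges to $x$ in $\mathbf{Y}$ as well; and a Fr\'echet--Urysohn space is automatically sequential since $\cl_{\mathbf{X}}(A)=\sqc_{\mathbf{X}}(A)$ for every $A\subseteq X$, so sequentially closed sets coincide with closed sets.

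For the reverse direction $\mathbf{HS}(\mathbf{X})\Rightarrow\mathbf{FU}(\mathbf{X})$, I would reduce the problem to a subspace with only one candidate limit point. Given $A\subseteq X$ and $x\in\cl_{\mathbf{X}}(A)$, the case $x\in A$ is immediate via the constant sequence, so assume $x\notin A$ and consider $\mathbf{Y}=A\cup\{x\}$, which is sequential by $\mathbf{HS}(\mathbf{X})$. Since $x\in\cl_{\mathbf{Y}}(A)\setminus A$, the set $A$ is not closed in $\mathbf{Y}$, hence not sequentially closed in $\mathbf{Y}$; thus $\sqc_{\mathbf{Y}}(A)\supsetneq A$. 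Combined with $\sqc_{\mathbf{Y}}(A)\subseteq Y=A\cup\{x\}$ this forces $\sqc_{\mathbf{Y}}(A)=A\cup\{x\}$, so $x$ is a limit in $\mathbf{Y}$ of some sequence from $A\setminus\{x\}=A$, which is also such a sequence in $\mathbf{X}$.

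For part~(ii), let $\mathbf{X}$ be a sequential Hausdorff space and let $A\subseteq X$ satisfy $A\cap K\in\mathcal{C}l(\mathbf{X})$ for every $K\in\mathcal{K}(\mathbf{X})$. By sequentiality it suffices to show $A$ is sequentially closed. If $(x_n)$ is a sequence in $A$ converging to $x\in X$, set $K=\{x\}\cup\{x_n:n\in\omega\}$. One verifies in $\mathbf{ZF}$ that $K$ is compact: any open cover contains some $U\ni x$, which covers all but finitely many $x_n$, and the remaining finitely many points are individually assigned members of the cover with no appeal to choice. Then $A\cap K$ is closed in $\mathbf{X}$ and contains every $x_n$, hence $x\in A\cap K\subseteq A$.

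Neither argument is technically delicate, but the main conceptual step is the passage to $\mathbf{Y}=A\cup\{x\}$ in part~(i), which collapses potentially transfinite iteration of sequential closure into a single application of $\sqc_{\mathbf{Y}}$ and thereby produces one honest sequence witnessing Fr\'echet--Urysohn. The only choice-theoretic point to monitor is the $\mathbf{ZF}$-compactness of a convergent sequence together with its limit in part~(ii), which uses only finitely many selections from any given open cover.
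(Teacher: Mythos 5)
Your proof is correct and follows exactly the route the paper intends: part (i) by the standard reduction to the subspace $A\cup\{x\}$, and part (ii) by the Engelking-style observation that a convergent sequence together with its limit is compact in $\mathbf{ZF}$ (only finitely many selections from a cover are needed). The paper omits the details, merely asserting (i) is easily verified and citing \cite[Theorem 3.3.20]{en} for (ii), so your write-up supplies precisely the intended arguments.
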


One can easily verify that item (i) of Proposition \ref{s1p18} is true in $\mathbf{ZF}$. To check that item (ii) of Proposition \ref{s1p18} is also true in $\mathbf{ZF}$, one can mimic the proof of Theorem 3.3.20 in \cite{en}. Although the following very useful proposition is not especially original, our formulation is more general than the relevant known ones  (see, e.g., \cite[Proposition 1]{k2}), so we include its proof for completeness and future references. 

\begin{proposition}
\label{s1p19}
$[\mathbf{ZF}]$  Let $\mathbf{X}$ be a first-countable space. 
\begin{enumerate}
\item[(i)] Let $A\in\mathcal{SC}(\mathbf{X})$. If the subspace $\mathbf{A}$ of $\mathbf{X}$ contains a dense well-orderable set, then $A\in\mathcal{C}l(\mathbf{X})$.
\item[(ii)] $\mathbf{S}(\mathbf{X})$ if and only if, for every $A\in\mathcal{SC}(\mathbf{X})$ and every $x\in\cl_{\mathbf{X}}(A)$, there exists a well-orderable subset $F$ of $A$ such that $x\in\cl_{\mathbf{X}}(F)$.
\end{enumerate}
\end{proposition}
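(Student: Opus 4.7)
The plan for part (i) is to combine first countability of $\mathbf{X}$ with the well-ordering of the dense set $D \subseteq A$ to extract, without any appeal to choice, a sequence in $D$ converging to an arbitrary $x \in \cl_{\mathbf{X}}(A)$. First, I would observe that since $D$ is dense in the subspace $\mathbf{A}$, every open neighborhood of a point of $A$ meets $D$, which yields $A \subseteq \cl_{\mathbf{X}}(D)$ and hence $\cl_{\mathbf{X}}(A) = \cl_{\mathbf{X}}(D)$. Fixing a countable decreasing neighborhood base $(U_n)_{n\in\omega}$ at $x$ (obtainable from any countable base at $x$ by finite intersections), each set $U_n \cap D$ is non-empty, so the well-ordering on $D$ allows me to define $x_n$ to be the least element of $U_n \cap D$ with no invocation of choice. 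The sequence $(x_n)$ lies in $D \subseteq A$ and converges to $x$, and sequential closedness of $A$ then forces $x \in A$, so $A$ is closed.

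The forward implication in (ii) is immediate: if $\mathbf{S}(\mathbf{X})$ holds and $A \in \mathcal{SC}(\mathbf{X})$, then $A \in \mathcal{C}l(\mathbf{X})$, so every $x \in \cl_{\mathbf{X}}(A)$ already lies in $A$ and the singleton $F = \{x\}$ witnesses the required conclusion. For the reverse implication, my strategy is to reuse the canonical-sequence construction from (i), but with the hypothesized well-orderable set $F$ in place of the dense set $D$. Given $A \in \mathcal{SC}(\mathbf{X})$ and $x \in \cl_{\mathbf{X}}(A)$, I would pick by hypothesis a well-orderable $F \subseteq A$ with $x \in \cl_{\mathbf{X}}(F)$, fix a countable decreasing base $(U_n)_{n\in\omega}$ at $x$, and take $x_n$ to be the minimum of $U_n \cap F$ in the well-ordering of $F$. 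The resulting sequence lies in $A$ and converges to $x$, so sequential closedness yields $x \in A$, showing that $A$ is closed and hence that $\mathbf{S}(\mathbf{X})$ holds.

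The only delicate point, and the reason this proposition is sharper than the more familiar $\mathbf{ZFC}$ formulation, is avoiding the tacit use of $\mathbf{CAC}$ when selecting the representatives $x_n \in U_n \cap D$ (or $x_n \in U_n \cap F$). The well-orderability hypothesis on $D$ (respectively on $F$) is exactly what supplies a $\mathbf{ZF}$-definable canonical choice and makes the extraction of the convergent sequence go through without any additional choice principle.
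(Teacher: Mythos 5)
Your argument is correct and is essentially the paper's own proof: both rely on a decreasing countable neighborhood base at $x$ together with the well-ordering of $D$ (resp.\ $F$) to define a canonical convergent sequence in $A$ without any choice, and both dispatch the forward direction of (ii) with the singleton $F=\{x\}$. The only cosmetic difference is that the paper runs the construction once with a general well-orderable $F$ and derives (i) and the sufficiency part of (ii) simultaneously, whereas you treat the two cases separately.
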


\begin{proof} Let $\mathcal{B}(x)=\{U_n: n\in \omega\}$ be a base of neighborhoods of $x$ in $\mathbf{X}$ such that, for every $n\in\omega$, $U_{n+1}\subseteq U_n$.  Let $A\subseteq X$ and $x\in\cl_{\mathbf{X}}(A)$. Suppose that $F$ is a well-orderable subset of $A$ such that $x\in\cl_{\mathbf{X}}(F)$. We fix a well-ordering $\leq$ on $F$ and consider any $n\in\omega$. Since $x\in\cl_{\mathbf{X}}(F)$, the set $F\cap U_{n}$ is non-empty. We can define $x_n$ as the first element of $F\cap U_n$ in $\langle F,\leq\rangle$. Then $(x_n)_{n\in\omega}$ is a sequence of points of $A$ which coverges to $x$ in $\mathbf{X}$. Hence, if $A\in\mathcal{SC}(\mathbf{X})$, then $x\in A$.  This implies that both (i) and the sufficiency part of (ii) hold. To complete the proof, it suffices to notice that if $\mathbf{S}(\mathbf{X})$ and $A\in\mathcal{SC}(\mathbf{X})$, then $\cl_{\mathbf{X}}(A)=A$, so, for $x\in\cl_{\mathbf{X}}(A)$, the set $F=\{x\}$ is a well-orderable subset of $A$ such that $x\in\cl_{\mathbf{X}}(F)$.
\end{proof}

Basic facts about sequentially closed sets and sequential spaces in $\mathbf{ZF}$ can be found, for instance, in \cite[Section 4.6]{hr}, \cite{k2},  \cite{kw1} and \cite{kw2}.

\subsection{The content of the article in brief}
\label{s1.5}

Among other results of Section \ref{s2}, we prove in $\mathbf{ZF}$ that if $\mathbf{X}$ is a Loeb $T_3$-space which has a cuf base, then the family of all non-empty open subsets of $\mathbf{X}$ has a choice function, and  $\mathbf{X}$ is a metrizable, second-countable space whose every $F_{\sigma}$-subspace is separable (see Theorem \ref{s2t3}). We show in $\mathbf{ZF}$ that every $F_{\sigma}$-subspace of a Loeb space is Loeb (see Proposition \ref{s2p1}), and if $\mathbf{X}$ is a locally compact, $\mathcal{K}$-Loeb Hausdorff space which has a cuf base, then every $G_{\delta}$-subspace of $\mathbf{X}$ is Cantor completely metrizable and Loeb (see Theorem \ref{s2t6} and Corollary \ref{s2c8}). We notice in Theorem \ref{s2t7} that it is provable in $\mathbf{ZF}$ that every $G_{\delta}$-subspace of a Cantor completely metrizable space is Cantor completely metrizable. In Theorem \ref{s2t9}, we show in $\mathbf{ZF}$ that every $G_{\delta}$-subspace of a  second-countable, Cantor completely metrizable space is Loeb and separable.  We deduce that it holds in $\mathbf{ZF}$ that if $\mathbf{X}$ is a second-countable Cantor completely metrizable space, then every infinite $G_{\delta}$ or $F_{\sigma}$-subset of $\mathbf{X}$ is Dedekind-infinite (see Corollary \ref{s2c10}). We prove in $\mathbf{ZF}$ that every countably compact $\mathcal{K}$-Loeb space is compact, every scattered $\mathcal{K}$-Loeb $T_1$-space which has a cuf base is well-orderable, and every Cantor completely metrizable $\mathcal{K}$-Loeb space whose every base contains a cuf base is Loeb. We notice that, in a model of $\mathbf{ZF}$, a Loeb $T_1$-space may fail to be $\mathcal{K}$-Loeb (see Proposition \ref{s2p14}). 

In Section \ref{s3}, we notice that  it holds in $\mathbf{ZF}$ that if $\mathbf{X}$ is a second-countable Cantor completely metrizable space, then every sequentially closed subset of type $G_{\delta}$ or $F_{\sigma}$ in $\mathbf{X}$ is closed (see Proposition \ref{s3p1}) and, in consequence, the families $\mathcal{G}_{\delta}^{\ast}(\mathbf{X})\cap\mathcal{SC}(\mathbf{X})$ and $\mathcal{F}_{\sigma}^{\ast}(\mathbf{X})\cap\mathcal{SC}(\mathbf{X})$ have choice functions (see Corollary \ref{s3c3}). We prove in $\mathbf{ZF}$ that, for every second-countable metrizable Loeb space $\mathbf{X}$, the statements $\mathbf{S}(\mathbf{X})$, $\mathbf{SL}(\mathbf{X})$, $\mathbf{SCF}_{\sigma}(\mathbf{X})$, $\mathbf{SCDF}_{\sigma}(\mathbf{X})$ and $\mathbf{SCDWOU}(\mathbf{X})$ are all equivalent (see Theorem \ref{s3t4}).

Section \ref{s4} concerns mainly forms (9)--(17) from Definition \ref{s1d8}. In particular, we prove in $\mathbf{ZF}$ that, for every Cantor completely metrizable second-countable space $\mathbf{X}$, the forms $\mathbf{IDI}(\mathbf{X})$, $\mathbf{\Pi G}_{\delta}(\mathbf{X})$, $\mathbf{\Pi F}_{\sigma}(\mathbf{X})$ and $\mathbf{ISCIC}(\mathbf{X})$ are all equivalent (see Corollary \ref{s4c2}). We notice in Proposition \ref{s4p3} that, in $\mathbf{ZF}$, $\mathbf{AC}$ (respectively, $\mathbf{CAC}$) is equivalent to $\mathbf{F_{\sigma}L}_M$ and $\mathbf{G_{\delta}L}_M$ (respectively,  $\mathbf{CF_{\sigma}L}_M$ and $\mathbf{CG_{\delta}L}_M$). In Theorem \ref{s4t4}, we show that if $\mathbf{X}$ is a second-countable Cantor completely metrizable space, then it holds in $\mathbf{ZF}$ that $\mathbf{UIC}(\mathbf{X})$ is equivalent to each of the conjunctions $\mathbf{ISCIC}(\mathbf{X})\wedge\mathbf{UISC}(\mathbf{X})$ and $\mathbf{IDI}(\mathbf{X})\wedge\mathbf{UISC}(\mathbf{X})$. We also prove that it holds in $\mathbf{ZF}$ that, for every second-countable Hausdorff space $\mathbf{X}$, $\mathbf{UIC}(\mathbf{X})$ follows from each of $\mathbf{FU}(\mathbf{X})$ and $\omega-\mathbf{CAC}(\mathbf{X})$ (see Corollary \ref{s4c6} and Proposition \ref{s4p7}). In Theorem \ref{s4t10}, we show that it holds in $\mathbf{ZF}$ that, for every $T_1$-space $\mathbf{X}$, $\mathbf{AC}(X)$ (respectively, $\mathbf{CAC}(X)$), is equivalent to the conjunction $\mathbf{F_{\sigma}L}(\mathbf{X})\wedge\mathbf{KW}(X, \omega)$ (respectively, $\mathbf{CF_{\sigma}L}(\mathbf{X})\wedge\mathbf{CKW}(X, \omega)$). Other relevant results are also included in Section \ref{s4}.

In Section 5, we turn attention to condensation points to find infinite closed subsets of some uncountable sequentially closed sets in metrizable spaces with cuf bases.  Among other facts, we show in Theorem \ref{s5t1} that, in $\mathbf{ZF}$, $\mathbf{CUC}$ is equivalent to the sentence ``For every second-countable space $\mathbf{X}$ and every uncountable subset $A$ of $X$, there exists a condensation point $x$ of $A$ in $\mathbf{X}$ such that $x\in A$''. We deduce that if a subset $A$ of a topological space $\mathbf{X}$ with a cuf base has no condensation points in $\mathbf{X}$, then $A$ is a countable union of countable sets (see Proposition \ref{s5p2}).  We notice in Theorem \ref{s5t3} that the sentence ``For every metrizable space $\mathbf{X}$ with a cuf base, it holds that every uncountable subset of $X$ has a condensation point in $\mathbf{X}$'' is unprovable in $\mathbf{ZF}$ for it implies $\mathbf{CAC}_{fin}$. Proposition \ref{s5t4} shows that it holds in $\mathbf{ZF}$ that $\mathbf{CAC}_{fin}$ implies that every uncountable sequentially closed set $A$ of a metrizable space $\mathbf{X}$ with a cuf base such that no point of $A$ is a condensation point of $A$ contains an infinite closed set of $\mathbf{X}$.

Now, let us mention several (but not all) results of Section  \ref{s6}. Corollary \ref{s6c4} shows that Arkhangel'skii's statement ``Every very $k$-space is Fr\'echet-Urysohn'' (see Theorem \ref{s1t:arkh}) is unprovable in $\mathbf{ZF}$. We prove that it holds in $\mathbf{ZF}$ that if $\mathbf{X}$ is a first-countable, $\mathcal{K}$-Loeb $T_3$-space, then $\mathbf{Hk}(\mathbf{X})$ and $\mathbf{FU}(\mathbf{X})$ are equivalent (see Theorem \ref{s6t2}). Theorem \ref{s6t5} shows that, in $\mathbf{ZF}$,  the forms $\mathbf{CAC}(\mathbb{R})$, $\mathbf{Hk}_{M2}$ and $\mathbf{k}_{M2}$ are all equivalent, and $\mathbf{k}_{M2}$ implies $\mathbf{UIC}_{M2}$. We prove in $\mathbf{ZF}$ that the sentence ``Every subspace of $\mathbb{R}$ is either a $k$-space or Loeb'' implies  $\mathbf{CAC}(\mathbb{R})$, and the negation of $\mathbf{CAC}_{\omega}(\mathbb{R})$ implies that there exists a subspace $\mathbf{X}$ of $\mathbb{R}$ such that $\mathbf{X}$ is neither a $k$-space nor Loeb but $\mathbf{UIC}(\mathbf{X})$ is true (see Theorem \ref{s6t8}). We show that the sentence ``Every first-countable Hausdorff space is Fr\'echet-Urysohn'' is equivalent to $\mathbf{CAC}$ in $\mathbf{ZF}$, but it fails to be  equivalent to the sentence ``Every first-countable Hausdorff space is a $k$-space'' in $\mathbf{ZFA}$ (see Theorem \ref{s6t12} and Remark \ref{s6r13}). We notice in Proposition \ref{s6p14} that, in $\mathbf{ZF}$,  for every $k$-space $\mathbf{X}$, $\mathbf{WUF}(\mathbf{X})$ and $\mathbf{CUF}(\mathbf{X})$ are equivalent, and for every space $\mathbf{X}$, $\mathbf{S}(\mathbf{X})$ implies $\mathbf{WUF}(\mathbf{X})$. We prove that, in $\mathbf{ZF}$,  $\mathbf{WUF}_M$ and $\mathbf{CAC}$ are equivalent, $\mathbf{CUF}_M$ implies $\mathbf{CAC}_{fin}$ and this implication is not reversible; furthermore, if $\mathbf{IDI}(\mathbb{R})$ is false, then there exists a metrizable non-sequential space $\mathbf{X}$ for which $\mathbf{WUF}(\mathbf{X})$ holds (see Theorem \ref{s6t15}).

Theorems \ref{s7t1} and \ref{s7t7} are the main results of Section \ref{s7}. It is shown in Theorem \ref{s7t1} that it is true in $\mathbf{ZF}$ that a first-countable Hausdorff space $\mathbf{X}$ is Fr\'echet-Urysohn if and only if $\mathbf{X}$ is a very $k$-space for which $\mathbf{CPCAC}(\mathbf{X})$ holds. Theorem \ref{s7t7} shows that it is true in $\mathbf{ZF}$ that, for every first-countable $\mathcal{K}$-Loeb $T_3$-space $\mathbf{X}$, the statements $\mathbf{FU}(\mathbf{X})$, $\mathbf{Hk}(\mathbf{X})$ and $\mathbf{CPCAC}(\mathbf{X})$ are all equivalent. 

For the convenience of readers, we include a shortlist of open problems in Section \ref{s8}.

\section{$F_{\sigma}$ and $G_{\delta}$-subspaces of Loeb spaces}
\label{s2}

In this section, we show several properties concerning subspaces of Loeb spaces.

\begin{proposition}
\label{s2p1}
$[\mathbf{ZF}]$
Every $F_{\sigma}$-subspace of a Loeb space is Loeb.
\end{proposition}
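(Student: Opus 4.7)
The plan is to fix, once and for all, a Loeb function $\phi$ of $\mathbf{X}$ and a specific $F_{\sigma}$-representation of the underlying set of the subspace, and then use $\phi$ to choose points from closed sets of the subspace in a uniform way. So let $\mathbf{X}$ be a Loeb topological space, let $A \subseteq X$ be an $F_{\sigma}$-set of $\mathbf{X}$, and fix a decomposition $A = \bigcup_{n \in \omega} A_n$ with each $A_n \in \mathcal{C}l(\mathbf{X})$.

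The key preliminary observation is that every closed subset of the subspace $\mathbf{A}$ has a canonical extension to a closed subset of $\mathbf{X}$, namely its closure in $\mathbf{X}$. Indeed, if $F$ is closed in $\mathbf{A}$, then $F = A \cap G$ for some $G \in \mathcal{C}l(\mathbf{X})$, so $F \subseteq G$ gives $\cl_{\mathbf{X}}(F) \subseteq G$, and hence
\[
A \cap \cl_{\mathbf{X}}(F) \subseteq A \cap G = F \subseteq A \cap \cl_{\mathbf{X}}(F),
\]
i.e., $F = A \cap \cl_{\mathbf{X}}(F)$. This removes the ambiguity coming from the non-uniqueness of $G$ and is what makes the construction uniform in $\mathbf{ZF}$.

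Next I would define the candidate Loeb function $\psi$ on $\mathcal{C}l^{\ast}(\mathbf{A})$ as follows. Given any non-empty $F \in \mathcal{C}l(\mathbf{A})$, write
\[
F = A \cap \cl_{\mathbf{X}}(F) = \bigcup_{n \in \omega} \bigl( A_n \cap \cl_{\mathbf{X}}(F) \bigr),
\]
where each set $A_n \cap \cl_{\mathbf{X}}(F)$ is closed in $\mathbf{X}$ as the intersection of two closed sets. Since $F \neq \emptyset$, the set $n(F) = \min\{n \in \omega : A_n \cap \cl_{\mathbf{X}}(F) \neq \emptyset\}$ is well-defined, and then $A_{n(F)} \cap \cl_{\mathbf{X}}(F) \in \mathcal{C}l^{\ast}(\mathbf{X})$. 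Put $\psi(F) = \phi\bigl( A_{n(F)} \cap \cl_{\mathbf{X}}(F) \bigr)$. Since $\phi$ is a Loeb function of $\mathbf{X}$, $\psi(F) \in A_{n(F)} \cap \cl_{\mathbf{X}}(F) \subseteq A \cap \cl_{\mathbf{X}}(F) = F$, so $\psi$ is a choice function of $\mathcal{C}l^{\ast}(\mathbf{A})$, and $\mathbf{A}$ is Loeb.

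There is no real obstacle here: the only mildly delicate point is to avoid secretly invoking choice when passing from $F \in \mathcal{C}l(\mathbf{A})$ to a closed subset of $\mathbf{X}$ witnessing its closedness, and this is taken care of by the canonical choice $\cl_{\mathbf{X}}(F)$ and by using the least $n$ for which $A_n \cap \cl_{\mathbf{X}}(F)$ is non-empty.
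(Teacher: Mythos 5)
Your proof is correct and follows essentially the same strategy as the paper's: pick the least index $n$ for which the closed piece $A_n$ meets the given closed subset of the subspace, and apply the ambient Loeb function to that (non-empty, $\mathbf{X}$-closed) intersection. Your extra step of verifying $F = A \cap \cl_{\mathbf{X}}(F)$ is a slightly more explicit justification that $A_n \cap F$ is closed in $\mathbf{X}$, but the construction is the same.
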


\begin{proof}
Suppose that $P$ is an $F_{\sigma}$-subset of a Loeb space $\mathbf{X}$. Let $P=\bigcup_{n\in\omega}P_n$ where, for every $n\in\omega$, the set $P_n$ is closed in $\mathbf{X}$. To show that the subspace $\mathbf{P}$ of $\mathbf{X}$ is Loeb, we fix a Loeb function $f$ of $\mathbf{X}$ and consider an arbitrary set $A\in\mathcal{C}l^{\ast}(\mathbf{P})$. Let $m(A)=\min\{n\in\omega: P_n\cap A\neq\emptyset\}$. Since $A\cap P_{m(A)}\in\mathcal{C}l^{\ast}(\mathbf{X})$, we can define $f_P(A)= f(A\cap P_{m(A)})$. In this way, we have defined a Loeb function $f_P$ of $\mathbf{P}$.
\end{proof}
 
\begin{proposition}
\label{s2p2}
$[\mathbf{ZF}]$ Let $\mathbf{X}$ be a $T_1$-space. If $\mathbf{X}$ is $\mathcal{K}$-Loeb or second-countable,  then every cuf subset of $X$ is countable.
\end{proposition}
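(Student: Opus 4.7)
Let $A$ be a cuf subset of $X$; write $A=\bigcup_{n\in\omega}A_n$ with each $A_n$ finite. My main observation is that it suffices to produce well-orderings of the sets $A_n$ uniformly in $n$, by an explicit $\mathbf{ZF}$-definable rule with no hidden choice. Indeed, if each $A_n$ comes with such a well-ordering and $\psi_n:|A_n|\to A_n$ is the resulting enumeration, then the map $(n,i)\mapsto\psi_n(i)$, defined on $\{(n,i)\in\omega\times\omega:i<|A_n|\}$, is a surjection onto $A$, so $A$ is countable.

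\textbf{The two cases.} Suppose first that $\mathbf{X}$ is second-countable, and fix a countable base $\{B_m:m\in\omega\}$. Since $T_1$ implies $T_0$, the map $\chi:X\to\mathcal{P}(\omega)$ given by $\chi(x)=\{m\in\omega:x\in B_m\}$ is injective. Fix once and for all the linear order $\prec$ on $\mathcal{P}(\omega)$ defined by $S\prec T$ iff $S\neq T$ and $\min(S\triangle T)\in T$; then $\prec$ well-orders every finite subset of $\mathcal{P}(\omega)$, so pulling back the $\prec$-order on $\chi(A_n)$ through $\chi|_{A_n}$ gives the desired uniform well-ordering of $A_n$. Suppose instead that $\mathbf{X}$ is $\mathcal{K}$-Loeb, and fix a choice function $f\in\prod_{K\in\mathcal{K}^{\ast}(\mathbf{X})}K$. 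Every finite subset of any topological space is compact, so each non-empty subset of $A_n$ lies in $\mathcal{K}^{\ast}(\mathbf{X})$; hence we can recursively set $a_{n,0}=f(A_n)$ and $a_{n,i+1}=f(A_n\setminus\{a_{n,0},\ldots,a_{n,i}\})$ for as long as this argument is non-empty. This terminates after $|A_n|$ steps and yields a well-ordering of $A_n$, uniformly in $n$.

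\textbf{Main obstacle.} The only conceptual point is the initial framing: in $\mathbf{ZF}$, a countable union of finite sets need not be countable, so one must exploit the extra hypothesis precisely to replace a would-be appeal to $\mathbf{AC}_{fin}$ by an explicit, uniform enumeration of each $A_n$. Once this is seen, both cases are routine. Note that $T_1$ is used only in the second-countable case, where it provides $T_0$ and hence injectivity of $\chi$; the $\mathcal{K}$-Loeb argument actually goes through for arbitrary topological spaces.
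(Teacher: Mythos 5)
Your proof is correct, and its overall strategy is the same as the paper's: fix the auxiliary object (a choice function on $\mathcal{K}^{\ast}(\mathbf{X})$, resp.\ a countable base) once, use it to well-order every non-empty finite subset of $X$ by an explicit uniform rule, and then conclude that a countable union of uniformly well-ordered finite sets is countable. Your $\mathcal{K}$-Loeb case is essentially identical to the paper's (recursively peeling off $f$-chosen points of the finite, hence compact, sets). The second-countable case is implemented differently: the paper uses $T_1$ to attach to each point $x$ of a finite set $E$ the least index $n(x)$ of a basic open set with $U_{n(x)}\cap E=\{x\}$ and orders $E$ by these indices, whereas you inject $X$ into $\mathcal{P}(\omega)$ via $x\mapsto\{m:x\in B_m\}$ and pull back the lexicographic order. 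Your variant buys a small generalization — it needs only $T_0$ rather than $T_1$ in the second-countable case (and, as you note, no separation axiom at all in the $\mathcal{K}$-Loeb case) — at the cost of having to check that your order $\prec$ is transitive (it is: it is the lexicographic order on characteristic functions). Your final step, passing from a surjection of a subset of $\omega\times\omega$ onto $A$ to countability of $A$ via minimal preimages, is valid in $\mathbf{ZF}$.
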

\begin{proof}
Let $E$ be a non-empty finite subset of $X$ such that $E$ is not a singleton. We assume that $\mathbf{X}$ is either $\mathcal{K}$-Loeb or second-countable. We are going to define a well-ordering $\leq_E$ on $E$.

First, suppose that $\mathbf{X}$ is $\mathcal{K}$-Loeb. Let $f$ be a choice function of $\mathcal{K}^{\ast}(\mathbf{X})$. Let $n_E\in\omega$ be equipotent with $E$.  We effectively define a bijection $\psi_E:n_E\to E$ as follows. We put $\psi_E(0)=f(E)$.  Suppose that $n\in n_E$ is such that, for every $i\in n$,  $\psi_E(i)$ has been defined. Then we put $\psi_E(n)=f(E\setminus\{\psi_E(i): i\in n\})$. Having defined $\psi_E$, we can define a well ordering $\leq_E$ on $E$ as follows. For $x,y\in E$, we put $x\leq_E y$ if and only if $\psi_E^{-1}(x)\subseteq \psi_E^{-1}(y)$. 

Now, suppose that $\mathbf{X}$ is second-countable. Let $\mathcal{B}=\{U_n: n\in\omega\}$ be a base for $\mathbf{X}$ such that $\mathcal{B}$ is stable under finite intersections. For every $x\in E$, the set $N(x)=\{n\in\omega: x\in U_n\wedge U_n\cap (E\setminus\{x\})=\emptyset\}$ is non-empty, so we can define $n(x)=\min N(x)$. Now, we can define a well-ordering $\leq_E$  on $E$ as follows. For $x,y\in E$, 
$$x\leq_E y\leftrightarrow n(x)\subseteq n(y).$$  

Let $\{A_n: n\in\omega\}$ be a family of non-empty finite subsets of $\mathbf{X}$. We have shown that there exists a family $\{\leq_n: n\in\omega\}$ such that, for every $n\in\omega$, $\leq_n$ is a well-ordering on $A_n$. This implies that $\bigcup_{n\in\omega}A_n$ is countable as a countable union of well-ordered finite sets.
\end{proof}

\begin{theorem}
\label{s2t3}
$[\mathbf{ZF}]$ Let $\mathbf{X}=\langle X, \tau\rangle$ be a non-empty regular Loeb space which has a cuf base. Then the family $\tau\setminus\{\emptyset\}$ has a choice function. In consequence, if $\mathbf{X}$ is also a $T_1$-space, then $\mathbf{X}$ is a second-countable metrizable space such that every $F_{\sigma}$-subspace of $\mathbf{X}$ is separable. 
\end{theorem}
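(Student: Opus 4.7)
The plan is to build the choice function on $\tau\setminus\{\emptyset\}$ by combining the regularity, the cuf base, and a given Loeb function $f$ of $\mathbf{X}$; the remaining assertions will then fall out easily. Write the cuf base as $\mathcal{B}=\bigcup_{n\in\omega}\mathcal{B}_n$ with each $\mathcal{B}_n$ finite. For a non-empty $U\in\tau$, regularity produces, at every $x\in U$, some $B\in\mathcal{B}$ with $x\in B\subseteq\cl_{\mathbf{X}}(B)\subseteq U$. Hence
$$N(U)=\{n\in\omega:(\exists B\in\mathcal{B}_n)\; B\neq\emptyset\text{ and }\cl_{\mathbf{X}}(B)\subseteq U\}$$
is non-empty. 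Put $n(U)=\min N(U)$ and
$$F(U)=\bigcup\{\cl_{\mathbf{X}}(B):B\in\mathcal{B}_{n(U)},\;B\neq\emptyset,\;\cl_{\mathbf{X}}(B)\subseteq U\}.$$
This is a finite union of non-empty closed subsets of $\mathbf{X}$, hence a non-empty member of $\mathcal{C}l^{\ast}(\mathbf{X})$ contained in $U$. Setting $g(U)=f(F(U))$ defines the desired choice function on $\tau\setminus\{\emptyset\}$.

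Now assume in addition that $\mathbf{X}$ is $T_1$. Since $T_1$ makes every finite subset of $X$ closed, iterated application of the Loeb function yields a uniform well-ordering of any finite $S\subseteq X$ (set $\psi_S(0)=f(S)$ and $\psi_S(k+1)=f(S\setminus\{\psi_S(i):i\leq k\})$), so every cuf subset of $X$ is countable. Apply this observation to
$$D=\{g(B):B\in\mathcal{B}\}=\bigcup_{n\in\omega}\{g(B):B\in\mathcal{B}_n\},$$
which is a cuf set and hence countable. For any non-empty open $U$ and any $x\in U$, pick $B\in\mathcal{B}$ with $x\in B\subseteq U$; then $g(B)\in B\subseteq U$, so $D$ is dense. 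Thus $\mathbf{X}$ is separable. As $\mathbf{X}$ is a $T_3$-space with a cuf base, Theorem \ref{s1t14} gives metrizability, and the standard $\mathbf{ZF}$ argument (balls of positive rational radii centred at points of $D$) promotes separability to second-countability.

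For the $F_{\sigma}$-subspace clause, let $P$ be a non-empty $F_{\sigma}$-subset of $\mathbf{X}$. By Proposition \ref{s2p1} the subspace $\mathbf{P}$ is Loeb, and the properties of being $T_1$, regular, and having a cuf base are all hereditary (in the last case, one intersects each $\mathcal{B}_n$ with $P$ and discards empty members). Running the separability argument of the previous paragraph inside $\mathbf{P}$ produces a countable dense subset of $\mathbf{P}$.

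The main obstacle is the first step: one must simultaneously use regularity (to shrink an arbitrary open $U$ so that some basic set has its \emph{closure} inside $U$), the cuf structure (to pick a canonical finite layer $\mathcal{B}_{n(U)}$ without invoking any choice), and Loebness (to collapse the resulting non-empty closed union to a single point). Once $g$ is in hand, separability, metrizability, second-countability, and the $F_{\sigma}$-subspace assertion follow by routine arguments.
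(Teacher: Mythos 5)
Your proposal is correct and follows essentially the same route as the paper: define $n(U)$ as the least index of a finite layer containing a basic set with closure inside $U$, apply the Loeb function to the resulting finite union of closures, extract a dense cuf set from the base, and reduce countability of that set to well-ordering finite sets via the Loeb function on closed finite sets (the paper routes this last step through its Proposition \ref{s2p2}, but the argument is the same). The $F_{\sigma}$-clause is likewise handled identically via Proposition \ref{s2p1} and heredity of the hypotheses.
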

\begin{proof}
Suppose that $\mathcal{B}=\bigcup_{n\in\omega}\mathcal{B}_n$ is a base of $\mathbf{X}$ such that, for every $n\in\omega$, the family $\mathcal{B}_n$ is finite. We may assume that $\emptyset\notin\mathcal{B}$. Let $f$ be a Loeb function of $\mathbf{X}$. We define a function $\psi\in\prod_{V\in\tau\setminus\{\emptyset\}}V$ as follows. For every $V\in\tau\setminus\{\emptyset\}$, let $n_V=\min\{n\in\omega:  (\exists U\in\mathcal{B}_n) \cl_{\mathbf{X}}(U)\subseteq V\}$,  $U(V)=\bigcup\{U\in\mathcal{B}_{n_V}: \cl_{\mathbf{X}}(U)\subseteq V\}$ and $\psi(V)=f(\cl_{\mathbf{X}}(U(V)))$. The set $D=\{\psi(V): V\in \mathcal{B}\}$ is dense in $\mathbf{X}$ and $D$ is a cuf set. Now, let us assume that $\mathbf{X}$ is also a $T_1$-space. It follows from Proposition \ref{s2p2} that the set $D$ is countable, hence $\mathbf{X}$ is separable. By Theorem \ref{s1t14}, $\mathbf{X}$ is metrizable. It is known that every metrizable separable space is second-countable, so $\mathbf{X}$ is second-countable.

Let $P=\bigcup_{n\in\omega}P_n$ where, for every $n\in\omega$, $P_n\in\mathcal{C}l(\mathbf{X})$. The subspace $\mathbf{P}$ of $\mathbf{X}$ is regular and has a cuf base. By Proposition \ref{s2p1}, $\mathbf{P}$ is Loeb. It follows from the first part of the proof that $\mathbf{P}$ has a dense cuf set $E$ and, if $\mathbf{P}$ is a $T_1$-space, then $E$ is countable by Proposition \ref{s2p2}. This completes the proof.
\end{proof}

\begin{remark}
\label{s2r4}
(i) It follows from Theorem \ref{s1t12} and  Theorem \ref{s2t3} that $\mathbb{P}$ is both Loeb and separable in $\mathbf{ZF}$. However, that $\mathbb{P}$ is separable in $\mathbf{ZF}$ has a very elementary proof. Namely, it is easily seen that the set $\{q+\frac{\sqrt{2}}{n}: q\in\mathbb{Q}\wedge n\in\mathbb{N}\}$ is a countable dense subset of $\mathbb{P}$. \medskip

(ii) Metric spaces for which the family of all non-empty open sets have a choice function are called \emph{selective}, for instance, in \cite{k01}, \cite{k1} and \cite{kt1}.
\end{remark}

In \cite{am},  A. Miller showed a model of $\mathbf{ZF}$ in which an uncountable Dede\-kind-finite subset of $\mathbb{R}$ is of type $F_{\sigma\delta}$ in $\mathbb{R}$. One can deduce from Theorem \ref{s1t15} that infinite Dedekind-finite subsets of $\mathbb{R}$ are not of type $F_{\sigma}$. With Theorem \ref{s2t3}, we can generalize this result as follows:

\begin{theorem}
\label{s2t5}
$[\mathbf{ZF}]$ Let $\mathbf{X}$ be a Loeb $T_3$-space which has a cuf base. Then no infinite Dedekind-finite subset of $X$ is of type $F_{\sigma}$ in $\mathbf{X}$.
\end{theorem}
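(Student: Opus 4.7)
The plan is to argue by contradiction, using Theorem \ref{s2t3} as the main engine and exploiting the hypotheses almost exactly in the same way as they are packaged there. Assume, towards a contradiction, that $A$ is an infinite Dedekind-finite subset of $X$ and that $A$ is of type $F_\sigma$ in $\mathbf{X}$. Since $\mathbf{X}$ is a Loeb $T_3$-space with a cuf base, Theorem \ref{s2t3} tells us that $\mathbf{X}$ is a second-countable metrizable space in which every $F_\sigma$-subspace is separable. In particular, the subspace $\mathbf{A}$ of $\mathbf{X}$ is separable, so we can fix a countable dense subset $D$ of $\mathbf{A}$.

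Next I would split into two cases according to whether $D$ is finite or denumerable. If $D$ is finite, then $D$ is closed in the $T_1$-space $\mathbf{A}$ (indeed in the metrizable space $\mathbf{A}$), and since $D$ is dense in $\mathbf{A}$ we conclude $A=\cl_{\mathbf{A}}(D)=D$, contradicting the assumption that $A$ is infinite. If $D$ is countable and infinite, then by Definition \ref{s1d1}(iv), $D$ is denumerable, i.e.\ equipotent with $\omega$. Hence $A$ has a denumerable subset, so $A$ itself is Dedekind-infinite, which again contradicts the hypothesis on $A$. Both cases yield contradictions, so no infinite Dedekind-finite subset of $X$ can be of type $F_\sigma$ in $\mathbf{X}$.

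The main obstacle has already been absorbed into Theorem \ref{s2t3}: producing a countable dense subset of an $F_\sigma$-subspace from a Loeb function together with a cuf base in the absence of choice is the nontrivial construction. Once that is available, the remainder of the proof of Theorem \ref{s2t5} is essentially the elementary observation that a dense countable subset of an infinite $T_1$-space must itself be denumerable and therefore witnesses Dedekind-infiniteness of the ambient set; no further choice principle is needed at this stage because we are just extracting elements from a single already countable set.
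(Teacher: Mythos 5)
Your proposal is correct and follows essentially the same route as the paper: apply Theorem \ref{s2t3} to get separability of the $F_\sigma$-subspace, then observe that a countable dense subset of an infinite $T_1$-space must be denumerable, which witnesses Dedekind-infiniteness. Your explicit case split on whether $D$ is finite or denumerable just spells out the detail the paper compresses into the phrase ``contains a denumerable set which is dense.''
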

\begin{proof}
It suffices to observe that, by Theorem \ref{s2t3}, if $\mathbf{P}$ is an infinite $F_{\sigma}$-subspace of $\mathbf{X}$, then $P$ is Dedekind-infinite because $P$ contains a denumerable set which is dense in $\mathbf{P}$.
\end{proof}

\begin{theorem}
\label{s2t6}
$[\mathbf{ZF}]$ Let $\mathbf{X}$ be a locally compact Hausdorff $\mathcal{K}$-Loeb space which has a cuf base. Then every $G_{\delta}$-subspace of $\mathbf{X}$ is Loeb.
\end{theorem}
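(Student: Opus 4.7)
The plan is to mimic the classical construction of a complete metric on a $G_{\delta}$-subset of a locally compact Hausdorff space, but in a choice-free fashion where at each stage we accumulate finitely many basic open sets rather than making a single choice. Fix a cuf base $\mathcal{B}=\bigcup_{n\in\omega}\mathcal{B}_n$ of $\mathbf{X}$ with each $\mathcal{B}_n$ finite, and a $\mathcal{K}$-Loeb function $f$ of $\mathbf{X}$. Write a given $G_{\delta}$-subset as $G=\bigcap_{n\in\omega}U_n$ with $U_n$ open in $\mathbf{X}$; replacing $U_n$ by $\bigcap_{i\leq n}U_i$, I may assume that the sequence $(U_n)_{n\in\omega}$ is decreasing. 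The key observation, for which local compactness is needed, is that for every open $U\subseteq X$ and every $a\in U$ there exists $B\in\mathcal{B}$ with $a\in B$, $\cl_{\mathbf{X}}(B)\subseteq U$ and $\cl_{\mathbf{X}}(B)$ compact: pick a compact neighborhood $V\subseteq U$ of $a$, then some $B\in\mathcal{B}$ with $a\in B\subseteq\inter(V)$, and use that compact sets in a Hausdorff space are closed.

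Given a non-empty closed subset $A$ of the subspace $\mathbf{G}$, set $C=\cl_{\mathbf{X}}(A)$, so that $A=G\cap C$. I would recursively build a decreasing family of non-empty compact sets $K_n(A)\subseteq C\cap U_n$ with $K_n(A)\cap A\neq\emptyset$. Starting from $K_{-1}(A)=C$, at stage $n\geq 0$ let
$$m_n(A)=\min\bigl\{m\in\omega:\exists B\in\mathcal{B}_m,\ B\cap K_{n-1}(A)\cap A\neq\emptyset\text{ and }\cl_{\mathbf{X}}(B)\subseteq U_n\bigr\},$$
which exists by the preceding observation applied at any point of the non-empty set $K_{n-1}(A)\cap A\subseteq A\subseteq U_n$, and put
$$K_n(A)=\bigcup\bigl\{K_{n-1}(A)\cap\cl_{\mathbf{X}}(B):B\in\mathcal{B}_{m_n(A)},\ B\cap K_{n-1}(A)\cap A\neq\emptyset,\ \cl_{\mathbf{X}}(B)\subseteq U_n\bigr\}.$$
Each summand is a closed subset of the compact set $\cl_{\mathbf{X}}(B)$, hence $K_n(A)$ is a finite union of compact sets, is compact, is contained in $C\cap U_n$ (using $K_{n-1}(A)\subseteq C$ inductively), contains any witness $a\in B\cap K_{n-1}(A)\cap A$, and is canonically determined by $A$.

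The sequence $(K_n(A))_{n\in\omega}$ is a decreasing family of non-empty closed subsets of the compact Hausdorff space $K_0(A)$, so the finite intersection property yields $\bigcap_{n\in\omega}K_n(A)\neq\emptyset$; this intersection lies in $C\cap\bigcap_{n\in\omega}U_n=C\cap G=A$, and it is compact in $\mathbf{X}$. I then define the Loeb function of $\mathbf{G}$ by
$$\psi(A)=f\bigl(\bigcap_{n\in\omega}K_n(A)\bigr).$$
The delicate point is to guarantee that every step of the construction is genuinely canonical: the cuf-base hypothesis lets me form a finite union over $\mathcal{B}_{m_n(A)}$ instead of having to pick one basic open set, and the $\mathcal{K}$-Loeb function $f$ is invoked only at the very end on the compact set $\bigcap_{n\in\omega}K_n(A)\subseteq X$, not on any relatively closed subset of $\mathbf{G}$, which would not be accessible from $f$ alone.
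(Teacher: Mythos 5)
Your construction is essentially the paper's: a canonical, choice-free recursion that at each stage forms a \emph{finite union} over one level $\mathcal{B}_{m}$ of the cuf base (indexed by a minimum, so no choice is made), produces a decreasing sequence of compacta whose non-empty intersection lands inside $A$, and only then invokes the $\mathcal{K}$-Loeb function $f$ on that compact set. The only cosmetic difference is that the paper writes the complement of the $G_{\delta}$-set as $\bigcup_n F_n$ and demands $F_k\cap\cl_{\mathbf{X}}(U)=\emptyset$ together with $\cl_{\mathbf{X}}(U)\subseteq U_{k-1}$, whereas you intersect with $\cl_{\mathbf{X}}(A)$ from the start and shrink into a decreasing sequence of open sets whose intersection is $G$; these are interchangeable.

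There is, however, one step that fails as written. You claim that each summand $K_{n-1}(A)\cap\cl_{\mathbf{X}}(B)$ is a closed subset of ``the compact set $\cl_{\mathbf{X}}(B)$,'' but the conditions defining $m_n(A)$ and $K_n(A)$ only require $B\cap K_{n-1}(A)\cap A\neq\emptyset$ and $\cl_{\mathbf{X}}(B)\subseteq U_n$; nothing forces $\cl_{\mathbf{X}}(B)$ to be compact. For $n\geq 1$ this is harmless, since the summand is then a closed subset of the compact set $K_{n-1}(A)$; but at $n=0$ you have $K_{-1}(A)=C=\cl_{\mathbf{X}}(A)$, which need not be compact, so $C\cap\cl_{\mathbf{X}}(B)$ need not be compact and the compactness of $K_0(A)$ (on which the whole finite-intersection-property argument rests) is unjustified. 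The repair is immediate and canonical: replace each $\mathcal{B}_m$ by $\{B\in\mathcal{B}_m:\cl_{\mathbf{X}}(B)\in\mathcal{K}(\mathbf{X})\}$ at the outset (your key observation shows this pruned family is still a base), or equivalently add ``$\cl_{\mathbf{X}}(B)$ compact'' to the conditions defining $m_n(A)$ and $K_n(A)$. This is exactly what the paper does by fixing from the start a cuf base all of whose members have compact closures. With that amendment your proof is correct.
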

\begin{proof}
Let $F=\bigcup_{n\in\omega}F_n$ where, for every $n\in\omega$, $F_n\in\mathcal{C}l(\mathbf{X})$. Let $P=X\setminus F$. To show that the subspace $\mathbf{P}$ of $\mathbf{X}$ is Loeb, we fix a choice function $f$ of $\mathcal{K}^{\ast}(\mathbf{X})$ and a base $\mathcal{B}=\bigcup_{i\in\omega}\mathcal{B}_i$ of $\mathbf{X}$ such that, for every $i\in\omega$, the family $\mathcal{B}_i$ is finite and, for every $U\in\mathcal{B}_i$, $\cl_{\mathbf{X}}(U)\in\mathcal{K}(\mathbf{X})$. Let $A\in\mathcal{C}l^{\ast}(\mathbf{P})$. The set $I_0=\{i\in\omega: (\exists U\in\mathcal{B}_i) (U\cap A\neq\emptyset\wedge F_0\cap\cl_{\mathbf{X}}(U)=\emptyset)\}$ is non-empty, so we can define $n_0=\min I_0$ and $U_0=\bigcup\{U\in\mathcal{B}_{n_0}: U\cap A\neq\emptyset\wedge F_0\cap\cl_{\mathbf{X}}(U)=\emptyset\}$.  Suppose that, for $k\in\omega$, we have already defined $n_k\in\omega$ and an open subset $U_k$ of $\mathbf{X}$, such that $U_{k}\cap A\neq\emptyset$ and $F_k\cap\cl_{\mathbf{X}}(U_k)=\emptyset$.  Then the set $I_{k+1}=\{i\in\omega: (\exists U\in\mathcal{B}_{i})( U\cap A\neq\emptyset\wedge \cl_{\mathbf{X}}(U)\subseteq  U_{k}\wedge F_{k+1}\cap \cl_{\mathbf{X}}(U)=\emptyset)\}$ is non-empty. We define $n_{k+1}=\min I_{k+1}$ and $U_{k+1}=\bigcup\{U\in\mathcal{B}_{n_{k+1}}: U\cap A\neq\emptyset\wedge \cl_{\mathbf{X}}(U)\subseteq  U_{k}\wedge F_{k+1}\cap \cl_{\mathbf{X}}(U)=\emptyset\}$.  In this way, we have defined by induction the sequence $(U_k)_{k\in\omega}$ of open sets of $\mathbf{X}$. Let $D_A=\bigcap_{k\in\omega}\cl_{\mathbf{X}}(U_{k})\cap\cl_\mathbf{X}(A)$. Since $\mathbf{X}$ is Hausdorff, it follows from the compactness of the sets $\cl_{\mathbf{X}}(U_{k})$ and from our definition of the sequence  $(U_k)_{k\in\omega}$ that the set $D_A$ is a non-empty compact subset of $\mathbf{X}$. Clearly, $D_A\cap F=\emptyset$, so $D_A\subseteq P$. Since $D_A\subseteq\cl_{\mathbf{X}}(A)$ and $A$ is closed in $P$, we deduce that $D_A\subseteq A$. Now, we can define $f_P(A)=f(D_A)$. This defines a Loeb function $f_P$ of $\mathbf{P}$.
\end{proof}

We include a proof of the following theorem for completeness. 

\begin{theorem}
\label{s2t7}
$[\mathbf{ZF}]$ Every $G_{\delta}$-subspace of a Cantor completely metrizable space is Cantor completely metrizable.
\end{theorem}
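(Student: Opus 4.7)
The plan is to imitate the classical Alexandroff argument, reading it carefully in $\mathbf{ZF}$. Let $d$ be a Cantor complete metric on $X$ with $\tau(d)=\tau$, and write $G=\bigcap_{n\in\omega}U_n$ with each $U_n$ open in $\mathbf{X}$. Dropping trivial factors, I may assume that every $U_n$ is a proper subset of $X$, so $X\setminus U_n$ is a non-empty closed set and $\rho_n(x):=d(x,X\setminus U_n)$ defines a $d$-continuous function on $X$ that is strictly positive on $U_n$, hence on $G$. I then define, for $x,y\in G$,
$$\rho(x,y)=d(x,y)+\sum_{n=0}^{\infty}\min\!\left(2^{-n-1},\;\left|\tfrac{1}{\rho_n(x)}-\tfrac{1}{\rho_n(y)}\right|\right).$$

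The series converges termwise below $2^{-n-1}$, and the triangle inequality reduces to the elementary termwise bound $\min(a,|u-w|)\le\min(a,|u-v|)+\min(a,|v-w|)$. Since $\rho\ge d$ on $G$, the $\rho$-topology refines $\tau(d)|_G$; conversely, the $d$-continuity of each $1/\rho_n$ on $G$ together with the tail estimate $\sum_{n>N}2^{-n-1}\to 0$ shows that every $\rho$-ball around $x\in G$ contains a $d$-ball, so the two topologies agree.

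For Cantor completeness, let $(F_k)_{k\in\omega}$ be a descending family of non-empty $\rho$-closed subsets of $G$ with $\delta_\rho(F_k)\to 0$. Since $\rho\ge d$, the $d$-closures $\tilde F_k$ of $F_k$ in $X$ form a descending family of non-empty closed sets with $\delta_d(\tilde F_k)=\delta_d(F_k)\le\delta_\rho(F_k)\to 0$; by Cantor completeness of $d$, there is a unique $x\in X$ with $\bigcap_k\tilde F_k=\{x\}$. The crucial step is to verify $x\in G$. Fix $n\in\omega$ and set $S_k=\{1/\rho_n(y):y\in F_k\}\subseteq\mathbb{R}$. For $k$ large enough that $\delta_\rho(F_k)<2^{-n-1}$, the $n$-th summand of $\rho$ forces $|1/\rho_n(y)-1/\rho_n(z)|\le\rho(y,z)\le\delta_\rho(F_k)$ for all $y,z\in F_k$, so the closures $\overline{S_k}$ in $\mathbb{R}$ form a descending family of non-empty closed sets of diameter tending to $0$; by Cantor completeness of $\mathbb{R}$ (provable in $\mathbf{ZF}$), $\bigcap_k\overline{S_k}\ne\emptyset$. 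If $x\notin U_n$, then $\rho_n(y)=d(y,X\setminus U_n)\le d(y,x)\le\delta_d(\tilde F_k)$ for every $y\in F_k$, whence $\inf S_k\ge 1/\delta_d(\tilde F_k)\to\infty$, contradicting the existence of a real in $\bigcap_k\overline{S_k}$. Hence $x\in U_n$ for every $n$, so $x\in G$, and since each $F_k$ is $\rho$-closed in $G$ while $\tau(\rho)=\tau(d)|_G$, we conclude $x\in\tilde F_k\cap G=F_k$ for all $k$.

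The main obstacle is precisely the step showing $x\in G$: in $\mathbf{ZFC}$ one simply picks representatives $y_k\in F_k$ and extracts a limit, but such a sequential choice is unavailable in $\mathbf{ZF}$. Replacing that choice by the uniform estimate on the ranges $S_k\subseteq\mathbb{R}$ and invoking the $\mathbf{ZF}$-provable Cantor completeness of $\mathbb{R}$ is what makes the argument go through without any additional set-theoretic assumptions.
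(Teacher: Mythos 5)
Your proof is correct, and at bottom it is the same classical Alexandroff argument the paper invokes: the paper's proof simply cites Engelking's Lemma 4.3.22 to embed the $G_{\delta}$-set as a closed subspace of $\mathbf{X}\times\mathbb{R}^{\omega}$ (via the graph of $x\mapsto(1/\rho_n(x))_{n}$) and then quotes two facts from the literature, namely that countable products and closed subspaces of Cantor complete metric spaces are Cantor complete. What you do differently is unfold those citations into a direct, self-contained verification: your metric $\rho$ is essentially the restriction of the product metric to that graph, and your treatment of the key step --- replacing the choice of representatives $y_k\in F_k$ by the nested ranges $S_k=\{1/\rho_n(y):y\in F_k\}\subseteq\mathbb{R}$ and invoking the $\mathbf{ZF}$-provable completeness of $\mathbb{R}$ --- is exactly the choice-free content that the paper delegates to the Cantor completeness of $\mathbb{R}^{\omega}$. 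The only cosmetic point is the reduction to the case where every $U_n$ is proper: if only finitely many $U_n$ are proper (and $G\neq X$), repeat one of the proper ones infinitely often so that the sum defining $\rho$ is genuinely infinite; this changes nothing. Your version is arguably more informative than the paper's, since it exhibits explicitly where the axiom of choice threatens to enter and how it is avoided.
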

\begin{proof} Let $\mathbf{X}=\langle X, d\rangle$ be a Cantor complete metric space and let $P$ be a $G_{\delta}$-subset of $\mathbf{X}$. In much the same way, as in the proof of Lemma 4.3.22 in \cite{en}, one can show in $\mathbf{ZF}$ that the subspace $\mathbf{P}$ of $\langle X, \tau(d)\rangle$ is homeomorphic with a closed subspace of $\langle X, \tau(d)\rangle\times\mathbb{R}^{\omega}$. It is known from \cite{k1}  that every countable product of Cantor complete metric spaces is a Cantor complete metric space and a closed metric subspace of a Cantor complete metric space is Cantor complete. This implies that $\mathbf{P}$ is Cantor completely metrizable. 
\end{proof}

\begin{corollary}
\label{s2c8}
$[\mathbf{ZF}]$ Let $\mathbf{X}$ be a locally compact Hausdorff space which has a cuf base, and let $P$ be a $G_{\delta}$-subset of $\mathbf{X}$. Then the subspace $\mathbf{P}$ of $\mathbf{X}$ is Cantor completely metrizable. 
\end{corollary}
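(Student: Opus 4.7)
The plan is to reduce to Theorem \ref{s2t7} by embedding $\mathbf{X}$ as a $G_{\delta}$-subspace of its one-point (Alexandroff) compactification $\mathbf{X}^{+}=X\cup\{\infty\}$ and proving that $\mathbf{X}^{+}$ is itself Cantor completely metrizable. Since $\mathbf{X}$ is locally compact Hausdorff, $\mathbf{X}^{+}$ is a compact Hausdorff space, and in particular $T_{3}$. The subspace $X$ is open in $\mathbf{X}^{+}$, so $X$ is a $G_{\delta}$-subset of $\mathbf{X}^{+}$, and therefore the given $G_{\delta}$-subset $P$ of $\mathbf{X}$ is also a $G_{\delta}$-subset of $\mathbf{X}^{+}$.

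The key step will be to exhibit a cuf base of $\mathbf{X}^{+}$. Let $\mathcal{B}=\bigcup_{n\in\omega}\mathcal{B}_{n}$ be a cuf base of $\mathbf{X}$ with each $\mathcal{B}_{n}$ finite. Using local compactness, I would first pass to $\mathcal{B}'=\bigcup_{n}\mathcal{B}'_{n}$ with $\mathcal{B}'_{n}=\{B\in\mathcal{B}_{n}:\cl_{\mathbf{X}}(B)\text{ is compact}\}$; this is still a cuf base, since for every $x\in U\in\tau$ one can pick an open $V$ with $x\in V\subseteq\cl_{\mathbf{X}}(V)\subseteq U$ and $\cl_{\mathbf{X}}(V)$ compact, and any $B\in\mathcal{B}$ with $x\in B\subseteq V$ then has $\cl_{\mathbf{X}}(B)$ compact. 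Neighborhoods of $\infty$ in $\mathbf{X}^{+}$ are the complements of compact subsets of $X$, so for each finite $\mathcal{F}\subseteq\mathcal{B}'$ the set $V_{\mathcal{F}}=\mathbf{X}^{+}\setminus\cl_{\mathbf{X}}(\bigcup\mathcal{F})$ is an open neighborhood of $\infty$. For each $n\in\omega$, the family $\mathcal{V}_{n}=\{V_{\mathcal{F}}:\mathcal{F}\subseteq\bigcup_{k\le n}\mathcal{B}'_{k}\}$ is finite, as the power set of a finite set is finite. Given any basic neighborhood $\mathbf{X}^{+}\setminus K$ of $\infty$, compactness of $K$ yields a finite subfamily $\mathcal{F}\subseteq\mathcal{B}'$ covering $K$, whence $V_{\mathcal{F}}\subseteq\mathbf{X}^{+}\setminus K$. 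Therefore $\mathcal{B}'\cup\bigcup_{n}\mathcal{V}_{n}$ is a cuf base of $\mathbf{X}^{+}$.

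With the cuf base in hand, I would apply Theorem \ref{s1t14} to the $T_{3}$-space $\mathbf{X}^{+}$ to conclude that $\mathbf{X}^{+}$ is metrizable. Since $\mathbf{X}^{+}$ is compact, any compatible metric is automatically Cantor complete: the finite intersection property guarantees that every descending sequence of non-empty closed subsets has non-empty intersection, with no appeal to the diameter condition. Hence $\mathbf{X}^{+}$ is Cantor completely metrizable, and since $P$ is a $G_{\delta}$-subset of $\mathbf{X}^{+}$, Theorem \ref{s2t7} delivers the conclusion that $P$ is Cantor completely metrizable.

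The main obstacle, and the only step where the absence of choice needs care, is verifying that the augmented family we produce for $\mathbf{X}^{+}$ really is cuf; this is precisely the reason for the level-by-level packaging $\mathcal{V}_{n}$ above, which avoids ever having to make infinitely many independent selections of finite covers. Everything else reduces to routine ZF manipulations combined with the theorems already established in this section.
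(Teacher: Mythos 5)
Your proof is correct and follows essentially the same route as the paper's: pass to the one-point Hausdorff compactification, observe it is a compact $T_3$-space with a cuf base, apply Theorem \ref{s1t14} and the fact that compact metrizable spaces are Cantor completely metrizable, and finish with Theorem \ref{s2t7}. The only difference is that you spell out the verification that $\alpha\mathbf{X}$ has a cuf base (via the finite families $\mathcal{V}_n$), which the paper merely asserts, and you handle the compact and non-compact cases uniformly rather than splitting them.
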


\begin{proof}
By Theorem \ref{s1t14}, the space $\mathbf{X}$ is metrizable. Suppose that $\mathbf{X}$ is compact. Then $\mathbf{X}$ is Cantor completely metrizable, so $\mathbf{P}$ is Cantor completely metrizable by Theorem \ref{s2t7}.

Assuming that  $\mathbf{X}$ is non-compact, we can consider the one-point Hausdorff compactification $\alpha\mathbf{X}$  of $\mathbf{X}$. Since $\mathbf{X}$ has a cuf base, the space  $\alpha\mathbf{X}$ has a cuf base. By Theorem \ref{s1t14}, $\alpha\mathbf{X}$ is metrizable.  Clearly, $\alpha\mathbf{X}$ is Cantor completely metrizable as a metrizable compact space. The set $P$ is of type $G_{\delta}$ in $\alpha\mathbf{X}$. This, together with Theorem \ref{s2t7}, implies that $\mathbf{P}$ is Cantor completely metrizable. 
\end{proof}

\begin{theorem}
\label{s2t9}
$[\mathbf{ZF}]$ Let $\mathbf{X}$ be a Cantor completely metrizable space.
\begin{enumerate}
\item[(i)] If $\mathbf{X}$ is second-countable, then every $G_{\delta}$-subspace of $\mathbf{X}$ is Loeb, so also separable. 
\item[(ii)] If $\mathbf{X}$ is Loeb and has a cuf base, then every $G_{\delta}$-subspace of $\mathbf{X}$ is Loeb and separable. 
\end{enumerate}
\end{theorem}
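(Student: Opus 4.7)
The plan is to chain together three previously established results: Theorem \ref{s2t7} (to preserve Cantor completeness in $G_{\delta}$-subspaces), Theorem \ref{s1t12} (to obtain the Loeb property from second-countability plus Cantor completeness), and Theorem \ref{s2t3} (to obtain separability from the Loeb property plus a cuf base in a regular $T_1$-setting).

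For (i), I would take an arbitrary $G_{\delta}$-subset $P$ of $\mathbf{X}$ and form the subspace $\mathbf{P}$. Theorem \ref{s2t7} immediately gives that $\mathbf{P}$ is Cantor completely metrizable. Next, since second-countability is inherited by subspaces in $\mathbf{ZF}$ (if $\{B_n:n\in\omega\}$ is a base for $\mathbf{X}$, then $\{B_n\cap P:n\in\omega\}$ is a base for $\mathbf{P}$), $\mathbf{P}$ is second-countable. Theorem \ref{s1t12} then gives that $\mathbf{P}$ is Loeb. Finally, $\mathbf{P}$ is metrizable, hence regular and $T_1$, and trivially has a cuf base since a countable base is a cuf base. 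Thus Theorem \ref{s2t3} applies to $\mathbf{P}$: every $F_{\sigma}$-subspace of $\mathbf{P}$ is separable, and in particular $\mathbf{P}$ itself (as its own $F_{\sigma}$-subspace) is separable.

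For (ii), the argument is short: $\mathbf{X}$ is Cantor completely metrizable (so regular and $T_1$), Loeb by hypothesis, and has a cuf base by hypothesis. Therefore Theorem \ref{s2t3} applies and forces $\mathbf{X}$ to be second-countable and metrizable. Once second-countability is in hand, part (i) applies to $\mathbf{X}$, yielding that every $G_{\delta}$-subspace of $\mathbf{X}$ is Loeb and separable.

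Since each ingredient has already been proved, no genuine obstacle remains; the only thing to be careful about is invoking the hypotheses of Theorem \ref{s2t3} correctly (namely, verifying regularity and the $T_1$-property and the existence of a cuf base in each subspace), and noting explicitly that Theorem \ref{s2t3} gives separability of the whole space (not just its $F_{\sigma}$-subspaces) in the $T_1$ case, because its proof constructs an explicit countable dense set $D$ in $\mathbf{X}$. This last point is what lets us conclude separability of $\mathbf{P}$ directly, without any further choice beyond what Theorem \ref{s2t3} already supplies.
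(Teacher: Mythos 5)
Your proposal is correct and follows essentially the same route as the paper: part (i) is obtained from Theorem \ref{s2t7} plus Theorem \ref{s1t12}, with separability coming from the fact that a second-countable regular Loeb space is separable (which you extract from Theorem \ref{s2t3}, exactly the fact the paper cites as obvious), and part (ii) reduces to (i) via Theorem \ref{s2t3} just as in the paper.
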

\begin{proof}
That (i) holds follows immediately from Theorems \ref{s2t7} and \ref{s1t12}, and the obvious fact that every second-countable regular Loeb space is separable.

To prove (ii), we observe that if $\mathbf{X}$ is Loeb and has a cuf base, then $\mathbf{X}$ is second-countable by Theorem \ref{s2t3}, so (ii) is a consequence of (i).
\end{proof}

\begin{corollary}
\label{s2c10}
$[\mathbf{ZF}]$ Let $\mathbf{X}$ be a Cantor completely metrizable second-countable space. Then every infinite set $A\in\mathcal{F}_{\sigma}(\mathbf{X})\cup\mathcal{G}_{\delta}(\mathbf{X})$ is Dedekind-infinite. 
\end{corollary}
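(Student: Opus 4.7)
The plan is to reduce both cases ($F_\sigma$ and $G_\delta$) to the single fact that an infinite separable metrizable space must be Dedekind-infinite, and then invoke the already established separability results for each kind of subspace.

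First I would note that $\mathbf{X}$ is Loeb by Theorem \ref{s1t12}, metrizable (hence $T_3$), and has a cuf base since it is second-countable. If $A \in \mathcal{F}_\sigma(\mathbf{X})$, then Theorem \ref{s2t3} applied to $\mathbf{X}$ yields directly that the subspace $\mathbf{A}$ of $\mathbf{X}$ is separable. If instead $A \in \mathcal{G}_\delta(\mathbf{X})$, then Theorem \ref{s2t9}(i) yields that $\mathbf{A}$ is Loeb and separable. In either case, $\mathbf{A}$ is a separable metrizable space (being a subspace of the metrizable space $\mathbf{X}$).

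Next I would argue that any infinite separable metrizable space is Dedekind-infinite. Indeed, separability provides a countable dense subset $D \subseteq A$. Since $\mathbf{A}$ is $T_1$ and infinite while every finite subset of a $T_1$-space is closed, $D$ cannot be finite (otherwise $A = \cl_{\mathbf{A}}(D) = D$ would be finite). Hence $D$ is denumerable, so $A$ contains a denumerable subset and is therefore Dedekind-infinite. This completes the argument.

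There is essentially no obstacle here: the statement is really just the conjunction of the separability conclusions of Theorem \ref{s2t3} and Theorem \ref{s2t9}(i) with the elementary $\mathbf{ZF}$-observation that a countable dense subset of an infinite $T_1$-space must itself be infinite. The only point requiring a bit of care is to note that one must use $T_1$ separation (rather than any choice principle) to conclude that a finite dense set forces the whole space to be finite, which is why the reduction to separability is the right packaging of the result.
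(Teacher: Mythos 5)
Your proof is correct and follows essentially the same route as the paper's: the paper handles the $F_{\sigma}$ case by citing Theorem \ref{s2t5} (whose proof is exactly your reduction to the separability conclusion of Theorem \ref{s2t3}) together with Theorem \ref{s1t12}, and the $G_{\delta}$ case via the separability given by Theorem \ref{s2t9}. Your explicit remark that a countable dense subset of an infinite $T_1$-space must be denumerable (hence witnesses Dedekind-infiniteness) is the same elementary observation the paper leaves implicit.
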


\begin{proof}
Let $A\in\mathcal{F}_{\sigma}(\mathbf{X})\cup\mathcal{G}_{\delta}(\mathbf{X})$ be infinite. If  $A\in\mathcal{F}_{\sigma}(\mathbf{X})$, it follows from Theorems \ref{s2t5} and \ref{s1t12} that $A$ is Dedekind-infinite. If $A\in\mathcal{G}_{\delta}(\mathbf{X})$, then, by Theorem \ref{s2t9}, the subspace $\mathbf{A}$ of $\mathbf{X}$ is separable, so $A$ is Dedekind-infinite. 
\end{proof}

\begin{remark} 
\label{s2r11}
 In $\mathbf{ZF}$, a Cantor completely metrizable space (even a compact metrizable space) which has a cuf base may fail to be $\mathcal{K}$-Loeb and it may contain an infinite Dedekind-finite open subspace. Indeed, assuming that $\mathbf{CAC}_{fin}$ fails, we can fix a family $\mathcal{A}=\{A_n: n\in\omega\}$ of pairwise disjoint non-empty finite sets such that $\mathcal{A}$ does not have a partial choice function. Let $X=\bigcup\mathcal{A}$ and $\mathbf{X}=\langle X, \mathcal{P}(X)\rangle$. The one-point Hausdorff compactification $\alpha\mathbf{X}$ of $\mathbf{X}$ has a cuf base, is Cantor completely metrizable but not $\mathcal{K}$-Loeb. The set $X$ is Dedekind-finite  and open in $\alpha\mathbf{X}$, so $X$ is of types $F_{\sigma}$ and  $G_{\delta}$ in $\alpha\mathbf{X}$. 
\end{remark}

We do not know a satisfactory answer to the following question:

\begin{question}
\label{s2q12}
Is it true in $\mathbf{ZF}$ that every Cantor completely metrizable $\mathcal{K}$-Loeb space with a cuf base is Loeb?
\end{question}

To give an extended partial answer to Question \ref{s2q12}, we need to apply scattered spaces. Let us recall that a topological space $\mathbf{X}$ is called \emph{scattered} (or, equivalently, \emph{dispersed}) if no non-empty subspace of $\mathbf{X}$ is dense-in-itself.  The newest results on scattered spaces in $\mathbf{ZF}$ are included in \cite{ktw2}. 

For a topological space $\mathbf{X}$, we denote by $\Iso(X)$ the set of all isolated points of $\mathbf{X}$ and, for an ordinal $\alpha$, $X^{(\alpha)}$ stands for the $\alpha$-th Cantor-Bendixson derivative of $\mathbf{X}$.  The least ordinal $\alpha$ such that $X^{(\alpha+1)}=X^{(\alpha)}$ is denoted by $|\mathbf{X}|_{CB}$ and called the \emph{Cantor-Bendixson rank} of $\mathbf{X}$. If $\alpha=|\mathbf{X}|_{CB}$,  then $X=\bigcup_{\gamma\in\alpha}\Iso(\mathbf{X}^{(\gamma)})\cup X^{(\alpha)}$. 

\begin{theorem}
\label{s2t13}
$[\mathbf{ZF}]$
\begin{enumerate}
\item[(i)]  Every compact $\mathcal{K}$-Loeb space is Loeb.

\item[(ii)]  Every countably compact $\mathcal{K}$-Loeb space with a cuf base is compact, so also
Loeb.

\item[(iii)] Every scattered, $\mathcal{K}$-Loeb  $T_1$-space
with a cuf base is well-orderable, so also Loeb.

\item[(iv)] For every dense-in-itself $\mathcal{K}$-Loeb $T_3$-space $\mathbf{X}=\langle X, \tau\rangle$ such that the family $\mathcal{C}=\{O\setminus\{x\}: x\in O\in \tau\}$ contains a cuf base of $\mathbf{X}$, it holds that $\mathbf{X}$ is separable and second-countable, so also Loeb if $\mathbf{X}$ is Cantor completely metrizable.

\item[(v)]  Every Cantor completely metrizable, $\mathcal{K}$-Loeb space such that
each of its bases contains a cuf base is Loeb. 

\end{enumerate}
\end{theorem}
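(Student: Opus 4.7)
The plan is to handle the five parts in order, with each later part reducing to the preceding ones where possible. For (i), in a compact space every closed set is compact, so $\mathcal{C}l^{\ast}(\mathbf{X})\subseteq\mathcal{K}^{\ast}(\mathbf{X})$ and any $\mathcal{K}$-Loeb function restricts to a Loeb function. For (ii), given an arbitrary open cover $\mathcal{U}$ and a cuf base $\mathcal{B}=\bigcup_{n\in\omega}\mathcal{B}_n$ with each $\mathcal{B}_n$ finite, the cuf refinement $\mathcal{B}'=\{B\in\mathcal{B}:(\exists U\in\mathcal{U})\ B\subseteq U\}$ yields a countable open cover $\{V_n\}_{n\in\omega}$ via $V_n=\bigcup(\mathcal{B}'\cap\mathcal{B}_n)$; countable compactness extracts a finite subfamily $\{V_{n_1},\dots,V_{n_k}\}$, and finite choice in $\mathbf{ZF}$ unpacks this into a finite subcover of $\mathcal{U}$. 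Compactness then gives Loebness by (i).

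For (iii), since $\mathbf{X}$ is scattered, $X^{(\alpha)}=\emptyset$ for $\alpha=|\mathbf{X}|_{CB}$, so $X=\bigcup_{\gamma<\alpha}\Iso(\mathbf{X}^{(\gamma)})$. Each closed subspace $\mathbf{X}^{(\gamma)}$ inherits $T_1$, the cuf base, and $\mathcal{K}$-Loebness, since compact subsets of $\mathbf{X}^{(\gamma)}$ remain compact in $\mathbf{X}$. Each isolated point $x\in\Iso(\mathbf{X}^{(\gamma)})$ is the unique element of some $B\cap X^{(\gamma)}$ with $B$ in the cuf base of $\mathbf{X}$, so $\Iso(\mathbf{X}^{(\gamma)})$ is a cuf subset of $X^{(\gamma)}$, hence countable by Proposition \ref{s2p2}. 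The $\mathcal{K}$-Loeb function of $\mathbf{X}$ canonically well-orders each finite level (exactly as in the proof of Proposition \ref{s2p2}), which gives a canonical well-ordering of each $\Iso(\mathbf{X}^{(\gamma)})$ and, by transfinite concatenation along $\gamma<\alpha$, of $X$ itself; taking minima then yields a Loeb function.

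For (iv), the plan is to produce a countable dense subset of $X$. For each $V\in\mathcal{B}$, the hypothesis $V\in\mathcal{C}$ combined with $T_2$ and dense-in-itself should force the candidate set $F(V)=\{x\in X\setminus V:V\cup\{x\}\in\tau\}$ to be a singleton $\{x_V\}$; granting this, the assignment $V\mapsto x_V$ is canonical, and dense-in-itself makes $x_V\in\cl_{\mathbf{X}}(V)$. Then $P=\{x_V:V\in\mathcal{B}\}$ is a cuf subset of $X$, hence countable by Proposition \ref{s2p2}. Density follows from $T_3$: for any non-empty open $U$ and $y\in U$, one picks $V\in\mathcal{B}$ with $y\in V\subseteq\cl_{\mathbf{X}}(V)\subseteq U$, whence $x_V\in U$. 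Theorem \ref{s1t14} gives metrizability, and a separable metrizable space is second-countable in $\mathbf{ZF}$; Theorem \ref{s1t12} then yields Loebness when $\mathbf{X}$ is additionally Cantor completely metrizable. The main obstacle is the singleton property of $F(V)$: two distinct candidates $x\neq x'$ produce compatible open extensions $V\cup\{x\}$ and $V\cup\{x'\}$, and I expect $T_3$ separation together with dense-in-itself to contradict the structure of $V\in\mathcal{C}$; should that fail, the fallback is to apply the $\mathcal{K}$-Loeb function to a canonical compact subset of $\cl_{\mathbf{X}}(V)$ built by $T_3$-shrinking inside the cuf base.

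For (v), let $d$ be a Cantor complete metric and, for each $n\in\mathbb{N}$, let $\mathcal{B}_n=\bigcup_{k\in\omega}\mathcal{B}_n^k$ be a cuf base contained in the base $\{U\in\tau:\delta_d(U)<1/n\}$ (which exists by the hypothesis that every base contains a cuf base). Given $F\in\mathcal{C}l^{\ast}(\mathbf{X})$, I would build a descending sequence $F=F_0\supseteq F_1\supseteq\cdots$ of non-empty closed sets with $\delta_d(F_n)<1/n$ by setting $F_{n+1}=F_n\cap\cl_{\mathbf{X}}(V_n)$, where $V_n$ is canonically selected from the finite non-empty family $\{B\in\mathcal{B}_n^{k_0(n)}:B\cap F_n\neq\emptyset\}$ with $k_0(n)$ the least index making this family non-empty. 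Cantor completeness of $d$ forces $\bigcap_n F_n$ to be non-empty, and the diameter constraint reduces it to a singleton $\{\psi(F)\}$; the map $F\mapsto\psi(F)$ is a Loeb function. The main obstacle is making the choice of $V_n$ canonical, since the candidate family lives in $\mathcal{P}(X)$ rather than in $X$. I plan to resolve this by applying the $\mathcal{K}$-Loeb function to a finite set of canonical point-representatives of the candidate basic sets (extracted from the metric structure of the balls) and transporting the resulting well-ordering back to the basic sets, invoking finite choice in $\mathbf{ZF}$ at the required finite step.
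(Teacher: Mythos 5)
Parts (i)--(iii) of your proposal are essentially correct. For (i) and (ii) you argue exactly as the paper does (the paper starts from a cover $\mathcal{U}\subseteq\mathcal{B}$, which amounts to the same reduction to basic covers plus finitely many choices). For (iii) you take a genuinely different route: the paper uses scatteredness of every non-empty subspace $A$ to define a choice function on all of $\mathcal{P}(X)\setminus\{\emptyset\}$ directly ($\Iso(\mathbf{A})$ is a non-empty cuf set whose first non-empty finite layer is fed to the $\mathcal{K}$-Loeb function, giving $\mathbf{AC}(X)$ and hence well-orderability), whereas you well-order $X$ level by level along the Cantor--Bendixson decomposition. Your version is valid -- it is in fact the argument the paper itself uses for the scattered remainder in part (v) -- though the paper's is slightly more economical.

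The gaps are in (iv) and (v). In (iv), the claim that $F(V)=\{x\in X\setminus V: V\cup\{x\}\in\tau\}$ is a singleton is false, even for $V\in\mathcal{C}$ and even in $\mathbb{R}$: take $O=(0,1)\cup(1,2)\cup(3,4)$ and $x=3.5$; then $V=O\setminus\{x\}\in\mathcal{C}$, yet both $1$ and $3.5$ belong to $F(V)$, and with $O=\bigcup_{n\in\mathbb{N}}(\frac{1}{n+1},\frac{1}{n})\cup(2,3)$ the set $F(V)$ is even infinite. Neither $T_3$ nor dense-in-itself rules this out (the only structural fact available is that $F(V)$ is relatively discrete). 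The paper's route is instead to gather \emph{all} puncture points of the members of each finite layer $\mathcal{B}_i$ into one set $A_i$ and to apply Proposition \ref{s2p2} to $\bigcup_{i\in\omega}A_i$, with density coming from the fact that a puncture point of $O\setminus\{x\}$ lies in $\cl_{\mathbf{X}}(O\setminus\{x\})$ because $\mathbf{X}$ is dense-in-itself. Your fallback -- canonically extracting a non-empty compact subset of $\cl_{\mathbf{X}}(V)$ and applying the $\mathcal{K}$-Loeb function -- cannot work as stated: if such a canonical extraction existed for every basic $V$, the hypothesis on $\mathcal{C}$ would be superfluous and Question \ref{s2q12} would be settled.

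For (v) the objection is sharper. Your construction uses the hypothesis ``every base contains a cuf base'' only to obtain, for each $n$, a cuf base of mesh $<1/n$; but a \emph{single} cuf base $\mathcal{B}$ already yields this, since $\{B\in\mathcal{B}:\delta_d(B)<1/n\}$ is again a cuf base. So if your recursion could be completed, it would prove that every Cantor completely metrizable $\mathcal{K}$-Loeb space with a cuf base is Loeb -- precisely the paper's open Question \ref{s2q12}. The step you flag as the ``main obstacle'' is indeed fatal: there is no canonical point-representative of an arbitrary basic open set in a metric space (a ball does not determine its center), and making an $\omega$-sequence of dependent selections from non-empty finite sets is not free in $\mathbf{ZF}$ (it is a form of dependent choice for finite families, not provable without something like $\mathbf{CAC}_{fin}$); taking the union of all candidates instead destroys the diameter control. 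The paper circumvents all of this by splitting $\mathbf{Y}$ into its perfect kernel $X=Y^{(\alpha_Y)}$ and the scattered remainder: the family $\{O\setminus\{x\}: x\in O\in\tau|_X\}$ is a base of the dense-in-itself subspace $\mathbf{X}$, the ``every base'' hypothesis transfers to bases of $\mathbf{X}$, so (iv) makes $\mathbf{X}$ second-countable and Loeb, while $Y\setminus X$ is well-ordered level by level as in (iii). That reduction, rather than a direct Cantor-completeness recursion, is the missing idea.
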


\begin{proof} Since closed sets in compact spaces are compact, it is obvious that (i) holds.  To prove (ii)-(iv), we assume that $\mathbf{X}=\langle X, \tau\rangle$ is a $\mathcal{K}$-Loeb $T_1$-space which has cuf base. We fix a base $\mathcal{B}=\bigcup \{\mathcal{B}_{i}:i\in \omega\}$ of $\mathbf{X}$ such that, for every $i\in\omega$, the family $\mathcal{B}_i$ is finite. We fix a choice function $h$ of $\mathcal{K}^{\ast}(\mathbf{X})$.\medskip

(ii) Suppose that $\mathbf{X}$ is countably compact. In view of (i), to prove (ii), it suffices to show that $\mathbf{X}$ is compact. To
this end, we fix an open cover $\mathcal{U}\subseteq \mathcal{B}$ of $\mathbf{X}
$. For every $i\in \omega$, let $V_{i}=\bigcup (\mathcal{B}_{i}\cap 
\mathcal{U})$. Clearly, $\mathcal{V}=\{V_{i}:i\in \omega\}$ is an open
cover of $\mathbf{X}$. Since $\mathbf{X}$ is countably compact, there exists $n\in\omega$ such that $X=\bigcup_{i\in n+1}V_i$. Then $\mathcal{W}=\bigcup \{\mathcal{B
}_i\cap \mathcal{U}:i\in n+1\}$ is a finite subcover of $\mathcal{U}$, so $\mathbf{X}$ is compact as required.\medskip

(iii) Now, suppose that $\mathbf{X}$ is scattered. We are going to define (effectively in $\mathbf{ZF}$) a choice function $f$ of $\mathcal{P}(X)\setminus\{\emptyset\}$. To this aim, we consider any non-empty subset $A$ of $X$. For every $n\in\omega$, let $A_n=\{x\in A: (\exists U\in\mathcal{B}_n) (U\cap A=\{x\})\}$. We notice that $\Iso(A)=\bigcup_{n\in\omega}A_n$. Since the space $\mathbf{X}$ is scattered, the set $\Iso(A)$ is non-empty. Then the set $N(A)=\{n\in\omega: A_n\neq\emptyset\}$ is non-empty, so we can define $n(A)=\min N(A)$. The set $A_{n(A)}$ is a non-empty finite subset of $A$. Hence, we can define $f(A)=h(A_{n(A)})$. This shows that $\mathbf{AC}(X)$ holds, so $X$ is well-orderable by Proposition \ref{s1:WOT}.\medskip

(iv) For the proof of (iv), we assume that $\mathbf{X}$ is a dense-in-itself $T_3$-space, and the family $\mathcal{C}$ contains a cuf a base for $\mathbf{X}$. We may assume that $\mathcal{B}\subseteq \mathcal{C}$. It follows from Theorem \ref{s1t14} that $\mathbf{X}$ is metrizable. For every $i\in \omega$, let 
$$A_{i}=\{x\in X: (\exists O\in \tau)(x\in O\wedge O\setminus\{x\}\in \mathcal{B}_i)\}$$
\noindent and let $A=\bigcup_{i\in\omega}A_i$. Since $\mathbf{X}$ is a $\mathcal{K}$-Loeb $T_1$-space and, for every $i\in\omega$, the set $A_i$ is finite, it follows from Proposition \ref{s2p2} that $A$ is countable. To show that $A$ is dense in $\mathbf{X}$, let us consider any non-empty set $U\in\tau$ and any $x\in U$. There exist $i\in\omega$, $O\in\tau$ and $y\in O$, such that $x\in O\setminus\{y\}\in \mathcal{B}_i$ and $\cl_{\mathbf{X}}(O\setminus \{y\})\subseteq U$. Then $y\in U$ because $\mathbf{X}$ is dense-in-itself. Furthermore, $y\in A$. This proves that $A$ is dense in $\mathbf{X}$. In consequence,  $\mathbf{X}$ is a separable metrizable space. Hence $\mathbf{X}$ is second-countable. By Theorem \ref{s1t12}, if $\mathbf{X}$ is Cantor completely metrizable, then $\mathbf{X}$ is Loeb.\medskip

(v) Suppose that  $\mathbf{Y}=\langle Y, \tau_Y\rangle$ is a Cantor completely metrizable $\mathcal{K}$-Loeb, non-scattered space whose every base contains a cuf base. Let $\alpha_Y$ be the Cantor-Bendixson rank of $\mathbf{Y}$. Let $X=Y^{(\alpha_Y)}$.  Then the closed subspace $\mathbf{X}$ of $\mathbf{Y}$ is a dense-in-itself, Cantor completely metrizable $\mathcal{K}$-Loeb space. Let $\mathcal{B}_X$ be a base of $\mathbf{X}$. Then $\mathcal{B}_Y=\{U\in\tau_Y: U\cap X=\emptyset\}\cup\{U\in\tau: U\cap X\in\mathcal{B}_X\}$ is a base of $\mathbf{Y}$. Since $\mathcal{B}_Y$ contains a cuf base of $\mathbf{Y}$, it is easily seen that $\mathcal{B}_X$ contains a cuf base of $\mathbf{X}$. Therefore, it follows from (iv) that $\mathbf{X}$ is second-countable and Loeb. In much the same way, as in the proof of Proposition \ref{s2p2}, we can show that there exists a family $\{\leq_{\gamma}: \gamma\in\alpha_Y\}$ such that, for every $\gamma\in\alpha_Y$, $\leq_{\gamma}$ is a well-ordering on $\Iso(Y^{{(\gamma)}})$. This implies that $Y\setminus X$ is well-orderable. In consequence, since $\mathbf{X}$ is Loeb and $Y\setminus X$ is well-orderable, the space $\mathbf{Y}$ is Loeb. 
\end{proof}

Every Loeb $T_2$-space is $\mathcal{K}$-Loeb in $\mathbf{ZF}$; however, the following proposition shows that, in $\mathbf{ZF}$, a Loeb $T_1$-space may fail to be $\mathcal{K}$-Loeb.

\begin{proposition}
\label{s2p14}
\begin{enumerate}
\item[(a)] $[\mathbf{ZF}]$ Let $X$ be a set and let $\tau_{cf}$ be the co-finite topology on $X$. Then $\mathbf{X}=\langle X, \tau_{cf}\rangle$ is a $T_1$-space for which the following equivalences hold:
\begin{enumerate}
\item[(i)] $\mathbf{AC}(X)\leftrightarrow \mathbf{KL}(\mathbf{X})$;
\item[(ii)] $\mathbf{AC}_{fin}(X)\leftrightarrow \mathbf{L}(\mathbf{X})$.
\end{enumerate}
\item[(b)] In Cohen's Original Model $\mathcal{M}1$ in \cite{hr}, there exists a Loeb $T_1$-space which is not $\mathcal{K}$-Loeb. 
\end{enumerate}
\end{proposition}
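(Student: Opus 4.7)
The plan is to exploit the transparent structure of the cofinite topology for part (a), and then to import a known feature of $\mathcal{M}1$ for part (b).

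For part (a), I would begin by recording three observations about $\mathbf{X}=\langle X,\tau_{cf}\rangle$. First, $\mathbf{X}$ is $T_1$ since $X\setminus\{x\}$ is cofinite, hence open, for every $x\in X$. Second, \emph{every} subset $A\subseteq X$ is compact: given any open cover of $A$, pick one non-empty member $U$; its complement is finite, so only finitely many additional members of the cover are needed to cover $A\setminus U$. Third, the only closed subsets of $\mathbf{X}$ are $X$ itself and the finite subsets of $X$. Consequently, $\mathcal{K}^{\ast}(\mathbf{X})=\mathcal{P}(X)\setminus\{\emptyset\}$ and $\mathcal{C}l^{\ast}(\mathbf{X})=\{X\}\cup\bigl([X]^{<\omega}\setminus\{\emptyset\}\bigr)$. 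Then (i) is immediate: $\mathbf{KL}(\mathbf{X})$ says exactly that $\mathcal{P}(X)\setminus\{\emptyset\}$ has a choice function, which is the statement $\mathbf{AC}(X)$. For (ii), any Loeb function of $\mathbf{X}$ restricts to a choice function on $[X]^{<\omega}\setminus\{\emptyset\}$, giving $\mathbf{AC}_{fin}(X)$; conversely, if $X\neq\emptyset$, I would fix any $y\in X$, take a choice function $g$ on $[X]^{<\omega}\setminus\{\emptyset\}$, and extend it to a Loeb function by declaring its value at $X$ to be $y$. The case $X=\emptyset$ is handled vacuously.

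For part (b), I would invoke the well-documented facts, recorded in \cite{hr}, that $\mathbf{AC}_{fin}$ (Form 62) holds in Cohen's Original Model $\mathcal{M}1$, while $\mathbf{AC}$ (Form 1) fails there. Failure of $\mathbf{AC}$ yields a set $X$ which is not well-orderable, hence by Proposition \ref{s1:WOT} the form $\mathbf{AC}(X)$ fails; on the other hand, $\mathbf{AC}_{fin}(X)$ holds as a consequence of the global $\mathbf{AC}_{fin}$. Applying (a) to this set $X$ (with $\tau_{cf}$), the space $\mathbf{X}$ is a Loeb $T_1$-space by (ii) but fails to be $\mathcal{K}$-Loeb by (i), as required.

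There is essentially no technical obstacle: the entire proof is a matter of unpacking definitions once the two structural facts about the cofinite topology are in place. The only points that demand minor care are the degenerate case of empty or finite $X$ in the reverse direction of (a)(ii), where the extension step trivialises but both sides of the equivalence hold anyway, and the correct reference to \cite{hr} for the simultaneous truth of $\mathbf{AC}_{fin}$ and failure of $\mathbf{AC}$ in $\mathcal{M}1$ in (b), rather than a direct forcing argument.
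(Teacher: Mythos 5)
Your proposal is correct and follows essentially the same route as the paper: identify the compact sets as all subsets and the closed sets as $X$ together with the finite sets, read off (i) and (ii) from the definitions, and then obtain (b) by applying (a) to a set witnessing $\mathbf{AC}_{fin}(X)\wedge\neg\mathbf{AC}(X)$ in $\mathcal{M}1$ (the paper uses the Dedekind-finite set of added Cohen reals as its concrete witness). Your write-up merely makes explicit the small points the paper leaves as "obvious," such as the extension of a choice function on $[X]^{<\omega}\setminus\{\emptyset\}$ to all of $\mathcal{C}l^{\ast}(\mathbf{X})$ by picking a value at $X$.
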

\begin{proof}
(a)  Since every subset of $\mathbf{X}$ is compact, it is obvious that (i) holds. Since $\mathcal{C}l(\mathbf{X})=[X]^{<\omega}\cup\{X\}$, it follows that (ii) holds.\medskip

(b) In Cohen's Original Model $\mathcal{M}1$, $\mathbf{AC}_{fin}$ holds and $\mathbf{AC}$ fails. This implies that, in $\mathcal{M}1$, we can fix a set $X$ for which $\mathbf{AC}_{fin}(X)\wedge\neg\mathbf{AC}(X)$ is true. For instance, in $\mathcal{M}1$, the set $D$ of all added Cohen reals is an infinite Dedekind-finite subset of $\mathbb{R}$ (see \cite[pp. 146--147]{hr}), so $D$ can serve as $X$. It follows from (a) that, in $\mathcal{M}1$, $\langle X, \tau_{cf}\rangle$ is a Loeb $T_1$-space which is not $\mathcal{K}$-Loeb. 

\end{proof}

\begin{remark}
\label{s2r10}
(i) Suppose that $\mathcal{M}$ is a model of $\mathbf{ZF}$ in which $\mathbf{AC}_{fin}$ is true and $\mathbf{CAC}$ is false. For instance, Cohen's Original Model $\mathcal{M}1$ in \cite{hr} can be taken as $\mathcal{M}$.  Then, in $\mathcal{M}$, there exists a Cantor completely metrizable $\mathcal{K}$-Loeb space which is not Loeb. Indeed, we can fix in $\mathcal{M}$ a denumerable family $\mathcal{A}$ of infinite sets without a choice function in $\mathcal{M}$. Then, in $\mathcal{M}$, the discrete space $\mathbf{X}$ with the underlying set $X=\bigcup\mathcal{A}$ is Cantor completely metrizable, $\mathcal{K}$-Loeb but not Loeb. The space $\mathbf{X}$ does not have a cuf base in $\mathcal{M}$.

(ii) Let $\mathbf{X}$ be a locally compact Hausdorff, second-countable space, and let $\mathbf{P}$ be a $G_{\delta}$-subspace of $\mathbf{X}$. Since $\mathbf{P}$ is second-countable and, by Corollary \ref{s2c8}, also Cantor completely metrizable, that $\mathbf{P}$ is Loeb follows from Theorem \ref{s1t12}. This is an alternative proof of Theorem \ref{s2t6} limited to second-countable, locally compact Hausdorff spaces.

(iii) If $\mathbf{X}$ is a Cantor completely metrizable second-countable space, one can modify our proof of Theorem \ref{s2t6} to show without applying Theorem \ref{s2t7} that, in $\mathbf{ZF}$, every $G_{\delta}$-subspace of $\mathbf{X}$ is Loeb.  
\end{remark}

\section{Equivalents of $\mathbf{S}(\mathbf{X})$ for a Cantor completely metrizable second-countable $\mathbf{X}$}
\label{s3}

The main aim of this section is to give several new necessary and sufficient conditions for a Cantor completely metrizable second-countable space to be sequential. Let us start with the following simple proposition which follows directly from Theorems \ref{s2t3}, \ref{s2t9} and Proposition \ref{s1p19}.

\begin{proposition}
\label{s3p1}
$[\mathbf{ZF}]$ Let $\mathbf{X}$ be a Cantor completely metrizable second-countable space and let $A$ be a sequentially closed set in $\mathbf{X}$. If $A$ is either of type $G_{\delta}$ or of type $F_{\sigma}$ in $\mathbf{X}$, then $A$ is closed in $\mathbf{X}$.
\end{proposition}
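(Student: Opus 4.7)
The plan is to apply Proposition~\ref{s1p19}(i), which asserts that in a first-countable space a sequentially closed set containing a dense well-orderable subset is automatically closed. Since $\mathbf{X}$ is (Cantor completely) metrizable, it is first-countable, so the hypothesis of that proposition is already at hand. The problem therefore reduces to producing a countable dense subset of the subspace $\mathbf{A}$ of $\mathbf{X}$: every countable set is well-orderable in $\mathbf{ZF}$, so separability of $\mathbf{A}$ will suffice.

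To establish separability of $\mathbf{A}$, I would split on the type of $A$. If $A$ is of type $F_{\sigma}$, I first observe that $\mathbf{X}$, being second-countable and Cantor completely metrizable, is Loeb by Theorem~\ref{s1t12}; it is also a $T_3$-space with a countable base, which is a fortiori a cuf base. Theorem~\ref{s2t3} then applies and delivers exactly what is needed: every $F_{\sigma}$-subspace of $\mathbf{X}$ is separable, so $\mathbf{A}$ is separable. If instead $A$ is of type $G_{\delta}$, Theorem~\ref{s2t9}(i) applies directly and gives that every $G_{\delta}$-subspace of a second-countable Cantor completely metrizable space is Loeb and separable, so $\mathbf{A}$ is again separable.

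Picking a countable, hence well-orderable, dense subset $D$ of $\mathbf{A}$ and applying Proposition~\ref{s1p19}(i) to the first-countable space $\mathbf{X}$ together with the sequentially closed set $A$, we conclude $A\in\mathcal{C}l(\mathbf{X})$, as desired. I do not expect any genuine obstacle here, since the statement is essentially a packaging of Theorems~\ref{s2t3} and~\ref{s2t9} through Proposition~\ref{s1p19}(i); the only point that warrants a line of verification is that the $F_{\sigma}$ case really does fit the hypotheses of Theorem~\ref{s2t3}, i.e., that Loeb-ness and a cuf base are both available for $\mathbf{X}$, and both are immediate from second-countability together with Cantor complete metrizability.
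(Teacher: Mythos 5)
Your proof is correct and is exactly the argument the paper intends: the paper states that Proposition \ref{s3p1} ``follows directly from Theorems \ref{s2t3}, \ref{s2t9} and Proposition \ref{s1p19}'', which is precisely the combination you spell out (Theorem \ref{s2t3} for the $F_{\sigma}$ case via Loeb-ness from Theorem \ref{s1t12} and the countable base as a cuf base, Theorem \ref{s2t9}(i) for the $G_{\delta}$ case, and Proposition \ref{s1p19}(i) to pass from separability of $\mathbf{A}$ to closedness of $A$). No gaps.
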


The following corollaries can be deduced from Proposition \ref{s3p1} and Theorem \ref{s1t12}:
\begin{corollary}
\label{s3c2}
$[\mathbf{ZF}]$  Let $A$ be a sequentially closed subset of $\mathbb{P}=\mathbb{R}\setminus\mathbb{Q}$. If $A$ is of type $G_{\delta}$ in $\mathbb{P}$ or of type $F_{\sigma}$ in $\mathbb{P}$, then $A$ is closed in $\mathbb{P}$. 
\end{corollary}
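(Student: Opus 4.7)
The plan is to derive this corollary as a direct special case of Proposition \ref{s3p1}. The only thing to verify is that $\mathbb{P}$ satisfies the hypotheses of that proposition, namely that $\mathbb{P}$ is both second-countable and Cantor completely metrizable (in $\mathbf{ZF}$).

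First I would observe that $\mathbb{P}$ is second-countable. This is immediate in $\mathbf{ZF}$: the real line $\mathbb{R}$ has the countable base consisting of intervals with rational endpoints, and second-countability is inherited by every subspace. Next I would recall, as noted in the topological preliminaries of the paper, that $\mathbb{P}$ is homeomorphic to the Baire space $\mathbb{N}^{\omega}$. By Theorem \ref{s1t12}, the Baire space $\mathbb{N}^{\omega}$ is Cantor completely metrizable (indeed, the usual Baire metric on $\mathbb{N}^{\omega}$ is Cantor complete in $\mathbf{ZF}$, and being Cantor completely metrizable is a topological property preserved under homeomorphism). Hence $\mathbb{P}$ is Cantor completely metrizable as well.

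Having established these two properties, the corollary follows at once: given a sequentially closed subset $A$ of $\mathbb{P}$ which is either of type $G_{\delta}$ or of type $F_{\sigma}$ in $\mathbb{P}$, Proposition \ref{s3p1} applied to $\mathbf{X}=\mathbb{P}$ yields that $A$ is closed in $\mathbb{P}$.

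There is no real obstacle here; the only conceivable point of friction is ensuring that the homeomorphism $\mathbb{P}\cong\mathbb{N}^{\omega}$ is constructed without appealing to any fragment of choice, but this is a standard effective construction (the continued fraction expansion gives an explicit $\mathbf{ZF}$-definable bijection that is a homeomorphism), so nothing nontrivial is required.
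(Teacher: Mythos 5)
Your proof is correct and follows essentially the same route as the paper, which likewise derives the corollary by applying Proposition \ref{s3p1} to $\mathbb{P}$, using (as recorded in Theorem \ref{s1t12}) that $\mathbb{P}$ is a second-countable Cantor completely metrizable space. Your verification of the hypotheses, including the choice-free homeomorphism $\mathbb{P}\cong\mathbb{N}^{\omega}$ via continued fractions, is exactly the standard justification the paper relies on.
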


\begin{corollary}
\label{s3c3}
$[\mathbf{ZF}]$ Let $\mathbf{X}$ be a second-countable, Cantor completely metrizable space. Then $\mathcal{G}^{\ast}_{\delta}(\mathbf{X})\cap\mathcal{SC}(\mathbf{X})=\mathcal{F}^{\ast}_{\sigma}(\mathbf{X})\cap\mathcal{SC}(\mathbf{X})=Cl^{\ast}(\mathbf{X})$. In consequence, the families $\mathcal{G}^{\ast}_{\delta}(\mathbf{X})\cap\mathcal{SC}(\mathbf{X})$ and $\mathcal{F}^{\ast}_{\sigma}(\mathbf{X})\cap\mathcal{SC}(\mathbf{X})$ have choice functions.
\end{corollary}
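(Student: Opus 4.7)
The plan is to first establish the set-theoretic equalities and then invoke Theorem \ref{s1t12} to obtain the choice functions.

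For one direction of both equalities, I would show that $\mathcal{C}l^{\ast}(\mathbf{X})$ is contained in each of the two intersections. That $\mathcal{C}l^{\ast}(\mathbf{X}) \subseteq \mathcal{F}^{\ast}_{\sigma}(\mathbf{X}) \cap \mathcal{SC}(\mathbf{X})$ is immediate, since every non-empty closed set is trivially an $F_{\sigma}$-set (the union of the single-element family $\{F\}$) and is sequentially closed. For $\mathcal{C}l^{\ast}(\mathbf{X}) \subseteq \mathcal{G}^{\ast}_{\delta}(\mathbf{X}) \cap \mathcal{SC}(\mathbf{X})$, I would fix a Cantor complete metric $d$ on $X$ compatible with the topology and observe that, for every non-empty closed set $F$, the identity $F = \bigcap_{n \in \mathbb{N}} \{x \in X : d(x, F) < 1/n\}$ exhibits $F$ as a $G_{\delta}$-set; sequential closedness is again automatic.

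The reverse inclusions $\mathcal{G}^{\ast}_{\delta}(\mathbf{X}) \cap \mathcal{SC}(\mathbf{X}) \subseteq \mathcal{C}l^{\ast}(\mathbf{X})$ and $\mathcal{F}^{\ast}_{\sigma}(\mathbf{X}) \cap \mathcal{SC}(\mathbf{X}) \subseteq \mathcal{C}l^{\ast}(\mathbf{X})$ are precisely the content of Proposition \ref{s3p1}. Combining these with the previous paragraph, the three families all coincide with $\mathcal{C}l^{\ast}(\mathbf{X})$.

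For the consequence, since $\mathbf{X}$ is second-countable and Cantor completely metrizable, Theorem \ref{s1t12} supplies a Loeb function on $\mathbf{X}$, i.e., a choice function on $\mathcal{C}l^{\ast}(\mathbf{X})$. Restricting (or rather, reading) this same function through the equalities just established gives the required choice functions on $\mathcal{G}^{\ast}_{\delta}(\mathbf{X}) \cap \mathcal{SC}(\mathbf{X})$ and on $\mathcal{F}^{\ast}_{\sigma}(\mathbf{X}) \cap \mathcal{SC}(\mathbf{X})$. There is no genuine obstacle here: the corollary is a direct amalgamation of Proposition \ref{s3p1} and Theorem \ref{s1t12}. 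The only subtlety worth flagging is that the inclusion $\mathcal{C}l^{\ast}(\mathbf{X}) \subseteq \mathcal{G}^{\ast}_{\delta}(\mathbf{X})$ uses metrizability (to write a closed set as a countable intersection of open $1/n$-neighborhoods), which is supplied by the Cantor completely metrizable hypothesis.
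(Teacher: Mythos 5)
Your proof is correct and follows exactly the route the paper intends: the nontrivial inclusions are Proposition \ref{s3p1}, the easy inclusions come from closed sets in a metrizable space being sequentially closed, $F_{\sigma}$ and $G_{\delta}$, and the choice functions come from Theorem \ref{s1t12}. The paper gives no more detail than ``deduced from Proposition \ref{s3p1} and Theorem \ref{s1t12}'', so your write-up is a faithful (and slightly more explicit) version of the same argument.
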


\begin{theorem}
\label{s3t4}
$[\mathbf{ZF}]$ Let $\mathbf{X}$ be a second-countable, metrizable Loeb space. Then the following conditions are all equivalent:
\begin{enumerate}
\item[(i)]  $\mathbf{S}(\mathbf{X})$; 
\item[(ii)] every sequentially closed subspace of $\mathbf{X}$ is separable;
\item[(iii)] $\mathbf{SL}(\mathbf{X})$;
\item[(iv)] $\mathbf{SCF}_{\sigma}(\mathbf{X})$;
\item[(v)] $\mathbf{SCDF}_{\sigma}(\mathbf{X})$;
\item[(vi)] $\mathbf{SCDWOU}(\mathbf{X})$;
\item[(vii)] for every $F_{\sigma}$-subspace $\mathbf{P}$ of $\mathbf{X}$, $\mathbf{S}(\mathbf{P})$ holds. 
\end{enumerate}
In particular, for every second-countable, Cantor completely metrizable space $\mathbf{X}$, conditions (i)-(vii) are all equivalent.
\end{theorem}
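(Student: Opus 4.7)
Since $\mathbf{X}$ is second-countable and metrizable, it is first-countable, $T_3$, and has a cuf base; combined with the Loeb hypothesis, Theorem \ref{s2t3} then gives that $\mathbf{X}$ is separable, and its proof supplies a distinguished countable dense subset of every non-empty closed subspace uniformly from the Loeb function of $\mathbf{X}$. The plan is to establish the cycle (i)$\Rightarrow$(iv)$\Rightarrow$(v)$\Rightarrow$(vi)$\Rightarrow$(i), the two pairs (i)$\Leftrightarrow$(ii) and (i)$\Leftrightarrow$(iii), and finally (i)$\Leftrightarrow$(vii) by applying the equivalence already established for second-countable metrizable Loeb spaces to the $F_\sigma$-subspace $\mathbf{P}$ itself, which is such a space by Proposition \ref{s2p1}.

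For the cycle: (i)$\Rightarrow$(iv) and (iv)$\Rightarrow$(v) are trivial, since a closed set is a one-term $F_\sigma$ and (v) is witnessed by $D=A$. For (v)$\Rightarrow$(vi), I would write the dense $F_\sigma$-subset of $A$ as $D=\bigcup_{n\in\omega}D_n$ with each $D_n$ closed in $\mathbf{X}$; each $D_n$ is then $F_\sigma$, hence Loeb by Proposition \ref{s2p1}, and second-countable, metrizable and $T_1$, hence separable by Theorem \ref{s2t3}. The uniform construction in that proof supplies a countable enumeration $E_n$ of a dense subset of $D_n$, and then $E=\bigcup_{n\in\omega}E_n$ is a dense subset of $A$ presented as the $\omega$-indexed union $\bigcup_{n<\omega}E_n$ of countable (hence well-orderable) sets. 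For (vi)$\Rightarrow$(i), the well-ordered expression of $D$ together with the chosen orderings on its pieces supplies a (lexicographic) well-ordering of $D$, so Proposition \ref{s1p19}(i) gives $A\in\mathcal{C}l(\mathbf{X})$. The equivalence (i)$\Leftrightarrow$(ii) is parallel: closed subspaces being $F_\sigma$ are Loeb and separable by Theorem \ref{s2t3} (giving (i)$\Rightarrow$(ii)), while Proposition \ref{s1p19}(i) applied to a countable dense subset of a seq closed $A$ gives (ii)$\Rightarrow$(i). For (i)$\Leftrightarrow$(iii), the forward direction is immediate since $\mathcal{SC}^{\ast}(\mathbf{X})=\mathcal{C}l^{\ast}(\mathbf{X})$; for the converse I fix a compatible metric $d$ and an s-Loeb function $g$, and for a seq closed $A$ and $x\in\cl_{\mathbf{X}}(A)$ set $C_n=\scl_{\mathbf{X}}(B_d(x,1/n)\cap A)$, which is non-empty, sequentially closed, contained in $A$ (since $A$ is seq closed), and contained in the closed $d$-ball of radius $1/n$ about $x$; hence $x_n:=g(C_n)\in A$ and $x_n\to x$, forcing $x\in A$.

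For (i)$\Leftrightarrow$(vii), the direction (vii)$\Rightarrow$(i) is trivial on taking $\mathbf{P}=\mathbf{X}$. For (i)$\Rightarrow$(vii), I fix $\mathbf{P}$ with $P=\bigcup_{n\in\omega}P_n$ and each $P_n$ closed in $\mathbf{X}$. Because $P_n$ is closed in $\mathbf{X}$, the relative notions of sequential closure and closure for subsets of $P_n$ coincide with those taken in $\mathbf{X}$, so (i) for $\mathbf{X}$ transfers to each $P_n$. Then for $B\in\mathcal{SC}(\mathbf{P})$, a convergent sequence in $B\cap P_n$ within $P_n$ also converges in $\mathbf{P}$ (its limit lies in $P_n\subseteq P$), so $B\cap P_n\in\mathcal{SC}(P_n)=\mathcal{C}l(P_n)$, and hence $B\cap P_n$ is closed in $\mathbf{X}$ and in $\mathbf{P}$. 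Thus $B=\bigcup_{n\in\omega}(B\cap P_n)$ is $F_\sigma$ in $\mathbf{P}$, i.e., (iv) holds for $\mathbf{P}$; the already-established implication (iv)$\Rightarrow$(i) for the second-countable, metrizable, Loeb space $\mathbf{P}$ then gives $B\in\mathcal{C}l(\mathbf{P})$, so $\mathbf{P}$ is sequential. The final clause about second-countable, Cantor completely metrizable $\mathbf{X}$ follows at once from Theorem \ref{s1t12}, which guarantees that such $\mathbf{X}$ is Loeb. The most delicate step will be (iii)$\Rightarrow$(i), where the metric structure is essential to squeeze the s-Loeb outputs into a shrinking sequence of closed balls and thereby convert mere choice on sequentially closed sets into an actual convergent sequence in $A$.
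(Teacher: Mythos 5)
Your step (vi)$\Rightarrow$(i) contains a genuine gap. The hypothesis $\mathbf{SCDWOU}(\mathbf{X})$ only asserts that the dense set $D\subseteq A$ is \emph{expressible} as a well-ordered union $\bigcup_{\alpha\in\kappa}D_{\alpha}$ of sets each of which is \emph{well-orderable}; it does not hand you a simultaneous choice of well-orderings of the pieces. Without such a choice the ``lexicographic well-ordering of $D$'' cannot be formed: in $\mathbf{ZF}$ a well-ordered (even countable) union of well-orderable (even countable) sets need not be well-orderable (e.g.\ in the Feferman--Levy model $\mathbb{R}$ is a countable union of countable sets but is not well-orderable). So Proposition \ref{s1p19}(i) cannot be invoked on $D$ as you propose. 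Since your argument for (iv)$\Rightarrow$(i) and (v)$\Rightarrow$(i) runs through this link, and your (i)$\Rightarrow$(vii) invokes (iv)$\Rightarrow$(i) for the subspace $\mathbf{P}$, the gap propagates to those implications as well. The repair is to avoid well-ordering $D$ altogether: for each \emph{fixed} $\alpha$ one shows $A_{\alpha}=A\cap\cl_{\mathbf{X}}(D_{\alpha})$ is closed in $\mathbf{X}$ (here the mere existence of a well-ordering of the single set $D_{\alpha}$ suffices to extract a convergent sequence, so no choice is needed), and then one uses the countable base $\{U_n\}$ together with the Loeb function $f$ of $\mathbf{X}$, picking the least $\alpha_n$ with $U_n\cap A_{\alpha_n}\neq\emptyset$ and the point $f(A_{\alpha_n}\cap\cl_{\mathbf{X}}(U_n))$, to produce a countable dense subset of $A$; this gives (ii), and (ii)$\Rightarrow$(i) you already have.

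The remainder of your argument is correct, and in two places genuinely different from the paper's: your (iii)$\Rightarrow$(i) applies the s-Loeb function directly to the sequentially closed sets $\scl_{\mathbf{X}}(B_d(x,1/n)\cap A)$ squeezed into shrinking balls, producing a sequence converging to $x$ instead of first extracting a countable dense subset of $A$; and your (i)$\Rightarrow$(vii) reduces to (iv) for $\mathbf{P}$ rather than observing that the sequentially closed set is $F_{\sigma}$ in $\mathbf{X}$ itself and hence separable by Theorem \ref{s2t3}. Both variants are fine once (vi)$\Rightarrow$(i) is repaired, and your (v)$\Rightarrow$(vi) via the uniform separability construction of Theorem \ref{s2t3} is a legitimate alternative to the paper's direct definition of the sets $D_i$.
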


\begin{proof}
Let $f$ be a Loeb function of $\mathbf{X}$ and let $\mathcal{B}=\{U_n: n\in\omega\}$ be a base of $\mathbf{X}$.

That (i) implies (ii) follows from Theorem \ref{s2t3}. By Proposition \ref{s1p19}, (ii) implies (i).  Assuming (i), we obtain that $\mathcal{SC}^{\ast}(\mathbf{X})=Cl^{\ast}(\mathbf{X})$; thus, since $\mathbf{X}$ is Loeb, the family $\mathcal{SC}^{\ast}(\mathbf{X})$ has a choice function. Hence (i) implies (iii). \medskip

To show that (iii) implies (ii), we assume (iii) and fix a non-empty sequentially closed subset $A$ of $\mathbf{X}$. Let $N=\{n\in\omega: A\cap U_n\neq\emptyset\}$. We notice that, for every $n\in N$, $A\cap\cl_{\mathbf{X}}(U_n)\in\mathcal{SC}^{\ast}(\mathbf{X})$. Let $g$ be a choice function of $\mathcal{SC}^{\ast}(\mathbf{X})$. Then the set $D(A,g)=\{ g(A\cap\cl_{\mathbf{X}}(U_n)): n\in N\}$ is a countable dense subset of the subspace $\mathbf{A}$ of $\mathbf{X}$. Hence (iii) implies (ii). This completes the proof that conditions (i)--(iii) are all equivalent.\medskip

It is trivial that (i)$\rightarrow$(iv)$\rightarrow$(v).  Now, assume (v) and fix $A\in\mathcal{SC}^{\ast}(\mathbf{X})$. By (v), there exists a set $F\subseteq A$ such that $F$ is of type $F_{\sigma}$ in $\mathbf{X}$ and $F$ is dense in the subspace $\mathbf{A}$ of $\mathbf{X}$.  Let $F=\bigcup_{i\in\omega}F_i$. For every $i\in\omega$, let $N_i=\{n\in\omega: F_i\cap U_n\neq\emptyset\}$ and $D_i=\{f(F_i\cap\cl_{\mathbf{X}}(U_n)): n\in N_i\}$. For $i\in\omega$, let $h_i: N_i\to D_i$ be defined as follows:  for every $n\in N_i$, $h_i(n)=f(F_i\cap\cl_{\mathbf{X}}(U_n))$. Then, given $i\in\omega$, we have a surjection $h_i$ of the subset $N_i$ of $\omega$ onto the set $D_i$, so we can effectively define a well-ordering $\leq_i$ on $D_i$. The set $D=\bigcup_{i\in\omega}D_i$ is a countable union of well-orderable sets and $D$ is dense in $A$. Hence (v) implies (vi). Let us show that (vi) implies (ii).\medskip

Assume (vi) and fix $A\in\mathcal{SC}^{\ast}(\mathbf{X})$. By (vi), there exist a von Neumann ordinal number $\kappa$ and a family $\{ D_{\alpha}: \alpha\in\kappa\}$ such that, for every $\alpha\in\kappa$, $D_{\alpha}$ is a well-orderable subset of $A$, and the set $D=\bigcup_{\alpha\in\kappa}D_{\alpha}$ is dense in the subspace $\mathbf{A}$ of $\mathbf{X}$. We fix $\alpha\in\kappa$ and put $A_\alpha=A\cap\cl_{\mathbf{X}}(D_{\alpha})$. Suppose that $x\in\cl_{\mathbf{X}}(A_{\alpha})$. Then $x\in\cl_{\mathbf{X}}(D_{\alpha})$. Since $D_{\alpha}$ is well-orderable, we can define a sequence $(x_n)_{n\in\omega}$ of members of $D_{\alpha}$ such that $\lim\limits_{n\to+\infty}x_n=x$  in $\mathbf{X}$. Since $A$ is sequentially closed in $\mathbf{X}$, we deduce that $x\in A$. This shows that the set $A_{\alpha}$ is closed in $\mathbf{X}$. 

Let $M=\{n\in\omega: A\cap U_n\neq\emptyset\}$. For every $n\in M$, let $C_n=\{\alpha\in\kappa: U_n\cap A_{\alpha}\neq\emptyset\}$. That the set $D$ is dense in $\mathbf{A}$ implies that, for every $n\in M$, the set $C_n$ is non-empty, so we can define $\alpha_n$ as the first ordinal number in $C_n$. The set $\{ f(A_{\alpha_n}\cap\cl_{\mathbf{X}}(U_n)): n\in M\}$ is countable and dense in $\mathbf{A}$. Hence (vi) implies (ii). In consequence, conditions (i)--(vi) are all equivalent. Furthermore, it is obvious that (vii) implies (i).\medskip

Assume (i). Let $\{H_n: n\in\omega\}$ be a family of closed subsets of $\mathbf{X}$ and let $P=\bigcup_{n\in\omega}H_n$. Let  $A$ be a sequentially closed set in the subspace $\mathbf{P}$ of $\mathbf{X}$. For every $n\in\omega$, the set $H_n\cap A$ is sequentially closed in $\mathbf{X}$, so it is closed in $\mathbf{X}$. This implies that $A=\bigcup_{n\in\omega}(H_n\cap A)$ is of type $F_{\sigma}$ in $\mathbf{X}$. By Theorem \ref{s2t3}, the subspace $\mathbf{A}$ of $\mathbf{X}$ is separable. This, together with Proposition \ref{s1p19}, implies that $A$ is closed in $\mathbf{P}$. Hence (i) implies (vii). In consequence, (i)-(vii) are all equivalent. Thus, to conclude the proof concerning the case when $\mathbf{X}$ is Cantor completely metrizable, it suffices to apply Theorem \ref{s1t12}.
\end{proof}

\begin{proposition}
\label{s3p5}
$[\mathbf{ZF}]$ Let $\mathbf{X}$ be a first-countable Loeb $T_3$-space. Then the following conditions are all equivalent:
\begin{enumerate}
\item[(i)] $\mathcal{S}(\mathbf{X})$;
\item[(ii)] for every sequentially closed subset $A$ of $\mathbf{X}$ and every
point $x\in \cl_{\mathbf{X}}(A)$, there exists a family $\{F_n: n\in\omega\}$ of closed sets of $\mathbf{X}$ such that $x\in\cl_{\mathbf{X}}(\bigcup_{n\in\omega}F_n)$ and, for every $n\in\omega$, $F_n\subseteq A$;
\item[(iii)] for every sequentially closed subset $A$ of $\mathbf{X}$ and every
point $x\in \cl_{\mathbf{X}}(A)$, there exists a family $\{F_n: n\in\omega\}$ of well-orderable subsets of $A$ such that $x\in\cl_{\mathbf{X}}(\bigcup_{n\in\omega}F_n)$.
\end{enumerate}
In particular, for every Cantor completely metrizable, second-countable space $\mathbf{X}$,  conditions (i)-(iii) are all equivalent.
\end{proposition}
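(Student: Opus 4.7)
My plan is to prove the cycle $\mathrm{(i)}\Rightarrow\mathrm{(ii)}\Rightarrow\mathrm{(iii)}\Rightarrow\mathrm{(i)}$ and then deduce the particular clause from Theorem \ref{s1t12}. The three tools I would use throughout are: a Loeb function $f$ of $\mathbf{X}$; a decreasing neighbourhood base $\{U_k : k\in\omega\}$ of the point $x$ currently under consideration with $\cl_{\mathbf{X}}(U_{k+1})\subseteq U_k$ (available because $\mathbf{X}$ is first-countable and $T_3$); and Proposition \ref{s1p19}(i).

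The implication $\mathrm{(i)}\Rightarrow\mathrm{(ii)}$ is immediate: if $A\in\mathcal{SC}(\mathbf{X})$, then sequentiality forces $A\in\mathcal{C}l(\mathbf{X})$, hence $x\in A$ and the constant family $F_n=A$ witnesses $\mathrm{(ii)}$. For $\mathrm{(ii)}\Rightarrow\mathrm{(iii)}$, given closed $F_n\subseteq A$ with $x\in\cl_{\mathbf{X}}(\bigcup_n F_n)$, I would refine each $F_n$ to the countable set
\[
G_n=\{f(F_n\cap\cl_{\mathbf{X}}(U_{k+1})) : k\in\omega,\ F_n\cap\cl_{\mathbf{X}}(U_{k+1})\neq\emptyset\}.
\]
Each $G_n\subseteq F_n\subseteq A$ is well-orderable, and a short verification using the nested base shows $x\in\cl_{\mathbf{X}}(\bigcup_n G_n)$.

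The substantive step is $\mathrm{(iii)}\Rightarrow\mathrm{(i)}$. Given $A\in\mathcal{SC}(\mathbf{X})$ with $x\in\cl_{\mathbf{X}}(A)$ and well-orderable $F_n\subseteq A$ with $x\in\cl_{\mathbf{X}}(\bigcup_n F_n)$, my approach is first to upgrade each $F_n$ to a closed set sitting inside $A$. Put $B_n=\cl_{\mathbf{X}}(F_n)\cap A$; this is sequentially closed in $\mathbf{X}$, and since $B_n\subseteq\cl_{\mathbf{X}}(F_n)$ the well-orderable set $F_n$ is dense in the subspace $B_n$. Proposition \ref{s1p19}(i) then forces $B_n\in\mathcal{C}l(\mathbf{X})$, which yields the crucial enclosure $\cl_{\mathbf{X}}(F_n)\subseteq B_n\subseteq A$. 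With this in hand, I would set $n(k)=\min\{n\in\omega : F_n\cap U_{k+1}\neq\emptyset\}$ (well defined because $x\in\cl_{\mathbf{X}}(\bigcup_n F_n)$) and $y_k=f(\cl_{\mathbf{X}}(F_{n(k)})\cap\cl_{\mathbf{X}}(U_{k+1}))$. Each $y_k$ lies in $A\cap U_k$, so $(y_k)_{k\in\omega}$ converges to $x$ in $\mathbf{X}$, and sequential closedness of $A$ delivers $x\in A$.

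The bottleneck I anticipate is exactly the passage from the well-orderable $F_n$ to the closed enclosure $\cl_{\mathbf{X}}(F_n)\subseteq A$: in $\mathbf{ZF}$ I cannot simultaneously choose well-orderings of the $F_n$ to imitate the Loeb function by hand. Proposition \ref{s1p19}(i), which processes one well-orderable dense subset at a time and does not require uniform well-orderings, is precisely the tool that sidesteps this; once the enclosure is available, the Loeb function supplies a uniform selection with no further choice. For the final clause, Theorem \ref{s1t12} gives the Loeb property of a second-countable Cantor completely metrizable space, while metrizability furnishes both first-countability and the $T_3$ axiom, so the general statement applies.
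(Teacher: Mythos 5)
Your proposal is correct and uses essentially the same tools as the paper's proof: the Loeb function applied to sets of the form $F_{n}\cap\cl_{\mathbf{X}}(U_k)$ with a nested neighbourhood base to manufacture a convergent sequence, and Proposition \ref{s1p19}(i) to enclose $\cl_{\mathbf{X}}(F_n)$ inside the sequentially closed set $A$. The only difference is cosmetic: you prove the cycle (i)$\Rightarrow$(ii)$\Rightarrow$(iii)$\Rightarrow$(i), whereas the paper proves (ii)$\Rightarrow$(i) and (iii)$\Rightarrow$(ii) separately, so your direct (ii)$\Rightarrow$(iii) refinement via the Loeb function is an extra (correct) step the paper does not need.
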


\begin{proof} Let $A\in\mathcal{SC}^{\ast}(\mathbf{X})$ and let $x\in\cl_{\mathbf{X}}(A)$. Let $\mathcal{B}(x)=\{U_i: i\in\omega\}$ be a base of neighborhoods of $x$ in $\mathbf{X}$ such that, for every $i\in\omega$, $\cl_{\mathbf{X}}(U_{i+1})\subseteq U_{i}$. Let $f$ be a Loeb function of $\mathbf{X}$. \medskip

If (i) holds, then $x\in A$, so we can put $F_n=\{x\}$ for every $n\in\omega$. Hence (i) implies both (ii) and (iii).\medskip

To show that (ii) implies (i), assuming (ii), we fix a family $\{F_n: n\in\omega\}$ of closed sets of $\mathbf{X}$ such that $x\in\cl_{\mathbf{X}}(\bigcup_{n\in\omega}F_n)$ and, for every $n\in\omega$, $F_n\subseteq A$. For $i\in\omega$, let $n_i=\min\{n\in\omega: U_i\cap F_n\neq\emptyset\}$ and let $x_i=f(F_{n_i}\cap\cl_{\mathbf{X}}(U_i))$. Then $(x_i)_{i\in\omega}$ is a sequence of points of $A$ which converges in $\mathbf{X}$ to $x$. Since $A$ is sequentially closed, $x\in A$. Hence (ii) implies (i). \medskip

Now, we assume (iii) and fix a family $\{C_n: n\in\omega\}$ of well-orderable subsets of $A$ such that $x\in\cl_{\mathbf{X}}(\bigcup_{n\in\omega}C_n)$.  Since $A\in\mathcal{SC}(\mathbf{X})$ and, for every $n\in\omega$, the set $C_n$ is a well-orderable subset of $A$, arguing in much the same way, as in the proof of Proposition \ref{s1p19}, we deduce that, for every $n\in\omega$, $\cl_{\mathbf{X}}(C_n)\subseteq A$. For every $n\in\omega$, we put $F_n=\cl_{\mathbf{X}}(C_n)$ to see that (ii) is satisfied. Hence (iii) implies (ii) and, in consequence, conditions (i)-(iii) are all equivalent.  We conclude the proof by applying Theorem \ref{s1t12}.
\end{proof}

\section{Forms (9)--(17) of Definition \ref{s1d8}}
\label{s4}

In this section, we are concerned mainly with forms from Definitions \ref{s1d7} and \ref{s1d8} for metrizable spaces; however, if possible, we generalize our results to topological spaces from a wider class than the class of metrizable ones.

\begin{theorem}
\label{s4t1}
$[\mathbf{ZF}]$ Let $\mathbf{X}$ be a first-countable Hausdorff space such that $\mathcal{C}l(\mathbf{X})\subseteq\mathcal{G}_{\delta}(\mathbf{X})$. Then the following conditions are satisfied:
\begin{enumerate}
\item[(i)]  $\mathbf{IDI}(\mathbf{X})\rightarrow (\mathbf{\Pi G}_{\delta}(\mathbf{X})\wedge \mathbf{\Pi F}_{\sigma}(\mathbf{X})\wedge\mathbf{ISCIC}(\mathbf{X}))$;
\item[(ii)] if $\mathbf{X}$ is Cantor completely metrizable and second-countable, then 
$$(\mathbf{\Pi G}_{\delta}(\mathbf{X})\vee \mathbf{\Pi F}_{\sigma}(\mathbf{X})\vee\mathbf{ISCIC}(\mathbf{X}))\rightarrow \mathbf{IDI}(\mathbf{X}).$$
\end{enumerate}
\end{theorem}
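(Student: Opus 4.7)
The plan for (i) is to start from an arbitrary infinite set $A\subseteq X$ and, invoking $\mathbf{IDI}(\mathbf{X})$, fix an injection $\omega\to A$ whose image is a denumerable set $B=\{a_n:n\in\omega\}$. The next step is to split on whether $B$ has an accumulation point in $\mathbf{X}$. If it has none, then $B$ is closed in $\mathbf{X}$, hence $G_{\delta}$ by the hypothesis $\mathcal{C}l(\mathbf{X})\subseteq\mathcal{G}_{\delta}(\mathbf{X})$, while $B=\bigcup_{n\in\omega}\{a_n\}$ is $F_{\sigma}$ because $\mathbf{X}$ is $T_1$; the set $B$ then simultaneously witnesses $\mathbf{\Pi G}_{\delta}(\mathbf{X})$, $\mathbf{\Pi F}_{\sigma}(\mathbf{X})$, and (when $A$ is sequentially closed) $\mathbf{ISCIC}(\mathbf{X})$. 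If $B$ does have an accumulation point $x\in X$, I would use first-countability at $x$ together with the fixed enumeration of $B$ to recursively pick a subsequence $(a_{n_k})_{k\in\omega}$ of pairwise distinct points, all different from $x$, converging to $x$; this recursion rests on the Hausdorff fact that every neighborhood of an accumulation point of $B$ meets $B\setminus\{x\}$ in infinitely many terms. With $C=\{a_{n_k}:k\in\omega\}$, the set $C\cup\{x\}$ is closed in $\mathbf{X}$, hence $G_{\delta}$ by hypothesis.

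From here the three conclusions of (i) are read off. For $\mathbf{\Pi F}_{\sigma}(\mathbf{X})$ the original $B$ already works. For $\mathbf{\Pi G}_{\delta}(\mathbf{X})$, the identity $C=(C\cup\{x\})\cap(X\setminus\{x\})$ exhibits $C$ as a $G_{\delta}$ set (intersection of a $G_{\delta}$ with an open set), infinite, and contained in $A$. For $\mathbf{ISCIC}(\mathbf{X})$, sequential closedness of $A$ forces $x\in A$, so $C\cup\{x\}\subseteq A$ is the required infinite closed subset of $\mathbf{X}$.

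For (ii), the strategy is to leverage Corollary \ref{s2c10}: every infinite $F_{\sigma}$- or $G_{\delta}$-subset of a Cantor completely metrizable second-countable space is Dedekind-infinite. Each of $\mathbf{\Pi F}_{\sigma}(\mathbf{X})$ and $\mathbf{\Pi G}_{\delta}(\mathbf{X})$ then yields $\mathbf{IDI}(\mathbf{X})$ immediately, since Dedekind-infiniteness of a subset passes to the superset. The implication $\mathbf{ISCIC}(\mathbf{X})\rightarrow\mathbf{IDI}(\mathbf{X})$ is proved by contradiction: assume some infinite $A\subseteq X$ is Dedekind-finite, and deduce that $A$ is automatically sequentially closed. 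Indeed, for any sequence $(x_n)$ of points of $A$ converging to some $x\in X$, the range of the sequence is either finite (in which case $x$ must equal one of its values by Hausdorffness, so $x\in A$) or infinite (in which case, by sending each value to the least index at which it occurs, the range becomes a denumerable, hence Dedekind-infinite, subset of $A$, contradicting the hypothesis on $A$). Applying $\mathbf{ISCIC}(\mathbf{X})$ to the now-sequentially-closed $A$ produces an infinite closed subset $F\subseteq A$, which is $F_{\sigma}$ and, by Corollary \ref{s2c10}, Dedekind-infinite, so $A$ itself is Dedekind-infinite, the desired contradiction.

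The main obstacle I foresee is executing the recursive subsequence extraction in (i) in a manner compatible with $\mathbf{ZF}$: only a single enumeration of $B$ and a single fixed decreasing countable neighborhood base at $x$ are available at the outset, so each index $n_k$ must be defined as the least natural number satisfying an explicit condition, and the process involves no hidden choice. The second delicate point, exploited in (ii), is the implicit ``Dedekind-finite infinite sets in a Hausdorff space are sequentially closed'' lemma, which is what permits $\mathbf{ISCIC}(\mathbf{X})$ to be applied to $A$ directly rather than to some enlargement of it.
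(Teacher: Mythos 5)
Your proof is correct and follows essentially the same route as the paper's: use $\mathbf{IDI}(\mathbf{X})$ to produce a denumerable subset (automatically $F_{\sigma}$ by $T_1$), obtain an infinite closed subset either directly or by an effective extraction of a convergent subsequence, and for (ii) combine Corollary \ref{s2c10} with the observation that an infinite Dedekind-finite set in a Hausdorff space is sequentially closed. The only (harmless) divergence is in the $G_{\delta}$ case split of (i): you distinguish according to whether $B$ has an accumulation point anywhere in $X$ and then pass to $C=(C\cup\{x\})\cap(X\setminus\{x\})$, whereas the paper distinguishes according to whether the subspace $B$ is discrete and, in the discrete case, writes $B$ as the intersection of its closure (a $G_{\delta}$) with an open set.
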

\begin{proof}
Let $A$ be an infinite subset of $\mathbf{X}$.

(i) Assume $\mathbf{IDI}(\mathbf{X})$. Then $A$ is Dedekind-infinite, so, there exists an injection $\psi:\omega\to A$. Let $F=\psi[\omega]$. Then $F\in\mathcal{F}_{\sigma}(\mathbf{X})$ and $F\subseteq A$. This shows that $\mathbf{\Pi F}_{\sigma}(\mathbf{X})$ holds.  We are going to show that $F$ contains an infinite $G_{\delta}$-subset of $\mathbf{X}$. 

 Suppose  that the subspace  $\mathbf{F}$ of $\mathbf{X}$ is not discrete. Let $x_0\in F$ be an accumulation point of $F$ in $\mathbf{X}$. Then, since $X$ is first-countable and $F$ is denumarable, we can define a sequence $(x_n)_{n\in\mathbb{N}}$ of points of $F\setminus\{x_0\}$ which converges in $\mathbf{X}$ to the point $x_0$. The set $B=\{x_n: n\in\omega\}$ is closed in the space $\mathbf{X}$, so $B$  is of type $G_{\delta}$ in $\mathbf{X}$. Clearly, $B$ is an infinite subset of $A$.

Suppose that $\mathbf{F}$ is discrete. Let $D=\cl_{\mathbf{X}}(F)$. Then $F$ is open in the subspace $\mathbf{D}$ of $\mathbf{X}$. Since  $D$ is of type $G_{\delta}$ in $\mathbf{X}$ and $F$ is open in $\mathbf{D}$, $F$ is of type $G_{\delta}$ in $\mathbf{X}$. Hence $\mathbf{\Pi G}_{\delta}(\mathbf{X})$ holds.

Suppose that $A\in\mathcal{SC}(\mathbf{X})$. If $F\notin\mathcal{C}l(\mathbf{X})$, there exist $y_0\in\cl_{\mathbf{X}}(F)\setminus F$ and a sequence $(y_n)_{n\in\mathbb{N}}$ of points of $F$ which converges in $\mathbf{X}$ to $y_0$. Then $y_0\in A$ because $A$ is sequentially closed in $\mathbf{X}$. Therefore, the set $H=\{y_n: n\in\omega\}$ is an infinite subset of $A$ such that $H$ is closed in $\mathbf{X}$. Hence $\mathbf{ISCIC}(\mathbf{X})$ also holds. \medskip

(ii) Now, assume that $\mathbf{X}$ is a Cantor completely metrizable, second-countable space. First, suppose that $\mathbf{\Pi G}_{\delta}(\mathbf{X})\vee\mathbf{\Pi F}_{\sigma}(\mathbf{X})$ holds. Then there exists an infinite subset $E$ of $A$ such that $E\in\mathcal{F}_{\sigma}(\mathbf{X})\cup\mathcal{G}_{\delta}(\mathbf{X})$. It follows from Corollary \ref{s2c10} that $E$ is Dedekind-infinite, so $A$ is Dedekind-infinite, too. \medskip

Finally, to show that $\mathbf{ISCIC}(\mathbf{X})$ implies $\mathbf{IDI}(\mathbf{X})$, suppose that $A$ is Dedekind-finite. Then $A$ is sequentially closed in $\mathbf{X}$. Let $C$ be an infinite subset of $A$. Then $C$ is Dedekind-finite, so, by Corollary \ref{s2c10}, $C$ is not closed in $\mathbf{X}$. Hence, if $\mathbf{ISCIC}(\mathbf{X})$ holds, then $A$ is Dedekind-infinite.
\end{proof}

\begin{corollary}
\label{s4c2}
$[\mathbf{ZF}]$ For every second-countable,  Cantor completely metrizable space $\mathbf{X}$, the statements $\mathbf{IDI}(\mathbf{X})$, $\mathbf{\Pi G}_{\delta}(\mathbf{X})$, $\mathbf{\Pi F}_{\delta}(\mathbf{X})$ and $\mathbf{ISCIC}(\mathbf{X})$ are all equivalent.
\end{corollary}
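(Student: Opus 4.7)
The plan is to derive the corollary directly from Theorem \ref{s4t1} by verifying that a second-countable, Cantor completely metrizable space $\mathbf{X}$ meets the hypotheses of both parts of that theorem. First, I would note that $\mathbf{X}$, being metrizable, is first-countable and Hausdorff, and every closed subset of a metrizable space is of type $G_{\delta}$ (write $F=\bigcap_{n\in\mathbb{N}}\{x\in X:d(x,F)<1/n\}$ for any compatible metric $d$); hence $\mathcal{C}l(\mathbf{X})\subseteq\mathcal{G}_{\delta}(\mathbf{X})$. Thus $\mathbf{X}$ satisfies the standing assumption of Theorem \ref{s4t1}.

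Next, I would apply Theorem \ref{s4t1}(i) to obtain the three implications
\[
\mathbf{IDI}(\mathbf{X})\rightarrow \mathbf{\Pi G}_{\delta}(\mathbf{X}),\quad \mathbf{IDI}(\mathbf{X})\rightarrow \mathbf{\Pi F}_{\sigma}(\mathbf{X}),\quad \mathbf{IDI}(\mathbf{X})\rightarrow \mathbf{ISCIC}(\mathbf{X}).
\]
Since $\mathbf{X}$ is moreover second-countable and Cantor completely metrizable, Theorem \ref{s4t1}(ii) yields the converse implications
\[
\mathbf{\Pi G}_{\delta}(\mathbf{X})\rightarrow\mathbf{IDI}(\mathbf{X}),\quad \mathbf{\Pi F}_{\sigma}(\mathbf{X})\rightarrow\mathbf{IDI}(\mathbf{X}),\quad \mathbf{ISCIC}(\mathbf{X})\rightarrow\mathbf{IDI}(\mathbf{X}).
\]
Concatenating these gives the equivalence of all four statements.

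Since Theorem \ref{s4t1} has just been established, there is essentially no obstacle to this corollary; the only substantive point is recalling the elementary fact that closed sets are $G_{\delta}$ in metrizable spaces, which is needed to invoke part (i). The proof can therefore be written as two short sentences that pass each of the four statements through the cycle $\mathbf{IDI}(\mathbf{X})\to\mathbf{\Pi G}_{\delta}(\mathbf{X})\to\mathbf{IDI}(\mathbf{X})$ and similarly for the remaining two forms.
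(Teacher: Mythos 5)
Your proof is correct and is exactly the intended argument: the paper states Corollary \ref{s4c2} without a separate proof precisely because it follows immediately from Theorem \ref{s4t1}(i) and (ii) once one observes that a metrizable space is first-countable, Hausdorff, and satisfies $\mathcal{C}l(\mathbf{X})\subseteq\mathcal{G}_{\delta}(\mathbf{X})$. Your explicit verification of that last hypothesis via $F=\bigcap_{n\in\mathbb{N}}\{x\in X: d(x,F)<1/n\}$ is a welcome (and choice-free) addition, but the route is the same as the paper's.
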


\begin{proposition}
\label{s4p3}
$[\mathbf{ZF}]$
\begin{enumerate}
\item[(i)] For every topological space $\mathbf{X}$, the following implication is true:  
$$\mathbf{UIC}(\mathbf{X})\rightarrow\mathbf{UISC}(\mathbf{X}).$$
\item[(ii)] $\mathbf{AC}\leftrightarrow\mathbf{F_{\sigma}L}_M\leftrightarrow\mathbf{G_{\delta}L}_M$.
\item[(iii)] $\mathbf{CAC}\leftrightarrow\mathbf{CF_{\sigma}L}_M\leftrightarrow\mathbf{CG_{\delta}L}_M$.
\end{enumerate}
\end{proposition}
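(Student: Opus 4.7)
For part (i), the plan is to observe that, in any topological space, every closed set is sequentially closed: if $F$ is closed, $x_n \in F$ and $x_n \to x$, then $x \in \cl_{\mathbf{X}}(F) = F$. Hence any infinite closed subset of an uncountable $A \subseteq X$ is, in particular, an infinite sequentially closed subset of $\mathbf{X}$ contained in $A$, and $\mathbf{UIC}(\mathbf{X})$ immediately yields $\mathbf{UISC}(\mathbf{X})$.

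For part (ii), the implications $\mathbf{AC} \to \mathbf{F}_{\sigma}\mathbf{L}_M$ and $\mathbf{AC} \to \mathbf{G}_{\delta}\mathbf{L}_M$ are trivial, because $\mathbf{AC}$ supplies a choice function for any non-empty family whatsoever. The crucial direction is each of the reverse implications, and the key idea is to test the Loeb hypothesis on discrete metric spaces. Given an arbitrary set $X$, put $\mathbf{X} = \langle X, \mathcal{P}(X)\rangle$; this space is metrizable via the discrete $\{0,1\}$-valued metric. In this topology every subset of $X$ is simultaneously open and closed, so $\mathcal{F}_{\sigma}^{\ast}(\mathbf{X}) = \mathcal{G}_{\delta}^{\ast}(\mathbf{X}) = \mathcal{P}(X)\setminus\{\emptyset\}$. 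Hence either of $\mathbf{F}_{\sigma}\mathbf{L}(\mathbf{X})$ or $\mathbf{G}_{\delta}\mathbf{L}(\mathbf{X})$ produces a choice function on $\mathcal{P}(X)\setminus\{\emptyset\}$, which by Proposition \ref{s1:WOT} is equivalent to saying that $X$ is well-orderable. Since this argument is carried out for an \emph{arbitrary} $X$, we conclude that every set is well-orderable; the usual well-ordering argument (well-order the union of a family $\{A_j : j \in J\}$ of non-empty sets and pick the minimal element in each $A_j$) then yields $\mathbf{AC}$.

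For part (iii), the approach is entirely parallel but restricted to countable families. The forward implications $\mathbf{CAC} \to \mathbf{CF}_{\sigma}\mathbf{L}_M$ and $\mathbf{CAC} \to \mathbf{CG}_{\delta}\mathbf{L}_M$ are again immediate. For the converse, given a denumerable family $\mathcal{A} = \{A_n : n\in\omega\}$ of non-empty sets, we replace it with the pairwise disjoint family $\{\{n\}\times A_n : n \in \omega\}$ (a choice function on the latter evidently gives one on $\mathcal{A}$), set $X = \bigcup_{n\in\omega}(\{n\}\times A_n)$, and endow $X$ with the discrete topology, which is metrizable. Every $\{n\}\times A_n$ is then a non-empty $F_{\sigma}$-set (indeed a closed set) and a $G_{\delta}$-set in $\mathbf{X}$, so either $\mathbf{CF}_{\sigma}\mathbf{L}(\mathbf{X})$ or $\mathbf{CG}_{\delta}\mathbf{L}(\mathbf{X})$ applied to the countable family $\{\{n\}\times A_n : n \in \omega\}$ delivers the required choice function, proving $\mathbf{CAC}$.

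I do not foresee a substantive obstacle: the entire argument rests on the observation that the discrete metric trivialises both the $F_{\sigma}$ and $G_{\delta}$ hierarchies, so that the ostensibly weaker Loeb-type principles collapse back to full choice on the underlying set. The only point demanding a line of care is reducing $\mathbf{AC}$ (respectively $\mathbf{CAC}$) for arbitrary families to $\mathbf{AC}(X)$ (respectively choice on a single countable family of subsets of a single set), handled by the standard union construction.
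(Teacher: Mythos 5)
Your proposal is correct and rests on the same key observation as the paper's proof: in a discrete (hence metrizable) space every non-empty subset is both $F_{\sigma}$ and $G_{\delta}$, so the Loeb-type hypotheses collapse to full choice on the underlying set. The paper simply takes $X=\bigcup\mathcal{A}$ and restricts the choice function on $\mathcal{F}_{\sigma}^{\ast}(\mathbf{X})$ directly to $\mathcal{A}$, so your detours through well-orderability in (ii) and disjointification in (iii) are harmless but unnecessary.
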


\begin{proof}
It is obvious that (i) holds,  $\mathbf{AC}$ implies both $\mathbf{F_{\sigma}L}_M$ and $\mathbf{G_{\delta}L}_M$, while $\mathbf{CAC}$ implies both $\mathbf{CF_{\sigma}L}_M$ and $\mathbf{CG_{\delta}L}_M$.
For the proof of (ii) and (iii), let us consider any non-empty family $\mathcal{A}$ of non-empty sets, put $X=\bigcup\mathcal{A}$ and $\mathbf{X}=\langle X, \mathcal{P}(X)\rangle$. Then $\mathbf{X}$ is metrizable and $\mathcal{A}\subseteq\mathcal{F}^{\ast}_{\sigma}(\mathbf{X})\cap\mathcal{G}^{\ast}_{\delta}(\mathbf{X})$. Hence $\mathbf{F_{\sigma}L}_M\vee\mathbf{G_{\delta}L}_M$ implies that $\mathcal{A}$ has a choice function, while $\mathbf{CF_{\sigma}L}_M\vee\mathbf{CG_{\delta}L}_M$ implies that if $\mathcal{A}$ is countable, then $\mathcal{A}$ has a choice function.
\end{proof}

\begin{theorem}
\label{s4t4}
$[\mathbf{ZF}]$ Let $\mathbf{X}$ be a Hausdorff first-countable space. Then:
\begin{enumerate}
\item[(i)] $\mathbf{UIC}(\mathbf{X})\leftrightarrow (\mathbf{ISCIC}(\mathbf{X})\wedge\mathbf{UISC}(\mathbf{X}))$;
\item[(ii)] $(\mathbf{IDI}(\mathbf{X})\wedge\mathbf{UISC}(\mathbf{X}))\rightarrow \mathbf{UIC}(\mathbf{X})$;
\item[(iii)] if $\mathbf{X}$ is Cantor completely metrizable and second-countable, then 
  $$(\mathbf{IDI}(\mathbf{X})\wedge\mathbf{UISC}(\mathbf{X}))\leftrightarrow \mathbf{UIC}(\mathbf{X}).$$
\end{enumerate}
\end{theorem}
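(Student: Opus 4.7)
My plan is to prove parts (i)–(iii) in sequence, with part (ii) carrying the main technical content and parts (i) and (iii) being essentially repackagings. For part (i), the implication $\mathbf{UIC}(\mathbf{X})\to\mathbf{UISC}(\mathbf{X})$ is already Proposition \ref{s4p3}(i), and the backward direction $(\mathbf{ISCIC}(\mathbf{X})\wedge\mathbf{UISC}(\mathbf{X}))\to\mathbf{UIC}(\mathbf{X})$ is an immediate two-step composition: apply $\mathbf{UISC}(\mathbf{X})$ to an uncountable $A\subseteq X$ to get an infinite sequentially closed $B\subseteq A$, then apply $\mathbf{ISCIC}(\mathbf{X})$ to $B$. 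The remaining implication $\mathbf{UIC}(\mathbf{X})\to\mathbf{ISCIC}(\mathbf{X})$ I would split into cases on the cardinality of an infinite $A\in\mathcal{SC}(\mathbf{X})$: if $A$ is uncountable, $\mathbf{UIC}(\mathbf{X})$ applies directly; if $A$ is countable (hence denumerable, hence well-orderable), then Proposition \ref{s1p19}(i), used with $F=A$, shows that $A$ itself is already closed in $\mathbf{X}$.

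For part (ii), given an uncountable $A\subseteq X$, I would first apply $\mathbf{UISC}(\mathbf{X})$ to extract an infinite sequentially closed $B\subseteq A$, and then use $\mathbf{IDI}(\mathbf{X})$ to choose an injection $\psi\colon\omega\to B$; set $D=\psi[\omega]$ and $C=\cl_{\mathbf{X}}(D)\cap B$. The goal is to show that $C$ is an infinite closed subset of $A$. The key verification is that $C$ remains sequentially closed \emph{in the full space} $\mathbf{X}$ and not merely in the subspace $B$, since Proposition \ref{s1p19}(i) is a statement about the ambient space: any sequence in $C$ converging in $\mathbf{X}$ has its limit in $B$ because $B\in\mathcal{SC}(\mathbf{X})$, and in $\cl_{\mathbf{X}}(D)$ because this set is closed in $\mathbf{X}$, so the limit lies in $C$. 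The identity $\cl_C(D)=\cl_{\mathbf{X}}(D)\cap C=C$ shows that the well-orderable set $D$ is dense in the subspace $\mathbf{C}$, and Proposition \ref{s1p19}(i) then yields $C\in\mathcal{C}l(\mathbf{X})$; infiniteness is immediate from $D\subseteq C$.

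For part (iii), one direction is just (ii). For the converse, assuming $\mathbf{UIC}(\mathbf{X})$, part (i) gives both $\mathbf{ISCIC}(\mathbf{X})$ and $\mathbf{UISC}(\mathbf{X})$, and Corollary \ref{s4c2} upgrades $\mathbf{ISCIC}(\mathbf{X})$ to $\mathbf{IDI}(\mathbf{X})$ under the additional Cantor-completely-metrizable and second-countable hypotheses. The only genuinely delicate point throughout the argument is the sequential-closedness check for $C$ in (ii): this is what lets the proof bypass any global countable choice assumption, since $B$ being sequentially closed in $\mathbf{X}$ is exactly the property needed to pull closures back into $B$ without making any further arbitrary selections.
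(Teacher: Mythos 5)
Your proof is correct, and its overall decomposition coincides with the paper's, but the two steps that carry real content are executed by a different mechanism. For $\mathbf{UIC}(\mathbf{X})\rightarrow\mathbf{ISCIC}(\mathbf{X})$ the paper splits an infinite sequentially closed $A$ into the Dedekind-finite case (then $A$ is uncountable and $\mathbf{UIC}(\mathbf{X})$ applies) and the Dedekind-infinite case, where it extracts a denumerable $F\subseteq A$ and, if $F$ is not closed, builds a convergent sequence whose limit lies in $A$ by sequential closedness; the sequence together with its limit is the desired infinite closed set (closedness here uses that $\mathbf{X}$ is Hausdorff). Your countable/uncountable split sends every uncountable $A$ directly to $\mathbf{UIC}(\mathbf{X})$ and settles the countable case in one line with Proposition \ref{s1p19}(i), so you never need the explicit sequence construction in part (i). The same contrast appears in (ii): the paper again produces a single convergent sequence plus its limit, whereas you form $C=\cl_{\mathbf{X}}(D)\cap B$, verify it is sequentially closed with $D$ dense in it, and apply Proposition \ref{s1p19}(i); your variant is a little more structural and, as you observe, does not use the Hausdorff hypothesis at all in (ii), while the paper's yields a more concrete witness. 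Finally, in (iii) you obtain $\mathbf{IDI}(\mathbf{X})$ from $\mathbf{UIC}(\mathbf{X})$ by passing through $\mathbf{ISCIC}(\mathbf{X})$ and Corollary \ref{s4c2}, where the paper argues directly from Corollary \ref{s2c10}; both routes rest on the same underlying fact, so this is only a difference in bookkeeping.
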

\begin{proof}
(i) To begin, assume $\mathbf{UIC}(\mathbf{X})$. Then $\mathbf{UISC}(\mathbf{X})$ holds (cf. Proposition \ref{s4p3}(i)). Let $A$ be an infinite sequentially closed subset of $\mathbf{X}$. If $A$ is Dedekind-finite, then $A$ is uncountable, so, by $\mathbf{UIC}(\mathbf{X})$,  $A$ contains an infinite closed subset of $\mathbf{X}$. Suppose that $A$ is Dedekind-infinite. Let $F$ be a denumerable subset of $A$. In much the same way, as in the last part of the proof of Theorem \ref{s4t1}(i), we can show that $A$ contains an infinite set $H$ such that $H$ is closed in $\mathbf{X}$. Hence $\mathbf{UIC}(\mathbf{X})\rightarrow (\mathbf{ISCIC}(\mathbf{X})\wedge\mathbf{UISC}(\mathbf{X}))$.

 Now, we assume that the conjunction $\mathbf{ISCIC}(\mathbf{X})\wedge\mathbf{UISC}(\mathbf{X})$ is true. Let $A$ be an uncountable subset of $X$. By $\mathbf{UISC}(\mathbf{X})$, there exists an infinite set $C\subseteq A$ such that $C$ is sequentially closed in $\mathbf{X}$. By  $\mathbf{ISCIC}(\mathbf{X})$, there exists an infinite set $D\subseteq C$ such that $D$ is closed in $\mathbf{X}$. This shows that $(\mathbf{ISCIC}(\mathbf{X})\wedge\mathbf{UISC}(\mathbf{X}))\rightarrow\mathbf{UIC}(\mathbf{X})$. Hence (i) holds.\medskip
 
 (ii) Assume that $\mathbf{IDI}(\mathbf{X})\wedge\mathbf{UISC}(\mathbf{X})$ is true.  Let $A$ be an uncountable subset of $\mathbf{X}$. By $\mathbf{UISC}(\mathbf{X})$, there exists an infinite set $C\in\mathcal{SC}(\mathbf{X})$ such that $C\subseteq A$. By $\mathbf{IDI}(\mathbf{X})$, there exists a denumerable subset $D$ of $C$. If $D$ is not closed in $\mathbf{X}$, in much the same way, as in the last part of the proof of Theorem \ref{s4t1}(i), we deduce that there exists a sequence $(y_n)_{n\in\mathbb{N}}$ of points of $D$ which converges in $\mathbf{X}$ to a point $y_0$  such that $y_0\notin D$. Since $C$ is sequentially closed, we have $y_0\in C$. Then $\{y_n: n\in\omega\}$ is an infinite subset of $A$ such that $\{y_n: n\in\omega\}\in\mathcal{C}l(\mathbf{X})$. Hence (ii) holds.\medskip
 
(iii) Let $\mathbf{X}$ be a Cantor completely metrizable, second-countable space for which $\mathbf{UIC}(\mathbf{X})$ is satisfied. Consider an arbitrary infinite subset $A$ of $X$. If $A$ is countable, then $A$ is Dedekind-infinite. Suppose that $A$ is uncountable. By $\mathbf{UIC}(\mathbf{X})$, we can fix an infinite subset $E$ of $A$ such that $E\in\mathcal{C}l(\mathbf{X})$. It follows from Corollary \ref{s2c10} that $E$ is Dedekind-infinite, so $A$ is Dedekind-infinite, too. Hence $\mathbf{UIC}(\mathbf{X})$ implies $\mathbf{IDI}(\mathbf{X})$. This, together with (ii) and Proposition \ref{s4p3}(i), completes the proof. 
\end{proof}

It is worth noticing that the following proposition holds:

\begin{proposition}
\label{s4p5}
$[\mathbf{ZF}]$ Let $\mathbf{X}$ be a second-countable Hausdorff space and let $A$ be an uncountable subset of $X$ such that the subspace  $\mathbf{A}$ of $\mathbf{X}$ is sequential. Then $A$ contains an infinite closed subset of $\mathbf{X}$.
\end{proposition}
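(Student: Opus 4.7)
My plan is to locate a non-isolated point $x_0$ of the subspace $\mathbf{A}$, invoke sequentiality of $\mathbf{A}$ to pull out a sequence in $A\setminus\{x_0\}$ converging to $x_0$, and then use Hausdorffness of $\mathbf{X}$ to conclude that the range of that sequence together with $x_0$ is an infinite closed subset of $\mathbf{X}$ contained in $A$.

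First I would locate the non-isolated point. Fixing a countable base $\{U_n:n\in\omega\}$ of $\mathbf{X}$, the set $I$ of points isolated in $\mathbf{A}$ embeds into $\omega$ via the effective map $p\mapsto\min\{n\in\omega:U_n\cap A=\{p\}\}$, so $I$ is countable; since $A$ is uncountable, $A\setminus I$ is non-empty and I may pick any $x_0\in A\setminus I$.

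Next I would extract the sequence. Since $x_0$ is not isolated in $\mathbf{A}$, the set $A\setminus\{x_0\}$ is not closed in $\mathbf{A}$, and sequentiality of $\mathbf{A}$ then forces it to be not sequentially closed; this produces a sequence $(x_n)_{n\in\mathbb{N}}$ in $A\setminus\{x_0\}$ converging in $\mathbf{A}$, and hence in $\mathbf{X}$, to $x_0$. A short Hausdorff argument shows the range $R=\{x_n:n\in\mathbb{N}\}$ is infinite: otherwise, one could separate $x_0$ from the finite set $R$ by disjoint open sets, contradicting $x_n\to x_0$.

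Finally I would check that $F=\{x_0\}\cup R$ is closed in $\mathbf{X}$. This is the standard fact that, in a Hausdorff space, a convergent sequence together with its limit is closed: for any $y\notin F$, the Hausdorff property separates $y$ from $x_0$ and also from each of the finitely many $x_n$ lying outside a preselected neighborhood of $x_0$, and intersecting these finitely many open neighborhoods of $y$ yields a neighborhood of $y$ disjoint from $F$. The place where care is needed — and where the hypothesis of sequentiality is essential — is the extraction of the convergent sequence without appealing to countable choice; in $\mathbf{ZF}$ one cannot in general pick, for each neighborhood of $x_0$, a point of $A\setminus\{x_0\}$ in it, whereas sequentiality of $\mathbf{A}$ supplies the sequence directly. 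The remaining arguments are all finitary and plainly valid in $\mathbf{ZF}$.
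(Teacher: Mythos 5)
Your proof is correct and follows essentially the same route as the paper's: both locate a non-isolated point of the subspace $\mathbf{A}$ (using second-countability to see that the isolated points form a countable set, hence cannot exhaust the uncountable $A$), invoke sequentiality to extract a sequence of points of $A\setminus\{x_0\}$ converging to $x_0$, and observe that the range together with the limit is an infinite closed subset of the Hausdorff space $\mathbf{X}$. If anything, you are slightly more careful than the paper in explicitly verifying that the range of the sequence is infinite and that the resulting set is closed in $\mathbf{X}$.
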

\begin{proof}
Since $\mathbf{X}$ is second-countable, the space $\mathbf{A}$ is not discrete. Let $a_0\in A$ be an accumulation point of $\mathbf{A}$. Then the set $C=A\setminus\{a_0\}$ is not closed in $\mathbf{A}$. Since $\mathbf{A}$ is sequential, the set $C$ is not sequentially closed in $\mathbf{A}$. This implies that there exists a sequence $(a_n)_{n\in\mathbb{N}}$ of points of $C$ which converges in $\mathbf{A}$ to $a_0$. Then the set $D=\{a_n: n\in\omega\}$ is infinite, $D\subseteq A$ and $D$ is closed in $\mathbf{X}$. 
\end{proof}

\begin{corollary}
\label{s4c6}
$[\mathbf{ZF}]$ Let $\mathbf{X}$ is a second-countable Hausdorff space. Then:
$$\mathbf{CAC}(\mathbb{R})\rightarrow\mathbf{FU}(\mathbf{X})\rightarrow\mathbf{UIC}(\mathbf{X}).$$
\end{corollary}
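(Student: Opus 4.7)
The plan is to recognize that both implications follow almost immediately from results already established in the excerpt, so the main task is to identify the correct references and chain them together.

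For the first implication $\mathbf{CAC}(\mathbb{R})\rightarrow\mathbf{FU}(\mathbf{X})$, I would simply invoke Theorem \ref{s1t16}: since a Hausdorff space is automatically $T_0$, clause (i) of the list of equivalents of $\mathbf{CAC}(\mathbb{R})$ asserts that $\mathbf{FU}(\mathbf{X})$ holds for every second-countable $T_0$-space, and hence in particular for our second-countable Hausdorff $\mathbf{X}$.

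For the second implication $\mathbf{FU}(\mathbf{X})\rightarrow\mathbf{UIC}(\mathbf{X})$, the key observation is that $\mathbf{FU}(\mathbf{X})$ is equivalent to $\mathbf{HS}(\mathbf{X})$ by Proposition \ref{s1p18}(i); that is, Fr\'echet-Urysohn-ness passes to every subspace. So I would take an arbitrary uncountable $A\subseteq X$, note that the subspace $\mathbf{A}$ is Fr\'echet-Urysohn and hence sequential, and then apply Proposition \ref{s4p5} directly to conclude that $A$ contains an infinite closed subset of $\mathbf{X}$.

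No step here looks like a genuine obstacle: the entire content has been prepackaged in the preceding results. The only thing worth being careful about is verifying that the hypothesis of Proposition \ref{s4p5}, namely that the subspace $\mathbf{A}$ is sequential, is actually delivered by Fr\'echet-Urysohn-ness of $\mathbf{X}$, which it is via the hereditary sequentiality reformulation in Proposition \ref{s1p18}(i). The proof will therefore be a two-sentence chain of references.
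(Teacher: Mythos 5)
Your proposal is correct and follows exactly the paper's own proof: the first implication is cited from Theorem \ref{s1t16}, and the second is obtained by passing from $\mathbf{FU}(\mathbf{X})$ to sequentiality of the subspace $\mathbf{A}$ (which the paper does implicitly and you justify explicitly via Proposition \ref{s1p18}(i)) and then applying Proposition \ref{s4p5}.
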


\begin{proof}
The first implication follows from Theorem \ref{s1t16}. To prove that the second implication is true, let us assume $\mathbf{FU}(\mathbf{X})$ and consider any uncountable subset $A$ of $X$. By $\mathbf{FU}(\mathbf{X})$, the subspace $\mathbf{A}$ of $\mathbf{X}$ is sequential. Hence, by Proposition \ref{s4p5}, there exists an infinite set $C\in\mathcal{C}l(\mathbf{X})$ such that $C\subseteq A$.
\end{proof}

\begin{proposition} 
\label{s4p7}
$[\mathbf{ZF}]$ Let $\mathbf{X}=\langle X, \tau\rangle$ be a second-countable Hausdorff space. Then $\omega-\mathbf{CAC}(X)$ implies $\mathbf{UIC}(\mathbf{X})$.
\end{proposition}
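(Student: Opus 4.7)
The plan is to mimic the argument of Corollary \ref{s4c6} that $\mathbf{FU}(\mathbf{X})$ implies $\mathbf{UIC}(\mathbf{X})$, substituting $\omega-\mathbf{CAC}(X)$ for the Fr\'echet--Urysohn property at the sequence-extraction step. Fix an uncountable $A\subseteq X$ and a countable base $\mathcal{B}=\{U_n:n\in\omega\}$ of $\mathbf{X}$, which we may assume is closed under finite intersections. Exactly as in the proof of Proposition \ref{s4p5}, the set of points of $A$ that are isolated in the subspace $\mathbf{A}$ of $\mathbf{X}$ injects into $\omega$ via $a\mapsto\min\{n\in\omega:U_n\cap A=\{a\}\}$, hence is countable; since $A$ is uncountable, there exists $a\in A$ that is not isolated in $\mathbf{A}$.

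Fix such an $a$ and, from $\mathcal{B}$, construct canonically a decreasing neighbourhood base $(V_n)_{n\in\omega}$ at $a$ (for example by enumerating the base elements containing $a$ and taking successive finite intersections). Non-isolation of $a$ in $\mathbf{A}$ guarantees that $V_n\cap(A\setminus\{a\})$ is non-empty for every $n\in\omega$. Applying $\omega-\mathbf{CAC}(X)$ to the countable family $\{V_n\cap(A\setminus\{a\}):n\in\omega\}$ then yields a sequence $(C_n)_{n\in\omega}$ of countable non-empty sets with $C_n\subseteq V_n\cap(A\setminus\{a\})$.

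The next step is to use the $C_n$'s to build a sequence $(a_n)_{n\in\omega}$ with $a_n\in C_n$. Because $a_n\in V_n$ and $\bigcap_{n\in\omega}V_n=\{a\}$ (by the Hausdorff and first-countability conditions), such a sequence automatically converges to $a$ and its terms are eventually distinct, so $F=\{a\}\cup\{a_n:n\in\omega\}$ is an infinite subset of $A$ whose only accumulation point in $\mathbf{X}$ lies in $F$, making $F$ closed in $\mathbf{X}$---the required set. To choose $a_n$ from $C_n$ canonically I would use the base: for each $n$, let $k(n)=\min\{k\in\omega:|U_k\cap C_n|=1\}$ and, when such $k(n)$ exists, take $a_n$ to be the unique element of $U_{k(n)}\cap C_n$.

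The principal obstacle is handling the indices $n$ for which $k(n)$ does not exist, that is, when $C_n$ has no basic-open-isolated point and is thus dense-in-itself as a subspace of $\mathbf{X}$. In this case one refines: apply $\omega-\mathbf{CAC}(X)$ a second time to a countable family obtained from $C_n$ by intersecting with appropriately shrinking basic neighbourhoods of $a$, producing a further countable subset on which a canonical choice via the base becomes possible. This refinement step is the technical heart of the proof, for in $\mathbf{ZF}$ a countable subset of a second-countable Hausdorff space does not in general carry a canonical enumeration, and $\omega-\mathbf{CAC}(X)$ by itself supplies only countable witnesses, not individual elements; the role of the Hausdorff and second-countability assumptions is precisely to make such canonical selections possible.
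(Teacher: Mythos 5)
Your overall strategy is close to the paper's, but the step you yourself flag as ``the technical heart'' is a genuine gap, and the fix you sketch does not work. When some $C_n$ is dense-in-itself, applying $\omega-\mathbf{CAC}(X)$ a second time to sets of the form $C_n\cap W$ (for shrinking basic neighbourhoods $W$ of $a$) only produces further non-empty countable subsets of $X$, and nothing prevents these from again being dense-in-itself; so the canonical selection $k(n)=\min\{k:|U_k\cap C_n|=1\}$ is no more available after the refinement than before. Iterating $\omega-\mathbf{CAC}(X)$ never manufactures an isolated point, so as written the proof does not terminate in this case.

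The way out --- and it is what the paper's proof does --- is to notice that the ``bad'' case is actually the easy one, because $\mathbf{UIC}(\mathbf{X})$ only asks for \emph{some} infinite closed subset of $A$, not for a sequence converging to your pre-selected point $a$. If some $C_{n_0}$ is not discrete as a subspace, fix an accumulation point $x_0\in C_{n_0}$ of $\mathbf{C}_{n_0}$; since $C_{n_0}$ is countable it carries a fixed injection into $\omega$, hence a well-ordering, and using a decreasing neighbourhood base at $x_0$ one extracts (with no further choice) a sequence in $C_{n_0}\setminus\{x_0\}$ converging to $x_0$. The sequence together with its limit is an infinite closed subset of $A$, and you are done immediately. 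Only when every $C_n$ is discrete does one need the canonical selection via the base, and in that case $k(n)$ exists for every $n$, so your construction of $(a_n)_{n\in\omega}$ converging to $a$ goes through. (The paper runs the same dichotomy, applying $\omega-\mathbf{CAC}(X)$ to the traces $B_n\cap A$ of all basic open sets rather than to neighbourhoods of a single accumulation point, and in the all-discrete case first assembles a countable dense subset $Y$ of $\mathbf{A}$ before extracting a convergent sequence; the two variants are essentially equivalent once the dichotomy is in place.)
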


\begin{proof}
Let $A$ be an uncountable subset of $X$. Let $\mathcal{B}=\{B_n: n\in\omega\}$ be a base of $\mathbf{X}$ and, for every $n\in\omega$, let $D_n=B_n\cap A$. Let  $M=\{n\in\omega: D_n\neq\emptyset\}$. Assuming $\omega-\mathbf{CAC}(Y)$, we can fix a family $\{C_n: n\in M\}$ of non-empty countable sets such that, for every $n\in M$, $C_n\subseteq D_n$. 

Suppose that there exists $n_0\in M$ such that the subspace $\mathbf{C}_{n_0}$ of $\mathbf{X}$ is not discrete. Then we can fix an accumulation point $x_0$ of $\mathbf{C}_{n_0}$. Since $\mathbf{X}$ is first-countable and $C_{n_0}$ is countable, there exists a sequence $(x_n)_{n\in\mathbb{N}}$ of points of $C_{n_0}\setminus\{x_0\}$ which converges in $\mathbf{X}$ to $x_0$. Then the set $E=\{x_n: n\in\omega\}$ is infinite, $E\in\mathcal{C}l(\mathbf{X})$ and $E\subseteq A$.  

Now, suppose that, for every $n\in M$, the subspace $\mathbf{C}_n$ of $\mathbf{X}$ is discrete. Then, for a fixed $n\in M$, let $k(n)=\min\{i\in\omega: |B_i\cap C_n|=1\}$ and let $y_{k(n)}$ be the unique element of $B_{k(n)}\cap C_n$. The set $Y=\{y_{k(n)}: n\in M\}$ is countable and dense in the subspace $\mathbf{A}$ of $\mathbf{X}$. Since $\mathbf{A}$ is an uncountable second-countable space, it is not discrete. Let $z$ be an accumulation point of $\mathbf{A}$. Then there exists a sequence of points of $Y\setminus\{z\}$ which converges in $\mathbf{X}$ to $z$. This implies that $A$ contains an infinite closed subset of $\mathbf{X}$.
\end{proof}

At this moment, we do not know satisfactory answers to the following questions:

\begin{question}
\label{s4q8}
\begin{enumerate}
\item[(i)] Does $\mathbf{IDI}(\mathbb{R})$ imply $\mathbf{UISC}(\mathbb{R})$ in $\mathbf{ZF}$?
\item[(ii)] Is there a model $\mathcal{M}$ of $\mathbf{ZF}+\mathbf{IDI}$ in which there exists a second-countable Cantor completely metrizable space $\mathbf{X}$ for which $\mathbf{UISC}(\mathbf{X})$ fails in $\mathcal{M}$?
\end{enumerate}
\end{question}

Let us have a deeper look at $\mathbf{F_{\sigma}L}(\mathbf{X})$ and $\mathbf{G_{\delta}L}(\mathbf{X})$.

\begin{theorem}
\label{s4t9}
$[\mathbf{ZF}]$ Let $\mathbf{X}$ be a $T_1$-space. Then:
\begin{enumerate}
\item[(i)] $\mathbf{G_{\delta}L}(\mathbf{X})$ implies that there exists a family $\{\langle A_{G}, \leq_{G}\rangle: G\in \mathcal{G}_{\delta}(\mathbf{X})\cap\mathcal{P}^{un}(X)\}$ such that, for every $G\in \mathcal{G}_{\delta}(\mathbf{X})\cap\mathcal{P}^{un}(X)$, $A_G\subseteq G$,$|A_{G}|=\aleph_1$, and $\leq_{G}$ is a well-ordering on $A_G$. In particular, if $\mathbf{X}$ is uncountable, then $\mathbf{G_{\delta}L}(\mathbf{X})$ implies that $\mathbf{X}$ contains an uncountable well-orderable subset.

\item[(ii)] $\mathbf{F_{\sigma}L}(\mathbf{X})$ implies that there exists a family $\{\langle A_{F}, \leq_{F}\rangle: F\in \mathcal{F}_{\sigma}(\mathbf{X})\cap\mathcal{P}^{inf}(X)\}$ such that, for every $F\in \mathcal{F}_{\sigma}(\mathbf{X})\cap\mathcal{P}^{inf}(X)$, $A_F\subseteq F$,$|A_{F}|=\aleph_0$, and $\leq_{F}$ is a well-ordering on $A_F$. In particular, $\mathbf{F_{\sigma}L}(\mathbf{X})$ implies that, if $F\in\mathcal{F}_{\sigma}(\mathbf{X})\cap\mathcal{P}^{inf}(X)$, then there is an effective way to define a denumerable, well-ordered subset of $F$.

\item[(iii)] $\mathbf{F_{\sigma}L}(\mathbf{X})$ implies that, for every well-orderable family $\mathcal{A}$ of countable subsets of $\mathbf{X}$, the union $\bigcup\mathcal{A}$ is well-orderable. 
\end{enumerate} 
\end{theorem}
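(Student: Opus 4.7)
Part (i): I would proceed by transfinite recursion of length $\omega_1$. The enabling observation is that in a $T_1$-space every singleton $\{x\}$ is closed, so $X\setminus\{x\}$ is open and therefore a $G_\delta$-set, and the class of $G_\delta$-sets is closed under countable intersections. Consequently, for every countable $S\subseteq X$, $G\setminus S = G\cap\bigcap_{s\in S}(X\setminus\{s\})$ remains a $G_\delta$-set whenever $G$ is. Fixing the hypothesised choice function $f$ on $\mathcal{G}_\delta^*(\mathbf{X})$, I would set $x_\alpha = f(G\setminus\{x_\beta : \beta<\alpha\})$ for each $\alpha<\omega_1$. Since $G$ is uncountable while $\{x_\beta : \beta<\alpha\}$ has cardinality at most $|\alpha|\leq\aleph_0$, the set $G\setminus\{x_\beta : \beta<\alpha\}$ is uncountable (in particular non-empty) and $G_\delta$, so $x_\alpha$ is well-defined and automatically distinct from every earlier $x_\beta$. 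Then $A_G = \{x_\alpha : \alpha<\omega_1\}$ with the order inherited from the $\alpha$-indexing is a well-ordered subset of $G$ of cardinality $\aleph_1$. The ``in particular'' clause falls out by applying the construction to $G = X$, which is open and hence $G_\delta$.

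Part (iii): The plan is to exploit the fact that in a $T_1$-space every countable subset is $F_\sigma$ (being a countable union of closed singletons), so the restriction of $f$ to $[X]^{\leq\omega}\setminus\{\emptyset\}$ is a choice function on the non-empty countable subsets of $X$. Given a well-ordered family $\mathcal{A} = \{A_\alpha : \alpha<\kappa\}$ of countable non-empty subsets, for each $\alpha$ I would enumerate $A_\alpha$ recursively by $a_\alpha^n = f(A_\alpha\setminus\{a_\alpha^i : i<n\})$, terminating if the argument becomes empty; at every stage the argument is a countable subset of $X$, hence $F_\sigma$, so $f$ applies. The resulting enumerations $\leq_\alpha$ then assemble into a well-ordering of $\bigcup\mathcal{A}$ via the lexicographic pair $(\mu(x), n(x))$, where $\mu(x) = \min\{\alpha : x\in A_\alpha\}$ and $n(x)$ is the position of $x$ in the enumeration of $A_{\mu(x)}$.

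Part (ii) is where I expect the main obstacle to lie. The direct imitation of (i) fails because $F\setminus\{x\}$ need not be $F_\sigma$ when $F$ is $F_\sigma$ and $\mathbf{X}$ is merely $T_1$ (the cofinite topology on an uncountable set illustrates this). My plan is to exploit the $F_\sigma$-representation of $F$ together with the choice function $f$. By definition $F = \bigcup_{n\in\omega}F_n$ for some countable family of closed sets. If some representation of $F$ consists entirely of finite pieces, then $F$ is a cuf set, and $f$ simultaneously well-orders each finite piece (finite sets are $F_\sigma$, so the enumeration trick from (iii) applies); combining these well-orderings with the $\omega$-indexing of the pieces yields a well-ordering of $F$, whence $F$ — being an infinite well-orderable cuf set — is denumerable, and $A_F = F$ works. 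Otherwise every representation must contain an infinite closed piece $F_n\subseteq F$, and the construction proceeds inside such a piece, using $f$ recursively on its closed $F_\sigma$-subsets to build a sequence $(y_n)_{n\in\omega}$ of distinct points of $F$. The hard part will be carrying this out uniformly in $F$ so as to produce a single definable family $\{A_F\}_F$ and bridging the case distinction without invoking additional choice beyond what $\mathbf{F_\sigma L}(\mathbf{X})$ supplies; once the sequence $(y_n)$ is built, $A_F = \{y_n : n\in\omega\}$ together with the ordering induced by $n\mapsto y_n$ furnishes the required denumerable well-ordered subset.
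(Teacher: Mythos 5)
Parts (i) and (iii) follow the paper's own route. For (i) your recursion is exactly the paper's, and you add the justification the paper leaves tacit (in a $T_1$-space the complement of a countable set is open, hence $G_\delta$, so $G\setminus\{x_\beta:\beta<\alpha\}$ stays in the domain of the choice function); this is correct. In (iii) there is one small slip: an $\omega$-length recursion $a_\alpha^n=f\bigl(A_\alpha\setminus\{a_\alpha^i:i<n\}\bigr)$ need not exhaust a denumerable $A_\alpha$ (nothing forces $f$ ever to select a given element), so $n(x)$ may be undefined for some $x\in A_{\mu(x)}$. The repair is what the paper does: \emph{every} non-empty subset of the countable set $A_\alpha$ is countable, hence $F_\sigma$ in a $T_1$-space, so $f$ restricts to a choice function of $\mathcal{P}(A_\alpha)\setminus\{\emptyset\}$ and the standard transfinite recursion of the Well-Ordering Theorem well-orders all of $A_\alpha$, uniformly in $\alpha$; your lexicographic assembly then works.

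For (ii), the obstacle you isolate is genuine, and you should not keep looking for a bridge: it cannot be crossed for arbitrary $T_1$-spaces, and the paper's own proof is where the gap sits. The paper simply puts $\phi_F(n+1)=f(F\setminus\phi_F[n+1])$, i.e., it applies the choice function to $F$ minus a finite set, which in a $T_1$-space need not be $F_\sigma$ --- exactly your cofinite-topology observation. Your two-case workaround does not close the gap either: in the case of an infinite closed piece $F_{n}$, the set $F_{n}\setminus\{y_0,\dots,y_k\}$ is again a closed set minus a finite set and has the same defect. In fact no proof exists: let $D$ be an infinite, Dedekind-finite, linearly orderable set (e.g.\ the set of added Cohen reals in the model $\mathcal{M}1$, the very space used in Proposition \ref{s2p14}) with the cofinite topology. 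Its non-empty $F_\sigma$-sets are exactly the non-empty finite sets and $D$ itself (a countable union of finite sets of reals is countable, and countable subsets of $D$ are finite), so $\mathbf{F_{\sigma}L}$ holds --- take minima of finite sets in the inherited linear order and any fixed point for $D$ --- yet $D$ is an infinite $F_\sigma$-set with no denumerable subset, so no $A_D$ with $|A_D|=\aleph_0$ can exist. Statement (ii) and the paper's recursion do become correct under the additional hypothesis that every open set of $\mathbf{X}$ is $F_\sigma$ (e.g.\ $\mathbf{X}$ metrizable or perfectly normal), since then $F\setminus S=F\cap(X\setminus S)$ is an intersection of two $F_\sigma$-sets; that is the hypothesis under which you should carry out the construction you sketch.
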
 

\begin{proof}
(i) Assuming $\mathbf{G_{\delta}L}(\mathbf{X})$, we fix a choice function $g$ of $\mathcal{G}_{\delta}^{\ast}(\mathbf{X})$. Let $G\in\mathcal{G}_{\delta}(\mathbf{X})\cap\mathcal{P}^{un}(X)$. By a trasitive induction on $\omega_1$, we can effectively define an injection $\psi_G:\omega_1\to G$ as follows. Let $\psi_G(0)=g(G)$. If $\gamma\in\omega_1$ is such that $\psi_G(\gamma)$ has been defined, we define $\psi_G(\gamma+1)=g(G\setminus\psi_G[\gamma+1])$. If $\alpha\in\omega_1$ is a limit ordinal such that, for every $\gamma\in\alpha$, $\psi_G(\gamma)$ has been defined, we put $\psi_G(\alpha)= g(G\setminus\psi_G[\alpha])$. We put $A_G=\psi[\omega_1]$ and, for $x,y\in A_G$, we write $x\leq_G y$ if and only if $\psi_G^{-1}(x)\subseteq \psi_G^{-1}(y)$. Hence (i) holds. \medskip

For the proof of (ii) and (iii), assuming $\mathbf{F_{\sigma}L}(\mathbf{X})$, we fix a choice function $f$ of $\mathcal{F}^{\ast}_{\sigma}(\mathbf{X})$.\medskip

(ii) Let $F\in\mathcal{F}_{\sigma}(\mathbf{X})\cap\mathcal{P}^{inf}(X)$. By an induction  on $\omega$, we can effectively define an injection $\phi_F:\omega\to F$ as follows. Let $\phi_F(0)=f(F)$. If $n\in\omega$ is such that $\phi_F(n)$ has been defined, we define $\phi_F(n+1)=f(F\setminus\phi_F[n+1])$.  We put $A_F=\phi[\omega]$ and, for $x,y\in A_F$, we write $x\leq_F y$ if and only if $\phi_F^{-1}(x)\subseteq \phi_F^{-1}(y)$. Hence (iii) holds.\medskip

(iii)  Let $\kappa$ be a von Neumann ordinal and let $\mathcal{A}=\{A_{\alpha}: \alpha\in\kappa\}$ be a family of non-empty countable subsets of $\mathbf{X}$. Let $\alpha\in\kappa$. For every $S\in\mathcal{P}(A_{\alpha})\setminus\{\emptyset\}$, we have $S\in\mathcal{F}^{\ast}_{\sigma}(\mathbf{X})$. Therefore, we can define a choice function $f_{\alpha}$ of $\mathcal{P}(A_{\alpha})\setminus\{\emptyset\}$ by putting, for every $S\in\mathcal{P}(A_{\alpha})\setminus\{\emptyset\}$, $f_{\alpha}(S)=f(S)$. Arguing in much the same way, as in the standard proof of the Well-Ordering Theorem (cf., e.g., \cite[the proof of Theorem 1.4]{her}), we can effectively define a well-ordering $\leq_{\alpha}$ on $A_{\alpha}$. This proves that there exists a family $\{\leq_{\alpha}: \alpha\in\kappa\}$ such that, for every $\alpha\in\kappa$, $\leq_{\alpha}$ is a well-ordering on $A_{\alpha}$. This implies that $\bigcup_{\alpha\in\kappa}A_{\alpha}$ is well-orderable. 
\end{proof}

\begin{theorem}
\label{s4t10}
$[\mathbf{ZF}]$ Let $\mathbf{X}$ be a $T_1$-space. Then:
\begin{enumerate}
\item[(i)] $(\mathbf{F_{\sigma}L}(\mathbf{X})\wedge\mathbf{KW}(X, \omega))\leftrightarrow\mathbf{AC}(X)$;
\item[(ii)] $(\mathbf{F_{\sigma}L}(\mathbf{X})\wedge\mathbf{CKW}(X, \omega))\rightarrow\mathbf{CAC}(X)$;
\item[(iii)] $(\mathbf{CF_{\sigma}L}(\mathbf{X})\wedge\mathbf{CKW}(X, \omega))\leftrightarrow\mathbf{CAC}(X)$.
\end{enumerate}
\end{theorem}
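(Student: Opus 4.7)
The plan hinges on one simple but decisive observation: since $\mathbf{X}$ is $T_1$, every singleton $\{x\}\subseteq X$ is closed, and hence every countable subset of $X$, being a countable union of singletons, belongs to $\mathcal{F}_\sigma(\mathbf{X})$. This will let us convert a choice of countable subsets (provided by $\mathbf{KW}$-type forms) into a family of non-empty $F_\sigma$-sets, to which $\mathbf{F_\sigma L}$-type forms can be applied.

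The easy directions of (i) and (iii) come first. For (i)$\Leftarrow$, assume $\mathbf{AC}(X)$; by Proposition \ref{s1:WOT}, $X$ is well-orderable, from which one extracts choice functions of $\mathcal{F}_\sigma^{\ast}(\mathbf{X})$ (picking the least element of each $F_\sigma$-set) and of every $\{[A]^{\leq\omega}\setminus\{\emptyset\}: A\in\mathcal{A}\}$ (picking the singleton of the least element of $A$). For (iii)$\Leftarrow$, assume $\mathbf{CAC}(X)$. A denumerable family of non-empty $F_\sigma$-sets is a denumerable family of non-empty subsets of $X$, so it has a choice function, giving $\mathbf{CF_\sigma L}(\mathbf{X})$. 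Given a denumerable $\mathcal{A}=\{A_n: n\in\omega\}$, $\mathbf{CAC}(X)$ yields $\phi\in\prod_n A_n$, and then $n\mapsto\{\phi(n)\}$ witnesses $\mathbf{CKW}(X,\omega)$.

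For the non-trivial directions, the strategy is uniform in all three cases. Assume the conjunction on the left. Given a family $\mathcal{A}$ of non-empty subsets of $X$ (infinite in (i), denumerable in (ii) and (iii), after dispensing with finite families by the provable finite axiom of choice in $\mathbf{ZF}$), apply $\mathbf{KW}(X,\omega)$ in case (i) or $\mathbf{CKW}(X,\omega)$ in cases (ii), (iii) to obtain a function $g$ with $g(A)\in[A]^{\leq\omega}\setminus\{\emptyset\}$ for each $A\in\mathcal{A}$. By the $T_1$ observation, $g(A)\in\mathcal{F}_\sigma^{\ast}(\mathbf{X})$. The family $\{g(A): A\in\mathcal{A}\}$ is then a subfamily of $\mathcal{F}_\sigma^{\ast}(\mathbf{X})$; it is arbitrary in (i) and countable in (ii), (iii). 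A choice function $f$ on this subfamily, supplied by $\mathbf{F_\sigma L}(\mathbf{X})$ in cases (i), (ii) and by $\mathbf{CF_\sigma L}(\mathbf{X})$ in case (iii), lets us define $\phi(A)=f(g(A))\in g(A)\subseteq A$, yielding the required choice function on $\mathcal{A}$ and thus $\mathbf{AC}(X)$ in (i) and $\mathbf{CAC}(X)$ in (ii), (iii).

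There is essentially no obstacle: the $T_1$ hypothesis is exactly what is needed to pass from countable subsets to $F_\sigma$-subsets, and each of the three statements then reduces to one two-step choice: first choose a countable witness inside each member of the family, then choose a point inside each such witness. The only minor care required is to keep straight which of $\mathbf{F_\sigma L}$ vs.\ $\mathbf{CF_\sigma L}$, and $\mathbf{KW}$ vs.\ $\mathbf{CKW}$, is available in each case, and to note that in (ii) only the implication from left to right is asserted, since $\mathbf{CAC}(X)$ need not by itself yield the full (unrestricted) form $\mathbf{F_\sigma L}(\mathbf{X})$.
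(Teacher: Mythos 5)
Your proposal is correct and follows essentially the same route as the paper: observe that in a $T_1$-space every non-empty countable subset is a non-empty $F_\sigma$-set, use the $\mathbf{KW}/\mathbf{CKW}$ hypothesis to select a countable witness inside each member of the given family, and then apply the $\mathbf{F_\sigma L}/\mathbf{CF_\sigma L}$ choice function to those witnesses. The converse directions are handled as in the paper (the paper in fact dismisses them as obvious), so nothing further is needed.
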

\begin{proof}
(i) It is obvious that $\mathbf{AC}(\mathbf{X})$ implies $\mathbf{F_{\sigma}L}(\mathbf{X})\wedge\mathbf{KW}(X, \omega)$. Assume that $\mathbf{F_{\sigma}L}(\mathbf{X})\wedge\mathbf{KW}(X, \omega)$ is true. Let $f$ be a choice function of $\mathcal{F}_{\sigma}^{\ast}(\mathbf{X})$. Let $h$ be a function defined on $\mathcal{P}(X)\setminus\{\emptyset\}$ such that, for every $A\in \mathcal{P}(X)\setminus\{\emptyset\}$, the set $h(A)$ is a non-empty countable subset of $A$. Then, for every $A\in\mathcal{P}(X)\setminus\{\emptyset\}$, we have $h(A)\in\mathcal{F}_{\sigma}^{\ast}(\mathbf{X})$, so we can put $\psi(A)=f( h(A))$ to define a choice function $\psi$ of $\mathcal{P}(X)\setminus\{\emptyset\}$. Hence (i) holds. Using similar arguments, one can prove (ii) and (iii).
\end{proof}

\section{On condensation points}
\label{s5}

We recall that, for an uncountable subset $A$ of a topological space $\mathbf{X}=\langle X, \tau\rangle$, a point $x\in X$ is called a \emph{condensation point} of $A$ in $\mathbf{X}$ if, for every $U\in\tau$ with $x\in U$, the set $A\cap U$ is uncountable. It is known that $\mathbf{CUC}(\mathbb{R})$ is equivalent to the sentence: ``Every uncountable subset $A$ of $\mathbb{R}$ has a condensation point $a\in A$.'' (Cf. \cite[Form 6A]{hr}.) 

\begin{theorem}
\label{s5t1}
$[\mathbf{ZF}]$
The following conditions (i)-(iii) are all equivalent:
\begin{enumerate}
\item[(i)] $\mathbf{CUC}$;
\item[(ii)] for every topological space $\mathbf{X}$ such that $\mathbf{X}$ has a cuf base, it holds that every uncountable subset $A$ of $X$ has a condensation point $x$ in $\mathbf{X}$ such that $x\in A$;
\item[(iii)] for every second-countable space $\mathbf{X}$, it holds that every uncountable subset $A$ of $X$ has a condensation point $x$ in $\mathbf{X}$ such that $x\in A$.
\end{enumerate}
Furthermore, $(iii)\rightarrow (iv)\rightarrow (v)$ where:
\begin{enumerate}
\item[(iv)] every uncountable subset $A$ of $\mathbb{R}$ has a condensation point $x$ in $\mathbb{R}$ such that $x\in A$;
\item[(v)] $\mathbf{CAC}_{\omega}(\mathbb{R})$.
\end{enumerate}
\end{theorem}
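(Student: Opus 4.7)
The plan is to establish the equivalence of (i), (ii), and (iii) cyclically, via (i) $\Rightarrow$ (ii) $\Rightarrow$ (iii) $\Rightarrow$ (i), and then to handle (iii) $\Rightarrow$ (iv) $\Rightarrow$ (v) as two further implications.

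For (i) $\Rightarrow$ (ii), the idea is to assume $\mathbf{CUC}$, fix a cuf base $\mathcal{B}=\bigcup_{n\in\omega}\mathcal{B}_n$ of $\mathbf{X}$ with each $\mathcal{B}_n$ finite, and for an uncountable $A\subseteq X$ set $\mathcal{B}'=\{B\in\mathcal{B}:A\cap B\text{ is countable}\}$. Being a cuf set, $\mathcal{B}'$ is countable by $\mathbf{CUC}$; hence $D=\bigcup_{B\in\mathcal{B}'}(A\cap B)$ is a countable union of countable sets, again countable by $\mathbf{CUC}$, and any $x\in A\setminus D$ is a condensation point of $A$ in $\mathbf{X}$ lying in $A$. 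The implication (ii) $\Rightarrow$ (iii) is immediate because a countable base is already a cuf base, and (iii) $\Rightarrow$ (iv) is immediate since $\mathbb{R}$ is second-countable.

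For (iii) $\Rightarrow$ (i) I would argue by contrapositive. Given a family $\{A_n:n\in\omega\}$ with each $A_n$ countable and $\bigcup_n A_n$ uncountable, replace $A_n$ by $A_n\setminus\bigcup_{k<n}A_k$, discard empty sets, and write $A=\bigcup_{n\in\omega}A_n$; since finite unions of countable sets are countable in $\mathbf{ZF}$, each tail $\bigcup_{n\geq k}A_n$ remains uncountable. Pick $\infty\notin A$, set $X=A\cup\{\infty\}$, and declare the countable family $\{A_n:n\in\omega\}\cup\{U_k:k\in\omega\}$ with $U_k=\{\infty\}\cup\bigcup_{n\geq k}A_n$ to be a base of a topology on $X$. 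Then $\mathbf{X}$ is second-countable; the smallest open neighborhood of any $a\in A_n$ is $A_n$ itself, which meets $A$ in the countable set $A_n$, so no point of $A$ is a condensation point of $A$ in $\mathbf{X}$, while $A$ is uncountable. This contradicts (iii), so $\bigcup_n A_n$ must be countable, giving $\mathbf{CUC}$.

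For (iv) $\Rightarrow$ (v) I would route through $\mathbf{CUC}(\mathbb{R})$. To establish (iv) $\Rightarrow \mathbf{CUC}(\mathbb{R})$ I imitate the above construction inside $\mathbb{R}$: given countable subsets $\{A_n\}$ of $\mathbb{R}$ with uncountable union, disjointify them and fix a homeomorphism $f\colon\mathbb{R}\to(0,1)$, then set $B=\bigcup_{n\in\omega}(n+f[A_n])\subseteq\mathbb{R}$. The set $B$ is uncountable, but any $b\in B\cap(n_0,n_0+1)$ has the open interval $(n_0,n_0+1)$ as a neighborhood in $\mathbb{R}$ meeting $B$ only in the countable set $n_0+f[A_{n_0}]$, so $b$ is not a condensation point of $B$, contradicting (iv). The step $\mathbf{CUC}(\mathbb{R}) \Rightarrow$ (v) is then classical: for a countable family of non-empty countable subsets of $\mathbb{R}$, the union is countable by $\mathbf{CUC}(\mathbb{R})$, hence injects into $\omega$, and the induced well-ordering lets us choose the minimum of each $A_n$. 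The main obstacle is calibrating the topology in (iii) $\Rightarrow$ (i): second countability and the presence of an uncountable subset pull against each other, forcing one to treat each $A_n$ as a single indivisible open block (points inside one $A_n$ are topologically indistinguishable) with all the $A_n$'s clustering only at a single exterior point $\infty\notin A$. Once this trade-off is spotted, the verifications are mechanical and the same idea, packaged inside $\mathbb{R}$ via $f$, delivers (iv) $\Rightarrow\mathbf{CUC}(\mathbb{R})$.
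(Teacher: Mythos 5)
Your proposal is correct and follows essentially the same route as the paper's proof: the (i)$\rightarrow$(ii) argument via the countable ``bad'' part of the base, the second-countable counterexample built from the blocks $A_n$ for (iii)$\rightarrow$(i) (your extra point $\infty$ and the sets $U_k$ are harmless but unnecessary, since the base $\{A_n:n\in\omega\}$ alone already works), and the packing of countable sets into disjoint intervals for the last implication. The only substantive variation is in (iv)$\rightarrow$(v), where you factor through $\mathbf{CUC}(\mathbb{R})$ while the paper argues directly by contraposition from $\neg\mathbf{CAC}_{\omega}(\mathbb{R})$, placing a choiceless family of countable sets into the intervals $(\tfrac{1}{n+1},\tfrac{1}{n})$; both versions are valid in $\mathbf{ZF}$.
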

\begin{proof}
Let $\mathbf{X}$ be a topological space which has a cuf base. Let $\mathcal{B}=\bigcup_{n\in\omega}\mathcal{B}_n$  be a base of $\mathbf{X}$ such that, for every $n\in\omega$, the family $\mathcal{B}_n$ is finite. Suppose that $A$ is a subset of $\mathbf{X}$ such that no point of $A$ is a condensation point of $A$ in $\mathbf{X}$. Let $N(A)=\{n\in\omega: (\exists U\in \mathcal{B}_n) |U\cap A|\leq\aleph_0\}$. For every $n\in N(A)$, let $A_n=\bigcup\{U\cap A: U\in\mathcal{B}_n\wedge |U\cap A|\leq\aleph_0\}$  Then, for every $n\in N(A)$, the set $A_n$ is countable. Moreover,  $A=\bigcup_{n\in N(A)}A_n$. It follows from $\mathbf{CUC}$ that $A$ is countable.  Hence (i) implies (ii). The implications (ii)$\rightarrow $(iii)$\rightarrow$(iv) are obvious.

 Now, assuming that $\mathbf{CUC}$ fails, we fix a family $\{Y_n: n\in\omega\}$ of pairwise disjoint countable sets such that the set $Y=\bigcup_{n\in\omega}Y_n$ is uncountable. Let $\tau$ be the topology on $Y$ such that the family $\{Y_n: n\in\omega\}$ is a base of $\mathbf{Y}=\langle Y, \tau\rangle$. Then $\mathbf{Y}$ is second-countable but no point of $Y$ is a condensation point of $Y$ in $\mathbf{Y}$. Hence (iii) implies (i) and, in consequence (i)--(iii) are all equivalent.

Suppose that (v) is false. Then there exists a family $\mathcal{A}=\{A_n: n\in\mathbb{N}\}$ of countable subsets of $\mathbb{R}$ such that $\mathcal{A}$ does not have a choice function and, for every $n\in\mathbb{N}$, $A_n\subseteq (\frac{1}{n+1}, \frac{1}{n})$. Then the set $A=\bigcup_{n\in\mathbb{N}}A_n$ is uncountable and no point of $A$ is a condensation point of $A$ in $\mathbb{R}$. Hence (iv) implies (v).
\end{proof}

The proof that (i) implies (ii) in Theorem \ref{s5t1} shows that the following proposition holds:

\begin{proposition}
\label{s5p2}
$[\mathbf{ZF}]$ Let $A$ be a subset of a topological space $\mathbf{X}$ having a cuf base.  Suppose that no point of $A$ is a condensation point of $A$ in $\mathbf{X}$. Then $A$ is expressible as a countable union of countable sets. 
\end{proposition}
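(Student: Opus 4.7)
The plan is to mirror, essentially verbatim, the argument used to prove the implication (i)$\Rightarrow$(ii) of Theorem \ref{s5t1}, but to stop just before the final appeal to $\mathbf{CUC}$. The key insight is that the cuf base already organizes $A$ into a countable family of pieces, and the condensation-point hypothesis is exactly what guarantees that these pieces cover $A$ and that each piece can be chosen countable.

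Concretely, I would fix a cuf base $\mathcal{B}=\bigcup_{n\in\omega}\mathcal{B}_n$ of $\mathbf{X}$ with each $\mathcal{B}_n$ finite, and for each $n\in\omega$ define
$$A_n=\bigcup\{U\cap A : U\in\mathcal{B}_n\text{ and }|U\cap A|\leq\aleph_0\}.$$
This is a finite union (since $\mathcal{B}_n$ is finite) of countable sets, and in $\mathbf{ZF}$ a finite union of countable sets is countable, so each $A_n$ is countable.

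Next, I would verify the equality $A=\bigcup_{n\in\omega}A_n$. The inclusion $\bigcup_{n\in\omega}A_n\subseteq A$ is immediate from the definition. For the reverse inclusion, fix $x\in A$; by hypothesis $x$ is not a condensation point of $A$ in $\mathbf{X}$, so there exists an open $U$ with $x\in U$ and $|U\cap A|\leq\aleph_0$. Since $\mathcal{B}$ is a base, there exist $n\in\omega$ and $V\in\mathcal{B}_n$ with $x\in V\subseteq U$; then $|V\cap A|\leq|U\cap A|\leq\aleph_0$, so $V$ contributes to the family defining $A_n$, and therefore $x\in V\cap A\subseteq A_n$. This exhibits $A$ as a countable union of countable sets, as required.

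I do not anticipate a genuine obstacle. The only subtlety worth flagging is the countability of each $A_n$, which depends on the $\mathbf{ZF}$-provable fact that a finite union of countable sets is countable; no further choice principle is needed because the index set $\mathcal{B}_n$ is finite and the family $\{U\cap A:U\in\mathcal{B}_n\}$ is definable from $\mathcal{B}_n$ and $A$ with no selection step. This is precisely why the conclusion is a countable union of countable sets rather than outright countability: the missing ingredient compared to Theorem \ref{s5t1}(i)$\Rightarrow$(ii) is exactly $\mathbf{CUC}$, which would collapse the double union.
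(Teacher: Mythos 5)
Your argument is correct and is exactly the paper's: the paper proves this proposition by pointing to the (i)$\Rightarrow$(ii) step of Theorem \ref{s5t1}, which defines the same sets $A_n=\bigcup\{U\cap A: U\in\mathcal{B}_n\wedge |U\cap A|\leq\aleph_0\}$, notes each is countable as a finite union of countable sets, and uses the condensation-point hypothesis to get $A=\bigcup_n A_n$ before invoking $\mathbf{CUC}$. Your observation that stopping before the appeal to $\mathbf{CUC}$ yields precisely the stated conclusion matches the paper's intent.
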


\begin{theorem}
\label{s5t3}
$[\mathbf{ZF}]$ The sentence ``For every metrizable space $\mathbf{X}$ such that $\mathbf{X}$ has a cuf base, it holds that every uncountable subset $A$ of $X$ has a condensation point $x$ in $\mathbf{X}$ such that $x\in A$''  implies $\mathbf{CAC}_{fin}$.
\end{theorem}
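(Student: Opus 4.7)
The plan is to unfold the hypothesis on a carefully chosen discrete metrizable space. Given a denumerable family $\mathcal{A}=\{A_n:n\in\omega\}$ of non-empty finite sets, I would first replace each $A_n$ by $A_n\times\{n\}$ so that the family becomes pairwise disjoint without any appeal to choice; any choice function for the disjoint family yields one for $\mathcal{A}$. Let $X=\bigcup_{n\in\omega}A_n$ and equip $X$ with the discrete metric $d(x,y)=1$ for $x\neq y$. Then $\mathbf{X}=\langle X,\tau(d)\rangle$ is metrizable, and $\mathcal{B}=\bigcup_{n\in\omega}\{\{a\}:a\in A_n\}$ is an open base of $\mathbf{X}$ displayed as a countable union of finite sets, so $\mathbf{X}$ has a cuf base.

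The next step is the key observation: since every singleton $\{x\}$ is open in $\mathbf{X}$, for any $A\subseteq X$ and any $x\in X$, the neighborhood $\{x\}$ of $x$ meets $A$ in at most one point, so $x$ cannot be a condensation point of $A$ in $\mathbf{X}$. Consequently, no uncountable subset of $X$ has a condensation point in itself. By the hypothesis applied to $A=X$, the set $X$ cannot be uncountable, hence there exists an injection $\phi:X\to\omega$.

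Finally, I would use $\phi$ to manufacture a choice function. For each $n\in\omega$, the set $\phi[A_n]$ is a non-empty finite subset of $\omega$, so I can define $f(n)$ to be the unique $a\in A_n$ with $\phi(a)=\min\phi[A_n]$. The function $f$ is a choice function of the disjointified family, which in turn yields one for $\mathcal{A}$; this establishes $\mathbf{CAC}_{fin}$.

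There is no substantive obstacle in this argument: the whole point is that the discrete metric supplies a cuf-based metrizable space on which the condensation hypothesis is vacuous unless the underlying set is already countable, and countability of a disjoint union of non-empty finite sets is well known to deliver $\mathbf{CAC}_{fin}$ by a minimum-rank selection.
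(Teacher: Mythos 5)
Your proof is correct and uses essentially the same construction as the paper: the discrete space on the union of the finite sets, with the cuf base of singletons, on which no point can be a condensation point of any subset. The paper argues contrapositively (a family without a choice function yields an uncountable such space), whereas you argue directly via countability of $X$ and selection by minimal $\phi$-value, but this is only a difference of presentation.
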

\begin{proof}
Assuming that $\mathbf{CAC}_{fin}$ is false, we fix a disjoint family $\mathcal{A}=\{A_n: n\in\omega\}$ of finite sets such that $\mathcal{A}$ does not have a choice function. Let $X=\bigcup_{n\in\omega}A_n$ and $\mathbf{X}=\langle X, \mathcal{P}(X)\rangle$. Then the space $\mathbf{X}$ is metrizable and has a cuf base. The set $X$ is uncountable and has no condensation point in $\mathbf{X}$.  
\end{proof}

Similarly to Proposition \ref{s4p5} and some other results of Section \ref{s4}, the following two propositions give sufficient conditions for a topological space $\mathbf{X}$ and an infinite subset of $X$ to contain an infinite closed set of $\mathbf{X}$. 

\begin{proposition}
\label{s5t4}
$[\mathbf{ZF}]$ Let $\mathbf{X}$ be a $T_3$-space which has a cuf base. Let $A$ be an uncountable sequentially closed subset of $\mathbf{X}$ such that no point of $A$ is a condensation point of $A$ in $\mathbf{X}$. Then $\mathbf{CAC}_{fin}(X)$ implies that $A$ contains an infinite closed set of $\mathbf{X}$.
\end{proposition}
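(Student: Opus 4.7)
The plan is to reduce to showing $A$ is Dedekind-infinite, and then to exploit sequential closedness together with first-countability (which holds by Theorem \ref{s1t14}, since $\mathbf{X}$ is $T_3$ with a cuf base and hence metrizable) to extract a convergent sequence whose limit lies in $A$.

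First I would fix a cuf base $\mathcal{B} = \bigcup_{n \in \omega}\mathcal{B}_n$ of $\mathbf{X}$ with each $\mathcal{B}_n$ finite, and invoke (the proof of) Proposition \ref{s5p2} to decompose $A = \bigcup_{n \in N}A_n$, where $N \subseteq \omega$ and, for each $n \in N$, the set $A_n = \bigcup\{B \cap A : B \in \mathcal{B}_n,\ |B \cap A| \leq \aleph_0\}$ is countable as a finite union of countable sets (finite unions of countable sets are countable in $\mathbf{ZF}$, using only finite choice).

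Next I would argue that $A$ is Dedekind-infinite. Suppose otherwise: then every subset of $A$ is Dedekind-finite, so each $A_n$ is Dedekind-finite; being countable and Dedekind-finite, it must be finite. Disjointifying by $A'_n := A_n \setminus \bigcup_{m \in N,\, m < n}A_m$, the family $\{A'_n : n \in N\}$ partitions $A$ into finite subsets of $X$. Let $K = \{n \in N : A'_n \neq \emptyset\}$. If $K$ were finite then $A$ would be finite, contradicting $|A| > \aleph_0$. So $K$ is an infinite subset of $\omega$ (hence denumerable), and $\mathbf{CAC}_{fin}(X)$ furnishes a choice function $(x_n)_{n \in K}$ with $x_n \in A'_n$; the $x_n$ are pairwise distinct by disjointness, yielding a denumerable subset of $A$ and the desired contradiction.

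Finally, let $F = \{y_n : n \in \omega\} \subseteq A$ be denumerable with injective enumeration. If $F$ is closed in $\mathbf{X}$, we are done. Otherwise, pick any $y_\ast \in \cl_{\mathbf{X}}(F) \setminus F$. Fixing a compatible metric $d$ on $\mathbf{X}$ (by Theorem \ref{s1t14}), I would inductively set $n_1 = \min\{n : y_n \in B_d(y_\ast, 1)\}$ and $n_k = \min\{n > n_{k-1} : y_n \in B_d(y_\ast, 1/k)\}$ for $k \geq 2$. These are well-defined because $y_\ast \in \cl_{\mathbf{X}}(F)$ and, since $\mathbf{X}$ is $T_1$, removing the finitely many points $y_0,\dots,y_{n_{k-1}}$ from $B_d(y_\ast, 1/k)$ still yields a neighborhood of $y_\ast$, which must meet $F$. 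The subsequence $(y_{n_k})$ converges to $y_\ast$; sequential closedness of $A$ then gives $y_\ast \in A$, so $C := \{y_{n_k} : k \in \mathbb{N}\} \cup \{y_\ast\}$ is an infinite closed subset of $\mathbf{X}$ contained in $A$. The main obstacle is the Dedekind-infinity step, where one must observe that the cuc decomposition of Proposition \ref{s5p2} degenerates to a cuf decomposition precisely when $A$ is Dedekind-finite, so that $\mathbf{CAC}_{fin}(X)$ can finally be applied.
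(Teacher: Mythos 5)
Your proof is correct and follows essentially the same route as the paper's: decompose $A$ via Proposition \ref{s5p2}, use $\mathbf{CAC}_{fin}(X)$ to extract a denumerable subset of $A$, and then use metrizability (Theorem \ref{s1t14}) together with sequential closedness of $A$ to turn a convergent subsequence plus its limit into an infinite closed subset of $A$. The only (harmless) difference is organizational: the paper applies $\mathbf{CAC}_{fin}(X)$ to conclude that some countable piece $A_{n_0}$ of the decomposition is already infinite and works with that single piece, whereas you first establish that $A$ is Dedekind-infinite and then work with an arbitrary denumerable subset --- your variant has the small advantage of invoking the choice principle only for a denumerable family of pairwise disjoint finite subsets of $X$, which matches the literal statement of $\mathbf{CAC}_{fin}(X)$.
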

\begin{proof}
By Proposition \ref{s5p2}, we can fix a family $\{A_n: n\in\omega\}$ of countable sets such that $A=\bigcup_{n\in\omega}A_n$. Since $A$ is uncountable, assuming $\mathbf{CAC}_{fin}(X)$, we deduce that there exists $n_0\in\omega$ such that the set $A_{n_0}$ is infinite. Suppose that $A_{n_0}$ is not closed in $\mathbf{X}$. Then there exists a point $x_0\in\cl_{\mathbf{X}}(A_{n_0})\setminus A_{n_0}$. In view of Theorem \ref{s1t14}, $\mathbf{X}$ is metrizable, so also first-countable. Hence, since  $A_{n_0}$ is countable, there exists a sequence $(x_n)_{n\in\mathbb{N}}$ of points of $A_{n_0}$ which converges in $\mathbf{X}$ to $x_0$. Since $A$ is sequentially closed in $\mathbf{X}$, $x_0\in A$. Then the set $C=\{x_n: n\in\omega\}$ is an infinite subset of $A$ such that $C\in\mathcal{C}l(\mathbf{X})$.
\end{proof}

Since every countable union of finite subsets of $\mathbb{R}$ is countable, arguing in much the same way, as in the proof of Proposition \ref{s5t4}, one can show that the following proposition holds:

\begin{proposition}
\label{s5p5} 
$[\mathbf{ZF}]$ If $A$ is an uncountable, sequentially closed
subset of $\mathbb{R}$ such that no point of $A$ is a condensation point of $A$ in $\mathbb{R}$, then there exists an infinite subset $C$ of $A$ such that $C$ is closed in $\mathbb{R}$.
\end{proposition}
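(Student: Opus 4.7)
The plan is to adapt the argument at the end of the proof of Proposition \ref{s5t4}, replacing its use of $\mathbf{CAC}_{fin}(X)$ by the $\mathbf{ZF}$-theorem that every countable union of finite subsets of $\mathbb{R}$ is countable (which holds because the natural order of $\mathbb{R}$ canonically enumerates each finite piece, and a diagonal enumeration then gives a countable enumeration of the union).

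First, since $\mathbb{R}$ is second-countable, it has a cuf base, and by hypothesis no point of $A$ is a condensation point of $A$ in $\mathbb{R}$. Hence Proposition \ref{s5p2} applies and yields a decomposition $A=\bigcup_{n\in\omega}A_{n}$ with each $A_{n}$ countable (the decomposition is in fact explicit from the proof of Theorem \ref{s5t1}). If every $A_{n}$ were finite, then $A$ would be a countable union of finite subsets of $\mathbb{R}$, hence countable in $\mathbf{ZF}$, contradicting the assumption that $A$ is uncountable. Therefore $N=\{n\in\omega:A_{n}\text{ is infinite}\}\ne\emptyset$; let $n_{0}=\min N$. Then $A_{n_{0}}$ is denumerable and in particular well-orderable.

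If $A_{n_{0}}$ is closed in $\mathbb{R}$, take $C=A_{n_{0}}$ and we are done. Otherwise, fix any $x_{0}\in\cl_{\mathbb{R}}(A_{n_{0}})\setminus A_{n_{0}}$ (a single point, so no choice principle is needed). Using the countable base of neighborhoods $\{(x_{0}-\frac{1}{k},x_{0}+\frac{1}{k}):k\in\mathbb{N}\}$ of $x_{0}$ together with a fixed well-ordering of $A_{n_{0}}$, define $x_{k}$ to be the first element of $A_{n_{0}}\cap(x_{0}-\frac{1}{k},x_{0}+\frac{1}{k})$ (non-empty since $x_{0}\in\cl_{\mathbb{R}}(A_{n_{0}})$). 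Then $(x_{k})_{k\in\mathbb{N}}$ is a sequence of points of $A_{n_{0}}$ converging in $\mathbb{R}$ to $x_{0}$; since $A$ is sequentially closed, $x_{0}\in A$. Because $x_{0}\notin A_{n_{0}}$ while $x_{k}\in A_{n_{0}}$ for every $k$, the set $C=\{x_{k}:k\in\mathbb{N}\}\cup\{x_{0}\}$ has infinitely many distinct elements, is closed in $\mathbb{R}$ (a convergent sequence together with its limit), and is contained in $A$.

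The only genuinely delicate step is the first one, namely dispensing with $\mathbf{CAC}_{fin}(X)$: this is exactly where the special structure of $\mathbb{R}$ enters, and everything else is a direct transcription of the metric/first-countable argument concluding the proof of Proposition \ref{s5t4}.
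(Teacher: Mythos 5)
Your proof is correct and takes essentially the same route the paper intends: it reruns the argument of Proposition \ref{s5t4}, replacing the single use of $\mathbf{CAC}_{fin}(X)$ (needed there only to guarantee that some $A_{n_0}$ in the decomposition from Proposition \ref{s5p2} is infinite) by the $\mathbf{ZF}$-fact that a countable union of finite subsets of $\mathbb{R}$ is countable. Nothing further is needed.
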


\section{On $k$-spaces and Fr\'echet-Urysohn spaces}
\label{s6}

This section is devoted to the following statement due to  Arkhangel'skii: ``Every very $k$-space is Fr\'echet-Urysohn'' (see Theorem \ref{s1t:arkh}).  We show that, in $\mathbf{ZF}$, a very $k$-space may fail to be Fr\'echet-Urysohn and, among other problems, we search for sufficient conditions for a very $k$-space to be Fr\'echet-Urysohn.

We make use of the following idea several times in the sequel. This idea has appeared, for instance, in \cite{kerta}, \cite{kert}, \cite{kopsw}, \cite{kw1}, \cite{kw3} and \cite{ktw3}.

Suppose that  $\mathcal{A}=\{A_n: n\in\mathbb{N}\}$ is a disjoint family of non-empty sets, $A=\bigcup\mathcal{A}$ and $\infty\notin A$. Let $X=A\cup\{\infty\}$.  Suppose that $(\rho_n)_{n\in\mathbb{N}}$ is a sequence such that, for each $n\in\mathbb{N}$, $\rho_n$ is a metric on $A_n$. Let $d_n(x,y)=\min\{\rho_n(x,y), \frac{1}{n}\}$ for all $x,y\in A_n$. We define a function $d:X\times X\to\mathbb{R}$ as follows:

\[(\ast)\text{  } d(x,y)=\begin{cases} 0 &\text{if $x=y$;}\\
\max\{\frac{1}{n}, \frac{1}{m}\} &\text{if $x\in A_n ,y\in A_m$ and $n\neq m$;}\\
d_n(x,y) &\text{if  $x,y\in A_n$;}\\
\frac{1}{n} &\text{if $x\in A_n$ and $y=\infty$ or $x=\infty$ and $y\in A_n$.}\end{cases}
\]

\begin{proposition}
\label{s6p1}
The function $d$, defined by ($\ast$), has the following properties:
\begin{enumerate}
\item[(i)] $d$ is a metric on $X$ (cf. \cite{kw3}, \cite{ktw3});
\item[(ii)] if, for every $n\in\mathbb{N}$, the space $\langle A_n, \tau(\rho_n)\rangle$ is compact, then so is the space $\langle X, \tau(d)\rangle$ (cf. \cite{kw3}, \cite{ktw3});
\item[(iv)] if  $\mathcal{A}$ does not have a choice function, the space $\langle X, \tau(d)\rangle$ is not separable nor Loeb (cf. \cite{ktw3}, \cite{kerta});
\item[(v)] if $\mathcal{A}$ does not have a partial choice function, the space  $\langle X, \tau(d)\rangle$ is not sequential (cf. \cite{kw1});
\item[(vi)] if, for every $n\in\mathbb{N}$, the set $A_n$ is finite, then $\langle X, \tau(d)\rangle$ is a compact very $k$-space.
\end{enumerate}
\end{proposition}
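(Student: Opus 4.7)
First I would record several geometric facts about $d$ that drive every part. Namely: (a) on $A_n$ the metric $d$ agrees with $d_n$, so the subspace topology on $A_n$ is $\tau(\rho_n)$; (b) distinct ``components'' are well-separated, in the sense that $d(x,y) \geq \max(1/n, 1/m)$ whenever $x \in A_n$ and $y \in A_m$ with $m \neq n$, while $d(x, \infty) = 1/n$ for $x \in A_n$, and intra-component distances in $A_n$ are bounded by $1/n$; and (c) the basic neighborhood $B_d(\infty, 1/k)$ equals $\{\infty\} \cup \bigcup_{m > k} A_m$. An immediate corollary of (b) is that every $A_n$ is clopen in $\langle X, \tau(d) \rangle$.

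For (i) the triangle inequality reduces to a finite case analysis on where the three points live; the only observation worth verifying is that an inter-component hop of size at least $1/n$ always dominates the intra-component diameter $\leq 1/n$. For (ii), any open cover of $X$ contains some $U_0 \ni \infty$, and by (c) this $U_0$ absorbs all but finitely many $A_m$; the remaining finitely many components are compact by hypothesis, so a finite union of finite subcovers (requiring only finite choice) yields a finite subcover of $X$.

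For (iv), any Loeb function $f$ of $\mathbf{X}$ would give the choice function $n \mapsto f(A_n)$ for $\mathcal{A}$ since each $A_n$ is nonempty and closed by (b); and a countable dense $D \subseteq X$ would meet every open $A_n$, and fixing any injection $D \hookrightarrow \omega$ and picking the least element of $D \cap A_n$ again yields a choice function. For (v), the set $A$ is not closed since $\infty \in \cl_{\mathbf{X}}(A)$ by (c); but it is sequentially closed, because if $(x_j)_{j \in \mathbb{N}} \subseteq A$ converged to $\infty$, writing $x_j \in A_{\phi(j)}$, item (c) forces $\phi(j) \to \infty$, so the range $S = \phi[\mathbb{N}]$ is infinite and the map $m \mapsto x_{\min \phi^{-1}(m)}$ is a partial choice function for $\mathcal{A}$, contradicting the hypothesis. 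Hence $\mathbf{X}$ fails to be sequential.

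For (vi), when each $A_n$ is finite, any metric on $A_n$ induces the discrete topology, so by (a) together with the clopenness of $A_n$ the whole set $A$ is discrete in $X$, while $X$ is compact by (ii). Given any $Y \subseteq X$: if $\infty \notin Y$, then $Y$ is discrete and thus trivially a $k$-space (every subset is already closed); if $\infty \in Y$, then $X \setminus Y \subseteq A$ is open in $X$ (as a union of singletons from the discrete $A$), so $Y$ is closed in the compact Hausdorff space $X$, hence compact, and any compact Hausdorff space is trivially a $k$-space. The spot I would most carefully check is that finiteness of each $A_n$ really forces its metric topology to be discrete, from which the closedness of every $Y \ni \infty$ in $X$ follows cleanly.
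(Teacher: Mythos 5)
Your proofs of (i), (ii), (iv) and (v) are correct and essentially the standard arguments (the paper only cites references for these, so there is nothing to compare there); in particular your treatment of (v) correctly extracts a partial choice function on the infinite range of the index map $\phi$, which is exactly the point. For (vi), the only item the paper actually proves, your route is genuinely different. The paper argues by contradiction inside an arbitrary subspace $\mathbf{P}$: if $D\subseteq P$ is $k$-closed but not closed, the only possible missing limit point is $\infty$, and then $K_0=\{\infty\}\cup D$ is a compact subset of $\mathbf{P}$ on which $D$ fails to be closed, contradicting $k$-closedness. You instead prove the stronger structural fact that every subspace $Y$ of $\mathbf{X}$ is either discrete (if $\infty\notin Y$, since every point of $A$ is isolated in $\mathbf{X}$) or closed in the compact space $\mathbf{X}$ and hence itself compact Hausdorff (if $\infty\in Y$, because $X\setminus Y$ is then a union of open singletons), and both kinds of space are $k$-spaces for trivial reasons (in the discrete case every subset is closed; in the compact case one takes $K=Y$ in the definition). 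Both arguments are choice-free; yours buys a cleaner classification of the subspaces and avoids the contradiction, while the paper's is more local and generalizes more readily to situations where the $A_n$ are compact but not finite. Your two flagged verification points — that a finite metric space is discrete, and that closedness of $Y\ni\infty$ follows — both check out in $\mathbf{ZF}$.
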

\begin{proof}
Since only (vi) is new here, let us explain why (vi) holds. We assume that, for every $n\in\mathbb{N}$, the set $A_n$ is finite. By (ii), the space $\mathbf{X}=\langle X, \tau(d)\rangle$ is compact.  Let $\mathbf{P}$ be a subspace of $\mathbf{X}=\langle X, \tau(d)\rangle$.  Suppose that $D\subseteq P$ is such that, for every $K\in\mathcal{K}^{\ast}(\mathbf{P})$, $D\cap K\in\mathcal{K}(\mathbf{P})$. We need to show that $D$ is closed in the subspace $\mathbf{P}$ of $\mathbf{X}$. Suppose that $x_0\in\cl_{\mathbf{P}}(D)\setminus D$. Then $x_0=\infty$. The set $K_0=\{\infty\}\cup\bigcup_{n\in\mathbb{N}}(D\cap A_n)$ is compact in $\mathbf{X}$, $K_0\subseteq P$ but $K_0\cap D$ is not closed in $\mathbf{P}$. The contradiction obtained proves that $D\in\mathcal{C}l(\mathbf{P})$. Hence $\mathbf{X}$ is a very $k$-space. 
\end{proof}

\begin{theorem}
\label{s6t2}
$[\mathbf{ZF}]$ Let $\mathbf{X}$ be a first-countable, $\mathcal{K}$-Loeb  $T_3$-space. Then $\mathbf{X}$ is a very $k$-space if and only if $\mathbf{X}$ is Fr\'echet-Urysohn.
\end{theorem}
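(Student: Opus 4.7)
My approach splits by direction. The forward implication (Fr\'echet-Urysohn $\Rightarrow$ very $k$-space) is immediate from the paper's Proposition \ref{s1p18}: if $\mathbf{X}$ is Fr\'echet-Urysohn then $\mathbf{HS}(\mathbf{X})$ holds, so every subspace is sequential; since every subspace of a $T_3$-space is Hausdorff, Proposition \ref{s1p18}(ii) makes every subspace a $k$-space. So the real content is the converse, which I develop in the remaining paragraphs.

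Assume $\mathbf{X}$ is first-countable, $\mathcal{K}$-Loeb, $T_3$, and a very $k$-space. Fix $A \subseteq X$ and $x \in \cl_{\mathbf{X}}(A)$; we may assume $x \notin A$. The plan is to apply the $k$-space property to the subspace $Y := A \cup \{x\}$: since $A$ is not closed in $Y$ (because $x \in \cl_Y(A) \setminus A$), there exists a compact $K \subseteq Y$ with $A \cap K$ not closed in $K$. Any witness $p \in \cl_K(A \cap K) \setminus (A \cap K)$ must equal $x$, because $K \subseteq A \cup \{x\}$ and $p \notin A$. Hence $x \in K$, and $x$ is not isolated in $K$, since every $K$-neighbourhood of $x$ meets $A \cap K \subseteq K \setminus \{x\}$.

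Next, using $T_3$ plus first-countability, I would construct effectively (in $\mathbf{ZF}$) a descending open neighbourhood base $\{V_n : n \in \omega\}$ of $x$ with $\cl_{\mathbf{X}}(V_{n+1}) \subseteq V_n$: fix any countable base $\{W_m\}_{m\in\omega}$ at $x$ and recursively let $V_{n+1} := W_{m_n}$, where $m_n$ is the least $m \in \omega$ with $\cl_{\mathbf{X}}(W_m) \subseteq V_n \cap W_{n+1}$ (which exists by regularity). Then define
\[
L_n \;:=\; K \cap \cl_{\mathbf{X}}(V_n) \setminus V_{n+1}.
\]
Each $L_n$ is closed in $K$ hence compact, and $x \in V_{n+1}$ gives $L_n \subseteq K \setminus \{x\} \subseteq A$. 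I would show that $N := \{n \in \omega : L_n \neq \emptyset\}$ is infinite: if $L_n = \emptyset$ for all $n \geq n_0$, then $K \cap \cl_{\mathbf{X}}(V_n) \subseteq V_{n+1}$ for all such $n$, and iterating forces $K \cap V_{n_0} \subseteq \bigcap_m V_m = \{x\}$, contradicting the fact that $x$ is not isolated in $K$.

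Finally, enumerate $N$ in its natural order as $n_0 < n_1 < \cdots$ and apply the fixed $\mathcal{K}$-Loeb function $f$ of $\mathbf{X}$ to form $y_k := f(L_{n_k}) \in L_{n_k} \subseteq A$. Since $y_k \in \cl_{\mathbf{X}}(V_{n_k}) \subseteq V_{n_k - 1}$ and $n_k \geq k$, the sequence $(y_k)_{k\in\omega}$ converges to $x$ in $\mathbf{X}$, which establishes the Fr\'echet-Urysohn property. The main obstacle throughout is staying inside $\mathbf{ZF}$: we cannot blindly pick shrinking neighbourhoods with small closure nor points of $A$ clustering at $x$, and the well-ordering of $\omega$ (for the base construction) together with the $\mathcal{K}$-Loeb function (for selecting $y_k$ from each compact $L_n$) is doing exactly the choice work that these two hypotheses are designed to supply.
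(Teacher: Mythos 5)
Your proof is correct and takes essentially the same route as the paper's: apply the $k$-property to the subspace $A\cup\{x\}$ to obtain a compact $K\subseteq A\cup\{x\}$ clustering at $x$, build a shrinking closed-neighbourhood base at $x$ from regularity plus first countability, and use the $\mathcal{K}$-Loeb function to pick points from compact annular slices of $K$. The only cosmetic difference is that the paper indexes the slices by pairs $\langle n,m\rangle$ and then extracts a convergent sequence from the resulting countable set, whereas you use consecutive annuli $L_n$ and check directly that infinitely many are non-empty.
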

\begin{proof}
By Proposition \ref{s1p18}, every Fr\'echet-Urysohn Hausdorff space is a very $k$-space. Therefore, assuming that $\mathbf{X}$ is a very $k$-space, we prove $\mathbf{FU}(\mathbf{X})$.  Suppose that $A\subseteq X$ and  $x\in\cl_{\mathbf{X}}(A)\setminus A$. Let $P=A\cup\{x\}$. Since the subspace $\mathbf{P}$ of $\mathbf{X}$ is a $k$-space and $A$ is not closed in $\mathbf{P}$, there exists a compact set $K$ in $\mathbf{X}$ such that $K\subseteq P$ and $K\cap A$ is not closed in $\mathbf{P}$. Since $K\cup\{x\}$ is compact in $\mathbf{X}$ but $K\setminus\{x\}=K\cap A$ is not closed in $\mathbf{P}$,  we infer that $x\in\cl_{\mathbf{P}}(K\cap A)$. This implies that
 $x\in\cl_{\mathbf{X}}(K\setminus\{x\})$. Let $\mathcal{B}(x)=\{U_n: n\in\omega\}$ be a base of neighborhoods of $x$ in $\mathbf{X}$ such that, for every $n\in\omega$, $\cl_{\mathbf{X}}(U_{n+1})\subseteq U_n$. Let $f$ be a choice function of $\mathcal{K}^{\ast}(\mathbf{X})$. Let $M=\{\langle n, m\rangle\in\omega\times\omega: (\cl_{\mathbf{X}}(U_n)\setminus U_m)\cap K\neq\emptyset\}$. For every $\langle n, m\rangle\in M$, let $x_{n,m}=f((\cl_{\mathbf{X}}(U_n)\setminus U_m)\cap K)$. The set $C=\{ x_{n,m}: \langle n,m\rangle\in M\}$ is a countable subset of $K\setminus\{x\}$ such that $x\in\cl_{\mathbf{X}}(C)$. Hence, there is a sequence of points of $A$ which converges in $\mathbf{X}$ to the point $x$. This shows that $\mathbf{X}$ is Fr\'echet-Urysohn. 
\end{proof}

\begin{theorem}
\label{s6t3}
$[\mathbf{ZF}]$  The statement ``Every compact metrizable very $k$-space is sequential'' implies $\mathbf{CAC}_{fin}$.
\end{theorem}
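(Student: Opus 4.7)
The plan is to prove the contrapositive: assuming $\mathbf{CAC}_{fin}$ fails, I will construct a compact metrizable very $k$-space that is not sequential. The natural vehicle is the construction of Proposition \ref{s6p1}, applied to a suitably chosen denumerable pairwise disjoint family $\mathcal{B}=\{B_n:n\in\mathbb{N}\}$ of non-empty finite sets. Parts (i), (ii), and (vi) of that proposition will give a compact metrizable very $k$-space as soon as each $B_n$ is finite, while part (v) yields non-sequentiality provided $\mathcal{B}$ admits no partial choice function. So the task reduces to producing, from $\neg\mathbf{CAC}_{fin}$, a denumerable family of non-empty finite sets that has no partial choice function.

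The main step is precisely this reduction. Starting from a denumerable family $\{A_n:n\in\mathbb{N}\}$ of non-empty finite sets without a choice function (which exists by $\neg\mathbf{CAC}_{fin}$), I would set $B_n=\{n\}\times\prod_{i=1}^{n}A_i$. Each $B_n$ is finite and non-empty, and the family is pairwise disjoint thanks to the first coordinate. I claim $\mathcal{B}$ has no partial choice function. Indeed, a partial choice supplying $f(n)=(n,g_n)\in B_n$ for all $n$ in some infinite $S\subseteq\mathbb{N}$ would let me define
\[
c(i)=g_{n_i}(i),\qquad n_i=\min\{n\in S:n\geq i\}.
\]
Since $S$ is infinite, $n_i$ exists for every $i\in\mathbb{N}$, and $i\leq n_i$ forces $g_{n_i}(i)\in A_i$, so $c$ would be a choice function for $\{A_n\}$, contradicting the choice of this family.

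Equipped with such a $\mathcal{B}$, the conclusion is immediate. I would endow each $B_n$ with its discrete metric $\rho_n$, form the metric $d$ on $X=\{\infty\}\cup\bigcup_{n\in\mathbb{N}}B_n$ by the formula $(\ast)$ preceding Proposition \ref{s6p1}, and invoke that proposition: (i) gives metrizability, (ii) gives compactness (since each finite discrete $\langle B_n,\rho_n\rangle$ is compact), (vi) gives the very $k$-space property, and (v) --- using that $\mathcal{B}$ has no partial choice function --- gives non-sequentiality. Thus $\mathbf{X}=\langle X,\tau(d)\rangle$ is the desired counterexample, completing the contrapositive. The only subtle point is the reduction from \emph{no choice function} to \emph{no partial choice function} via the product construction; the argument above handles it cleanly in $\mathbf{ZF}$ without any appeal to equivalences from the literature on choice for families of finite sets.
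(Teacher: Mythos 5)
Your proof is correct and follows essentially the same route as the paper: both argue the contrapositive by applying the $(\ast)$-construction and Proposition \ref{s6p1} (parts (i), (ii), (v), (vi)) to a pairwise disjoint denumerable family of non-empty finite sets having no \emph{partial} choice function. The only difference is that the paper obtains such a family directly from $\neg\mathbf{CAC}_{fin}$ by citing the known equivalence \cite[Form 10E]{hr}, whereas you reprove that reduction from scratch via the sets $B_n=\{n\}\times\prod_{i=1}^{n}A_i$ and the selector $c(i)=g_{n_i}(i)$; that self-contained step is valid in $\mathbf{ZF}$.
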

\begin{proof}
Assuming that $\mathbf{CAC}_{fin}$ is false, we fix a family $\mathcal{A}=\{A_n: n\in\mathbb{N}\}$ of pairwise disjoint, non-empty finite sets  such that $\mathcal{A}$ does not have a partial choice function (see \cite[Form 10E]{hr}). We fix an element $\infty\notin\bigcup\mathcal{A}$. For every $n\in\mathbb{N}$, let $\rho_n$ be the discrete metric on $A_n$.  We put $X=\{\infty\}\cup\bigcup\mathcal{A}$, and consider the metric $d$ on $X$ defined by ($\ast$). In view of Proposition \ref{s6p1}, the metrizable space $\langle X, \tau(d)\rangle$ is a compact, very $k$-space which is not sequential.
\end{proof}

\begin{corollary}
\label{s6c4}
The statement ``For every metrizable space $\mathbf{X}$, $\mathbf{Hk}(\mathbf{X})$ and $\mathbf{FU}(\mathbf{X})$ are equivalent'' is unprovable in $\mathbf{ZF}$.
\end{corollary}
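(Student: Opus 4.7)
My plan is to deduce the corollary directly from Theorem \ref{s6t3} by observing that the stated equivalence, if provable in $\mathbf{ZF}$, would force the ``$\mathbf{Hk}\Rightarrow\mathbf{FU}$'' direction to hold in $\mathbf{ZF}$ for every metrizable space, which in turn suffices to yield the hypothesis of Theorem \ref{s6t3}.

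First I would record the trivial observation that, for every topological space $\mathbf{X}$, $\mathbf{FU}(\mathbf{X})$ implies $\mathbf{S}(\mathbf{X})$: a sequentially closed set $A\subseteq X$ satisfies $\cl_{\mathbf{X}}(A)=A$ under $\mathbf{FU}(\mathbf{X})$, since every point of $\cl_{\mathbf{X}}(A)$ is a limit of a sequence from $A$, hence belongs to $A$. Next, I would argue for contradiction. Assume that the sentence
\[
(\ast)\quad (\forall \mathbf{X}\text{ metrizable})\;\bigl(\mathbf{Hk}(\mathbf{X})\leftrightarrow\mathbf{FU}(\mathbf{X})\bigr)
\]
is provable in $\mathbf{ZF}$. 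Then, in $\mathbf{ZF}$, every metrizable very $k$-space is Fr\'echet-Urysohn, hence in particular sequential by the preceding remark. Restricting to the compact case, $(\ast)$ therefore entails the statement
\[
(\star)\quad \text{Every compact metrizable very }k\text{-space is sequential.}
\]

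The key step is then to invoke Theorem \ref{s6t3}, which asserts that $(\star)$ implies $\mathbf{CAC}_{fin}$. Consequently, $(\ast)$ implies $\mathbf{CAC}_{fin}$ in $\mathbf{ZF}$. Since $\mathbf{CAC}_{fin}$ (\cite[Form 10]{hr}) is a well-known choice principle that is not a theorem of $\mathbf{ZF}$ (for instance, it fails in the basic Fraenkel model and, by transfer, in appropriate $\mathbf{ZF}$-models), the sentence $(\ast)$ cannot be a theorem of $\mathbf{ZF}$ either. This yields the corollary.

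The argument is essentially a two-line deduction from Theorem \ref{s6t3}, so there is no real obstacle; the only thing to be careful about is to use the easy implication $\mathbf{FU}(\mathbf{X})\Rightarrow\mathbf{S}(\mathbf{X})$ to bridge between ``Fr\'echet-Urysohn'' in the conclusion of $(\ast)$ and ``sequential'' in the hypothesis of Theorem \ref{s6t3}, and to cite a known $\mathbf{ZF}$-model in which $\mathbf{CAC}_{fin}$ fails in order to conclude unprovability.
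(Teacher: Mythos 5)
Your argument is correct and is essentially the paper's own proof: both reduce the corollary to Theorem \ref{s6t3} via the trivial implication $\mathbf{FU}(\mathbf{X})\rightarrow\mathbf{S}(\mathbf{X})$ and then appeal to a model of $\mathbf{ZF}+\neg\mathbf{CAC}_{fin}$ (you phrase it contrapositively, the paper states it directly, but the content is identical). One factual slip: $\mathbf{CAC}_{fin}$ does \emph{not} fail in the basic Fraenkel model $\mathcal{N}1$ --- there the support argument shows every element of each member of a denumerable family of finite sets is fixed by the stabilizer of the family's support, so Form 10 holds; the standard $\mathbf{ZFA}$ witness is the \emph{second} Fraenkel model $\mathcal{N}2$, and the paper's cited $\mathbf{ZF}$ witness is Pincus' Model I ($\mathcal{M}4$ in \cite{hr}). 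Replace the model citation and the proof stands.
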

\begin{proof}
By Theorem \ref{s6t3}, the statement ``For every metrizable $\mathbf{X}$, $\mathbf{Hk}(\mathbf{X})$ and $\mathbf{FU}(\mathbf{X})$ are equivalent'' is false in every model of $\mathbf{ZF}+\neg\mathbf{CAC}_{fin}$, for instance, in Pincus' Model I (model $\mathcal{M}4$ in \cite{hr}). 
\end{proof}

\begin{theorem}
\label{s6t5}
$[\mathbf{ZF}]$ 
\begin{enumerate}
\item[(i)] For every second-countable metrizable space $\mathbf{X}$, the following holds:
\begin{enumerate} 
\item[(a)] $\mathbf{Hk}(\mathbf{X})\leftrightarrow \mathbf{FU}(\mathbf{X})$;
\item[(b)] $\mathbf{Hk}(\mathbf{X})\rightarrow\mathbf{UIC}(\mathbf{X})$.
\end{enumerate}
\item[(ii)] $\mathbf{CAC}(\mathbb{R})\leftrightarrow\mathbf{Hk}_{M2}\leftrightarrow \mathbf{k}_{M2}$.
\item[(iii)] $\mathbf{k}_{M2}\rightarrow\mathbf{UIC}_{M2}$.
\end{enumerate}
\end{theorem}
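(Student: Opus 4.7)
The plan is to prove items (i)--(iii) in order, leveraging the earlier machinery so that only a single implication in (ii) requires any genuine construction. For (i)(a), note that a second-countable metrizable space $\mathbf{X}$ is first-countable, $T_3$, and $\mathcal{K}$-Loeb (by Theorem \ref{s1t13}), so the equivalence $\mathbf{Hk}(\mathbf{X})\leftrightarrow\mathbf{FU}(\mathbf{X})$ is a direct specialization of Theorem \ref{s6t2}. Item (i)(b) then follows by composing (i)(a) with Corollary \ref{s4c6}, which supplies $\mathbf{FU}(\mathbf{X})\rightarrow\mathbf{UIC}(\mathbf{X})$ for every second-countable Hausdorff space.

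For (ii), three of the four needed arrows come almost for free: Theorem \ref{s1t16} supplies $\mathbf{CAC}(\mathbb{R})\rightarrow\mathbf{FU}_{M2}$, item (i)(a) upgrades this to $\mathbf{Hk}_{M2}$, and $\mathbf{Hk}_{M2}\rightarrow\mathbf{k}_{M2}$ is immediate because any space is a subspace of itself. The substantive implication, and the main obstacle of the theorem, is $\mathbf{k}_{M2}\rightarrow\mathbf{CAC}(\mathbb{R})$, which I would prove contrapositively. Assuming $\neg\mathbf{CAC}(\mathbb{R})$, I would fix a disjoint family $\{A_n:n\in\mathbb{N}\}$ of non-empty subsets of $\mathbb{R}$ admitting no partial choice function (using the standard $\mathbf{ZF}$-equivalence of $\mathbf{CAC}(\mathbb{R})$ with its partial-choice version, proved by coding the finite products $\prod_{k\leq n}A_k$ back into $\mathbb{R}$ via fixed ZF-definable injections $\mathbb{R}^{n+1}\to\mathbb{R}$, and then if necessary placing each set into its own open interval). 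Equipping each $A_n$ with its Euclidean metric $\rho_n$ and running the construction of Proposition \ref{s6p1} produces a metric space $\mathbf{X}=\langle\{\infty\}\cup\bigcup_{n}A_n,d\rangle$; second-countability is witnessed by the explicit countable base $\{B_d(\infty,1/k):k\in\mathbb{N}\}\cup\{A_n\cap(p,q):n\in\mathbb{N},\ p,q\in\mathbb{Q},\ p<q\}$, built from the uniform rational-interval bases on the clopen pieces $A_n$.

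The technical heart of the argument is to check that $E=X\setminus\{\infty\}$ witnesses $\neg\mathbf{k}(\mathbf{X})$. Non-closedness is immediate because $\infty\in\cl_{\mathbf{X}}(E)$. Given an arbitrary compact $K\subseteq X$, the case $\infty\notin K$ is easy: $K$ is bounded away from $\infty$ in $d$, hence contained in $\bigcup_{n\leq n_0}A_n$ for some $n_0$, and $E\cap K=K$ is closed. In the case $\infty\in K$, I would use that each $A_n$ is clopen in $\mathbf{X}$, whence $K\cap A_n$ is compact in $\mathbf{X}$; and because the subspace topology on $A_n$ inside $\mathbf{X}$ agrees with the Euclidean topology inherited from $\mathbb{R}$, the set $K\cap A_n$ is a compact subset of $\mathbb{R}$ and so admits a minimum. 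If $K$ were to meet infinitely many $A_n$, then $n\mapsto\min(K\cap A_n)$ would be a partial choice function for $\{A_n\}$, contradicting the choice of the family; consequently $K$ must meet only finitely many $A_n$, forcing $\infty$ to be isolated in $K$ and $E\cap K=K\setminus\{\infty\}$ to be closed in $K$. The extraction of these minima from compact subsets of $\mathbb{R}$ is the one calculation on which the whole direction $\mathbf{k}_{M2}\rightarrow\mathbf{CAC}(\mathbb{R})$ hinges, and I expect it to be the main technical obstacle of the theorem.

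Item (iii) then requires no further construction: by (ii), $\mathbf{k}_{M2}$ implies $\mathbf{CAC}(\mathbb{R})$; and Corollary \ref{s4c6}, applied to every second-countable metrizable (hence Hausdorff) space, yields $\mathbf{CAC}(\mathbb{R})\rightarrow\mathbf{UIC}_{M2}$. Composing these gives $\mathbf{k}_{M2}\rightarrow\mathbf{UIC}_{M2}$ as claimed.
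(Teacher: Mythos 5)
Your proofs of (i) and (iii) coincide with the paper's, but in (ii) you have overlooked a one-line observation that makes your entire construction unnecessary: since every subspace of a second-countable metrizable space is again second-countable and metrizable, $\mathbf{k}_{M2}$ \emph{trivially} implies $\mathbf{Hk}_{M2}$ (you noted only the converse direction), and hence implies $\mathbf{Hk}(\mathbb{R})$, which by your own item (i)(a) gives $\mathbf{FU}(\mathbb{R})$, which by Theorem \ref{s1t16} is equivalent to $\mathbf{CAC}(\mathbb{R})$. This is exactly the paper's argument, and it disposes of what you call ``the main obstacle of the theorem'' without any construction at all. That said, your contrapositive argument for $\mathbf{k}_{M2}\rightarrow\mathbf{CAC}(\mathbb{R})$ is sound as far as I can check: the reduction of $\mathbf{CAC}(\mathbb{R})$ to its partial-choice form is standard (the paper itself invokes it via \cite[Lemma 2]{hkrst} in Theorem \ref{s6t8}), the space built from Proposition \ref{s6p1} with the Euclidean metrics is second-countable via your explicit base, and the key step --- that a compact $K\ni\infty$ meeting infinitely many $A_n$ would yield a partial choice function by taking $\min(K\cap A_n)$, which exists in $\mathbf{ZF}$ because $K\cap A_n$ is clopen in $K$, hence compact, hence closed and bounded in $\mathbb{R}$ --- is correct. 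So you end up with a valid proof whose hard part essentially re-derives a special case of the paper's Theorem \ref{s6t8}(ii) (where the sets $F_n=\psi_n[A_n]$ are packed into $\mathbb{R}$ itself, giving a subspace of $\mathbb{R}$ that is not a $k$-space); what the paper's route buys is brevity and the recognition that for the class $M2$, $\mathbf{k}$ and $\mathbf{Hk}$ are the same statement, while your route buys an explicit witness space, which is of independent interest but not needed here.
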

\begin{proof}
(i) Let $\mathbf{X}$ be a non-empty second-countable metrizable space.  By Theorem \ref{s1t13}, $\mathbf{X}$ is $\mathcal{K}$-Loeb, so (a) can be deduced from Theorem \ref{s6t2}. That (b) holds follows from (a) and Corollary \ref{s4c6}.\medskip

(ii) It is obvious that $\mathbf{Hk}_{M2}$ and $\mathbf{k}_{M2}$ are equivalent. It follows from (i) that $\mathbf{Hk}_{M2}$ implies $\mathbf{FU}(\mathbb{R})$. By Theorem \ref{s1t16}, $\mathbf{FU}(\mathbb{R})$, $\mathbf{CAC}(\mathbb{R})$ and $\mathbf{FU}_{M2}$ are equivalent. By Proposition \ref{s1p18}, $\mathbf{FU}_{M2}$ implies $\mathbf{Hk}_{M2}$.  All this taken together proves that (ii) holds. That (iii) is true follows from (i).
\end{proof}

\begin{remark} 
\label{s6r6}
$[\mathbf{ZF}]$ Let $\mathbf{X}$ be a Hausdorff space for which $\mathbf{UIC}(\mathbf{X})$ is false. Then we can fix an uncountable subset $P$ of $X$ such that every compact set of $\mathbf{X}$ which is contained in $P$  is finite. If, in addition $\mathbf{X}$ is second-countable, then  the subspace $\mathbf{P}$ of $\mathbf{X}$ is not  discrete, so,  there exists an accumulation point $p_0$ of $\mathbf{P}$. The set $A_0=P\setminus\{p_0\}$ is not closed in $\mathbf{P}$  but the intersection of $A_0$ with every compact subset of $\mathbf{P}$ is closed. This implies  that $\mathbf{P}$ is not a $k$-space. Hence, for every second-countable Hausdorff space $\mathbf{X}$ the following implication holds: $\mathbf{Hk}(\mathbf{X})\rightarrow \mathbf{UIC}(\mathbf{X})$.
\end{remark}

In \cite{kopsw}, it was shown  that $\mathbb{R}$ is a $k$-space in $\mathbf{ZF}$. Now, we can generalize this result by proving the following theorem:

\begin{theorem}
\label{s6t7}
$[\mathbf{ZF}]$ Every first-countable, $\sigma$-compact, $\mathcal{K}$-Loeb $T_3$-space is a $k$-space. In particular, every metrizable, second-countable $\sigma$-compact space is a $k$-space. 
\end{theorem}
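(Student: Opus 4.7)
The plan is to prove closedness by extracting a sequence and using compactness of its closure. Let $\mathbf{X}$ be a first-countable, $\sigma$-compact, $\mathcal{K}$-Loeb $T_3$-space, and fix once and for all a representation $X=\bigcup_{n\in\omega}K_n$ with each $K_n$ compact; replacing $K_n$ by $\bigcup_{i\leq n}K_i$ (a finite union, no choice needed), we may assume $K_n\subseteq K_{n+1}$. Fix a choice function $f$ on $\mathcal{K}^{\ast}(\mathbf{X})$. Given $A\subseteq X$ such that $A\cap K\in\mathcal{C}l(\mathbf{X})$ for every $K\in\mathcal{K}(\mathbf{X})$, take any $x\in\cl_{\mathbf{X}}(A)$; the goal is to show $x\in A$.

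The construction of the sequence goes as follows. Fix a decreasing countable neighborhood base $\{V_n:n\in\omega\}$ at $x$ (first-countability). For each $n\in\omega$, since $x\in\cl_{\mathbf{X}}(A)$ the set $A\cap V_n$ is non-empty, so the integer
\[
M_n=\min\{m\in\omega:A\cap V_n\cap K_m\neq\emptyset\}
\]
is well-defined. Set $L_n=A\cap\cl_{\mathbf{X}}(V_n)\cap K_{M_n}$. The set $A\cap K_{M_n}$ is closed in $\mathbf{X}$ by hypothesis, and $\cl_{\mathbf{X}}(V_n)$ is closed, so $L_n$ is a closed subset of the compact set $K_{M_n}$, hence compact; it is also non-empty because it contains $A\cap V_n\cap K_{M_n}$. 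Therefore $L_n\in\mathcal{K}^{\ast}(\mathbf{X})$ and I can define $x_n=f(L_n)\in A$.

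The next key step is to verify $x_n\to x$ in $\mathbf{X}$ \emph{without} committing to a fixed base of closed neighborhoods (which would need countable choice). Let $U$ be any open neighborhood of $x$; by $T_3$ there exists an open $W$ with $x\in W$ and $\cl_{\mathbf{X}}(W)\subseteq U$, and since $\{V_n\}$ is a base at $x$, some $V_k\subseteq W$, so $\cl_{\mathbf{X}}(V_k)\subseteq\cl_{\mathbf{X}}(W)\subseteq U$. For $n\geq k$, $V_n\subseteq V_k$, hence $x_n\in\cl_{\mathbf{X}}(V_n)\subseteq\cl_{\mathbf{X}}(V_k)\subseteq U$, proving convergence. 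To conclude, the set $C=\{x_n:n\in\omega\}\cup\{x\}$ is compact (a convergent sequence together with its limit in a Hausdorff space), so by hypothesis $A\cap C$ is closed in $\mathbf{X}$. Since $A\cap C\supseteq\{x_n:n\in\omega\}$ and $x\in\cl_{\mathbf{X}}(\{x_n:n\in\omega\})$, it follows that $x\in A\cap C$, in particular $x\in A$, as desired. The ``in particular'' clause is immediate: a second-countable metrizable space is first-countable, $T_3$, and is $\mathcal{K}$-Loeb by Theorem \ref{s1t13}, so the general statement applies.

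The main delicate point—and the place where a naive proof slips into needing countable choice—is step three: one must avoid choosing, one by one, closed neighborhoods $\overline{W_n}\subseteq V_n$. I sidestep this by defining $L_n$ directly from the data $(V_n,K_{M_n},A)$, and by verifying convergence through the \emph{existential} form of $T_3$ applied once per test neighborhood $U$, rather than by choosing a shrinking sequence in advance. Everything else (the minimum $M_n$, the finite unions $K_n$, the Loeb selection) is uniformly definable from the fixed data, so the whole construction is carried out in $\mathbf{ZF}$.
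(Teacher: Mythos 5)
Your proof is correct and follows essentially the same route as the paper's: fix the compact decomposition $\{K_n\}$ and a choice function on $\mathcal{K}^{\ast}(\mathbf{X})$, take the minimal index $M_n$ so that $A\cap\cl_{\mathbf{X}}(V_n)\cap K_{M_n}$ is a non-empty compact set, select $x_n$ from it, and apply the $k$-space hypothesis to the compact set $\{x_n:n\in\omega\}\cup\{x\}$. The only cosmetic difference is that the paper simply fixes a neighborhood base with $\cl_{\mathbf{X}}(U_{i+1})\subseteq U_i$ (which is definable in $\mathbf{ZF}$ from a decreasing countable base by taking minimal indices, so no choice issue arises there either), whereas you verify convergence directly via regularity; both are fine.
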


\begin{proof}
Let $\mathbf{X}$ be a first-countable  $T_3$-space which is $\sigma$-compact and $\mathcal{K}$-Loeb. We fix a family $\{K_n: n\in\omega\}$ of compact subsets of $\mathbf{X}$ such that $X=\bigcup_{n\in\omega}K_n$.  Let $f$ be a choice function of $\mathcal{K}^{\ast}(\mathbf{X})$.  Let $A\subseteq X$ be a set such that, for every $K\in\mathcal{K}(\mathbf{X})$, the set $A\cap K\in\mathcal{C}l(\mathbf{X})$. To prove that $A\in\mathcal{C}l(\mathbf{X})$, we fix a point $x\in\cl_{\mathbf{X}}(A)$ and a base $\{U_i: i\in\omega\}$ of open neighborhoods of $x$ in $\mathbf{X}$ such that, for every $i\in\omega$, $\cl_{\mathbf{X}}(U_{i+1})\subseteq U_i$.  For every $i\in\omega$, let $n_i=\min\{n\in\omega: A\cap U_i\cap K_n\neq\emptyset\}$. For every $i\in\omega$, the set $A\cap\cl_{\mathbf{X}}(U_{i})\cap K_{n_i}$ is a non-empty compact set in $\mathbf{X}$, so we can define $x_i=f(A\cap\cl_{\mathbf{X}}(U_{i})\cap K_{n_i})$. The set $C=\{x\}\cup\{x_i: i\in\omega\}$ is compact in $\mathbf{X}$, so $C\cap A$ is closed in $\mathbf{X}$. Therefore, $x\in A$ because the sequence $(x_i)_{i\in\omega}$ of points of $C\cap A$  converges in $\mathbf{X}$ to the point $x$. This implies that $A=\cl_{\mathbf{X}}(A)$, so $A\in\mathcal{C}l(\mathbf{X})$. Hence $\mathbf{X}$ is a $k$-space.
To complete the proof, it suffices to recall that, by Theorem \ref{s1t13}, every second-countable metrizable space is $\mathcal{K}$-Loeb. 
\end{proof}

\begin{theorem}
\label{s6t8}
$[\mathbf{ZF}]$
\begin{enumerate}  
\item[(i)] The negation of $\mathbf{CAC}_{\omega}(\mathbb{R})$ implies that there exists a subspace $\mathbf{X}$ of $\mathbb{R}$ such that $\mathbf{X}$ is neither a $k$-space nor Loeb but $\mathbf{UIC}(\mathbf{X})$ holds.
\item[(ii)] If every subspace of $\mathbb{R}$ is either a $k$-space or Loeb, then $\mathbf{CAC}(\mathbb{R})$ holds.
\end{enumerate}
\end{theorem}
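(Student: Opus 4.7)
The plan is to prove both parts with essentially the same construction. Given either $\neg\mathbf{CAC}(\mathbb{R})$ (for (ii)) or $\neg\mathbf{CAC}_{\omega}(\mathbb{R})$ (for (i)), fix a witnessing denumerable family $\{A_n : n \in \omega\}$ of non-empty subsets of $\mathbb{R}$ (each countable in case (i)) with no choice function. The first crucial step is to replace it with the family of finite products $A_n' = \prod_{i\leq n} A_i$: if $(s_n)_{n\in S}$ were a partial choice function on $\{A_n' : n \in S\}$ for some infinite $S\subseteq\omega$, then $k \mapsto s_{\min\{n\in S : n\geq k\}}(k)$ would be a full choice function on $\{A_n\}$, so $\{A_n'\}$ admits no partial choice function. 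Fix once and for all a uniform injection $\iota_n : \mathbb{R}^{n+1} \hookrightarrow \mathbb{R}$ (for instance by iterated digit interleaving) and a homeomorphism $g_n : \mathbb{R} \to (2^{-n-1}, 2^{-n})$. Set $B_n = g_n[\iota_n[A_n']] \subseteq (2^{-n-1}, 2^{-n})$ and let $X = \{0\} \cup \bigcup_{n\in\omega} B_n$, regarded as a subspace of $\mathbb{R}$.

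Each $B_n$ equals $X\cap(2^{-n-1}, 2^{-n})$ because the open intervals $(2^{-n-1}, 2^{-n})$ are pairwise disjoint and miss $0$; in particular $B_n$ is clopen in $X$. The family $\{B_n : n\in\omega\}$ of non-empty closed subsets of $X$ admits no choice function (such a choice would pull back through $g_n\circ\iota_n$ to one on $\{A_n'\}$, hence on $\{A_n\}$), so $\mathbf{L}(\mathbf{X})$ fails. To refute $\mathbf{k}(\mathbf{X})$, take $A = X\setminus\{0\}$; since every neighbourhood of $0$ contains $B_n$ for large $n$, $0 \in \cl_{\mathbf{X}}(A) \setminus A$ and $A$ is not closed. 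On the other hand, if $K \subseteq X$ is compact and the set $S = \{n : K\cap B_n \neq\emptyset\}$ is infinite, then each $K\cap B_n$ is a non-empty compact subset of $\mathbb{R}$ and $n \mapsto \min(K\cap B_n)$ is a partial choice function on $\{B_n\}$, a contradiction. Hence $K \subseteq \{0\}\cup\bigcup_{n\in F} B_n$ for some finite $F\subseteq\omega$, and since $\bigcup_{n\in F} B_n$ is closed in $X$, the point $0$ is isolated in $K$ whenever it lies there; so $A\cap K = K\setminus\{0\}$ is closed in $K$. This establishes (ii).

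For (i), each $A_n$ is countable, so in $\mathbf{ZF}$ the finite product $A_n'$, and hence $B_n$, is countable; yet $X$ itself is uncountable since an injection of $X$ into $\omega$ would immediately produce a choice function on $\{B_n\}$. The arguments of the previous paragraph show $\mathbf{X}$ is neither Loeb nor a $k$-space, so it remains to establish $\mathbf{UIC}(\mathbf{X})$. For this I would apply Proposition \ref{s4p7}: as $\mathbf{X}$ is a second-countable Hausdorff subspace of $\mathbb{R}$, it suffices to verify $\omega-\mathbf{CAC}(X)$. Given a family $\{D_i : i\in\omega\}$ of non-empty subsets of $X$, set $C_i = \{0\}$ when $0\in D_i$, and otherwise let $n(i) = \min\{n\in\omega : D_i\cap B_n\neq\emptyset\}$ and $C_i = D_i\cap B_{n(i)}$. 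Each $C_i$ is a non-empty countable subset of $D_i$ (either a singleton or a subset of the countable $B_{n(i)}$), which is exactly $\omega-\mathbf{CAC}(X)$.

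The main obstacle is that the compact-set argument refuting $\mathbf{k}(\mathbf{X})$ needs the stronger hypothesis that $\{B_n\}$ has no \emph{partial} choice function: an infinite selection $b_n\in B_n$ would at once produce the compact set $\{0\}\cup\{b_n\}\subseteq X$ through which $A\cap K$ would fail to be closed, and the whole scheme would collapse. Passing from $\{A_n\}$ to the finite products $\{A_n'\}$ is the standard way to upgrade ``no choice function'' to ``no partial choice function'', and it is fortunate that in $\mathbf{ZF}$ this device preserves countability, which is precisely what permits the $\omega-\mathbf{CAC}(X)$ verification for $\mathbf{UIC}(\mathbf{X})$ in part (i).
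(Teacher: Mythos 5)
Your proof is correct, and its core construction is the same as the paper's: place copies of the witnessing sets into pairwise disjoint open intervals shrinking to $0$, adjoin the point $0$, and exploit the fact that the pieces are clopen in the resulting subspace while any compact set can meet only finitely many of them. Two steps are handled differently. First, where the paper simply invokes \cite[Lemma 2]{hkrst} to obtain a denumerable family (of countable, in fact dense, sets in case (i)) with no \emph{partial} choice function, you derive this yourself via the finite-product trick $A_n'=\prod_{i\leq n}A_i$ together with explicit injections $\mathbb{R}^{n+1}\hookrightarrow\mathbb{R}$; this makes the argument self-contained at the cost of the sets no longer being homeomorphic copies of the $A_n$ (harmless, since only cardinality and choice-theoretic data are used). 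Your observation that upgrading to ``no partial choice function'' is indispensable for the $k$-space refutation is exactly right. Second, for $\mathbf{UIC}(\mathbf{X})$ in part (i) the paper argues directly: some $U\cap F_{n_0}$ must fail to be discrete (else $U$ would inject canonically into $\omega\times\omega$), and one extracts a convergent sequence inside it; you instead verify $\omega$-$\mathbf{CAC}(X)$ (immediate, since every non-empty subset of $X$ canonically contains either $\{0\}$ or a subset of some countable $B_n$) and cite Proposition \ref{s4p7}. Both routes are valid in $\mathbf{ZF}$; yours is arguably cleaner in that it reuses machinery already established in Section \ref{s4}, while the paper's is self-standing within the proof. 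One purely cosmetic point: in the $k$-space argument you should note that $A\cap K$ is closed in $\mathbf{X}$ (not merely in $K$), which follows since $K$, being compact in a Hausdorff space, is closed in $X$.
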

\begin{proof}
(i) Suppose that $\mathbf{CAC}_{\omega}(\mathbb{R})$ fails. Then, by \cite[Lemma 2(i)]{hkrst}, there exists a family $\mathcal{A}=\{A_n: n\in\mathbb{N}\}$ of countable dense subsets of $\mathbb{R}$ such that $\mathcal{A}$ does not have a partial choice function. For every $n\in\mathbb{N}$, we can effectively define in $\mathbf{ZF}$ a homeomorphism  $\psi_n:\mathbb{R}\to (\frac{1}{n+1}, \frac{1}{n})$. For every $n\in\mathbb{N}$, let $F_n=\psi_n[A_n]$. Let $X=\{0\}\cup\bigcup_{n\in\mathbb{N}}F_n$. For every $n\in\mathbb{N}$, the set $F_n$ is closed in the subspace $\mathbf{X}$ of $\mathbb{R}$.  Since $\mathcal{A}$ does not have a partial choice function, the family $\{F_n: n\in\mathbb{N}\}$ of closed sets of $\mathbf{X}$ does not have a choice function. This shows that $\mathbf{X}$ is not Loeb. The set $F=\bigcup_{n\in\mathbb{N}}F_n$ is not closed in $\mathbf{X}$ but $F$ is sequentially closed in $\mathbf{X}$. Let $K$ be a compact set in $\mathbf{X}$. Let $N_K=\{n\in\mathbb{N}: K\cap F_n\neq\emptyset\}$. Suppose that the set $N_K$  is infinite. Since, by Theorem \ref{s1t12}, $\mathbb{R}$ is Loeb, we can fix a Loeb function $f$ of $\mathbb{R}$. The sets $K\cap F_n$ are closed in $\mathbb{R}$. By assigning to every $n\in N_K$  the element $f(K\cap F_n)$ of $F_n$, we obtain a choice function of $\{F_n: n\in N_K\}$. This contradicts the assumption that $\mathcal{A}$ does not have a partial choice function. The contradiction obtained proves that the set $N_K$ is finite. Hence the set  $F\cap K=\bigcup_{n\in N_K}(K\cap F_n)$ is closed in $\mathbf{X}$. This proves that $\mathbf{X}$ is not a $k$-space.

To show that $\mathbf{UIC}(\mathbf{X})$ holds, let us fix an uncountable subset $U$ of $X$. Let $N_U=\{n\in\mathbb{N}: |U\cap F_n|=\aleph_0\}$. Since $\mathcal{A}$ does not have a partial choice function, the set $N_U$ is infinite. If, for every $n\in N_U$, the subspace $U\cap F_n$ of $\mathbf{X}$ were discrete, the set $U$ would be countable. Hence, there exists $n_0\in N_U$ such that $U\cap F_{n_0}$ is not discrete. Let $x_0\in U\cap F_{n_0}$ be an accumulation point of $U\cap F_{n_0}$. Since $U\cap F_{n_0}$ is denumerable, there exists a sequence $(x_n)_{n\in\mathbb{N}}$ of points of $(U\cap F_{n_0})\setminus\{x_0\}$ such that $\lim\limits_{n\to+\infty}x_n=x_0$ in $\mathbb{R}$. Then the set $E=\{x_n: n\in\omega\}\subseteq U$ is closed in $\mathbf{X}$ and infinite. Hence $\mathbf{UIC}(\mathbf{X})$ holds. \medskip

(ii) By \cite[Lemma 2(ii)]{hkrst}, assuming that $\mathbf{CAC}(\mathbb{R})$ is false, we can fix a family $\mathcal{A}=\{A_n: n\in\mathbb{N}\}$ of non-empty subsets of $\mathbb{R}$ such that $\mathcal{A}$ does not have a partial choice function. Arguing in much the same way, as in the proof of (i),  we fix a sequence $(\psi_n)_{n\in\mathbb{N}}$ of homeomorphisms  $\psi_n:\mathbb{R}\to (\frac{1}{n+1}, \frac{1}{n})$ and, for every $n\in\mathbb{N}$, we put $F_n=\psi_n[A_n]$. Then, for $X=\{0\}\cup\bigcup_{n\in\mathbb{N}}F_n$, the subspace $\mathbf{X}$ of $\mathbb{R}$ is neither a $k$-space nor Loeb.
\end{proof}

Item (i) of the following theorem generalizes \cite[Theorem 4.23 and Corollary 4.25]{kw1}:

\begin{theorem}
\label{s6t9}
$[\mathbf{ZF}]$
\begin{enumerate} 
\item[(i)] $\omega-\mathbf{CAC}(\mathbb{R})$  implies that every second-countable metrizable Loeb space is sequential, so also a $k$-space. In particular, $\omega-\mathbf{CAC}(\mathbb{R})$  implies that every second-countable Cantor completely metrizable space is sequential, so also a $k$-space.

\item[(ii)] The sentence "Every second-countable metrizable Loeb space is sequential" does not imply $\mathbf{CAC}_{\omega}(\mathbb{R})$. In consequence, the sentence "Every second-countable metrizable Loeb space is a $k$-space" does not imply $\mathbf{CAC}_{\omega}(\mathbb{R})$.

\item[(iii)] $\mathbf{CAC}(\mathbb{R})$ is equivalent to the sentence ``Every second-countable metrizable $k$-space is Fr\'echet-Urysohn''.

\item[(iv)] The sentence ``Every metrizable $k$-space is Fr\'echet-Urysohn'' implies both $\mathbf{CAC}(\mathbb{R})$ and $\mathbf{CAC}_{fin}$.
\end{enumerate}
\end{theorem}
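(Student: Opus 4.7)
The plan for (i) is to verify condition (vi) of Theorem~\ref{s3t4}, namely $\mathbf{SCDWOU}(\mathbf{X})$, for an arbitrary second-countable metrizable Loeb space $\mathbf{X}$. First I fix a countable base $\mathcal{B}=\{B_n:n\in\omega\}$ of $\mathbf{X}$ and observe that $x\mapsto\{n\in\omega:x\in B_n\}$ injects $X$ into $2^{\omega}$, so through a fixed embedding $2^{\omega}\hookrightarrow\mathbb{R}$ every subset of $X$ becomes a subset of $\mathbb{R}$; this makes $\omega$-$\mathbf{CAC}(\mathbb{R})$ applicable to $\omega$-indexed families of non-empty subsets of $X$. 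Given any $A\in\mathcal{SC}^{\ast}(\mathbf{X})$, set $N=\{n\in\omega:A\cap B_n\neq\emptyset\}$ and, after padding the empty entries of $\{A\cap B_n:n\in\omega\}$ with $A$ itself, invoke $\omega$-$\mathbf{CAC}(\mathbb{R})$ to extract countable non-empty $C_n\subseteq A\cap B_n$ for every $n\in N$. Then $D=\bigcup_{n\in N}C_n\subseteq A$ is dense in the subspace $\mathbf{A}$---each neighborhood of a point $a\in A$ contains some $B_n\ni a$, forcing $n\in N$ and $\emptyset\neq C_n\subseteq B_n$---and, since $N$ inherits a well-ordering from $\omega$ while each countable $C_n$ is well-orderable, $D$ witnesses $\mathbf{SCDWOU}(\mathbf{X})$. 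Theorem~\ref{s3t4} then yields $\mathbf{S}(\mathbf{X})$, and Proposition~\ref{s1p18}(ii) promotes this to the $k$-space assertion; the Cantor completely metrizable special case follows at once from Theorem~\ref{s1t12}.

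For (ii), I combine (i) with the known fact (see \cite{her}) that $\omega$-$\mathbf{CAC}(\mathbb{R})$ does not imply $\mathbf{CAC}_{\omega}(\mathbb{R})$ in $\mathbf{ZF}$: in any model where $\omega$-$\mathbf{CAC}(\mathbb{R})$ holds but $\mathbf{CAC}_{\omega}(\mathbb{R})$ fails, item (i) still delivers the sequential statement, witnessing the non-implication. The subsidiary claim about $k$-spaces is then read off from Proposition~\ref{s1p18}(ii).

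For (iii), the direction $(\Rightarrow)$ is immediate from Theorem~\ref{s1t16}, which includes $\mathbf{CAC}(\mathbb{R})\Leftrightarrow\mathbf{FU}_{M2}$; under this equivalence \emph{every} second-countable metrizable space---in particular every such $k$-space---is Fr\'echet-Urysohn. For $(\Leftarrow)$, Theorem~\ref{s6t7} shows that $\mathbb{R}$ is a second-countable metrizable $k$-space in $\mathbf{ZF}$, so the hypothesis forces $\mathbf{FU}(\mathbb{R})$, which is equivalent to $\mathbf{CAC}(\mathbb{R})$ by Theorem~\ref{s1t16}.

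Finally, for (iv), $\mathbf{CAC}(\mathbb{R})$ follows verbatim from the argument in (iii)$(\Leftarrow)$. For $\mathbf{CAC}_{fin}$ I argue by contraposition: if $\mathbf{CAC}_{fin}$ fails, the construction inside the proof of Theorem~\ref{s6t3}---based on a disjoint family $\{A_n:n\in\mathbb{N}\}$ of non-empty finite sets without a partial choice function together with the metric defined by $(\ast)$---produces, via Proposition~\ref{s6p1}, a compact metrizable very $k$-space that is not sequential. Compactness makes this space a $k$-space, whence the hypothesis would render it Fr\'echet-Urysohn, hence sequential, a contradiction. The main obstacle will be in (i), where care is required to present the countable sets supplied by $\omega$-$\mathbf{CAC}(\mathbb{R})$ in exactly the shape demanded by $\mathbf{SCDWOU}(\mathbf{X})$ as formulated in Definition~\ref{s1d8}.
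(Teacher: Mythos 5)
Your proposal is correct, and parts (ii)--(iv) follow the paper's own route essentially verbatim (Feferman--Levy-type model for (ii), $\mathbb{R}$ as a second-countable metrizable $k$-space plus Theorem \ref{s1t16} for (iii), and Theorem \ref{s6t3}'s construction for the $\mathbf{CAC}_{fin}$ half of (iv)). The only genuine divergence is in (i): the paper argues directly, fixing $x\in\cl_{\mathbf{X}}(A)$ for a sequentially closed $A$, applying $\omega-\mathbf{CAC}(\mathbb{R})$ to the traces $A\cap B_d(x,\frac{1}{n})$ to get countable sets $A_n$, observing $\cl_{\mathbf{X}}(A_n)\subseteq A$, and using the Loeb function to pick $x_n\in\cl_{\mathbf{X}}(A_n)$ converging to $x$; you instead apply $\omega-\mathbf{CAC}(\mathbb{R})$ uniformly to the traces $A\cap B_n$ over the whole countable base to verify $\mathbf{SCDWOU}(\mathbf{X})$ and then invoke the equivalence (vi)$\leftrightarrow$(i) of Theorem \ref{s3t4}. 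Both routes rest on the same mechanism (countable selection via $\omega-\mathbf{CAC}(\mathbb{R})$ combined with the Loeb function to turn well-orderable dense subsets into convergent sequences), so yours buys no extra generality, but it does package the argument more modularly by reusing an equivalence already established in the paper; your explicit justification that $\omega-\mathbf{CAC}(\mathbb{R})$ transfers to families of subsets of $X$ via the $T_0$-embedding $x\mapsto\{n:x\in B_n\}$ into $\mathcal{P}(\omega)\subseteq\mathbb{R}$ is a point the paper's proof leaves tacit, and your padding of the empty traces is handled correctly.
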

\begin{proof}
(i) Let $\mathbf{X}$ be a second-countable metrizable Loeb space and let $A$ be a sequentially closed subset of $\mathbf{X}$.  To show that $A$ is closed in $\mathbf{X}$, we fix $x\in\cl_{\mathbf{X}}(A)$.  Let $d$ be any metric which induces the topology of $\mathbf{X}$ and let $f$ be a Loeb function of $\mathbf{X}$. Assuming $\omega-\mathbf{CAC}(\mathbb{R})$, we can fix a family $\{A_n: n\in\mathbb{N}\}$ of countable sets such that, for every $n\in\mathbb{N}$, $A_n\subseteq A\cap B_d(x, \frac{1}{n})$. Since $A$ is sequentially closed in $\mathbf{X}$, it follows that, for every $n\in\mathbb{N}$, $\cl_{\mathbf{X}}(A_n)\subseteq A$. For every $n\in\mathbb{N}$, we define $x_n=f(\cl_{\mathbf{X}}(A_n))$. Then $(x_n)_{n\in\mathbb{N}}$ is a sequence of points of $A$ which converges to $x$ in $\mathbf{X}$. Since $A$ is sequentially closed in $\mathbf{X}$, we deduce that $x\in A$. Hence $A$ is closed in $\mathbf{X}$; thus, $\mathbf{X}$ is sequential. That $\mathbf{X}$ is a $k$-space follows from Proposition \ref{s1p18}(ii). In view of Theorem \ref{s1t12}, the second statement of (i) follows from the first one.\medskip

(ii) In the Feferman-Levy Model $\mathcal{M}9$ in \cite{hr}, $\mathbb{R}$ is a countable union of countable sets, so $\omega-\mathbf{CAC}(\mathbb{R})$ is true in $\mathcal{M}9$. It is known that $\mathbf{CAC}(\mathbb{R})$ is false in $\mathcal{M}9$ (see, e.g., \cite[p. 77]{her}). Since the equivalence  
$$\mathbf{CAC}(\mathbb{R})\leftrightarrow((\omega-\mathbf{CAC}(\mathbb{R}))\wedge\mathbf{CAC}_{\omega}(\mathbb{R}))$$
 holds in $\mathbf{ZF}$, $\mathbf{CAC}_{\omega}(\mathbb{R})$ is false in $\mathcal{M}9$. It follows from (i) that it is true in $\mathcal{M}9$ that every second-countable metrizable Loeb space is sequential, so also a $k$-space by Proposition \ref{s1p18}(ii).\medskip
 
To prove (iii), it suffices to notice that, by Theorem \ref{s1t16}, $\mathbf{CAC}(\mathbb{R})$ is equivalent to both $\mathbf{FU}_{M2}$ and $\mathbf{FU}(\mathbb{R})$, and $\mathbb{R}$ is a second-countable $k$-space.
 
To prove (iv), in the light of Theorems \ref{s6t3} and \ref{s1t16}, it suffices to notice that if every metrizable $k$-space is Fr\'echet-Urysohn, then $\mathbf{FU}(\mathbb{R})$. 
 \end{proof}
 
 \begin{remark}
 \label{s6r10}
  Let us recall that, since the conjunction $(\omega-\mathbf{CAC}(\mathbb{R}))\wedge\neg\mathbf{CAC}(\mathbb{R})$ is true in the Feferman-Levy Model $\mathcal{M}9$ in \cite{hr}, $\mathbf{FU}(\mathbb{R})$ fails in $\mathcal{M}9$ (see \cite[Remark 4.59]{hr}). Hence, by Theorems \ref{s1t12} and  \ref{s6t9}(i), in $\mathbf{ZF}$,  the sentence ``Every second-countable Cantor completely metrizable space is sequential'' does not imply ``Every second-countable Cantor completely metrizable space is Fr\'echet-Urysohn''.
 \end{remark}
 
 One may ask the following question:
 \begin{question}
 \label{s6q11} 
  Are the sentences ``Every first-countable Hausdorff space is a $k$-space'' and ``Every first-countable Hausdorff space is Fr\'echet-Urysohn'' equivalent in $\mathbf{ZF}$?
 \end{question}
To give an answer to this question, first of all, let us prove the following theorem:

\begin{theorem}
\label{s6t12}
$[\mathbf{ZF}]$ Each of $\mathbf{CAC}$, $\mathbf{FU}_M$, $\mathbf{S}_M$, $\mathbf{HS}_M$ is equivalent to each of the following sentences (i)--(iii):
\item[(i)] every first-countable Hausdorff space is Fr\'echet-Urysohn;
\item[(ii)] every first-countable Hausdorff space is sequential;
\item[(iii)] every first-countable Hausdorff space is hereditarily sequential. 
\end{theorem}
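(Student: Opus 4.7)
The plan is to close the cycle
\[
\mathbf{CAC}\Rightarrow\text{(i)}\Rightarrow\mathbf{FU}_M\Rightarrow\mathbf{S}_M\Rightarrow\mathbf{CAC}
\]
and to read off the remaining equivalences as formalities. Since every metrizable space is first-countable and Hausdorff, sentences (i), (ii), (iii) trivially imply $\mathbf{FU}_M$, $\mathbf{S}_M$, $\mathbf{HS}_M$, respectively. Applying Proposition~\ref{s1p18}(i) to an arbitrary first-countable Hausdorff space (resp.\ to an arbitrary metrizable space) gives at once (i)$\Leftrightarrow$(iii) and $\mathbf{FU}_M\Leftrightarrow\mathbf{HS}_M$; combined with the elementary implication ``Fr\'echet-Urysohn $\Rightarrow$ sequential'', it also yields (i)$\Rightarrow$(ii), $\mathbf{FU}_M\Rightarrow\mathbf{S}_M$ and $\mathbf{HS}_M\Rightarrow\mathbf{S}_M$. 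So only the two substantive implications remain.

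For $\mathbf{CAC}\Rightarrow\text{(i)}$, let $\mathbf{X}$ be a first-countable Hausdorff space, $A\subseteq X$ and $x\in\cl_{\mathbf{X}}(A)$. Fix a decreasing countable base $(V_n)_{n\in\omega}$ of open neighborhoods of $x$. Each $A\cap V_n$ is non-empty, so $\mathbf{CAC}$ furnishes a choice function for the denumerable family $\{A\cap V_n:n\in\omega\}$; the resulting sequence in $A$ converges to $x$ in $\mathbf{X}$, proving $\mathbf{FU}(\mathbf{X})$.

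For the decisive implication $\mathbf{S}_M\Rightarrow\mathbf{CAC}$, I rely on the standard $\mathbf{ZF}$-fact that $\mathbf{CAC}$ is equivalent to the assertion that every denumerable family of non-empty sets has a partial choice function; the reduction is obtained by passing to the family of non-empty finite products $\{\prod_{k\le n}A_k:n\in\omega\}$ and reading off coordinates from any partial choice function. Assuming $\neg\mathbf{CAC}$, fix a pairwise disjoint family $\mathcal{A}=\{A_n:n\in\mathbb{N}\}$ of non-empty sets admitting no partial choice function, pick $\infty\notin\bigcup\mathcal{A}$, equip each $A_n$ with its discrete metric $\rho_n$, and build the metric $d$ on $X=\{\infty\}\cup\bigcup\mathcal{A}$ via formula $(\ast)$. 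By Proposition~\ref{s6p1}(i) the space $\mathbf{X}=\langle X,\tau(d)\rangle$ is metrizable, while Proposition~\ref{s6p1}(v) says that $\mathbf{X}$ fails to be sequential, contradicting $\mathbf{S}_M$.

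The principal obstacle is the passage from failure of full choice to failure of partial choice in the last step; once the folklore reduction $\mathbf{CAC}\Leftrightarrow\text{PCAC}$ is recorded, the rest is a direct appeal to the non-sequential gluing construction already packaged in Proposition~\ref{s6p1}. Everything else amounts to bookkeeping around Proposition~\ref{s1p18}(i) and the inclusion ``metrizable $\Rightarrow$ first-countable Hausdorff''.
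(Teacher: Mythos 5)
Your proposal is correct and follows essentially the same route as the paper: the bookkeeping via Proposition \ref{s1p18}(i) together with the hereditary character of metrizability and first countability, the routine verification that $\mathbf{CAC}$ yields the Fr\'echet-Urysohn property for first-countable spaces, and, for the return implication, the passage from $\neg\mathbf{CAC}$ to a denumerable disjoint family with no partial choice function (Form 8B of \cite{hr}, whose product-reduction you spell out explicitly) followed by the non-sequential metric space supplied by Proposition \ref{s6p1}(v). The only cosmetic difference is that the paper cites Form 8B where you reprove the $\mathbf{CAC}\leftrightarrow$ partial-choice equivalence, and both arguments tacitly disjointify the family, which is harmless.
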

\begin{proof}
It follows from Proposition \ref{s1p18}(i) that (i)--(iii) are all equivalent. By Proposition \ref{s1p18},  $\mathbf{FU}_M$, $\mathbf{S}_M$ and $\mathbf{HS}_M$ are also all equivalent. Of course, $\mathbf{CAC}$ implies (i)--(iii) and each of $\mathbf{FU}_M$, $\mathbf{S}_M$ and $\mathbf{HS}_M$. 

To complete the proof, suppose that $\mathbf{CAC}$ is false. Then we can fix a family $\mathcal{A}=\{A_n: n\in\omega\}$ of pairwise disjoint non-empty sets such that $\mathcal{A}$ does not have a partial choice function (see \cite[Form 8B]{hr}). It follows from Proposition \ref{s6p1} that there exists a metrizable space which is not sequential. Hence $\mathbf{S}_M$ and each of (i)--(iii) implies $\mathbf{CAC}$.
\end{proof}

\begin{remark}
\label{s6r13}
To show that, in $\mathbf{ZFA}$, the answer to Question \ref{s6q11} is in the negative, we recall that $\mathbf{CMC}$ does not imply $\mathbf{CAC}$ in $\mathbf{ZFA}$. Indeed, for instance, in the Second Fraenkel Model $\mathcal{N}2$ in \cite{hr} and in Brunner's Model III ( $\mathcal{N}50(E)$ in \cite{hr}), $\mathbf{CMC}\wedge\neg\mathbf{CAC}$ is true (see \cite[p. 178]{hr} and \cite[p. 219]{hr}). Moreover, Theorems \ref{s1:fil} and \ref{s6t12} are valid in $\mathbf{ZFA}$. Hence, in $\mathcal{N}2$ and  $\mathcal{N}50(E)$,  the sentences ``Every first-countable Hausdorff space is a $k$-space'' and ``Every first-countable Hausdorff space is Fr\'echet-Urysohn'' are not equivalent. Unfortunately, we do not know if $\mathbf{CMC}$ implies $\mathbf{CAC}$ in $\mathbf{ZF}$. 
\end{remark}

Now, let us turn our attention to the forms $\mathbf{WUF}(\mathbf{X})$ and $\mathbf{CUF}(\mathbf{X})$.

\begin{proposition}
\label{s6p14}
$[\mathbf{ZF}]$ 
\begin{enumerate}
\item[(i)]For every sequential space $\mathbf{X}$, $\mathbf{WUF}(\mathbf{X})$ holds.

\item[(ii)] For every second-countable metrizable Loeb space, $\mathbf{WUF}(\mathbf{X})$ holds. 

\item[(iii)] For every Hausdorff space $\mathbf{X}$, $\mathbf{WUF}(\mathbf{X})$ implies $\mathbf{CUF}(\mathbf{X})$. 

\item[(iv)] For every $k$-space $\mathbf{X}$, $\mathbf{WUF}(\mathbf{X})$ and $\mathbf{CUF}(\mathbf{X})$ are equivalent. 
\end{enumerate}
\end{proposition}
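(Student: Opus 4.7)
The plan is to handle the four items in the order (i), (iii), (ii), (iv), with (i) and (iii) being short implications from the definitions, (ii) requiring an explicit diagonal construction using the Loeb function and a compatible metric, and (iv) resting on the $k$-space characterization of closed sets.

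For (i), I argue by contraposition of sequentiality. If $F$ is closed in $\mathbf{X}$ and $x$ is an accumulation point of $F$, then $x \in \cl_{\mathbf{X}}(F\setminus\{x\})$ but $x \notin F\setminus\{x\}$, so $F\setminus\{x\}$ is not closed. By $\mathbf{S}(\mathbf{X})$, it is not sequentially closed, hence there is a sequence $(x_n)_{n\in\omega}$ in $F\setminus\{x\}$ converging to some $y \notin F\setminus\{x\}$; because $F$ is closed, hence sequentially closed, $y \in F$, forcing $y = x$. For (iii), in a Hausdorff space every compact set is closed, so $\mathbf{WUF}(\mathbf{X})$ applied to the compact set $F$ and the accumulation point $x$ immediately yields the desired sequence.

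For (ii), I fix a compatible metric $d$, a Loeb function $h$ of $\mathbf{X}$, a closed set $F$, and an accumulation point $x$ of $F$. For every $n\in\mathbb{N}$, put
\[
C_n = F \cap \bigl\{y\in X : \tfrac{1}{n+1} \leq d(x,y) \leq \tfrac{1}{n}\bigr\},
\]
which is closed in $\mathbf{X}$ and does not contain $x$. The set $N=\{n\in\mathbb{N}:C_n\neq\emptyset\}$ is an infinite subset of $\mathbb{N}$, because every open ball around $x$ meets $F\setminus\{x\}$. Enumerate $N$ increasingly as $(n_k)_{k\in\omega}$ and set $x_{n_k}=h(C_{n_k})$; the resulting sequence lies in $F\setminus\{x\}$ and converges to $x$ in $\mathbf{X}$, since $d(x,x_{n_k})\leq 1/n_k\to 0$.

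For (iv), the direction $\mathbf{WUF}(\mathbf{X})\Rightarrow\mathbf{CUF}(\mathbf{X})$ follows from (iii), as every $k$-space is Hausdorff by definition. For the converse, assume $\mathbf{CUF}(\mathbf{X})$ and let $F$ be closed in $\mathbf{X}$ with accumulation point $x$. As in (i), $G := F\setminus\{x\}$ is not closed in $\mathbf{X}$, so by the $k$-space property there exists a compact $K\subseteq X$ such that $G\cap K$ is not closed in $\mathbf{X}$. Pick $y\in \cl_{\mathbf{X}}(G\cap K)\setminus(G\cap K)$. Since $K$ is closed in the Hausdorff space $\mathbf{X}$ and $F$ is closed, $y\in\cl_{\mathbf{X}}(G\cap K)\subseteq F\cap K$; since $y\notin G\cap K$ and $y\in K$, we conclude $y\notin G$, forcing $y=x$. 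Thus $F\cap K$ is a compact set of $\mathbf{X}$ with $x$ as an accumulation point, and $\mathbf{CUF}(\mathbf{X})$ delivers a sequence in $(F\cap K)\setminus\{x\}\subseteq F\setminus\{x\}$ converging to $x$. The main subtlety lies in this last step: one must argue that the limit witness $y$ supplied by the $k$-space characterization is forced to coincide with $x$, which is precisely where the Hausdorffness built into the $k$-space assumption is used, both to make $K$ closed and to identify $F\cap K$ as a legitimate compact target for $\mathbf{CUF}(\mathbf{X})$.
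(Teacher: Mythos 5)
Your proofs of (i), (iii) and (iv) are correct and essentially identical to the paper's: (i) by passing from ``not closed'' to ``not sequentially closed'' and using that $F$ is sequentially closed to force the limit to be $x$; (iii) because compact sets are closed in Hausdorff spaces; (iv) by extracting a compact $K$ witnessing that $F\setminus\{x\}$ is not closed and applying $\mathbf{CUF}(\mathbf{X})$ to the compact set $F\cap K$, whose only possible ``new'' limit point is $x$. Where you genuinely diverge is (ii): the paper deduces it from Theorem \ref{s2t3} (every closed subspace of a second-countable metrizable Loeb space is separable) and then implicitly invokes the Proposition \ref{s1p19}-style extraction of a sequence from a dense well-orderable set, whereas you give a self-contained annulus construction $C_n=F\cap\{y: \tfrac{1}{n+1}\le d(x,y)\le\tfrac{1}{n}\}$ and apply the Loeb function directly to these closed sets. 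Your argument is more elementary and in fact proves the stronger statement that \emph{every} metrizable Loeb space satisfies $\mathbf{WUF}(\mathbf{X})$ -- second-countability is never used -- at the cost of not exhibiting the separability information that the paper's route makes available for other purposes. Both are valid in $\mathbf{ZF}$; the only single choice you make (picking $y\in\cl_{\mathbf{X}}(G\cap K)\setminus(G\cap K)$ in (iv)) is a choice from one non-empty set and is harmless, and could anyway be avoided by observing directly that $\cl_{\mathbf{X}}(G\cap K)\subseteq (G\cap K)\cup\{x\}$.
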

\begin{proof} 
(i) Suppose that $\mathbf{X}$ is a sequential space, $F\in\mathcal{C}l(\mathbf{X})$ and $x$ is an accumulation point of $F$ in $\mathbf{X}$. Since $F\setminus\{x\}$ is not closed in the sequential space $\mathbf{X}$, $F\setminus\{x\}$ is not sequentially closed in $\mathbf{X}$. Hence, there exists a sequence $(x_n)_{n\in\omega}$ of points of $F\setminus\{x\}$ which converges in $\mathbf{X}$ to a point $y$ such that $y\not\in F\setminus\{x\}$. Since $F$ is closed in $\mathbf{X}$, $y\in F$. Therefore, $y=x$. This proves that $\mathbf{WUF}(\mathbf{X})$ is satisfied.  \medskip

(ii) Now, suppose that $\mathbf{X}$ is a second-countable metrizable Loeb space. Then, by Theorem \ref{s2t3}, every closed subspace of $\mathbf{X}$ is separable, so $\mathbf{WUF}(\mathbf{X})$ holds.\medskip

It is obvious that (iii) holds. To prove (iv), we assume that $\mathbf{X}$ is a $k$-space for which $\mathbf{CUF}(\mathbf{X})$ holds. Suppose that $A\in\mathcal{C}l(\mathbf{X})$, and $a$ is an accumulation point of $A$. Since $A\setminus\{a\}$ is not closed in the $k$-space $\mathbf{X}$, there exists $K\in\mathcal{K}(\mathbf{X})$ such that $K\cap(A\setminus\{a\})\notin\mathcal{C}l(\mathbf{X})$. Since $K\cap A\in\mathcal{C}l(\mathbf{X})$, $a$ is an accumulation point of $K\cap A$ and $a\in K\cap A$. By $\mathbf{CUF}(\mathbf{X})$, there exists a sequence of points of $(K\cap A)\setminus\{a\}$ which converges in $\mathbf{X}$ to $a$. Hence $\mathbf{WUF}(\mathbf{X})$ holds. To complete the proof of (iv), it suffices to apply (iii).
\end{proof}

\begin{theorem}
\label{s6t15}
$[\mathbf{ZF}]$
\begin{enumerate}
\item[(i)] If, for every second-countable metrizable space $\mathbf{X}$, $\mathbf{WUF}(\mathbf{X})$ implies $\mathbf{S}(\mathbf{X})$, then both $\mathbf{S}(\mathbb{R})$ and  $\mathbf{IDI}(\mathbb{R})$ hold.

\item[(ii)] $\mathbf{WUF}_M\leftrightarrow \mathbf{CAC}$.

\item[(iii)] $\mathbf{WUF}_M\rightarrow\mathbf{CUF}_M\rightarrow \mathbf{CAC}_{fin}$.

\item[(iv)] $\mathbf{CUF}_M$ implies that the following holds: For every family $\{\langle A_n, d_n\rangle: n\in\mathbb{N}\}$ of non-empty compact metric spaces, the family $\{A_n: n\in\mathbb{N}\}$ has a choice function.

\item[(v)] $\mathbf{CAC}_{fin}\nrightarrow \mathbf{CUF}_M$.
\end{enumerate}
\end{theorem}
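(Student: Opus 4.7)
For (i), I propose to apply Proposition~\ref{s6p14}(ii) to $\mathbb{R}$: by Theorem~\ref{s1t12} the real line is a second-countable metrizable Loeb space, hence $\mathbf{WUF}(\mathbb{R})$ holds in $\mathbf{ZF}$, and the hypothesis then delivers $\mathbf{S}(\mathbb{R})$. For $\mathbf{IDI}(\mathbb{R})$, I would suppose for contradiction that $D\subseteq\mathbb{R}$ is infinite and Dedekind-finite; then $D$ admits no denumerable subset, so every sequence of points of $D$ has finite range, and hence every such sequence converging in $\mathbb{R}$ is eventually constant with limit in $D$, giving $D\in\mathcal{SC}(\mathbb{R})$. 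Then $\mathbf{S}(\mathbb{R})$ forces $D\in\mathcal{C}l(\mathbb{R})$, contradicting Theorem~\ref{s1t15}.

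For (ii), the forward implication $\mathbf{CAC}\Rightarrow\mathbf{WUF}_M$ is immediate: given a metric $d$, a closed $F$ and an accumulation point $x\in F$, applying $\mathbf{CAC}$ to the family $\{F\cap B_d(x,1/n)\setminus\{x\}:n\in\mathbb{N}\}$ of non-empty sets produces the required sequence. For the converse, given a denumerable family $\{A_n\}_{n\in\mathbb{N}}$ of non-empty sets, my plan is to pass to the non-empty finite products $B_n=\prod_{i\le n}A_i$, disjointify them, equip each $B_n$ with a discrete metric $\rho_n$, and apply construction~$(\ast)$ of Proposition~\ref{s6p1} to obtain a metric space $\mathbf{X}=\langle\{\infty\}\cup\bigsqcup_{n\in\mathbb{N}} B_n,d\rangle$ in which $\infty$ is an accumulation point of the closed set $X$. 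By $\mathbf{WUF}(\mathbf{X})$ there is a sequence in $X\setminus\{\infty\}$ converging to $\infty$; reading off its values produces $y_n\in B_n$ for $n$ ranging over some infinite $S\subseteq\omega$, and setting $f(i)=y_{m_i}(i)$ with $m_i=\min\{n\in S:n\ge i\}$ delivers the desired choice function for $\{A_n\}$.

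For (iii), $\mathbf{WUF}_M\Rightarrow\mathbf{CUF}_M$ is immediate because in a metric space every compact subset is closed, so $\mathbf{WUF}$ specializes to $\mathbf{CUF}$ at such sets. For $\mathbf{CUF}_M\Rightarrow\mathbf{CAC}_{fin}$, I will rerun the construction of (ii) with finite $A_n$: the products $B_n$ are then finite, so by Proposition~\ref{s6p1}(ii) the space produced by~$(\ast)$ is a \emph{compact} metric space, and applying $\mathbf{CUF}_M$ at $\infty$ followed by the same projection step as in (ii) yields a choice function for $\{A_n\}$. Part (iv) proceeds along exactly the same lines with compact metric $\langle A_n,d_n\rangle$: the finite products $C_n=\prod_{i\le n}A_i$ are compact metric in, say, the sup product metric, so $(\ast)$ applied to the disjointified $\{C_n\}$ produces a compact metric space on which $\mathbf{CUF}_M$ operates, and projection recovers the choice function for $\{A_n\}$.

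For (v), my plan is to exhibit a model of $\mathbf{ZF}+\mathbf{CAC}_{fin}$ in which $\mathbf{CUF}_M$ fails. In view of (iv), it is enough to produce in such a model a denumerable family of non-empty compact metrizable spaces admitting no choice function; equivalently, a compact metric space $\mathbf{Y}$ containing an accumulation point $y_0$ of some $Y_0\subseteq Y$ to which no sequence from $Y_0$ converges. The natural candidate is Cohen's original model $\mathcal{M}1$ of \cite{hr}, which satisfies the stronger $\mathbf{AC}_{fin}$ and contains an infinite Dedekind-finite subset $D\subseteq\mathbb{R}$; since any convergent sequence of distinct points of $D$ would yield a denumerable subset of $D$, any compact subspace of $\mathbb{R}$ of the form $D'\cup\{y_0\}$ with $D'\subseteq D$ infinite and $y_0\in\cl_{\mathbb{R}}(D')\setminus D'$ would witness the failure of $\mathbf{CUF}$. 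The main obstacle will be arranging such a compact subspace inside $\mathcal{M}1$, i.e., controlling the $\mathbb{R}$-closure of a suitable $D'\subseteq D$ so that $\cl_{\mathbb{R}}(D')\setminus D'$ reduces to a single point; should a direct construction in $\mathcal{M}1$ resist, the fallback is to build a Fraenkel-Mostowski model with the required properties and invoke Pincus' transfer theorem to obtain the desired $\mathbf{ZF}$-model.
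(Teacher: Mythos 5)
Parts (i)--(iv) of your proposal are correct. For (i) you reproduce the paper's argument (Proposition~\ref{s6p14}(ii) plus Theorem~\ref{s1t12} give $\mathbf{WUF}(\mathbb{R})$; an infinite Dedekind-finite $D\subseteq\mathbb{R}$ is sequentially closed because convergent sequences in $D$ have finite range and are therefore eventually constant, so $\mathbf{S}(\mathbb{R})$ contradicts Theorem~\ref{s1t15}). For (ii)--(iv) your route differs slightly from the paper's: the paper works with the known equivalences ``$\neg\mathbf{CAC}$ yields a denumerable family with no \emph{partial} choice function'' (Forms 8B, 10E of \cite{hr}) and, for (iv), upgrades a partial choice function to a full one in a second step via finite products; you instead build the finite products $B_n=\prod_{i\le n}A_i$ into the construction $(\ast)$ from the start, so that the single sequence converging to $\infty$ directly yields a full choice function via $f(i)=y_{m_i}(i)$. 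This is a legitimate and slightly more self-contained variant (it avoids citing the partial-choice equivalences), and the compactness bookkeeping is right: the $B_n$ are non-empty finite (resp.\ compact metric) when the $A_n$ are, so Proposition~\ref{s6p1}(ii) applies in (iii) and (iv), and a sequence converging to $\infty$ must meet $B_n$ for infinitely many $n$ because $d(x,\infty)=1/n$ on $B_n$.

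Part (v), however, is not a proof but an acknowledged research plan, and the gap there is genuine. You never exhibit a model of $\mathbf{ZF}+\mathbf{CAC}_{fin}+\neg\mathbf{CUF}_M$; you name Cohen's model $\mathcal{M}1$ as a candidate and then concede that ``the main obstacle will be arranging such a compact subspace inside $\mathcal{M}1$,'' with a fallback that is only a one-line gesture at Pincus transfer. The obstacle is real: your plan requires an infinite $D'\subseteq D$ (with $D$ the set of Cohen reals) whose closure in $\mathbb{R}$ adds exactly one point. But for any infinite Dedekind-finite $D\subseteq\mathbb{R}$, the set of points of $D$ having a rational neighbourhood meeting $D$ in a finite set is a countable (hence finite) subset of $D$, so all but finitely many points of any infinite $D'\subseteq D$ are $\omega$-accumulation points of $D'$; if the derived set of $D'$ is perfect (as happens when $D'$ is dense in an interval, and the Cohen reals are dense in $\mathbb{R}$), your isolated-limit-point construction cannot get off the ground, and it is not at all clear that $\mathcal{M}1$ contains the compact witness you need. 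The paper closes this part by citing two specific external results: \cite[Proposition 10]{ktw3} (a model of $\mathbf{ZF}+\mathbf{CAC}_{fin}$ with a non-separable compact metrizable space) and the equivalence from \cite{kert1} between ``every compact metric space is separable'' and countable choice for families of compact metric spaces, which combined with your own part (iv) immediately kills $\mathbf{CUF}_M$ in that model. Without either reproducing such a model construction or citing these results, your (v) remains unestablished.
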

\begin{proof}
(i) Suppose that, for every second-countable metrizable space $\mathbf{X}$, $\mathbf{WUF}(\mathbf{X})$ implies $\mathbf{S}(\mathbf{X})$. In view of Theorem \ref{s1t12} and Proposition \ref{s6p14}(ii), $\mathbf{WUF}(\mathbb{R})$ holds in $\mathbf{ZF}$. Thus, since $\mathbb{R}$ is also second-countable and metrizable, it follows from our assumption that $\mathbf{S}(\mathbb{R})$ holds. It is known that $\mathbf{S}(\mathbb{R})$ implies $\mathbf{IDI}(\mathbb{R})$ because every infinite Dedekind-finite subset of $\mathbb{R}$ is sequentially closed but not closed in $\mathbb{R}$. Hence (i) holds.

(ii)--(iii) It is obvious that the implications $\mathbf{CAC}\rightarrow\mathbf{WUF}_M\rightarrow\mathbf{CUF}_M$ are true. Suppose that $\mathbf{CAC}$ is false. Let $\mathcal{A}=\{A_n: n\in\mathbb{N}\}$ be a pairwise disjoint family of non-empty sets such that $\mathcal{A}$ does not have a partial choice function. For every $n\in\mathbb{N}$, let $\rho_n$ be the discrete metric on $A_n$. We fix an element $\infty\notin\bigcup\mathcal{A}$ and put $X=\{\infty\}\cup\bigcup\mathcal{A}$. Let $d$ be the metric on $\mathbf{X}$ defined by $(\ast)$. Then $\infty$ is an accumulation point of $X$ in the space $\mathbf{X}=\langle X, \tau(d)\rangle$ but, since $\mathcal{A}$ does not have a partial choice function, no sequence of points of $\bigcup\mathcal{A}$ converges in $\mathbf{X}$ to $\infty$. Hence $\mathbf{WUF}(\mathbf{X})$ is false. We observe that if, for every $n\in\mathbb{N}$, the set $A_n$ is finite, then, by Proposition \ref{s6p1}, the space $\mathbf{X}$ is compact, so $\mathbf{CUF}(\mathbf{X})$ is false. In consequence, $\mathbf{WUF}_M$ implies $\mathbf{CAC}$ and $\mathbf{CUF}_M$ implies $\mathbf{CAC}_{fin}$. Hence (ii) and (iii) hold.\medskip

(iv) Now, assume $\mathbf{CUF}_M$ and consider any family $\{\langle A_n, \rho_n\rangle: n\in\mathbb{N}\}$ of compact metric spaces such that, for every pair $m,n$ of distinct elements of $\mathbb{N}$, $A_m\cap A_n=\emptyset$. Let $\mathcal{A}=\{A_n: n\in\mathbb{N}\}$ and let $\infty$ be an element such that $\infty\notin\bigcup\mathcal{A}$. As in the proof of (ii)--(iii), we put $X=\{\infty\}\cup\bigcup\mathcal{A}$ and consider the metric $d$ on $X$ defined by $(\ast)$. By Proposition \ref{s6p1}, the metric space $\langle  X, d\rangle$ is compact. Since $\infty$ is an accumulation point of $X$ in $\mathbf{X}=\langle X, \tau(d)\rangle$, it follows from $\mathbf{CUF}(\mathbf{X})$ that there exists a sequence of points of $\bigcup\mathcal{A}$ which converges in $\mathbf{X}$ to the point $\infty$. This implies that $\mathcal{A}$ has a partial choice function. For every $n\in\mathbb{N}$, let $Y_n=\prod_{i=1}^n A_i$ and let $\rho_{Y_n}$ be the metric on $Y_n$ defined by: $\rho_{Y_n}(y,z)=\max\{\rho_i(y(i), z(i)): i\in\{1,\dots n\}\}$ for $y,z\in Y_n$. Then, for every $n\in\mathbb{N}$, the metric space $\langle Y_n, \rho_{Y_n}\rangle$ is compact. It follows from  the first step of the proof of (iv) that the family $\{Y_n: n\in \mathbb{N}\}$ has a partial choice function. This implies that $\mathcal{A}$ has a choice function.\medskip

(v) By \cite[Proposition 10]{ktw3}, there is a model $\mathcal{M}$ of $\mathbf{ZF}+\mathbf{CAC}_{fin}$ in which there exists a compact metrizable space which is not separable.  It is known from \cite{kert1} that the sentence ``Every compact metric space is separable'' is equvalent in $\mathbf{ZF}$ to the sentence ``For every family $\{\langle A_n, d_n\rangle: n\in\mathbb{N}\}$ of non-empty compact metric spaces, the family $\{A_n: n\in\mathbb{N}\}$ has a choice function''. Hence, by (iv), $\mathbf{CUF}_M$ fails in $\mathcal{M}$. 
\end{proof}

\section{What to add to $\mathbf{Hk}(\mathbf{X})$ to get $\mathbf{FU}(\mathbf{X})$?}
\label{s7}

Let us show that, for every first-countable $T_3$-space, $\mathbf{CPCAC}(\mathbf{X})$ is the missing portion of $\mathbf{AC}$ to get $\mathbf{FU}(\mathbf{X})$ equivalent to $\mathbf{Hk}(\mathbf{X})$. 

\begin{theorem}
\label{s7t1}
$[\mathbf{ZF}]$ Let $\mathbf{X}=\langle X, \tau\rangle$ be a first-countable Hausdorff space. Then:
\begin{enumerate}
\item[(i)]  $\mathbf{FU}(\mathbf{X})\leftrightarrow (\mathbf{CPCAC}(\mathbf{X})\wedge\mathbf{Hk}(\mathbf{X}))$;

\item[(ii)] $\mathbf{CPCAC}(\mathbf{X})\nrightarrow \mathbf{k}(\mathbf{X})$ and $%
\mathbf{Hk}(\mathbf{X})\nrightarrow\mathbf{CPCAC}(\mathbf{X})$.
\end{enumerate}
\end{theorem}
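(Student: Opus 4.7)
The plan is to prove (i) by establishing the two implications separately, and to prove (ii) by constructing explicit counterexamples via the metric construction $(\ast)$ from Proposition \ref{s6p1}.

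For $\mathbf{CPCAC}(\mathbf{X})\wedge\mathbf{Hk}(\mathbf{X})\Rightarrow\mathbf{FU}(\mathbf{X})$, I take $A\subseteq X$ and $x\in\cl_{\mathbf{X}}(A)\setminus A$ and consider the subspace $P=A\cup\{x\}$, which is a $k$-space by $\mathbf{Hk}(\mathbf{X})$. Since $A$ is not closed in $P$, there exists a compact $K\subseteq P$ with $K\cap A$ not closed in $P$. Because $\mathbf{X}$ is Hausdorff, $K$ is closed in $\mathbf{X}$, which forces $x\in K$, so $x$ is an accumulation point of $K\cap A$ in $\mathbf{X}$, and $K$ must be infinite (else $K\cap A$ would be finite and hence closed). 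Fixing a decreasing neighborhood base $\{U_i:i\in\omega\}$ of $x$ in $\mathbf{X}$, the family $\{U_i\cap K\cap A:i\in\omega\}$ consists of non-empty subsets of the infinite compact $K$, so $\mathbf{CPCAC}(\mathbf{X})$ yields an infinite $S\subseteq\omega$ and a function $f$ on $S$ with $f(i)\in U_i\cap K\cap A$; enumerating $S$ in increasing order produces a sequence in $A$ converging to $x$ in $\mathbf{X}$, establishing $\mathbf{FU}(\mathbf{X})$.

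For the reverse direction, $\mathbf{FU}(\mathbf{X})\Rightarrow\mathbf{Hk}(\mathbf{X})$ is immediate from Proposition \ref{s1p18}: $\mathbf{FU}$ coincides with $\mathbf{HS}$, so every subspace of $\mathbf{X}$ is sequential Hausdorff and therefore a $k$-space. For $\mathbf{FU}(\mathbf{X})\Rightarrow\mathbf{CPCAC}(\mathbf{X})$, given an infinite compact $K\subseteq X$ and a denumerable family $\{A_n\}_{n\in\omega}$ of non-empty subsets of $K$, I set $B_k=\bigcup_{n\geq k}A_n$ and use the finite intersection property on the compact $K$ to obtain $x\in\bigcap_{k\in\omega}\cl_{\mathbf{X}}(B_k)\cap K$. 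If $\{n:x\in A_n\}$ is infinite, the constant map $f(n)=x$ on this set is a partial choice function; otherwise, after shifting indices I may assume $x\notin A_n$ for every $n$, and $\mathbf{FU}$ produces a sequence $(y_j)$ in $\bigcup_nA_n$ converging to $x$. Defining $n(j)=\min\{n:y_j\in A_n\}$ gives a partial choice directly when the hit set $\{n(j):j\in\omega\}$ is infinite; the remaining case of a bounded hit set is the main obstacle, to be handled by iterating $\mathbf{FU}$ on the tails $B_{M+1}$ with $M$ bounding the previous hit set, using a canonical $\min$-selection of the iteration indices in $\mathbf{ZF}$ to avoid invoking choice and ultimately forcing an infinite hit set.

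For (ii), $\mathbf{Hk}(\mathbf{X})\nrightarrow\mathbf{CPCAC}(\mathbf{X})$ is witnessed in any model of $\mathbf{ZF}+\neg\mathbf{CAC}_{fin}$ (e.g., Pincus' Model $\mathcal{M}4$ in \cite{hr}) by taking a pairwise disjoint family $\mathcal{A}=\{A_n:n\in\mathbb{N}\}$ of non-empty finite sets without a partial choice function and forming $\mathbf{X}=\{\infty\}\cup\bigcup\mathcal{A}$ via $(\ast)$: by Proposition \ref{s6p1}(vi), $\mathbf{X}$ is a compact very $k$-space so $\mathbf{Hk}(\mathbf{X})$ holds, but $\mathbf{X}$ itself is an infinite compact set on which $\{A_n\}$ violates $\mathbf{PCAC}$, so $\mathbf{CPCAC}(\mathbf{X})$ fails. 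For $\mathbf{CPCAC}(\mathbf{X})\nrightarrow\mathbf{k}(\mathbf{X})$, I work in a model (such as Cohen's Original Model $\mathcal{M}1$) in which $\mathbf{AC}_{fin}$ holds and $\mathbf{CAC}$ fails, and pick a denumerable pairwise disjoint family $\mathcal{A}=\{A_n\}$ of \emph{infinite} sets without a partial choice function (available after using $\mathbf{AC}_{fin}$ to discard the finite members of an arbitrary bad family); equipping each $A_n$ with its discrete metric and forming $\mathbf{X}$ as above, a direct analysis shows that every compact subset of $\mathbf{X}$ is finite (an infinite compact set would have to intersect infinitely many $A_n$'s, yielding a forbidden partial choice), so $\mathbf{CPCAC}(\mathbf{X})$ holds vacuously; meanwhile $\bigcup_n A_n$ is not closed in $\mathbf{X}$ (missing the limit point $\infty$), while its intersection with every compact subset is finite and hence closed, so $\mathbf{k}(\mathbf{X})$ fails.
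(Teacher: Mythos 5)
Most of what you write is sound and close to the paper's own argument: your proof of $(\mathbf{CPCAC}(\mathbf{X})\wedge\mathbf{Hk}(\mathbf{X}))\rightarrow\mathbf{FU}(\mathbf{X})$ is essentially the paper's, the implication $\mathbf{FU}(\mathbf{X})\rightarrow\mathbf{Hk}(\mathbf{X})$ via Proposition \ref{s1p18} is correct, and both halves of (ii) work. (Your second example for (ii), built from infinite discrete pieces via $(\ast)$ in a model of $\mathbf{AC}_{fin}\wedge\neg\mathbf{CAC}$, differs from the paper's choice of the subspace of Cohen reals in $\mathbb{R}$, but it is a legitimate alternative: $\mathbf{AC}_{fin}$ is exactly what lets you convert an infinite compact set, which would meet infinitely many $A_n$ in non-empty finite sets, into a forbidden partial choice function.)

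The genuine gap is in $\mathbf{FU}(\mathbf{X})\rightarrow\mathbf{CPCAC}(\mathbf{X})$, precisely at the point you yourself flag as ``the main obstacle''. Your plan --- iterate $\mathbf{FU}$ on the tails $B_{M+1}$ whenever the hit set of the current sequence is bounded --- cannot be carried out in $\mathbf{ZF}$: $\mathbf{FU}$ is a bare existence statement, and there is no canonical way to select the convergent sequence it supplies at each stage; a ``min-selection of the iteration indices'' does not help, because the non-canonical object is the sequence itself, not an index. Nothing forces the iteration to terminate either: every stage may again return a bounded hit set, and completing $\omega$ stages is then exactly an instance of dependent choice, which is what you must avoid. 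The paper circumvents this with a one-shot device inserted \emph{before} the unique application of $\mathbf{FU}$: after reducing to the case where every neighbourhood of $x$ meets $A_i\setminus\{x\}$ for infinitely many $i$, it recursively defines (by minimization over indices, using only the non-emptiness of certain sets, hence without any choice) strictly increasing sequences $(k_i)$ and $(m_i)$ with $(U_i\setminus U_{m_i})\cap(A_{k_i}\setminus\{x\})\neq\emptyset$, and then applies $\mathbf{FU}$ once to the union of these traces. A single sequence converging to $x$ inside that union must eventually enter each $U_{m_i}$, so it cannot live in only finitely many of the sets $U_i\setminus U_{m_i}$; hence it meets infinitely many distinct $A_{k_i}$, and the partial choice function is read off by taking first occurrences along the sequence. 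Some such ``spreading'' device is indispensable; without it the bounded-hit-set case is simply not handled.
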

\begin{proof}
(i) ($\rightarrow$) That $\mathbf{FU}(\mathbf{X})$ implies $\mathbf{Hk}(%
\mathbf{X})$ follows from Proposition \ref{s1p18}. To show that $\mathbf{FU}(\mathbf{X})$ implies  $\mathbf{CPCAC}(\mathbf{X})$, we assume that $\mathbf{X}$ is Fr\'echet-Urysohn. We fix an
infinite compact subset $K$ of $\mathbf{X}$ and a family $\mathcal{A}%
=\{A_{i}: i\in \omega\}$ of non-empty subsets of $K$. Without
loss of generality, we may assume that, for every infinite subset $M$ of $\omega$,  $\bigcap_{i\in M}A_i=\emptyset$. Let $\mathcal{U}=\{U\in\tau: \{i\in\omega: U\cap A_i\neq\emptyset\}\in[\omega]^{<\omega}\}$. By the compactness of $K$ in $\mathbf{X}$, $K\setminus\bigcup\mathcal{U}\neq\emptyset$. Thus, we can fix $x\in K\setminus\bigcup\mathcal{U}$. Let $\{U_n: n\in\omega\}$ be a base of neighborhoods of $x$ in $\mathbf{X}$ such that, for every $n\in\omega$, $U_{n+1}\subseteq U_n$. For every $n\in\omega$, the set $M_n=\{i\in\omega: U_n\cap (A_i\setminus\{x\})\neq\emptyset\}$ is infinite. Therefore, we can inductively define strictly increasing sequences $%
(k_{i})_{i\in\omega},(m_{i})_{i\in \omega}$ of natural numbers such that, for every $i\in \omega$, the set $(U_i\setminus U_{m_i})\cap (A_{k_{i}}\setminus \{x\})$ is non-empty.  Let $Y=\bigcup \{A_{k_{i}}:i\in\omega\}\setminus\{x\}$.
Since $x\in \cl_{\mathbf{X}}(Y)$ and $\mathbf{X}$ is Fr\'echet-Urysohn, we can fix a sequence $(x_{n})_{n\in \omega}$
of points of $Y$ which converges in $\mathbf{X}$ to the point $x$. With  $(x_{n})_{n\in \mathbb{N}%
}$ in hand,  we can easily define  an infinite subset $M$ of $\omega$ such that $\prod_{i\in M}A_i\neq\emptyset$. Hence $\mathbf{FU}(\mathbf{X})$ implies $\mathbf{CPCAC}(\mathbf{X})$. 

($\leftarrow $) Now, we assume $\mathbf{CPCAC}(\mathbf{X})\wedge\mathbf{Hk}(\mathbf{X})$. Let $C\subseteq X$ and $C\notin\mathcal{C}l(\mathbf{X})$. We fix $x\in\cl_{\mathbf{X}}(C)\setminus  C$.  Put $P=C\cup\{x\}$. The subspace $\mathbf{P}$ of $\mathbf{X}$ is a $k$-space, while $C$ is not closed in $\mathbf{P}$. Hence, there exists $F\in\mathcal{K}(\mathbf{P})$ such that $F\cap C\notin\mathcal{C}l(\mathbf{P})$. Then $x\in\cl_{\mathbf{X}}(F\cap C)$. As before, let $\{U_n: n\in\omega\}$ be a base of neighborhoods of $x$ in $\mathbf{X}$ such that, for every $n\in\omega$, $U_{n+1}\subseteq U_n$. For every $n\in \omega$, let $C_{n}=U_n\cap (F\cap C)$. Then $\{C_n: n\in\omega\}$ is a family of non-empty subsets of the compact set $F$. By $\mathbf{CPCAC}(\mathbf{X})$, there exists an infinite set $M\subseteq\omega$ such that $\prod_{n\in M}C_n\neq\emptyset$. If $\psi\in\prod_{n\in M}C_n$, then $D=\{\psi(n): n\in M\}$ is a denumerable subset of $C$ such that $x\in\cl_{\mathbf{X}}(D)$. This implies that there exists a sequence of points of $D$ (so also of $C$) which converges in $\mathbf{X}$ to $x$. Hence $\mathbf{FU}(\mathbf{X})$ holds.\medskip

(ii) It follows from the proof of Theorem \ref{s6t3} that, in every model of $\mathbf{ZF}+\neg\mathbf{CAC}_{fin}$, there exists a compact metrizable space $\mathbf{X}$ for which the conjunction $\mathbf{Hk}(\mathbf{X})\wedge\neg\mathbf{CPCAC}(\mathbf{X})$ is true. 

In Cohen's Original Model $\mathcal{M}1$ of \cite{hr}, the set $A$ of all added
Cohen reals is an infinite Dedekind-finite subset of $\mathbb{R}$. Since, in $\mathcal{M}1$,  every compact subset of the subspace $\mathbf{A}$ of $\mathbb{R}$ is finite, $\mathbf{CPCAC}(\mathbf{A})$ is true but $\mathbf{A}$ is not a $k$-space.
\end{proof}

\begin{corollary}
\label{s7c2}
$[\mathbf{ZF}]$ For every first-countable, $\mathcal{K}$-Loeb $T_3$-space $\mathbf{X}$, $\mathbf{Hk}(\mathbf{X})$ implies $\mathbf{CPCAC}(\mathbf{X})$.
\end{corollary}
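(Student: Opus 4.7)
The proof plan is essentially a two-step combination of results already established in the paper, so there should be no real obstacle.

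First, I would apply Theorem \ref{s6t2}. Assume $\mathbf{X}$ is a first-countable, $\mathcal{K}$-Loeb $T_3$-space satisfying $\mathbf{Hk}(\mathbf{X})$; in particular, $\mathbf{X}$ itself is a $k$-space, and since $\mathbf{Hk}(\mathbf{X})$ just expresses that $\mathbf{X}$ is a very $k$-space, Theorem \ref{s6t2} yields $\mathbf{FU}(\mathbf{X})$.

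Second, I would invoke the forward direction of Theorem \ref{s7t1}(i), which states that for any first-countable Hausdorff space $\mathbf{X}$, $\mathbf{FU}(\mathbf{X})$ implies the conjunction $\mathbf{CPCAC}(\mathbf{X}) \wedge \mathbf{Hk}(\mathbf{X})$. Since $\mathbf{X}$ is $T_3$ and hence Hausdorff, this applies, and we extract $\mathbf{CPCAC}(\mathbf{X})$ as the desired conclusion.

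The only thing worth checking is that the hypotheses line up correctly: Theorem \ref{s6t2} requires first-countable, $\mathcal{K}$-Loeb, and $T_3$, all of which are assumed; Theorem \ref{s7t1}(i) requires only first-countable Hausdorff, which is weaker. So no additional work is needed, and the corollary follows immediately from chaining \ref{s6t2} and \ref{s7t1}(i). No genuine obstacle arises.
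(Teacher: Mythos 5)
Your proof is correct and matches the paper's own argument, which likewise derives the corollary directly by chaining Theorem \ref{s6t2} (to pass from $\mathbf{Hk}(\mathbf{X})$, i.e.\ the very $k$-space property, to $\mathbf{FU}(\mathbf{X})$) with the forward direction of Theorem \ref{s7t1}(i). The hypothesis check you perform is exactly what is needed, and nothing further is required.
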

\begin{proof}
This follows directly from Theorems \ref{s6t2} and \ref{s7t1}.
\end{proof}

The following proposition is obvious:

\begin{proposition}
\label{s7p3}
$[\mathbf{ZF}]$
\begin{enumerate}
\item[(i)] For every topological space $\mathbf{X}$, the following is true:
 $$(\mathbf{PCAC}(\mathbf{X})\rightarrow\mathbf{CPCAC}(\mathbf{X}))\wedge (\mathbf{PCMC}(\mathbf{X})\rightarrow\mathbf{CPCMC}(\mathbf{X})).$$
\item[(ii)] For every compact space $\mathbf{X}$, the following is true:
 $$(\mathbf{PCAC}(\mathbf{X})\leftrightarrow\mathbf{CPCAC}(\mathbf{X}))\wedge (\mathbf{PCMC}(\mathbf{X})\leftrightarrow\mathbf{CPCMC}(\mathbf{X})).$$
 \item[(iii)] For every $\mathcal{K}$-Loeb $T_1$-space $\mathbf{X}$, the following is true:
 $$(\mathbf{PCAC}(\mathbf{X})\leftrightarrow\mathbf{PCMC}(\mathbf{X}))\wedge(\mathbf{CPCAC}(\mathbf{X})\leftrightarrow\mathbf{CPMC}(\mathbf{X})).$$
\end{enumerate}
\end{proposition}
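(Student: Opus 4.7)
The proof plan: since the authors announce the proposition as ``obvious,'' my aim is to record a concise three-part justification rather than hunt for a clever argument. For (i), I will observe that any denumerable family $\{A_i : i \in \omega\}$ of non-empty subsets of a compact set $K \subseteq X$ is, a fortiori, a denumerable family of non-empty subsets of $X$; hence $\mathbf{PCAC}(X)$ (respectively $\mathbf{PCMC}(X)$) directly produces the required partial (multiple) choice function for $\mathcal{A}$ restricted to $K$, yielding $\mathbf{CPCAC}(\mathbf{X})$ (respectively $\mathbf{CPCMC}(\mathbf{X})$).

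For (ii), the forward directions are immediate from (i), so only the converses need argument. Suppose $\mathbf{X}$ is compact. If $X$ is finite, then $X$ is well-orderable, so both $\mathbf{PCAC}(X)$ and $\mathbf{PCMC}(X)$ hold trivially in $\mathbf{ZF}$. If $X$ is infinite, then $X$ itself is an infinite compact subset of $\mathbf{X}$, so applying $\mathbf{CPCAC}(\mathbf{X})$ (respectively $\mathbf{CPCMC}(\mathbf{X})$) with $K = X$ immediately yields $\mathbf{PCAC}(X)$ (respectively $\mathbf{PCMC}(X)$).

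For (iii), let $f$ be a choice function of $\mathcal{K}^{\ast}(\mathbf{X})$. Since every non-empty finite subset of a topological space is compact, the restriction of $f$ to $[X]^{<\omega} \setminus \{\emptyset\}$ is a choice function on non-empty finite subsets of $X$. To prove $\mathbf{PCMC}(\mathbf{X}) \to \mathbf{PCAC}(\mathbf{X})$ (the reverse implication being trivial), take a denumerable family $\{A_i : i \in \omega\}$ of non-empty subsets of $X$, apply $\mathbf{PCMC}(X)$ to obtain an infinite $S \subseteq \omega$ and $\psi \in \prod_{i \in S}([A_i]^{<\omega} \setminus \{\emptyset\})$, and define the partial choice function by $i \mapsto f(\psi(i))$ for $i \in S$. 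The same argument, restricted to families of non-empty subsets of a fixed infinite compact $K \subseteq X$, delivers $\mathbf{CPCMC}(\mathbf{X}) \to \mathbf{CPCAC}(\mathbf{X})$, since each finite $\psi(i) \subseteq K$ is again a non-empty compact subset of $\mathbf{X}$ and hence in the domain of $f$.

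There is no main obstacle to speak of; the only points requiring a moment's attention are separating the trivially finite case from the infinite one in (ii), and noting that in (iii) the $T_1$ hypothesis is not actually invoked in the argument (finite sets are compact in any space), so this hypothesis is used only to pin down the class of spaces to which the $\mathcal{K}$-Loeb assumption is typically applied in this paper.
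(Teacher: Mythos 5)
Your proof is correct and is exactly the routine argument the paper leaves implicit when it declares the proposition obvious: restriction of families gives (i), taking $K=X$ (plus the trivial finite case) gives (ii), and composing a partial multiple choice function with a choice function on $\mathcal{K}^{\ast}(\mathbf{X})$ applied to the finite values gives (iii). Your side remark that the $T_1$ hypothesis is never used is also accurate, since finite subsets are compact in every topological space.
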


\begin{proposition}
\label{s7p4}
$[\mathbf{ZF}]$ Let $\mathbf{Y}=\langle Y, \tau\rangle$ be a Loeb $T_3$-space which has a cuf base. If $\mathbf{IDI}(Y)$ is false, then there exists a subspace $\mathbf{X}$ of $\mathbf{Y}$ such that $\mathbf{CPCAC}(\mathbf{X})$ is true but $\mathbf{PCMC}(\mathbf{X})$ is false. 
\end{proposition}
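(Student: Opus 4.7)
The plan is to take $\mathbf{X}$ to be the subspace $\mathbf{D}$ of $\mathbf{Y}$, where $D\subseteq Y$ is any infinite Dedekind-finite set (guaranteed by $\neg\mathbf{IDI}(Y)$). By Theorem~\ref{s2t3}, the space $\mathbf{Y}$ is second-countable and metrizable; fix a compatible metric $d$ on $Y$ and a Loeb function $f$ of $\mathbf{Y}$. Since $\mathbf{Y}$ is $T_1$, every non-empty finite subset of $Y$ is closed in $\mathbf{Y}$, so applying $f$ to such a finite set gives a uniform element-picking device; $f$ will also be applied below to certain closed ``annular'' subsets of $\mathbf{Y}$ built inside $D$.

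For $\mathbf{CPCAC}(\mathbf{X})$ I aim to show vacuity: $\mathbf{D}$ has no infinite compact subset. Suppose, for contradiction, that $K\subseteq D$ is infinite and compact in $\mathbf{Y}$; then $K$ is closed in $\mathbf{Y}$ and, being an infinite subset of a compact $T_1$-space, admits an accumulation point $p\in K$. For each $n\in\mathbb{N}$ the integer
\[
m_n=\min\{m\in\mathbb{N}: K\cap\overline{B_d(p,1/n)}\setminus B_d(p,1/m)\neq\emptyset\}
\]
is well-defined because $p$ is an accumulation point of $K$, and the set in parentheses is closed in $\mathbf{Y}$. Set $x_n=f\bigl(K\cap\overline{B_d(p,1/n)}\setminus B_d(p,1/m_n)\bigr)$; then $0<d(x_n,p)\le 1/n$, so $x_n\neq p$ and $x_n\to p$. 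Each $x\in K\setminus\{p\}$ can equal $x_n$ only for the finitely many $n$ with $d(x,p)\le 1/n$, so a pigeonhole argument forces the range $R=\{x_n:n\in\mathbb{N}\}$ to be infinite; the map $x\mapsto\min\{n:x_n=x\}$ then injects $R$ into $\omega$, exhibiting a denumerable subset of $D$ and contradicting Dedekind-finiteness.

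For $\neg\mathbf{PCMC}(\mathbf{X})$, Theorem~\ref{s2t5} ensures that $D$ is not of type $F_\sigma$ in $\mathbf{Y}$, hence not closed, so I fix $y_0\in\cl_{\mathbf{Y}}(D)\setminus D$. Put $U_n=B_d(y_0,1/n)$ and $A_n=D\cap U_n$ for $n\in\mathbb{N}$; each $A_n$ is non-empty because $y_0\in\cl_{\mathbf{Y}}(D)$. I claim the denumerable family $\{A_n:n\in\mathbb{N}\}$ of non-empty subsets of $X=D$ admits no partial multiple choice function. Suppose the contrary: an infinite $S\subseteq\mathbb{N}$ and non-empty finite sets $F_n\subseteq A_n$ ($n\in S$) exist. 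Each $F_n$ is closed in $\mathbf{Y}$, so $x_n=f(F_n)\in F_n\subseteq U_n$ is uniformly defined. Because $y_0\notin D$, every $x\in D$ satisfies $d(x,y_0)>0$ and therefore lies in $U_n$ for only finitely many $n$, so $x=x_n$ for only finitely many $n\in S$. The same pigeonhole-and-injection argument as in the previous paragraph then exhibits $\{x_n:n\in S\}$ as a denumerable subset of $D$, contradicting Dedekind-finiteness. The main technical point common to both halves is the uniform ``finite-to-one over an infinite index set yields denumerable range'' device; the remaining work is the routine verification that each set to which $f$ is applied is closed in $\mathbf{Y}$.
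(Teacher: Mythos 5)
Your proof is correct and follows essentially the same route as the paper's: take $\mathbf{X}=\mathbf{D}$ for an infinite Dedekind-finite $D\subseteq Y$, get $\mathbf{CPCAC}(\mathbf{X})$ vacuously because $D$ has no infinite compact subsets, and refute $\mathbf{PCMC}(\mathbf{X})$ by applying the Loeb function to the finite sets of a putative partial multiple choice function for $\{D\cap B_d(y_0,1/n):n\in\mathbb{N}\}$ with $y_0\in\cl_{\mathbf{Y}}(D)\setminus D$. The only (harmless) divergence is that for the absence of infinite compact subsets the paper simply cites Theorem \ref{s2t5} (an infinite compact subset would be closed, hence $F_{\sigma}$, hence Dedekind-infinite), whereas you re-derive this directly via an accumulation point and the Loeb function; you also make explicit the finite-to-one pigeonhole step that the paper leaves implicit.
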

\begin{proof}
Suppose that $X$ is an infinite Dedekind-finite subset of $Y$. It follows from Theorem \ref{s2t5} that every compact subset of the subspace $\mathbf{X}$ of $\mathbf{Y}$ is finite, so $\mathbf{CPCAC}(\mathbf{X})$ holds. By Theorem \ref{s2t5}, the set $X$ is not closed in $\mathbf{Y}$. Let us fix $x_0\in\cl_{\mathbf{Y}}(X)\setminus X$. By Theorem \ref{s2t3}, the space $\mathbf{Y}$ is metrizable and even second-countable. Let $\{U_n: n\in\omega\}$ be a base of neighborhoods of $x_0$ in $\mathbf{Y}$ such that, for every $n\in\omega$, $U_{n+1}\subseteq U_n$. For every $n\in\omega$, let $A_n=X\cap U_n$. Suppose that the family $\{A_n: n\in\omega\}$ has a partial multiple choice function. Let $M$ be an infinite subset of $\omega$ such that the family $\{A_n: n\in M\}$ has a multiple choice function $h$. Let $f$ be a Loeb function of $\mathbf{Y}$. Then the set $\{f(h(n)): n\in M\}$ is a denumerable subset of $X$ which cotradicts the assumption that $X$ is Dedekind-finite. Hence $\mathbf{PCMC}(\mathbf{X})$ is false. 
\end{proof}

\begin{remark}
\label{s7r5}
In Cohen's Original Model $\mathcal{M}1$ in \cite{hr}, if $A$ stands for the set of all added Cohen reals, then $A$ is an infinite Dedekind-finite subset of $\mathbb{R}$, so, by Proposition \ref{s7p4}, for the subspace $\mathbf{A}$ of $\mathbb{R}$, $\mathbf{CPCAC}(\mathbf{A})$ is true but $\mathbf{PCMC}(\mathbf{A})$ is false.
\end{remark}

\begin{remark} 
\label{s7r6}
Since in $\mathbf{ZFC}$, a $k$-space need not be Fr\'echet-Urysohn, for a Hausdorff space $\mathbf{X}$, the following implication may be false in $\mathbf{ZF}$: 
$$\text{ (1) } (\mathbf{CPCAC}(\mathbf{X})\wedge\mathbf{k}(\mathbf{X}))\rightarrow \mathbf{FU}(\mathbf{X}).$$
 However, we do not know if, for every Hausdorff first-countable space $\mathbf{X}$, (1) is true in $\mathbf{ZF}$. We also do not know if, for every Hausdorff first-countable space $\mathbf{X}$, the following implication is true in $\mathbf{ZF}$:
 $$\text{ (2) } (\mathbf{CPCAC}(\mathbf{X})\wedge\mathbf{S}(\mathbf{X}))\rightarrow \mathbf{FU}(\mathbf{X}).$$
\end{remark}

\begin{theorem}
\label{s7t7}
$[\mathbf{ZF}]$ Let $\mathbf{X}$ be a first-countable Hausdorff space such that, for every $A\subseteq X$ and every $x\in\cl_{\mathbf{X}}(A)$, there exists $K\in\mathcal{K}(\mathbf{X})$ such that $x\in\cl_{\mathbf{X}}(A\cap K)$. Then $\mathbf{FU}(\mathbf{X})$ and $\mathbf{CPCAC}(\mathbf{X})$ are equivalent. If $\mathbf{X}$ is also a $\mathcal{K}$-Loeb regular space, then $\mathbf{Hk}(\mathbf{X})$, $\mathbf{FU}(\mathbf{X})$ and $\mathbf{CPCAC}(\mathbf{X})$ are all equivalent.
\end{theorem}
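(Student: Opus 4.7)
The plan is to prove first that $\mathbf{FU}(\mathbf{X})\leftrightarrow\mathbf{CPCAC}(\mathbf{X})$ under the stated hypothesis, and then to upgrade this to the three-way equivalence by invoking Theorem~\ref{s6t2}. The forward implication $\mathbf{FU}(\mathbf{X})\rightarrow\mathbf{CPCAC}(\mathbf{X})$ is a direct consequence of Theorem~\ref{s7t1}, since any first-countable Hausdorff Fr\'echet-Urysohn space satisfies $\mathbf{CPCAC}(\mathbf{X})$; the genuine content therefore lies in the reverse implication.

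To prove $\mathbf{CPCAC}(\mathbf{X})\rightarrow\mathbf{FU}(\mathbf{X})$, I would fix $A\subseteq X$ and a point $x\in\cl_{\mathbf{X}}(A)$. The case $x\in A$ is trivial via the constant sequence, so I may assume $x\notin A$. Applying the capturing hypothesis, I choose $K\in\mathcal{K}(\mathbf{X})$ with $x\in\cl_{\mathbf{X}}(A\cap K)$. Because $x\notin A\cap K$ and finite subsets of a Hausdorff space are closed, $A\cap K$ must be infinite; in particular $K$ is infinite. Now fix a decreasing base $\{U_n:n\in\omega\}$ of neighborhoods of $x$ and set $C_n=U_n\cap A\cap K$, so that each $C_n$ is a non-empty subset of the infinite compact set $K$. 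By $\mathbf{CPCAC}(\mathbf{X})$ applied to $K$ and the denumerable family $\{C_n:n\in\omega\}\subseteq\mathcal{P}(K)\setminus\{\emptyset\}$, there exist an infinite set $M\subseteq\omega$ and a function $\psi\in\prod_{n\in M}C_n$. Writing $M=\{n_0<n_1<n_2<\cdots\}$ for the increasing enumeration, the points $y_k:=\psi(n_k)\in C_{n_k}\subseteq U_{n_k}\subseteq U_k$ form a sequence in $A$ that converges in $\mathbf{X}$ to $x$, so $\mathbf{FU}(\mathbf{X})$ holds.

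For the second assertion, assume further that $\mathbf{X}$ is $\mathcal{K}$-Loeb and regular; since $\mathbf{X}$ is already Hausdorff, it is a first-countable $\mathcal{K}$-Loeb $T_3$-space, and Theorem~\ref{s6t2} yields $\mathbf{Hk}(\mathbf{X})\leftrightarrow\mathbf{FU}(\mathbf{X})$. Combined with the first part, this gives the three-way equivalence $\mathbf{Hk}(\mathbf{X})\leftrightarrow\mathbf{FU}(\mathbf{X})\leftrightarrow\mathbf{CPCAC}(\mathbf{X})$. The main delicate point is the reverse implication in the middle paragraph: one must verify that the partial choice function provided by $\mathbf{PCAC}(K)$ can be refined to an honest convergent sequence, which is precisely why the nested structure of the $C_n$'s is essential, and why the capturing hypothesis---giving a \emph{single} compact $K$ with $x\in\cl_{\mathbf{X}}(A\cap K)$---is needed to reduce the problem to choice inside one compactum.
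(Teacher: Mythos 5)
Your proposal is correct and follows essentially the same route as the paper: the forward direction is delegated to Theorem~\ref{s7t1}(i), the reverse direction applies $\mathbf{CPCAC}(\mathbf{X})$ to the family $\{U_n\cap A\cap K:n\in\omega\}$ inside the single compact set $K$ supplied by the capturing hypothesis to extract a convergent sequence, and the three-way equivalence is obtained from Theorem~\ref{s6t2}. Your explicit check that $K$ is infinite (so that $\mathbf{CPCAC}(\mathbf{X})$ applies) is a small detail the paper leaves implicit.
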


\begin{proof} Let us assume $\mathbf{CPCAC}(\mathbf{X})$. To prove $\mathbf{FU}(\mathbf{X})$, suppose that $A\subseteq X$ and $x\in\cl_{\mathbf{X}}(A)\setminus A$. We fix $K\in\mathcal{K}(\mathbf{X})$ such that $x\in\cl_{\mathbf{X}}(A\cap K)$. Let $\{U_n: n\in\omega\}$ be a base of neighborhoods of $x$ in $\mathbf{X}$ such that, for every $n\in\omega$, $U_{n+1}\subseteq U_n$. Since the family $\mathcal{A}=\{U_n\cap A\cap K: n\in\omega\}$ has a partial choice function, there exists a denumerable subset $D$ of $A$ such that $x\in\cl_{\mathbf{X}}(D)$. Hence,  there exists a sequence of points of $A$ which converges in $\mathbf{X}$ to the point $x$. Therefore,  $\mathbf{CPCAC}(\mathbf{X})$ implies $\mathbf{FU}(\mathbf{X})$. Thus, by Theorem \ref{s7t1}(i), $\mathbf{CPCAC}(\mathbf{X})$ and $\mathbf{FU}(\mathbf{X})$ are equivalent. To conclude the proof, it suffices to apply Theorem \ref{s6t2}.
\end{proof}

\begin{corollary}
$[\mathbf{ZF}]$. Let $\mathbf{X}$ be a locally compact, first-countable Hausdorff space. Then $\mathbf{FU}(\mathbf{X})$ and $\mathbf{CPCAC}(\mathbf{X})$ are equivalent. If $\mathbf{X}$ is also  $\mathcal{K}$-Loeb, then $\mathbf{Hk}(\mathbf{X})$, $\mathbf{FU}(\mathbf{X})$ and $\mathbf{CPCAC}(\mathbf{X})$ are all equivalent.
\end{corollary}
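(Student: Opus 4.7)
The plan is to reduce both equivalences to Theorem \ref{s7t7}, whose running hypothesis I will verify from local compactness alone. Concretely, I claim that for every $A\subseteq X$ and every $x\in\cl_{\mathbf{X}}(A)$, there exists $K\in\mathcal{K}(\mathbf{X})$ with $x\in\cl_{\mathbf{X}}(A\cap K)$. Indeed, local compactness furnishes an open $W$ and a compact $K$ with $x\in W\subseteq K$; then for every open $U\ni x$, the set $U\cap W$ is an open neighborhood of $x$, hence meets $A$, and $U\cap W\cap A\subseteq U\cap (A\cap K)$, so $x\in\cl_{\mathbf{X}}(A\cap K)$. With this, the first clause of Theorem \ref{s7t7} immediately gives $\mathbf{FU}(\mathbf{X})\leftrightarrow\mathbf{CPCAC}(\mathbf{X})$, which is the first assertion.

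For the second assertion, assuming additionally that $\mathbf{X}$ is $\mathcal{K}$-Loeb, the second clause of Theorem \ref{s7t7} will yield $\mathbf{Hk}(\mathbf{X})\leftrightarrow\mathbf{FU}(\mathbf{X})\leftrightarrow\mathbf{CPCAC}(\mathbf{X})$ once I verify in $\mathbf{ZF}$ that every locally compact Hausdorff space is regular (so $T_3$, since Hausdorff gives $T_1$). I first handle the compact Hausdorff case: given $x\notin F$ with $F$ closed (hence compact), set
\[
\mathcal{V}=\{V\in\tau:(\exists W\in\tau)(x\in W\wedge W\cap V=\emptyset)\}.
\]
Hausdorffness makes $\mathcal{V}$ an open cover of $F$, compactness of $F$ extracts a finite subcover $V_1,\dots,V_n$, and a finite choice yields matching $W_1,\dots,W_n$, so that $\bigcap_{i=1}^n W_i$ and $\bigcup_{i=1}^n V_i$ are disjoint opens separating $x$ from $F$. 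For the locally compact case, given $x$ and open $U\ni x$, I take a compact neighborhood $K$ of $x$ with open $W$ satisfying $x\in W\subseteq K$, apply compact-Hausdorff regularity inside $K$ to separate $x$ from the compact set $K\setminus U$ by opens $V_x,V_{K\setminus U}$ of $K$, and then exploit that $K$ is closed in $\mathbf{X}$ to conclude that $V_x\cap W$ is an open neighborhood of $x$ in $\mathbf{X}$ whose $\mathbf{X}$-closure is contained in $U$.

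The main obstacle is the $\mathbf{ZF}$ verification that compact Hausdorff implies regular, because the textbook $\mathbf{ZFC}$ argument picks a separating pair of opens for each $y\in F$, which is a possibly uncountable choice. The cover-by-definable-opens trick embodied in $\mathcal{V}$ circumvents this: the whole family $\mathcal{V}$ is defined uniformly from $x$ and $\tau$ without any choice, compactness reduces the situation to finitely many members, and only then is a harmless finite choice needed to select companion opens. Once regularity is in hand, everything else is a direct invocation of Theorem \ref{s7t7}, with no further set-theoretic complications.
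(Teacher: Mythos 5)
Your proof is correct and follows the route the paper intends: the corollary is stated as an immediate specialization of Theorem \ref{s7t7}, and you verify precisely the two hypotheses needed — that local compactness yields, for each $A$ and $x\in\cl_{\mathbf{X}}(A)$, a compact $K$ with $x\in\cl_{\mathbf{X}}(A\cap K)$, and that locally compact Hausdorff spaces are regular in $\mathbf{ZF}$ (your choice-free cover argument for compact-Hausdorff regularity is sound, since only finitely many existential instantiations are needed after extracting the finite subcover). The paper leaves these verifications implicit, so your write-up just makes explicit what the authors treat as known.
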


\section{The shortlist of open problems}
\label{s8}

\begin{enumerate}
\item Is there a model of $\mathbf{ZF}$ or $\mathbf{ZFA}$ in which there exists a Cantor completely metrizable $\mathcal{K}$-Loeb space $\mathbf{X}$ such that $\mathbf{X}$ has a cuf base but is not Loeb? (Cf. Question \ref{s2q12}.)

\item  Does $\mathbf{IDI}(\mathbb{R})$ imply $\mathbf{UISC}(\mathbb{R})$ in $\mathbf{ZF}$? (Cf. Question \ref{s4q8}.)

\item Is there a model $\mathcal{M}$ of $\mathbf{ZF}+\mathbf{IDI}$ in which there exists a second-countable Cantor completely metrizable space $\mathbf{X}$ for which $\mathbf{UISC}(\mathbf{X})$ fails in $\mathcal{M}$? (Cf. Question \ref{s4q8}.)

\item Is it provable in $\mathbf{ZF}$ that if $\mathbf{X}$ is a first-countable Hausdorff space, then the conjunction $\mathbf{CPCAC}(\mathbf{X})\wedge\mathbf{k}(\mathbf{X})$ implies $\mathbf{FU}(\mathbf{X})$? (Cf. Remark \ref{s7r6}.)

\item Is it provable in $\mathbf{ZF}$ that if $\mathbf{X}$ is a first-countable Hausdorff space, then the conjunction $\mathbf{CPCAC}(\mathbf{X})\wedge\mathbf{S}(\mathbf{X})$ implies $\mathbf{FU}(\mathbf{X})$? (Cf. Remark \ref{s7r6}.)

\item  Does the sentence ``Every first-countable Hausdorff space is a $k$-space'' imply ``Every first-countable Hausdorff space is Fr\'echet-Urysohn'' in $\mathbf{ZF}$? (Cf. Question \ref{s6q11} and Remark \ref{s6r13}.)

\item Is the space $\mathbb{P}$ of irrationals a $k$-space in $\mathbf{ZF}$?

\item Is the implication $\mathbf{S}(\mathbb{R})\rightarrow\mathbf{S}(\mathbb{P})$ true in $\mathbf{ZF}$? (Cf. \cite[Problem 4.18]{kw1}.)

\end{enumerate}

\end{document}